\numberwithin{equation}{section}
\def \a{{\bf a}}
\def\nmin{(\frakn^{+}\text{-min})}
\def\nimpr{(\frakn^{+}\text{-DT})}
\def\ur{\text{ur}}
\def\PO{\text{PO}}
\def\rig{\rm{rig}}
\def\cl{{\rm cl}}
\def\d{{\bf d}}
\def\fin{\mathrm{fin}}
\def\fin{{\rm fin}}
\def\ord{{\rm ord}}
\def\sing{{\rm sing}}
\def\rk{\text{rank }}
\def\ord{\text{ord }}
\def\char{\text{char}}
\def\res{{\text{res}}}
\theoremstyle{definition}
\newtheorem{thm}{Theorem}[section]
\newtheorem{assumption}[thm]{Assumption}
\newtheorem*{thm*}{Theorem}
\newtheorem{lm}[thm]{Lemma}
\newtheorem{cor}[thm]{Corollary}
\newtheorem*{cor*}{Corollary}
\newtheorem{prop}[thm]{Proposition}
\newtheorem*{conj*}{Conjecture}
\newtheorem{conj}{Conjecture}
\newtheorem{I_thm}{Theorem} 
\newtheorem{I_cor}{Corollary} 
\theoremstyle{remark}
\newtheorem*{remark}{Remark}
\theoremstyle{definition}
\newtheorem*{defn*}{Definition}
\newtheorem{I_Remark*}{Remark}
\newtheorem{defn}[thm]{Definition}
\newtheorem*{hyp}{Hypothesis }
\newcommand{\nc}{\newcommand}
\newcommand{\beq}{\begin{equation}}
\newcommand{\eeq}{\end{equation}}
\newcommand{\bpmx}{\begin{pmatrix}}
\newcommand{\epmx}{\end{pmatrix}}
\newcommand{\bbmx}{\begin{bmatrix}}
\newcommand{\ebmx}{\end{bmatrix}}
\newcommand{\beqcd}[1]{\begin{equation*}\label{#1}\tag{#1}}
\newcommand{\eeqcd}{\end{equation*}}
\numberwithin{equation}{section}
\newenvironment{mylist}{
  \begin{enumerate}[label=(\arabic*)]{}{%
      \setlength{\itemsep}{5pt} \setlength{\parsep}{0in}
      \setlength{\parskip}{0in} \setlength{\topsep}{0in}
      \setlength{\partopsep}{0in}
      \setlength{\leftmargin}{0.17in}}}{\end{enumerate}}
\def\parref#1{\ref{#1}}
\def\thmref#1{Theorem~\parref{#1}}
\def\propref#1{Proposition~\parref{#1}}
\def\corref#1{Corollary~\parref{#1}}
\def\lmref#1{Lemma~\parref{#1}}
\def\defref#1{Definition~\parref{#1}}
\def\assref#1{Assumption~\parref{#1}}
\def\makeop#1{\expandafter\def\csname#1\endcsname
  {\mathop{\rm #1}\nolimits}\ignorespaces}
\def\Sel{\text{Sel}}
\def\hatSel{\widehat{\text{Sel}}}
\def\ord{\text{ord}}
\DeclareMathOperator{\length}{length}
\DeclareMathAlphabet{\mathpzc}{OT1}{pzc}{m}{it}
\DeclareSymbolFont{cyrletters}{OT2}{wncyr}{m}{n}
\DeclareMathSymbol{\SHA}{\mathalpha}{cyrletters}{"58}
\def\makebb#1{\expandafter\def
  \csname bb#1\endcsname{{\mathbb{#1}}}\ignorespaces}
\def\makebf#1{\expandafter\def\csname bf#1\endcsname{{\bf
      #1}}\ignorespaces}
\def\makegr#1{\expandafter\def
  \csname gr#1\endcsname{{\mathfrak{#1}}}\ignorespaces}
\def\makescr#1{\expandafter\def
  \csname scr#1\endcsname{{\EuScript{#1}}}\ignorespaces}
\def\makecal#1{\expandafter\def\csname cal#1\endcsname{{\mathcal
      #1}}\ignorespaces}
\def\doLetters#1{#1A #1B #1C #1D #1E #1F #1G #1H #1I #1J #1K #1L #1M
                 #1N #1O #1P #1Q #1R #1S #1T #1U #1V #1W #1X #1Y #1Z}
\def\doletters#1{#1a #1b #1c #1d #1e #1f #1g #1h #1i #1j #1k #1l #1m
                 #1n #1o #1p #1q #1r #1s #1t #1u #1v #1w #1x #1y #1z}
\def\Qbarp{\C_p}
\def\Zp{\Z_p}
\def\cA{{\mathcal A}}  
\def\cD{\mathcal D}
\def\cE{{\mathcal E}}
\def\cF{{\mathcal F}}  
\def\cG{{\mathcal G}}
\def\cL{{\mathcal L}}
\def\cH{{\mathcal H}}
\def\cI{\mathcal I}
\def\cJ{\mathcal J}
\def\cK{{\mathcal K}}  
\def\cM{\mathcal M}
\def\cR{{\mathcal R}}
\def\cO{\mathcal O}
\def\cf{{\mathcal f}}
\def\cX{\mathcal X}
\def\cV{{\mathcal V}}
\def\cJ{\mathcal J}
\def\cT{\mathcal T}
\def\cY{\mathcal Y}
\def\cU{\mathcal U}
\def\EucH{{\EuScript H}}
\def\EucU{\EuScript U}
\def\ap{\bfa_{\frakp}(f)}
\def\bda{\mathbf a}
\def\bftheta{\boldsymbol{\theta}}
\newcommand{\Z}{\mathbf Z}
\newcommand{\Q}{\mathbf Q}
\newcommand{\R}{\mathbf R}
\newcommand{\C}{\mathbf C}
\newcommand{\A}{\mathbf A}    
\newcommand{\F}{\mathbb F}
\def\frakc{{\mathfrak c}}
\def\frakp{{\mathfrak p}}
\def\frakP{\mathfak P}
\def\frakq{\mathfrak q}
\def\frakm{\mathfrak m}
\def\frakn{\mathfrak n}
\def\frakl{\mathfrak l}
\def\frakP{\mathfrak P}
\def\frakL{{\mathfrak L}}
\def\frakN{\mathfrak N}
\def\Hhat{\widehat{H}}
\def\Neron{N\'{e}ron }
\def\Frob{\mathrm{Frob}}
\newcommand{\<}{\langle}   
\renewcommand{\>}{\rangle} 
\def\imply{\Rightarrow}
  \nc{\opp}{\mathrm{opp}} \nc{\ul}{\underline}
\def\CR{\text{CR}^{+}}
\def\nminus{\frakn^{-}}
\def\nplus{\frakn^{+}}
\def\Bhat{\widehat{B}}
\def\cross{\times}
\def\Bcross{B^{\cross}}
\def\Bhatcross{\Bhat^{\cross}}
\def\openU{\EucU}
\def\openUprime{\openU^{\prime}}
\def\isomor{\simeq}
\def\Fcross{F^{\cross}}
\def\Fhat{\widehat{F}}
\def\Fhatcross{\widehat{F}^{\cross}}
\def\Rhat{\widehat{R}}
\def\Khat{\widehat{K}}
\def\Khatcross{\Khat^{\cross}}
\def\mplus{\frakm^{+}}
\def\Ohat{\widehat{\cO}_{F}}
\def\Hecke{\bbT}
\newcommand{\away}[1]{(#1)}
\newcommand{\SHecke}[2]{\Hecke_{B}^{\away{#1}}(#2)}
\newcommand{\CHecke}[2]{\Hecke_{B}(#1,#2)}
\def\Bprime{B^{\prime}}
\def\Bprimecross{B^{\prime\cross}}
\def\Bprimehatcross{\widehat{B}^{\prime\cross}}
\def\ShiUprime{M_{\openU^{\prime}}}
\def\ShiUprimeY{M_{\openU^{\prime}Y}}
\def\Shiln{M^{[\frakl]}_{n}}
\def\Bruhat{\cT_{\frakl}}
\def\vBruhat{\cV(\Bruhat)}
\def\oeBruhat{\vec{\cE}(\Bruhat)}
\def\Pic{\text{Pic}^{0}}
\newcommand{\Nm}[1]{{\bf{N}}(#1)}
\def\LnplusY{\frakl\nplus Y}
\def\Heckepara{\bf{a}}
\def\Satakepara{\alpha}
\def\Jnl{J^{[\frakl]}_{n}}
\def\m{\frakm}
\def\ml{\frakm^{[\frakl]}}
\def\ILg{\cI^{[\frakl]}_{g}}
\def\chagroup{\cX^{[\frakl]}}
\def\Fll{F_{\frakl^{2}}}
\def\Kum{\text{Kum}}
\def\compo{\Phi^{[\frakl]}}
\def\Nplus{\frakN^{+}}
\def\Oxi{\cO_{\xi}}
\def\uxi{\uf_{\xi}}
\def\Dnton{\cD_{n}}
\def\cSel{\widehat{\Sel}}
\def\Sxi{\mathfrak{S}^{\xi}}
\def\uxi{\uf_{\xi}}
\def\tka{\tilde{\kappa}}
\def\pkaDxil{\kappa^{\prime}_{\cD_{n+t},\xi}(\frakl)}
\def\kaDntonxil{\kappa_{\Dnton,\xi}(\frakl)}
\def\pkaDxi{\kappa^{\prime}_{\cD_{n+t},\xi}}
\def\kaDxi{\kappa_{\cD_{n+t},\xi}}
\def\oxi{\ord_{\uxi}}
\def\Dpp{\cD^{\prime\prime}}
\def\f{{\bf{f}}}
\def\Dfn{\cD^{f}_{n}}
\def\XYmatrix{\xymatrix@M=8pt} 
\def\ncmd{\newcommand}
\ncmd{\xysubset}[1][r]{\ar@<-2.5pt>@{^(-}[#1]\ar@<2.5pt>@{_(-}[#1]}
\ncmd{\XYmatrixc}[1]{\vcenter{\XYmatrix{#1}}}
\ncmd{\xyto}[1][r]{\ar@{->}[#1]}
\ncmd{\xyinj}[1][r]{\ar@{^(->}[#1]}
\ncmd{\xysurj}[1][r]{\ar@{->>}[#1]}
\ncmd{\xyline}[1][r]{\ar@{-}[#1]}
\ncmd{\xydotsto}[1][r]{\ar@{.>}[#1]}
\ncmd{\xydots}[1][r]{\ar@{.}[#1]}
\ncmd{\xyleadsto}[1][r]{\ar@{~>}[#1]}
\ncmd{\xyeq}[1][r]{\ar@{=}[#1]} \ncmd{\xyequal}[1][r]{\ar@{=}[#1]}
\ncmd{\xyequals}[1][r]{\ar@{=}[#1]}
\ncmd{\xymapsto}[1][r]{l\ar@{|->}[#1]}\ncmd{\xyimplies}[1][r]{\ar@{=>}[#1]}
\ncmd{\xyiso}{\ar[r]_-{\sim}}
\def\injxy{\ar@{^(->}}
\newcommand{\pMX}[4]{\begin{pmatrix}
{#1}& {#2}\\
{#3}&{#4}\end{pmatrix} }
\newcommand{\seesaw}[4]{{#1}\ar@{-}[rd]\ar@{-}[d]&{#2}\ar@{-}[d]\\
{#3}\ar@{-}[ru]&{#4}}
\newcommand\dual[1]{{#1}^{\!\vee}} 
\def\ie{i.e. }
\def\cf{\mbox{{\it cf.} }}
\def\resp{resp.$\,$}
\def\mapR{\smash{\mathop{\longrightarrow}}}
\def\MapRR#1{\xrightarrow{\hspace*{0.2cm}#1\hspace*{0.2cm}}}
\def\MapR#1#2{\smash{\mathop{\longrightarrow}\limits^{#1}_{#2}}}
\newcommand{\exact}[3]{
0\mapR{#1}\mapR{#2}\mapR{#3}\mapR 0 }
\newcommand{\lexact}[3]{
0\mapR{#1}\mapR{#2}\mapR{#3} }
\def\uf{\varpi} 
\def\kap{\kappa}
\def\dirlim{\varinjlim}
\def\prolim{\varprojlim}
\renewcommand\pmod[1]{\,(\mbox{mod }{#1})}
\renewcommand\mod[1]{\,\mbox{mod }{#1}}
\author{Haining Wang}
\address{\parbox{\linewidth} { Department of Mathematics,\\ McGill University,\\ 805 Sherbrooke St W,\\ Montreal, QC H3A 0B9, Canada.~ }}
\email{wanghaining1121@outlook.com}
\subjclass[2000]{Primary 11G40, Secondary 11G18}
\date{\today}
\begin{document}
\title[Anticyclotomic Iwasawa theory for Hilbert modular forms]{On the anticyclotomic Iwasawa main conjecture for Hilbert modular forms of parallel weights}
\keywords{\emph{Iwasawa theory, $p$-adic L-functions, Shimura curves, totally real field.}}

\begin{abstract}
In this article, we study the Iwasawa theory for Hilbert modular forms over the anticyclotomic extension of a CM field. We prove a one sided divisibility result toward the Iwasawa main conjecture. The proof relies on the first and second reciprocity law relating theta elements to Heegner points Euler system. As a by-product we also prove certain Bloch-Kato type result in the rank $0$ case and a parity conjecture.
\end{abstract}

\maketitle

\tableofcontents

\section{Introduction}
\subsection{Main results} Let $F$ be a totally real field over $\Q$ of degree $d$ with ring of integers $\cO_{f}$ and $\frakn$ be a non-zero integral ideal of $\cO_{F}$. We write $F_{\infty}=\prod_{v\mid \infty} F_{v}$. Let $$\Gamma_{0}(\frakn)=\{\pMX{a}{b}{c}{d}\in \GL_{2}(\hat{\cO}_{F}): c\equiv 0 \mod \frakn\}.$$ We take $f\in S_{k}(\Gamma_{0}(\frakn),1)$ to be a Hilbert cusp form of level $\frakn$, parallel  even weight $k$ for $k\geq 2$ and trivial central character. We recall the definition of Hilbert modular forms following \cite{Hida:p-adic-Hecke-algebra}.
For a complex function $f: \GL_{2}(\A_{F})\rightarrow \C$ and an element in $u\in \GL_{2}(\A_{F})$, we define
$$f\mid_{k}u(x)= j(u_{\infty}, z_{0})^{-k}\det(u_{\infty})^{(k-1)}f(xu^{-1})$$
for $x\in \GL_{2}(\A_{F})$, $z_{0}=(i, \cdots, i)\in \cH^{d}$ and $$j: \GL_{2}(F_{\infty})\times \cH^{d}\rightarrow \C, (\pMX{a_{v}}{b_{v}}{c_{v}}{d_{v}})_{v\mid \infty}\times (z_{v})_{v\mid \infty}\rightarrow \prod_{v\mid \infty} (c_{v}z_{v}+d_{v})_{v\mid \infty}.$$
Then $f$ belongs to the space $S_{k}(\Gamma_{0}(\frakn),1)$ of Hilbert cusp forms of parallel weight $k$ and trivial central character if
\begin{enumerate}
\item $f\mid_{k} u= f$ for all $u\in \Gamma_{0}(\frakn)C_{\infty}$ with $C_{\infty}=F_{\infty}\SO_{2}(F_{\infty})$.
\item For $\gamma\in \GL_{2}(F)$ and $z\in Z$ the center of $\GL_{2}(\A_{F})$, $f(\gamma z g)=f(g)$.
\item For $x\in \GL_{2}(\A^{f}_{F})$, the function $f_{x}: \cH^{d}\rightarrow \C, uz_{0}\rightarrow j(u, z_{0})^{k}\det(u)^{1-k}f(xu)$ is holomorphic where $u$ is any element of $\GL_{2}(F_{\infty})$.
\item For all $x\in \GL_{2}(\A_{F})$, $\int_{F\backslash \A_{F}}f(\pMX{1}{a}{0}{1}x) d(a)=0$ with a Harr measure $d(a)$ on $F\backslash \A_{F}$.
\end{enumerate}

We denote the space of Hilbert modular cusp forms of weight $k$, level $\Gamma_{0}(\frakn)$, with trivial central character by $S_{k}(\Gamma_{0}(\frakn), 1)$. The space $S_{k}(\Gamma_{0}(\frakn), 1)$ is acted on by the algebra of Hecke operators $\Hecke_{0}(\frakn)$. A typical element in this 
algebra acts on a Hilbert modular form by the double coset $$[\Gamma_{0}(\frakn)\pMX{\uf_{v}}{0}{0}{1}\Gamma_{0}(\frakn)]$$ for a place $v$ in $F$.

We assume that $f$ is a new(primitive) form in the sense of  Shimura \cite{Shimura:Hilbert1978}. 
Let $K$ be a totally imaginary quadratic extension over  $F$ with relative discriminant $D_{K/F}$. Then $K$ determines a decomposition of $\frakn$ into two integral ideals: $\frakn=\frakn^{+}\frakn^{-}$ where $\frakn^{+}$ is only divisible by primes split in $K$ and $\frakn^{-}$ is only divisible by primes that are non-split. In this article we assume that
\beqcd{ST}
\begin{aligned}
\text{ the number of prime divisors of $\nminus$ has the same parity as } d.
\end{aligned}
\eeqcd
The assumption \eqref{ST} puts us in the situation that 
\begin{enumerate}
\item the Hilbert modular form comes from a modular form on a totally definite quaternion algebra via the Jacquet-Langlands correspondence; 
\item  the sign of the functional equation of the Rankin Selberg $L$-function $L(f/K,\chi, s)=L(f\otimes \theta_{\chi}, s)$ of $f$ and $\theta_{\chi}$, the theta series attached to an anticyclotomic character $\chi$, is $+1$ \cf \cite[Remark 1.2]{Vatsal_Cornut:Non_Van}. 
\end{enumerate}
Let $p$ be an odd prime. We fix an embedding $\iota_{p}:\overline{\Q}\rightarrow\Qbarp$. Let $E_{f}$ be the $p$-adic Hecke field for the Hilbert modular form $f$ i.e. the field over $\Q_{p}$ generated by the Hecke eigenvalues of $f$ induced by $\iota_{p}$. We let $\cO_{f}$ be the ring of integers of $E_{f}$ and $\uf$ be a uniformizer of $\cO_{f}$.  Denote by $\bfa_{v}(f)$ the Hecke eigenvalue of $f$ at the prime $v$. We assume 
the following ordinary assumption at $p$ for $f$:
\beqcd{ORD}
\begin{aligned}
\bfa_{v}(f) \text{ is a unit in } \cO_{f} \text{ for all } v\mid p.
\end{aligned}
\eeqcd

Let $\rho_{f}:\Gal(\bar{F}/F)\rightarrow GL_{2}(E_{f}) $ be the $p$-adic  Galois representation associated to $f$ \cf\cite{Wiles:ordinary_hilbert}, \cite{Taylor:Galois_rep_HMF}. We know that $\det \rho_{f}=\epsilon^{k-1}$ where $\epsilon$ is the $p$-adic cyclotomic character of $G_{F}=\Gal(\bar{F}/F)$, and that if $\frakl$ is a prime not dividing $p \frakn$, then the characteristic polynomial of $\rho_{f}(\Frob_{\frakl})$ is given by $$x^{2}-{\bf{a}}_{\frakl}(f) x+ {\Nm\frakl}^{k-1},$$ here $\Frob_{\frakl}$ is the Frobenius element at $\frakl$. In this article we will consider the selfdual twist of $\rho_{f}$, namely $\rho_{f}^{*}:=\rho_{f}\otimes \epsilon^{\frac{2-k}{2}}$. Let $V_{f}$ be the underlying representation space for $\rho_{f}^{*}$ and $T_{f}$ be a $G_{F}$ stable $\cO_{f}$ lattice in $V_{f}$ and $A_{f}={V_{f}}/{T_{f}}$.  By (ORD) and a result of Wiles \cite[Theorem 2]{Wiles:ordinary_hilbert}, for $v$ a place dividing $p$ in $F$,  the restriction of $\rho_{f}^{*}$ at $G_{F_{v}}$ is of the form
$$\rho^{*}_{f}\mid_{ G_{F_{v}}}=\left(
                                   \begin{array}{cc}
                                     \chi^{-1}_{v}\epsilon^{\frac{k}{2}} & * \\
                                                      0                &  \chi_{v}\epsilon^{\frac{2-k}{2}}\\
                                   \end{array}
                                 \right)$$ 
where $\chi_{v}$ is an unramified character of $G_{F_{v}}$ such that $\chi_{v}(\Frob_{v})=\alpha_{v}$ with $\alpha_{v}$ the unit root of the Hecke polynomial $x^{2}-\bfa_{v}(f)x+\Nm{v}^{k-1}$.This induces an exact sequence of representations of $G_{F_{v}}$
$$\exact{F^{+}_{v}V_{f}}{V_{f}}{F^{-}_{v}V_{f}}$$ 
where $G_{F_{v}}$ acts on $F^{+}_{v}V_{f}$ (\resp $F^{-}_{v}V_{f}$) via $\epsilon^{\frac{k}{2}}\chi^{-1}_{v}$ (\resp $\chi_{v}\epsilon^{\frac{2-k}{2}}$). Define $F^{+}_{v}A_{f}$ to be the image of $F^{+}_{v}V_{f}$ in $A_{f}$. 

Let $\frakp$ be a fixed prime in $F$ over a rational prime $p$ such that $\frakp\nmid \frakn D_{K/F}$. Let $H_{m}$ be the ring class field of conductor $\frakp^{m}$ and let $G_{m}=\Gal(H_{m}/k)$ be the ring class group. The group $G_{m}$ corresponds to $K^{\cross}\backslash\hat{K}^{\cross}/ \hat{\cO}^{\cross}_{\frakp^{m}}\hat{F}^{\cross}$ via class field theory and here $\cO_{\frakp^{m}}$ is the order in $K$ of conductor $\frakp^{m}$. Our convention of ring class field follows that of \cite[Section 1.1]{Nekovar_CV}. Let $d_{\frakp}=[F_{\frakp}:\Q_{p}]$ be the inertia degree of  $\frakp$ and we put $K_{\infty}$ to be the maximal $\Z_{p}^{d_{\frakp}}$-extension of $K$ in the tower of ring class fields of $K$ with $\frakp$-power conductor. Let $K_{m}$ be the $m$-th layer of $K_{\infty}$. Let $\Gamma=\Gal(K_{\infty}/K)$ and let $\Lambda=\cO_{f}[[\Gamma]]$ be the $d_{\frakp}$ variable Iwasawa algebra over $\cO_{f}$. We also put $\Gamma_{m}=\Gal(K_{m}/K)$. In this article we study the structure of the minimal Selmer group
$$\Sel(K_{\infty},A_{f}):=\ker\{H^{1}(K_{\infty}, A_{f})\rightarrow \prod_{v\nmid p }H^{1}(K_{\infty,{v}}, A_{f})\times \prod_{v\mid p}H^{1}(K_{\infty, v}, A_{f}/F^{+}_{\frakp}A_{f})\} .$$

On the analytic side, based on the work of Chida-Hsieh \cite{Chida_Hsieh}, Hung  \cite{Hung:thesis} has constructed an element $\theta_{\infty}(f)\in \Lambda$ which we will refer to as the theta element such that for each finite character $\chi: \Gamma\mapR \mu_{p^{\infty}}$ of conductor $\frakp^{s}$, it satisfies
$$\chi({\theta_{\infty}(f)^{2}})=\Gamma(\frac{k}{2})^{2}u^{2}_{K}\frac{\sqrt{D_{K}}D^{k-1}_{K}}{D_{F}^{\frac{3}{2}}}\Nm{\frakp}^{s(k-1)}\chi(\frakN^{+})\epsilon_{\frakp}(f)\ap^{-s}e_{\frakp}(f,\chi)^{2}\frac{L(f/K,\chi, k/2)}{\Omega_{f,\frakn^{-}}}.$$
Here 
\begin{enumerate}
\item $D_{F}$ (\resp $D_{K}$) is the absolute discriminant of  $F$(\resp $K$). 
\item We fix a decomposition $\frakn^{+}=\frakN^{+}{\overline{\frakN^{+}}}$ where ${\overline{\frakN^{+}}}$ is the complex conjugation of $\frakN^{+}$.
\item $\epsilon_{\frakp}(f)$ is the local root number of $f$ at $\frakp$. 
\item $u_{K}=[\cO^{\times}_{K}:\cO^{\times}_{F}]$.
\item $e_{\frakp}(f,\chi)$ is the $p$-adic multiplier defined as follows
 \[e_{\mathfrak{p}}(f,\chi)=\begin{cases}
1 & \hbox{if $\chi$ is ramified;}\\
(1-\alpha_{\mathfrak{p}}^{-1}\chi(\mathfrak{P}))(1-\alpha_{\mathfrak{p}}^{-1}\chi(\overline{\mathfrak{P}})) & \hbox{if $s=0$ and $\mathfrak{p}=\mathfrak{P\overline{P}}$ is split;}\\
1-\alpha_{\mathfrak{p}}^{-2} & \hbox{if $s=0$ and $\mathfrak{p}$ is inert;}\\
1-\alpha_{\mathfrak{p}}^{-1}\chi(\mathfrak{P}) & \hbox{if $s=0$\hbox{ and }$\mathfrak{p}=\mathfrak{P}^{2}$ is ramified.}
\end{cases}\]
\item $\Omega_{f,\frakn^{-}}$ is a complex period associated to $f$. It is known as the Gross period and is given by \cite[(5.2)]{Hung:thesis}.  
\end{enumerate}

In this article we prove half of the anticyclotomic Iwasawa main conjecture for Hilbert modular forms under similar hypothesis at $p$ as in \cite{Chida_Hsieh:main}. Let $\frakn_{\bar{\rho}}$ be the Artin conductor of the residual Galois representation $\bar{\rho}_{f}$.

\begin{hyp}[$\CR$]\label{CR}\hfill
\begin{enumerate}
\item$p>k+1$ and $\#(\F^{\times}_{\frakp})^{k-1}>5$;
\item The restriction of $\bar{\rho}_{f}$ to $G_{F(\sqrt{p^{*}})}$ is irreducible where $p^{*}=(-1)^{\frac{p-1}{2}}p$;
\item$\bar{\rho}_{f}$ is ramified at $\frakl$ if $\frakl\mid\frakn^{-}$ and ${\Nm{\frakl}^{2}} \equiv 1 \pmod{p}$;
\item$\frac{\frakn}{\frakn_{\bar{\rho}}}$ is coprime to $\frakn_{\bar{\rho}}$.
\end{enumerate}
\end{hyp}
For primes dividing $\frakn^{+}$, we assume that
\begin{hyp}[$\frakn^{+}\text{-DT}$]
$\text{if $\frakl\mid\mid \nplus$ and $\Nm{\frakl}\equiv1\pmod{p}$, then $\bar{\rho}_{f}$ is ramified.}$
\end{hyp}

At places above $p$, we impose the condition
\begin{hyp}[$\PO$]
$
\a_{v}^{2}(f)\not\equiv 1(\mod p) \text{ for all $v\mid p$ if $k=2$  }.
$
\end{hyp}

In order to prove the second reciprocity law, we assume that the so-called "Ihara's Lemma" is true in our setting. We refer the reader to \assref{Ihara} for the precise statement. 

We put $L_{p}(K_{\infty}, f)=\theta_{\infty}(f)^{2}$ as an element in $\Lambda$.  It is well known that the Selmer group $\Sel(K_{\infty}, A_{f})$ is a cofinitely generated $\Lambda$-module and let $\text{char}_{\Lambda}  \Sel(K_{\infty}, A_{f})^{\lor}$ be the characteristic ideal of its Pontryagain dual.  The Iwasawa main conjecture for Hilbert modular forms over the anticyclotomic extension $K_{\infty}$ predicts the following
\begin{conj}
$$\text{char}_{\Lambda}  \Sel(K_{\infty}, A_{f})^{\lor} =(L_{p}(K_{\infty},f)).$$
\end{conj}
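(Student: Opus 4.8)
The plan is to deduce the conjectured equality from two divisibilities proven by complementary methods, in the style of the anticyclotomic main conjecture for elliptic modular forms of Chida--Hsieh \cite{Chida_Hsieh:main}; the hypotheses $\CR$, $\nimpr$, $\PO$, \eqref{ST} and \eqref{ORD} are imposed precisely so that the inputs of that circle of ideas become available over the totally real field $F$. Since $\Lambda\cong\cO_f[[x_1,\dots,x_{d_\frakp}]]$ is a regular, hence factorial, local ring, the desired equality --- being an identity of ideals each of which is a product of height-one primary ideals --- can be checked after localizing at each height-one prime $\frakP\subset\Lambda$; this reduces the $d_\frakp$-variable statement to a family of one-dimensional problems over the discrete valuation rings $\Lambda_\frakP$, where the usual structure theory of finitely generated torsion modules applies.

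\emph{First divisibility: $\text{char}_\Lambda\Sel(K_\infty,A_f)^\lor\supseteq(L_p(K_\infty,f))$.} This is the Euler system half. Using \eqref{ST}, attach to $f$ via Jacquet--Langlands a form on the Shimura curve associated to the quaternion algebra over $F$ ramified at all but one infinite place together with $\nminus$ and a chosen admissible prime, and from its Heegner points over the ring class tower of $K$ build a $\Lambda$-adic Heegner cohomology class $\kappa_\infty$ over $K$ together with its Kolyvagin derivatives $\kappa_\infty(\frakl)$ indexed by $n$-admissible primes $\frakl$. Norm-compatibility of Heegner points in the tower, combined with the first explicit reciprocity law identifying the residue $\partial_\frakl\kappa_\infty(\frakl)$ with the theta element of the form level-raised at $\frakl$, allows one to run Kolyvagin's argument in the Iwasawa-theoretic form of Nekov\'a\v{r} and Howard: this bounds $\text{char}_\Lambda\Sel(K_\infty,A_f)^\lor$ by the $\Lambda$-index of $\kappa_\infty$ in the relevant strict Selmer module, and a second use of the reciprocity law identifies that index with $(\theta_\infty(f)^2)=(L_p(K_\infty,f))$ once the explicit periods and local multipliers are matched. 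Non-triviality of $\kappa_\infty$, needed to make the bound meaningful, follows from the generic non-vanishing of Heegner points in anticyclotomic towers \cite{Vatsal_Cornut:Non_Van} together with $\CR$(2) and $\PO$.

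\emph{Second divisibility: $\text{char}_\Lambda\Sel(K_\infty,A_f)^\lor\subseteq(L_p(K_\infty,f))$.} Here one uses the second explicit reciprocity law, whose proof requires Ihara's lemma (\assref{Ihara}). The mechanism is the ``first half of a bipartite Euler system'' in the sense of Howard: starting from the mod-$\uf$ non-vanishing of $\theta_\infty(f)$ --- a Chida--Hsieh/Hung type statement resting on $\CR$, $\nimpr$ and residual irreducibility --- one raises the level at pairs $\frakl\frakl'$ of admissible primes, passing to the totally definite quaternion algebra ramified at $\nminus\frakl\frakl'$; Ihara's lemma guarantees that the relevant congruence module is cyclic, so the second reciprocity law becomes an equality expressing the reduction of the level-raised theta element through the unramified localization $v_\frakl\kappa_\infty(\frakl)$ of a Kolyvagin class. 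Propagating these relations shows that any nonzero class in $\Sel(K_\infty,A_f)$, localized at a suitable finite set of admissible primes, forces $\theta_\infty(f)$ into successively higher powers of a given height-one prime of $\Lambda$; reassembling over all such primes yields $\text{char}_\Lambda\Sel(K_\infty,A_f)^\lor\subseteq(\theta_\infty(f)^2)$. Combined with the first inclusion, this proves the conjecture.

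The principal obstacle is this second divisibility, and within it the dependence on Ihara's lemma: establishing \assref{Ihara} for the quaternionic Hilbert modular setting in the needed generality is itself a serious open problem, and even granting it one must control the failure of the relevant Hecke algebra to be Gorenstein at the maximal ideal of $\bar\rho_f$ --- which is the role of $\CR$(4) together with the multiplicity-one consequences of $\CR$(2)--(3) --- in order to upgrade the second reciprocity law from a mere divisibility to a genuine equality. A further difficulty, absent in the classical $\Z_p$-case, is that $\Gamma\cong\Z_p^{d_\frakp}$ has rank $d_\frakp>1$ when $\frakp$ has residue degree $>1$: one must check that the Heegner classes and theta elements are non-trivial generically on $\Spec\Lambda$ rather than merely along one coordinate direction, and that the height-one-prime-by-prime comparison genuinely recovers the equality of ideals in the factorial ring $\Lambda$ with no pseudo-null discrepancy --- which in turn requires a control theorem relating $\Sel(K_\infty,A_f)_\frakP$ to the Selmer groups over the finite layers $K_m$ and a non-degeneracy statement for the associated $p$-adic height pairings.
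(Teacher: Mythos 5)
The statement you are addressing is stated in the paper as a \emph{conjecture}, and the paper itself establishes only one of the two inclusions, namely $\text{char}_{\Lambda}\Sel(K_{\infty},A_{f})^{\lor}\mid(L_{p}(K_{\infty},f))$, i.e.\ $(L_{p}(K_{\infty},f))\subseteq\text{char}_{\Lambda}\Sel(K_{\infty},A_{f})^{\lor}$ (\thmref{main1}, proved via the first and second reciprocity laws and the induction of \thmref{induction}). Your proposal claims the full equality, so the entire burden falls on your ``second divisibility'' $\text{char}_{\Lambda}\Sel(K_{\infty},A_{f})^{\lor}\subseteq(L_{p}(K_{\infty},f))$, and that is where the argument breaks down.

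Concretely: after localizing at a height-one prime $\frakP$ of $\Lambda$, the inclusion needed in your second half is $\ord_{\frakP}(L_{p}(K_{\infty},f))\leq\length_{\Lambda_{\frakP}}(\Sel(K_{\infty},A_{f})^{\lor}_{\frakP})$, a \emph{lower} bound on the Selmer group. But the mechanism you describe --- level raising at pairs of admissible primes, the second reciprocity law $v_{\frakl_{1}}(\kappa_{\cD}(\frakl_{2}))=v_{\frakl_{2}}(\kappa_{\cD}(\frakl_{1}))=\theta_{\infty}(\cD^{\prime\prime})$, and ``any nonzero class in $\Sel(K_{\infty},A_{f})$ forces $\theta_{\infty}(f)$ into successively higher powers of $\frakP$'' --- yields an \emph{upper} bound $\length_{\Lambda_{\frakP}}(\Sel^{\lor}_{\frakP})\leq 2\,\ord_{\frakP}(\theta_{\infty}(f))$: it is exactly the global-duality step (\lmref{vanish}) that, combined with the first reciprocity law, proves the inclusion you already claimed in your first half. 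Both halves of your proof therefore establish the same containment, and the reverse one is never touched. To obtain it one must \emph{construct} Selmer classes whenever $\theta_{\infty}(f)$ is highly divisible --- in Howard's bipartite formalism this is the ``second exact sequence,'' which requires knowing that the compact Selmer modules containing the classes $\kappa_{\cD}(\frakl)$ are free of the correct rank over $\Lambda/\uf^{n}$ \emph{and} that these classes generate them, or else an entirely different method (Eisenstein congruences in the style of Skinner--Urban, or comparison with a Greenberg main conjecture). No such input appears in your argument and none is supplied by Ihara's lemma, whose role is confined to the second reciprocity law and hence to the divisibility already proved. A secondary issue: your description of the first half in terms of a single $\Lambda$-adic Heegner class $\kappa_{\infty}$ and a Nekov\'a\v{r}--Howard Kolyvagin argument belongs to the indefinite (sign $-1$) setting; under \eqref{ST} the sign is $+1$, there is no such class over $K_{\infty}$, and the correct argument is the Bertolini--Darmon induction on $t_{\cD}$ actually carried out in the paper.
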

It can be seen as a family version of the Bloch-Kato conjecture which relates the size of the Selmer groups with the $L$-function. 
We prove in this article the following result (one sided divisibility) toward the Iwasawa main conjecture for Hilbert modular forms over the anticyclotomic extension.
\begin{I_thm}\label{main1}
Assume $(\text{CR}^{+})$, $(\text{PO})$, $\nimpr$ and the "Ihara's lemma" are true. Then
$$\text{char}_{\Lambda}  \Sel(K_{\infty}, A_{f})^{\lor} \mid (L_{p}(K_{\infty},f)).$$
\end{I_thm}

Recall for each finite generated module $X$ over $\Lambda$, $X$ is pseudo-isomorphic to $\Lambda^{r} \oplus\prod_{i}\Lambda/(\uf^{\mu_{i}})\oplus \prod_{j}\Lambda/(f^{m_{j}}_{j})$ for some distinguished polynomials $f_{j}$ \cite[Theorem 3.1]{Lang:cyclotomic}. The algebraic Iwasawa $\mu$-invariant $\mu$ of $X$ is defined to be $\mu=\sum_{j}\mu_{j}$. An immediate corollary of this theorem and the result of Hung \cite[Theorem B]{Hung:thesis} is the following corollary.
\begin{I_cor}
Assume the same hypothesis in \thmref{main1}. Then the $\Lambda$-module $\Sel(K_{\infty}, A_{f})$ is cotorsion with trivial algebraic $\mu$-invariant.
\end{I_cor}

The ingredients of the proof of this theorem can also be used to prove the following result in the direction of the Bloch-Kato conjecture. We refer the reader to \cite{Bloch_Kato} for more about the conjecture and for the definition of their Selmer group.
\begin{I_thm}\label{intro_BK}
Assume  $(\text{CR}^{+})$, $(\text{PO})$, and $\nimpr$ hold. Let $\chi$ be a finite order character of $\Gamma_{m}=\Gal(K_{m}/K)$ and assume $$L(f/K,\chi, k/2)\neq 0,$$ then the Bloch-Kato Selmer group $\Sel_{f}(K, V_{f,\chi})=0$.
\end{I_thm}
 
Using this result, we obtain as a corollary the proof of the parity conjecture below.
\begin{I_thm}\label{intro_par}
Assume  $(\text{CR}^{+})$, $(\text{PO})$, and $\nimpr$ hold. Let $\chi$ be as in \thmref{intro_BK}, then
$$\ord_{s=k/2}L(f/K,\chi,s)\equiv  \rank \Sel_{f}(K, V_{f,\chi}) \pmod{2}.$$
\end{I_thm}

\subsection{Strategy of the proof}
To prove this one-sided divisibility result, we follow the scheme of \cite{Bertolini_Darmon:IMC_anti} and its modification in \cite{Pollack_Weston:AMU}, \cite{Chida_Hsieh:main}. We give an account of the strategy to prove \thmref{main1} in this section and summarize the structure of this article.
\begin{itemize}
\item In section $2$ we collect some basic facts about Galois cohomology over the anticyclotomic tower and introduce various Selmer groups associated to the Galois representation $\rho^{*}_{f}$. Notably we introduce the $\Delta$-ordinary, $S$-relaxed Selmer group $\Sel_{\Delta}^{S}(L,T_{f,n})$ for some $L$ contained in $K_{\infty}$. In section $3$, we study Hilbert modular forms on a definite quaternion algebra $B$ over $F$. In section $4$, we study the Shimura curve obtained from $B$ by interchanging the invariant at an infinite place of $F$ and a place inert in $K$. In particular, we give a modular description of the component group of its special fiber at a prime of bad reduction. We will construct in section 5 an Euler system $\kappa_{\cD}(\frakl) \in \widehat{H}^{1}(K_{\infty},T_{f,n})$ indexed by $(\frakl, \cD)$. Here $\frakl$ is an $n$-admissible prime in the sense of \defref{n_admissible} and $\cD=(\Delta,g)$ consists of $\Delta=\frakn^{-}S$ where $S$ is a square free product of even number of n-admissible primes and $g$ is a weight two modular form on the definite quaternion algebra of discriminant $\Delta$ whose Hecke eignevalues are congruent to those of $f$ modulo $\uf^{n}$. The construction is based on a level raising principle initiated by Ribet \cite{Ribet:level_lowering}, see also \cite[Theorem 5.15]{Bertolini_Darmon:IMC_anti}. Roughly speaking, for each admissible form $ \cD=(\Delta, g)$ and an $n$-admissible prime $\frakl$, one can prove an isomorphism $T_{f,n}\isomor T_{p} J^{[\frakl]}_{n}/\ILg$  where $T_{p} J^{[\frakl]}_{n}$ is the $p$-adic Tate module of the Jacobian $J^{[\frakl]}_{n}$ of a Shimura curve and $\ILg$ is an ideal corresponding to $g$ in a suitable Hecke alegbra. Then the Euler system $\kappa_{\cD}(\frakl)$ is produced using the Euler system of Heegner points on $J^{[\frakl]}_{n}$.

\item Section 6 is devoted to the proof of the so called first and second reciprocity laws. The first reciprocity law relates the residue of $\kappa_{\cD}(\frakl) \in \widehat{\Sel}_{\Delta\frakl}(K_{\infty}, T_{f,n})$ at $\frakl$ to an element $\theta_{\infty}(\cD)\in \cO_{f,n}[[\Gamma]]$. This element $\theta_{\infty}(\cD)\in \cO_{f,n}[[\Gamma]]$ is closely related to the theta element $\theta_{\infty}(f)$ via congruences. For two admissible primes $\frakl_{1}$ and $\frakl_{2}$, the second reciprocity law is the assertion that there exists an $n$-admissible form $\cD^{\prime\prime}=(\Delta_{B}\frakl_{1}\frakl_{2}, g^{\prime\prime})$ such that $v_{\frakl_{1}}(\kappa_{\cD}(\frakl_{2}))=v_{\frakl_{2}}(\kappa_{\cD}(\frakl_{1}))=\theta_{\infty}(\cD^{\prime\prime})$. Here $v_{\frakl_{1}}(\kappa_{\cD}(\frakl_{2}))$ (\resp  $v_{\frakl_{2}}(\kappa_{\cD}(\frakl_{1}))$) is the image of $\kappa_{\cD}(\frakl_{2})$ (\resp   $\kappa_{\cD}(\frakl_{1})$) in the finite part of the local Galois cohomology group which is of rank $1$ over $\Lambda$. The proof of the first reciprocity law is quite straightforward while the proof of the second reciprocity law relies on the "Ihara's lemma" which we treat as \assref{Ihara}. 

\item The first reciprocity law and second reciprocity law allow one to use an induction argument to prove the important \thmref{induction}. We sketch the structure of the proof here.  Let $\xi:\Lambda\rightarrow \cO_{\xi}$ be a character of the Iwasawa algebra of $\Gamma$ over $\cO_{f}$. For any $\cO_{\xi}$ module $M$ and each $x\in M$, we denote by $\ord_{\uf_{\xi}}(x)=\sup\{m\in \Z\cup\{\infty\}\mid x\in \uxi^{m}M\}$ the divisibility index of $x$ . For each positive integer $n$ and $n$-admissible form $\cD=(\Delta, f)$, we define two integers:
\begin{equation}
\begin{aligned}
& s_{\cD}=\length_{\cO_{\xi}} \dual{\Sel_{\Delta}(K_{\infty}, A_{f,n})}\otimes_{\xi}\cO_{\xi};\\
& t_{\cD}=\ord_{\uxi} \xi(\theta_{\infty}(\cD))\text{ where $\xi(\theta_{\infty}(\cD))\in \cO_{f,n}[[\Gamma]]\otimes_{\xi}\cO_{\xi}=\cO_{\xi}/\uxi^{n}$}.\\
\end{aligned}
\end{equation}
The first one measures the length of the specialization of the Selmer group and the second one measures the divisibility of the specialization of the theta element. We would like to show $s_{\cD}\leq 2 t_{\cD}$. We do induction on the number $t_{\cD}$. It follows from the first reciprocity law that if $t_{\cD}=0$, then $s_{\cD}=0$ and we have the base case. The underlying principle in the base case is the rank $0$ case of the Bloch-Kato conjecture which we will treat in chapter 8. For the induction step, we choose a suitable $n$-admissible prime $\frakl_{1}$ and define
\begin{equation}
e_{\cD}(\frakl_{1})= \ord_{\uf_{\xi}}(\kappa_{\cD,\xi}(\frakl_{1}))
\end{equation}
which is the divisibility index of the specialization class $\kappa_{\cD,\xi}(\frakl_{1})$ in the Selmer group. By the choice of $\frakl_{1}$, we have $e_{\cD}(\frakl_{1})< t_{\cD}$.  Now we choose a second $n$-admissible prime $\frakl_{2}$ and define an $n$-admissible form $\cD^{\prime\prime}=(\Delta\frakl_{1}\frakl_{2}, g)$. By carefully choosing the admissible prime $\frakl_{2}$, we can show $t_{\cD^{\prime\prime}}=e_{\cD}(\frakl_{1})< t_{\cD} $ using the second reciprocity law. Thus by the induction hypothesis, we have $s_{\cD^{\prime\prime}}\leq 2t_{\cD^{\prime\prime}}$. To finish the proof, we show $s_{\cD}-s_{\cD^{\prime\prime}}\leq 2(t_{\cD}-t_{\cD^{\prime\prime}})$ by comparing the Selmer group ordinary at $\Delta$ with the Selmer group ordinary at $\Delta\frakl_{1}\frakl_{2}$.

The induction argument is based on the following principle which plays a prominent role in the recent proof of the Kolyvagin's conjecture by Wei Zhang \cite{Zhang_wei:Selmer1}: level raising will make the theta element more primitive and at the same time will bring down the rank of the Selmer group. The one divisibility of Iwasawa main conjecture follows directly from \thmref{induction}.

\item Beyond the applications on the proof of the one divisibility result, the first reciprocity law can also be used in proving the Bloch-Kato type result \thmref{intro_BK} stated above.
We note that the proof of the first reciprocity law does not rely on the "Ihara's lemma" \cf \assref{Ihara}. With this Bloch-Kato type result at hand, we prove the parity conjecture in \thmref{intro_par} in our setting using the general method of Nekovar in \cite{Nekovar_parity3}. We remark that this result is not new and is treated by Nekovar in \cite{Nekovar_Grow}. However our method is different with Nekovar's method, while Nekovar uses Hida family of Hilbert modular forms, we deform the character $\chi$ that we twist our Hilbert modular form by. A similar argument is used in \cite[Theorem 6.4]{Castella_Hsieh} to prove the Parity conjecture in their setting. Chapter 8 is devoted to prove these results. In chapter 9, we prove a multiplicity one result for the space of quaternionic Hilbert modular forms as a module over the Hecke algebra upon localizing at certain maximal ideal. This multiplicity one result is crucial in proving the first and second reciprocity law mentioned earlier.

\item  The proof of the one divisibility result \thmref{main1} relies on the freeness result of the Selmer group theorem \cf \propref{free}. This freeness result in turn is based on the control theorem for Selmer groups \cf \propref{control}. However the control theorem only holds when we replace the hypothesis $\nimpr$ by the stronger one:
\begin{hyp}[$\frakn^{+}\text{-min}$]
if $\frakl\mid \frakn^{+}$, then $\bar{\rho}_{f}$ is ramified at $\frakl$.  
\end{hyp}
Thus we will first prove \thmref{main1} under the hypothesis $\nmin$. However we can relax the assumption $\nmin$ to $\nimpr$ using the one divisibility result proved under $\nmin$. This is carried out in chapter 10 based on the method invented by Pollack-Weston \cf \cite{PW_free}. Using their method we bypass the control theorem to establish the freeness result \lmref{new_free}. Note their result fixes a gap in \cite{Chida_Hsieh:main} who mistakenly proved  the control theorem under the hypothesis $\nimpr$ \cf\cite[Proposition 1.9(2)]{Chida_Hsieh:main}. This gap is also implicit in \cite{Pollack_Weston:AMU}.
\end{itemize}

\subsection{Historical remark}
The method to construct Euler system and to prove the one divisibility result employed in this paper originates from the work of Bertolini and Darmon \cite{Bertolini_Darmon:IMC_anti}, \cite{BD:Heegner_Mumford} where the one divisibility result for elliptic curve over $\Q$ is proved. We also note that Bertolini-Darmon assume the corresponding modular forms are $p$-isolated, which is assumption that excludes the possibility of congruences between the modular form in question with the others. The study of the $\mu$-invariant of the corresponding $p$-adic $L$-function is carried out by Vastal \cite{mu_vastal}, \cite{uniform_vastal} where the celebrated Mazur's conjecture for the definite case (Gross points) is handled. The method in \cite{Bertolini_Darmon:IMC_anti} is subsequently strengthened to handled the more general weight $2$ modular form case by Pollack-Weston in \cite{Pollack_Weston:AMU} and the $p$-isolated condition is relaxed. However the proof there contains a little gap concerning the freeness of Selmer groups, fortunately the authors themselves are able to resolve this issue \cite{PW_free} see also the last chapter in the present article. For modular forms of small weight (comparing to $p$), Chida-Hsieh has constructed the higher weight theta element in \cite{Chida_Hsieh} using congruence between modular forms of different weights when evaluated at CM points. The one divisibility of the main conjecture is proved in \cite{Chida_Hsieh:main} adapting a similar method as in \cite{Pollack_Weston:AMU} and the study of the $\mu$-invariant of the $p$-adic $L$-function is carried out in \cite{Chida_Hsieh}. In another direction, Longo first proved the one divisbility result in \cite{Longo_IMC} for Hilbert modular forms of parallel weight $2$ under similar assumption as in \cite{Bertolini_Darmon:IMC_anti}, some of the assumptions are removed by Van Order in \cite{Van_order:main}. As for the $\mu$-invariant, Cornut-Vastal successfully extended the results in\cite{uniform_vastal} to the totally real field \cite{Vatsal_Cornut:Non_Van}. In \cite{Hung:thesis}, the higher weight theta element for Hilbert modular forms is studied, in particular a similar non-vanishing result to \cite{Vatsal_Cornut:Non_Van} is proved. The present article treat the one divisibility of the Iwasawa main conjecture for small parallel weight Hilbert modular forms based on the work of \cite{Chida_Hsieh:main} and \cite{Hung:thesis}.

\subsection{Notations and conventions} Let $L$ be a number field. We denote by $\cO_{L}$ its ring of integers. If $v$ is a place of $L$, we identify it with a non-zero prime ideal in $\cO_{L}$ and we denote its absolute norm by $\Nm v$. We write the completion of $L$ at $v$ as $L_{v}$ and its valuation ring as $\cO_{L,v}$. We will write $\uf_{v}$ as the uniformizer of $\cO_{L,v}$. If $M/L$ is an algebraic extension and $v$ is a place in $L$, we will set $M_{v}=M\otimes \cO_{L,v}$ and similarly $\cO_{M,v}=\cO_{M}\otimes \cO_{L,v}$. We use the standard notations for Galois groups $G_{L}=\Gal(\bar{L}/L)$. We denote by $\Frob_{v}$ a Frobinius element at $v$ in $\Gal(\bar{L}/L)$. For a place $v$ of $L$, we use the notation $I_{L_{v}}$ for the inertia group of $L$ at $v$. When without the risk of confusion, we abbreviate $I_{L_{v}}$ by $I_{v}$.  We denote by $L^{\ur}_{v}$ the maximal unramified extension of $L_{v}$.

Recall we defined in the introduction for $f$, the $p$-adic Hecke field $E_{f}$, its ring of integers $\cO_{f}$, a uniformizer $\uf$ and $\cO_{f,n}=\cO_{f}/\uf^{n}$.
\subsection{Acknowledgments}
This article is the author's thesis at Penn state university. There seems to be an increasing interest towards this thesis and the author finally decided to revise it and make it available on arXiv. I would like to thank my advisor Professor Winnie Li for constant help and encouragement during my graduate studies. As is clear to the reader, this thesis is an extension of a result of Masataka Chida and Ming-Lun Hsieh. I would like to express my gratitude to my thesis advisor Professor Ming-Lun Hsieh for generously sharing his knowledge with me and answering numerous questions not restricted to the topic of this thesis.

\section{Selmer groups}
\subsection{Galois cohomology} In this section we define several versions of Selmer groups that are related to the Galois representations attached $f$ and prove some standard results about the behavior of the local cohomology over the tower of anticyclotomic extensions. 
Let $L$ be a finite algebraic extension of $F$ and $M$ be a discrete $G_{F}$ module. For a place $\frakl$ in $F$, we define the local cohomology groups
$$H^{1}(L_{\frakl},M)=\oplus_{\lambda\mid \frakl} H^{1}(L_{\lambda},M)$$
where the sum is taken over the places in $L$ above $\frakl$. Similarly we define
$$H^{1}(I_{\frakl},M)=\oplus_{\lambda\mid\frakl} H^{1}(I_{{\lambda}},M)$$
for the inertia group $I_{\lambda}=I_{L_{\lambda}}$.
Let $\text{res}_{\frakl}: H^{1}(L,M)\rightarrow H^{1}(L_{\frakl},M)$ be the restriction map. We define the finite part of the local cohomology group as 
$$H^{1}_{\fin}(L_{\frakl},M)=\ker\{H^{1}(L_{\frakl},M)\rightarrow H^{1}(I_{\frakl},M)\}$$
and the singular quotient as
$$H^{1}_{\sing}(L_{\frakl},M)=H^{1}(L_{\frakl},M)/H^{1}_{\fin}(L_{\frakl},M).$$
We denote by $\partial_{\frakl}$  the residual map obtained from the composition of the restriction map with the natural quotient map
\begin{equation}\label{residue}
\partial_{\frakl}:H^{1}(L,M)\rightarrow H^{1}(L_{\frakl}, M)\rightarrow H_{\sing}^{1}(L_{\frakl},M).
\end{equation}

\subsection{Selmer group}
In this subsection, we define certain Slemer groups for the Hilbert modular form $f$. First we need to recall some properties of the Galois representation associated to $f$, since we are working in the ordinary setup, this representation is constructed in \cite{Wiles:ordinary_hilbert}. Let $\rho_{f}: G_{F}\rightarrow \Aut_{E_{f}}(V_{f})$ be the Galois representation attached to $f$ and we fix a Galois stable lattice $T_{f}$ in $V_{f}$ and put $A_{f}=V_{f}/T_{f}$. Denote  by $\rho^{*}_{f}=\rho_{f}\otimes\epsilon^{\frac{2-k}{2}}$  the self dual twist of $\rho_{f}$. We collect the following properties of $\rho^{*}_{f}$ \cf \cite[Chapter 12]{Nekovar:SelmerComplexes}:
\begin{mylist}
\item {$\rho^{*}_{f}$ is unramified outside of $\frakn p$.}
\item{$\rho^{*}_{f}\mid_{ G_{F_{v}}}=\left(
                                   \begin{array}{cc}
                                     \chi^{-1}_{v}\epsilon^{\frac{k}{2}} & * \\
                                                      \O                &  \chi_{v}\epsilon^{\frac{2-k}{2}} \\
                                   \end{array}
                                 \right)$ for any $v$ above $p$. Here $\chi_{v}$ is an unramfied character such that $\chi_{v}(\Frob_{v})=\alpha_{v}(f)$ with $\alpha_{v}(f)$ the unit root of the Hecke polynomial of $f$ at $v$.}
\item{$\rho^{*}_{f}\mid_{ G_{F_{\frakl}}}=\left(
                                   \begin{array}{cc}
                                     \pm\epsilon           & * \\
                                                      \O                &  \pm1 \\
                                   \end{array}
                                 \right)$ for any $\frakl$ dividing $\frakn$ exactly once.}                            
\end{mylist} 

Let $\uf$ be a uniformizer of $\cO_{f}$ we set:
\begin{mylist}
\item{$\cO_{f,n}=\cO_{f}/\uf^{n}$};
 \item{$T_{f,n}=T_{f}/\uf^{n}$};
 \item{$A_{f,n}=\ker\{A_{f}\MapR{ \uf^{n}}{}A_{f}\}$.}
 \end{mylist}

\begin{defn}\label{n_admissible}
A prime $\frakl$ in $F$ is said to be $n$-admissible for $f$ if
\begin{mylist}
\item{$\frakl\nmid p\frakn$};
\item{$\frakl$ is inert in $K$};
\item{${\Nm \frakl}^{2}-1$ is not divisible by $p$};
\item{$\uf^{n}$ divides $\Nm\frakl^{\frac{k}{2}}+\Nm \frakl^{\frac{k-2}{2}}-\epsilon_{\frakl}\bda_{\frakl}(f)$ where $\epsilon_{\frakl}\in\{\pm1\}$}.
\end{mylist} 
\end{defn}

Let $L/K$ be a finite extension, we now define $F^{+}_{\frakl}A_{f,n}$ and $H^{1}_{\ord}(L_{\frakl}, A_{f,n})$ for a prime $\frakl$ of $F$ which  divides $p\frakn^{-}$ or is $n$-admissible.

\subsubsection{$\frakl=v\mid p$}\label{p-filteration} We have $\exact{F^{+}_{v}V_{f}}{V_{f}}{F^{-}_{v}V_{f}}$ where $F^{+}_{v}V_{f}$ is the subspace that $G_{F_{v}}$ acts by  $\chi^{-1}_{v}\epsilon^{\frac{k}{2}} $ as in the introduction. We take $F^{+}_{v}A_{f}$ to be the image of $F^{+}_{v}V_{f}$ in $A_{f}$ and $F^{+}_{v}A_{f,n}$ be $A_{f,n}\cap F^{+}_{v}A_{f}$.

\subsection{$\frakl\mid\frakn^{-}$} In this case we let $F^{+}_{\frakl}V_{f}$ to be the subspace of $V_{f}$ such that $G_{F_{\frakl}}$ acts by $\pm\epsilon$. Then $F^{+}_{\frakl}A_{f}$ and $F^{+}_{\frakl}A_{f,n}$ are defined similarly as above

\subsubsection{$\frakl$ is $n$-admissible} Since $\frakl$ is $n$-admissible, we know that the characteristic polynomial  for the action of $\Frob_{\frakl}$ is given by $x^{2}-\epsilon_{\frakl}(1+\Nm \frakl)x+\Nm \frakl=(x-\epsilon_{\frakl} \Nm \frakl)(x-\epsilon_{\frakl})$ modulo $\uf^{n}$. We thus let $F^{+}_{\frakl}A_{f,n}$ to be the line that $\Frob_{\frakl}$ acts by $\epsilon_{\frakl}\Nm \frakl$.

\begin{defn}\label{ordpart}
In all three cases we will define the ordinary cohomology group by
 $$H^{1}_{\rm ord}(L_{\frakl}, A_{f,n})=\ker\{H^{1}(L_{\frakl}, A_{f,n})\rightarrow H^{1}(L_{\frakl}, A_{f,n}/F^{+}_{\frakl}A_{f,n})\}.$$
  \end{defn}
 
 Let $\Delta$ be a square free product of primes in $F$ such that $\Delta/\frakn^{-}$ is a product of even number of $n$-admissible primes and let $S$ be an integral ideal of $F$ such that $S$ is prime to $p\Delta \frakn$.

 \begin{defn}\label{Delta_selmer}
 For $M=A_{f,n}\text{ or }T_{f,n}$, we define the $\Delta$-ordinary, $S$-relaxed Selmer group $\Sel_{\Delta}^{S}(L,M)$ with respect to the data $(f, n, \Delta, S)$ to be the group of elements $s \in H^{1}(L,M)$ such that
 \begin{mylist}
 \item{$\partial_{\frakl}(s)=0 \text{ for all } \frakl \nmid p\Delta S$;}
 \item{${\rm res}_{\frakl}(s)\in H^{1}_{\rm ord}(L_{\frakl},M) \text{ for all }  \frakl\mid p\Delta$;}
 \item{${\rm res}_{\frakl}(s)$ is arbitrary at $\frakl\mid S$.}
 \end{mylist}
 \end{defn}
 
 When $S$ is trivial we will abbreviate $\Sel_{\Delta}^{1}(L,M)$ as $\Sel_{\Delta}(L,M)$.

\subsection{Anticyclotomic tower and Selmer groups}
Let $m$ be a non-negative integer. Denote by $H_{ m}$ the ring class field of conductor $\frakp^{m}$ over $K$. Let $G_{m}=\Gal(H_{m}/K)$ and $H_{\infty}=\cup_{m=1}^{\infty}H_{m}$. Let $K_{\infty}$ be the unique subfield of $H_{\infty}$ such that $\Gal(K_{\infty}/K)\simeq \Zp^{d_{\frakp}}$ where $d_{\frakp}$ is the inertia degree of $\frakp$ and we call $K_{\infty}$ the $\frakp$-anticyclotomic $\Zp^{d_{\frakp}}$-extension over $K$. We put $\Gamma=\Gal(K_{\infty}/K)$. Let $K_{m}=K_{\infty}\cap H_{m}$ be the $m$-th layer of this tower of extensions and $\Gamma_{m}=\Gal(K_{m}/K)$ so that $\Gamma=\prolim_{m}\Gamma_{m}$. Let $\Lambda=\cO_{f}[[\Gamma]]$ be the Iwasawa algebra over $\cO_{f}$ and $\frakm_{\Lambda}$ be its maximal ideal.

Let $$H^{1}(K_{\infty}, A_{f,n})=\dirlim_{m}H^{1}(K_{m}, A_{f,n})$$
 where the limit is with respect to the restriction maps and 
 $$\Hhat^{1}(K_{\infty}, T_{f,n})=\prolim_{m}H^{1}(K_{m}, T_{f,n})$$
  where the limit is with respect to the corestriction(norm) maps.  
  
 We can define various local cohomology groups $H^{1}_{*}(K_{\infty,\frakl}, A_{f,n})$ and $\Hhat^{1}_{*}(K_{\infty, \frakl}, T_{f,n})$ in the similar fashion. Here $*$ can be any of the finite, singular or ordinary structure defined above. Now we can define 
 $$\Sel^{S}_{\Delta}(K_{\infty},A_{f,n})=\dirlim_{m} \Sel^{S}_{\Delta}(K_{m}, A_{f,n}),$$ 
 $$\hatSel^{S}_{\Delta}(K_{\infty}, T_{f,n})=\prolim_{m}\Sel_{\Delta}^{S}(K_{m}, T_{f,n}).$$
 Recall we have defined the minimal Selmer group $\Sel(K_{\infty}, M)$ in the introduction.

\begin{prop}\label{pollack-weston}
Assume $(\CR)$ and $\nimpr$ hold. Then $\Sel(K_{\infty}, A_{f})$ is of finite index in $$\dirlim_{n}\Sel_{\nminus}(K_{\infty}, A_{f,n}).$$ If furthermore $\Sel_{\nminus}(K_{\infty}, A_{f})$ is $\Lambda$-cotorsion, then they are equal to each other.
\end{prop}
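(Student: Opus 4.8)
The overall plan is to compare the defining local conditions of the two Selmer groups place by place, having first descended from $A_f$ to finite level in the uniformizer $\uf$. Since $A_f=\dirlim_n A_{f,n}$ and Galois cohomology, as well as the formation of the finite, singular and ordinary local subgroups, commutes with direct limits, one has $H^1(K_\infty,A_f)=\dirlim_n H^1(K_\infty,A_{f,n})$, and unwinding \defref{Delta_selmer} the group $\dirlim_n\Sel_{\nminus}(K_\infty,A_{f,n})$ is identified with the subgroup of $H^1(K_\infty,A_f)$ cut out by $\partial_\frakl=0$ for $\frakl\nmid p\nminus$ and ${\rm res}_\frakl\in H^1_{\rm ord}(K_{\infty,\frakl},A_f)$ for $\frakl\mid p\nminus$. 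The first point to check is that this collection of conditions and the one defining $\Sel(K_\infty,A_f)$ agree at every prime not dividing $\frakn$: at $\frakl\mid p$ both impose the ordinary (Greenberg) condition, and at $\frakl\nmid p\frakn$ one uses that $A_f$ is unramified there, so $A_f^{I_\frakl}$ is $\uf$-divisible and the natural maps $H^1_{\sing}(K_{\infty,\frakl},A_{f,n})\hookrightarrow H^1_{\sing}(K_{\infty,\frakl},A_f)$ are injective, which makes the unramified conditions for $A_{f,n}$ and $A_f$ compatible. Hence the comparison is localized at the finitely many primes dividing $\frakn=\frakn^+\nminus$, where the minimal group imposes the strict condition and $\Sel_{\nminus}$ imposes the unramified condition (if $\frakl\mid\frakn^+$) or the ordinary condition (if $\frakl\mid\nminus$); this already yields $\Sel(K_\infty,A_f)\subseteq\dirlim_n\Sel_{\nminus}(K_\infty,A_{f,n})$, with cokernel $Q$ embedded in $\bigoplus_{\frakl\mid\frakn}{\rm res}_\frakl\bigl(\dirlim_n\Sel_{\nminus}(K_\infty,A_{f,n})\bigr)$.

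The heart of the argument is to show $Q$ is finite, which reduces to bounding the relevant local cohomology at the primes dividing $\frakn$. Every such $\frakl$ is prime to $\frakp$, hence unramified in $K_\infty/K$; since $\Gamma\cong\Z_p^{d_{\frakp}}$ is torsion-free while complex conjugation acts on it by $-1$, a prime inert in $K$ — in particular any $\frakl\mid\nminus$ — splits completely in $K_\infty/K$, so $H^1_{\rm ord}(K_{\infty,\frakl},A_f)=\dirlim_m\bigoplus_{\lambda\mid\frakl}H^1_{\rm ord}(K_{m,\lambda},A_f)$. For $\frakl\mid\frakn^+$ I would use $\nimpr$: it forces $\bar\rho_f$ to be ramified at $\frakl$ exactly in the case $\Nm\frakl\equiv1\pmod p$ in which the unramified condition could differ from the strict one, and a short computation then bounds the local error by a finite group arising from $A_f^{I_\frakl}/\uf^n$, which stabilizes as $n\to\infty$. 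For $\frakl\mid\nminus$ the local representation $\rho^*_f|_{G_{F_\frakl}}$ is an unramified twist of the special representation, so $A_f|_{G_{K_\frakl}}$ is a non-split extension of a copy of $E_f/\cO_f$ by $E_f/\cO_f(\epsilon)$; analysing the resulting long exact sequence with the local Euler characteristic formula — and invoking $(\CR)(3)$ to handle the case $\Nm\frakl^2\equiv1\pmod p$ — one finds that $H^1_{\rm ord}(K_\frakl,A_f)$ is finite of bounded order, so the contribution at $\frakl$, even after passing up the split tower, is finite. Summing over the finite set of primes dividing $\frakn$ then shows $Q$ is finite.

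Finally, assume $\Sel_{\nminus}(K_\infty,A_f)$ is $\Lambda$-cotorsion. Then $\Sel(K_\infty,A_f)\subseteq\dirlim_n\Sel_{\nminus}(K_\infty,A_{f,n})\subseteq\Sel_{\nminus}(K_\infty,A_f)$ with finite quotients, so all three are $\Lambda$-cotorsion with the same characteristic ideal; to upgrade "finite index" to "equality" I would feed $Q$ into the Poitou--Tate global duality sequence comparing the minimal and $\nminus$-relaxed Selmer structures, which exhibits $Q$ as a subquotient of the $\uf$-divisible local modules $H^1(K_{\infty,\frakl},F^+_\frakl A_f)$ governing the ordinary conditions together with a dual Selmer group that, under the cotorsion hypothesis, contributes only a finite module of trivial $\mu$-invariant. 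Since a finite quotient of a $\uf$-divisible $\Lambda$-module vanishes, this forces $Q=0$, i.e.\ $\Sel(K_\infty,A_f)=\dirlim_n\Sel_{\nminus}(K_\infty,A_{f,n})$. The main obstacle is the local analysis of the second paragraph: establishing finiteness of the terms $H^1_{\rm ord}(K_{\infty,\frakl},A_f)$ for $\frakl\mid\nminus$ (and of the analogous terms at $\frakl\mid\frakn^+$) is precisely where the ramification hypotheses $(\CR)(3)$, $(\CR)(4)$ and $\nimpr$ are used essentially — without them these local terms can acquire positive $\Lambda$-corank and the comparison fails.
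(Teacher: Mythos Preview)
Your overall plan---reduce to finite level in $\uf$, then compare the local conditions at the finitely many primes dividing $\frakn$---is indeed how the argument in \cite[Proposition~3.6]{Pollack_Weston:AMU} proceeds, and your treatment of primes above $p$ and away from $\frakn$ is fine. There is, however, a genuine gap at the primes $\frakl\mid\nminus$. You correctly note that such $\frakl$ is inert in $K$ and hence splits completely in $K_\infty/K$, and you establish that $H^1_{\rm ord}(K_\frakl,A_f)$ is finite. But the inference ``so the contribution at $\frakl$, even after passing up the split tower, is finite'' is false as written: precisely because $\frakl$ splits completely, one has
\[
H^1_{\rm ord}(K_{\infty,\frakl},A_f)\;=\;\dirlim_m\bigoplus_{\lambda\mid\frakl}H^1_{\rm ord}(K_{m,\lambda},A_f),
\]
an increasing union of direct sums of $[K_m:K]$ copies of $H^1_{\rm ord}(K_\frakl,A_f)$. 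If the base term is nonzero this module is infinite, so a pointwise local bound gives no control once the number of places is unbounded. What is needed is either a vanishing statement for the local term, or a genuinely global bound on the \emph{image} of the Selmer classes in that local module; your argument supplies neither.

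The paper's proof avoids this by quoting the computation in Pollack--Weston, which works with the pair $\Sel_{\nminus}(K_\infty,A_{f,n})$ and $\Sel(K_\infty,A_f)[\uf^n]$ and shows their index is finite and bounded uniformly in $n$; the direct limit then inherits the finite-index conclusion. For the second assertion your Poitou--Tate manoeuvre is more elaborate than necessary: once finite index is known, the paper simply invokes the fact (due to Greenberg) that a $\Lambda$-cotorsion Selmer group of this type admits no proper $\Lambda$-submodule of finite index, which upgrades finite index to equality in one line.
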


\begin{proof}
This follows from the proof of \cite[Proposition  3.6]{Pollack_Weston:AMU}. The exact same computation shows that $\Sel_{\nminus}(K_{\infty}, A_{f,n})$ is of finite index in $\Sel(K_{\infty}, A_{f})[\uf^{n}]$ with index independent of $n$, then the result follows from the fact that $\Sel(K_{\infty}, A_{f})$ has no finite index submodule if it is $\Lambda$-cotorsion. 
\end{proof}

\subsection{Properties of local Iwasawa cohomologies} 
In this subsection we prove some standard results on the local cohomology groups defined before.
 Since we know that $A_{f,n}$ and $T_{f,n}$ are Cartier dual to each other, we have the following local Tate pairing for each prime $\frakl$ in $K_{m}$
 $$\<,\>_{\frakl}: H^{1}(K_{m,\frakl}, T_{f,n})\times H^{1}(K_{m,\frakl},A_{f,n})\rightarrow E_{f}/\cO_{f}.$$
 Taking limit with respect to $m$, we have
$$\<,\>_{\frakl}: \widehat{H}^{1}(K_{\infty,\frakl}, T_{f,n})\times H^{1}(K_{\infty,\frakl},A_{f,n})\rightarrow E_{f}/\cO_{f}.$$ The following lemma is well-known.

\begin{lm}\label{fin_orth}
Suppose $\frakl\nmid p$,  $H^{1}_{\fin}(K_{\infty,\frakl}, A_{f,n})$ and $\Hhat^{1}_{\fin}(K_{\infty,\frakl}, T_{f,n})$ are orthogonal complement to each other under the pairing $\<,\>_{\frakl}$.
\end{lm}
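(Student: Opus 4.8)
The plan is to deduce the statement from classical local Tate duality at each finite layer $K_m$ and then pass to the limit along the anticyclotomic tower.

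First I would work at a fixed layer. Fix $m$ and a place $\lambda$ of $K_m$ above $\frakl$. Because $\frakl\nmid p$, the modules $T_{f,n}$ and $A_{f,n}$ are finite $p$-power groups with trivial local Euler characteristic at $\lambda$, and local Tate duality supplies a perfect pairing $H^{1}(K_{m,\lambda},T_{f,n})\times H^{1}(K_{m,\lambda},A_{f,n})\to E_{f}/\cO_{f}$ whose sum over $\lambda\mid\frakl$ is $\langle\,,\rangle_{\frakl}$ at level $m$. I would then show $H^{1}_{\fin}(K_{m,\lambda},T_{f,n})$ and $H^{1}_{\fin}(K_{m,\lambda},A_{f,n})$ are orthogonal complements in two steps. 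For orthogonality: by inflation--restriction, for any finite $G_{K_{m,\lambda}}$-module $M$ one has $H^{1}_{\fin}(K_{m,\lambda},M)=H^{1}(\Gal(K_{m,\lambda}^{\ur}/K_{m,\lambda}),M^{I_{\lambda}})$, the Cartier pairing $T_{f,n}\otimes A_{f,n}\to\mu_{p^{n}}$ restricts to an $I_{\lambda}$-equivariant pairing on inertia invariants, and the cup product of two inflated classes is inflated from $H^{2}(\Gal(K_{m,\lambda}^{\ur}/K_{m,\lambda}),-)$, which vanishes since $\Gal(K_{m,\lambda}^{\ur}/K_{m,\lambda})\cong\Zhat$ has cohomological dimension $1$ (note this makes no use of unramifiedness of $M$ at $\lambda$, which matters when $\frakl\mid\frakn^{-}$). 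For the count: the endomorphism $\Frob_{\lambda}-1$ of the finite group $M^{I_{\lambda}}$ has kernel and cokernel of the same order, so $\#H^{1}_{\fin}(K_{m,\lambda},M)=\#H^{0}(K_{m,\lambda},M)$; combined with triviality of the Euler characteristic and local duality, $\#H^{1}(K_{m,\lambda},T_{f,n})=\#H^{0}(K_{m,\lambda},T_{f,n})\cdot\#H^{0}(K_{m,\lambda},A_{f,n})=\#H^{1}_{\fin}(K_{m,\lambda},T_{f,n})\cdot\#H^{1}_{\fin}(K_{m,\lambda},A_{f,n})$. Since the pairing is perfect and the two finite parts are orthogonal with orders multiplying to $\#H^{1}(K_{m,\lambda},T_{f,n})$, each is the exact annihilator of the other; summing over $\lambda\mid\frakl$ gives the claim over $K_{m,\frakl}$.

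Next I would pass to the limit. Using the adjunction of restriction and corestriction under local Tate duality, the pairing $\langle\,,\rangle_{\frakl}$ over $K_{\infty,\frakl}$ evaluates a compatible system $(x_{m})_{m}\in\Hhat^{1}(K_{\infty,\frakl},T_{f,n})=\prolim_{m}H^{1}(K_{m,\frakl},T_{f,n})$ against a class $y\in H^{1}(K_{\infty,\frakl},A_{f,n})=\dirlim_{m}H^{1}(K_{m,\frakl},A_{f,n})$ represented by some $y_{m_{0}}$ simply as $\langle x_{m_{0}},y_{m_{0}}\rangle_{\frakl}$ at level $m_{0}$. Hence $\Hhat^{1}_{\fin}(K_{\infty,\frakl},T_{f,n})=\prolim_{m}H^{1}_{\fin}(K_{m,\frakl},T_{f,n})$ annihilates $H^{1}_{\fin}(K_{\infty,\frakl},A_{f,n})=\dirlim_{m}H^{1}_{\fin}(K_{m,\frakl},A_{f,n})$ by the finite-level result; and conversely $(x_{m})$ annihilates $H^{1}_{\fin}(K_{\infty,\frakl},A_{f,n})$ precisely when $x_{m}$ lies in the level-$m$ annihilator $H^{1}_{\fin}(K_{m,\frakl},T_{f,n})$ for every $m$, i.e.\ precisely when $(x_{m})\in\Hhat^{1}_{\fin}(K_{\infty,\frakl},T_{f,n})$. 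The symmetric assertion that $H^{1}_{\fin}(K_{\infty,\frakl},A_{f,n})$ is the full annihilator of $\Hhat^{1}_{\fin}(K_{\infty,\frakl},T_{f,n})$ follows from the same bookkeeping together with the observation that the inverse system $\{H^{1}_{\fin}(K_{m,\frakl},T_{f,n})\}_{m}$ of finite groups satisfies the Mittag--Leffler condition, so membership of a class in the finite part is detected after passing to a sufficiently high layer.

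The finite-level input is entirely classical, so the only point requiring care is this limit argument: checking that $H^{1}_{\fin}$ commutes with the direct and inverse limits over the tower and that perfectness of local Tate duality survives. This is unproblematic here because the anticyclotomic tower is unramified away from $\frakp\mid p$, so $K_{\infty,\frakl}/K_{\frakl}$ is unramified and the decomposition group of $\frakl$ in $\Gamma$ is pro-cyclic; the inverse systems in play are therefore visibly Mittag--Leffler and the compatibilities are routine.
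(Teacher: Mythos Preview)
Your argument is correct and is essentially the standard proof; the paper itself does not give an argument but simply cites \cite[Proposition~1.4.2, 1.4.3]{Rubin:book}, and what you have written is precisely the content behind that citation. The finite-level step (orthogonality via $\mathrm{cd}(\Zhat)=1$ and the cardinality count via the local Euler characteristic formula) is exactly the classical argument.

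One small remark on the limit step: your appeal to Mittag--Leffler for the ``symmetric assertion'' is slightly imprecise as stated, since Mittag--Leffler alone does not guarantee that $\Hhat^{1}_{\fin}$ surjects onto each finite layer. A cleaner way to phrase that direction is to observe that at each level $m$ the finite-level result gives a perfect pairing between $H^{1}_{\sing}(K_{m,\frakl},T_{f,n})$ and $H^{1}_{\fin}(K_{m,\frakl},A_{f,n})$ (and likewise with roles swapped); since Pontryagin duality exchanges inverse and direct limits of finite abelian groups, passing to the limit immediately yields that $\Hhat^{1}_{\fin}$ and $H^{1}_{\fin}(K_{\infty,\frakl},A_{f,n})$ are exact annihilators of each other inside the (still perfect) limit pairing. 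This is a cosmetic fix; your overall route is the right one.
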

\begin{proof}
This is a standard result. See \cite[Proposition 1.4.2, 1.4.3]{Rubin:book}.
\end{proof}

It follows from the above lemma that $H^{1}_{\fin}(K_{\infty,\frakl}, A_{f,n})$ and $\Hhat^{1}_{\sing}(K_{\infty,\frakl}, T_{f,n})$ are dual to each other under the above pairing.

One of the differences between the cyclotomic Iwasawa theory and the anticyclotomic Iwasawa theory is the existence of primes that split completely in the $\Zp$-extension and as a consequence we have the following lemma.

\begin{lm}\label{split_inert}
Suppose $\frakl\nmid p$ is a prime in $F$. Then
\begin{mylist}
\item{if $\frakl$ splits in $K$, then  $H^{1}_{\rm fin}(K_{\infty,\frakl}, A_{f,n})=\{0\}$ and hence $\Hhat^{1}_{\rm sing}(K_{\infty,\frakl},T_{f,n})=\{0\}$ .} 
\item{If $\frakl$ is inert in $K$, then $\Hhat^{1}_{\rm sing}(K_{\infty,\frakl},T_{f,n})\simeq H^{1}(K_{\frakl}, T_{f,n})\otimes \Lambda$ and it follows that ${H}^{1}_{\rm fin}(K_{\infty,\frakl}, A_{f,n})\simeq \Hom(\widehat{H}^{1}_{\rm sing}(K_{\frakl},T_{f,n})\otimes \Lambda, {E_{f}}/{\cO_{f}})$.}
\end{mylist}
\end{lm}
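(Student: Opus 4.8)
The plan is to treat each prime $\frakl$ separately: localize, determine the splitting type of $\frakl$ in $K_\infty/K$, and then pass to the appropriate (co)limit over the layers $K_m$. Since $\frakl\nmid p$ and $K_\infty/K$ is unramified outside $\frakp$, the extension $K_\infty/K$ is unramified at every prime $\lambda\mid\frakl$ of $K$, so the decomposition group $D_\lambda\subseteq\Gamma$ — topologically generated by $\Frob_\lambda$ inside the torsion‑free group $\Gamma\cong\Zp^{d_{\frakp}}$ — is either trivial or $\cong\Zp$. The first step is to decide which, and here the two cases of the lemma diverge. If $\frakl$ is inert, let $\lambda$ be the prime above it and $\tilde c\in\Gal(K_\infty/F)$ a lift of the nontrivial element of $\Gal(K/F)$; then $\tilde c\,\Frob_\lambda\,\tilde c^{-1}=\Frob_\lambda$ because $\tilde c$ fixes $\lambda$, while the anticyclotomic property of $K_\infty/K$ forces $\tilde c\,\Frob_\lambda\,\tilde c^{-1}=\Frob_\lambda^{-1}$; hence $\Frob_\lambda^{2}=1$, so $\Frob_\lambda=1$ and $\frakl$ splits completely in $K_\infty/K$. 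If $\frakl$ splits in $K$, I would invoke the standard fact — a statement about the ring class field tower, where $\frakp\nmid D_{K/F}$ is used — that $\frakl$ does \emph{not} split completely in $K_\infty$, so $D_\lambda\cong\Zp$ and the completion of $K_\infty$ at any prime above $\frakl$ is the unramified $\Zp$‑extension of $K_\lambda$. (This is the phenomenon alluded to just before the lemma: inert primes of $K$ split completely in $K_\infty$, while split primes become ``infinitely inert''.)

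For part (i) ($\frakl$ split), fix a prime $w$ of $K_\infty$ above $\frakl$ and set $\lambda=w\cap K$; then $K_{\infty,w}/K_\lambda$ is the unramified $\Zp$‑extension, so the residue field of $K_{\infty,w}$ is the $\Zp$‑extension of the finite residue field of $K_\lambda$ and the absolute Galois group of that residue field is $\cong\Zhat/\Zp$, of pro‑order prime to $p$. As $A_{f,n}$ is $p$‑primary, higher cohomology of such a group with coefficients in $A_{f,n}^{I_\lambda}$ vanishes, and by inflation–restriction $H^1_{\fin}(K_{\infty,w},A_{f,n})$ is identified with this $H^1$, hence is $0$. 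Reindexing the direct limit over $m$ as $\bigoplus_{w\mid\frakl}H^1_{\fin}(K_{\infty,w},A_{f,n})$ (direct limits commute with finite direct sums, and restriction preserves unramified classes) gives $H^1_{\fin}(K_{\infty,\frakl},A_{f,n})=0$; by \lmref{fin_orth} and the remark after it, $\Hhat^1_{\sing}(K_{\infty,\frakl},T_{f,n})$ is the Pontryagin dual of this group and is therefore also $0$.

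For part (ii) ($\frakl$ inert), $\frakl$ splits completely in every $K_m/K$, so $\Gamma_m$ acts simply transitively on the primes of $K_m$ above $\frakl$ and each completion is canonically $K_{\frakl}$. I would record the resulting isomorphism
\[
\bigoplus_{\lambda_m\mid\frakl}H^1_{\sing}(K_{m,\lambda_m},T_{f,n})\;\cong\;\Hhat^1_{\sing}(K_{\frakl},T_{f,n})\otimes_{\cO_f}\cO_f[\Gamma_m],
\]
compatible in $m$: since every local extension along the tower is trivial, each local corestriction is the identity and the global corestriction sums over the primes above a given one, so the transition maps are the natural projections $\cO_f[\Gamma_{m+1}]\to\cO_f[\Gamma_m]$. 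Taking the inverse limit, and using that $\Hhat^1_{\sing}(K_{\frakl},T_{f,n})$ is a finite $\cO_f$‑module, gives $\Hhat^1_{\sing}(K_{\infty,\frakl},T_{f,n})\cong\Hhat^1_{\sing}(K_{\frakl},T_{f,n})\otimes_{\cO_f}\Lambda$. Finally, the stated description of $H^1_{\fin}(K_{\infty,\frakl},A_{f,n})$ follows by applying \lmref{fin_orth} and its remark once more: this group is the Pontryagin dual of $\Hhat^1_{\sing}(K_{\infty,\frakl},T_{f,n})$, i.e. $\Hom\!\big(\Hhat^1_{\sing}(K_{\frakl},T_{f,n})\otimes\Lambda,\,E_f/\cO_f\big)$.

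The step I expect to be the main obstacle is pinning down the decomposition of $\frakl$ in $K_\infty/K$ — above all the assertion that a prime split in $K$ is not split completely in $K_\infty$ — together with the limit bookkeeping: matching the restriction and corestriction maps on local cohomology with the structure maps of the group rings $\cO_f[\Gamma_m]$, and checking compatibility with the local Tate pairing so that the passage to Pontryagin duals is legitimate. The purely cohomological ingredient (vanishing of positive‑degree cohomology of a profinite group of pro‑order prime to $p$ acting on a $p$‑group) and the direct/inverse limit manipulations are then routine.
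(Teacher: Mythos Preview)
Your proposal is correct and follows essentially the same approach as the paper: determine the decomposition behaviour of $\frakl$ in $K_\infty/K$, then pass to the limit and invoke the duality of \lmref{fin_orth}. Your treatment is in fact more detailed than the paper's: for the inert case you give the explicit anticyclotomic conjugation argument $\tilde c\,\Frob_\lambda\,\tilde c^{-1}=\Frob_\lambda=\Frob_\lambda^{-1}$ in the torsion-free group $\Gamma$, where the paper simply writes ``by class field theory, $\frakl$ splits completely''; and for part~(ii) you spell out the identification with $\cO_f[\Gamma_m]$ and the transition maps, where the paper just says ``Shapiro's lemma''. Your honest flag on the split case---that one must know a prime split in $K$ does not split completely in the ring class field tower---is exactly the point the paper also leaves as an assertion (``the Frobenius element \dots\ is non-trivial''), and your handling of the decomposition group as $\Zp$ inside $\Zp^{d_\frakp}$ is arguably cleaner than the paper's phrasing ``generates a subgroup of finite index'', which is only literally correct when $d_\frakp=1$.
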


\begin{proof}
Suppose that $\frakl=\frakl_{1}\frakl_{2}$ in $K$, then the Frobenius element in $\Gal(K_{\infty}/K)$ above $\frakl_{i}$ is non-trivial. Hence it generates a subgroup of finite index in $G_{\infty}$ and $K_{\infty,\frakl_{i}}$ is a finite product of unramified $\Zp$ extension of $K_{\frakl_{i}}$. Then $(1)$ follows because the pro-$p$ part of $\Gal(K^{ur}_{\frakl_{i}}/K_{\infty,\frakl_{i}})$ is trivial by definition.
If $\frakl$ is inert in $K$, then by class field theory, $\frakl$ splits completely in the  tower $K_{\infty}/K$. The conclusion of $(2)$ follows easily from Shapiro's lemma.
\end{proof}

For $n$-admissible primes, we have a simple decomposition of the local cohomologies.

\begin{lm}\label{admi_coho}
If $\frakl$ is $n$-admissible, then 
\begin{mylist}
\item{both $\Hhat^{1}_{\rm sing}(K_{\infty, \frakl},T_{f,n})$ and $\Hhat^{1}_{\rm fin}(K_{\infty,\frakl},T_{f,n}$) are free of rank one over $\Lambda/\uf^{n}\Lambda$.}
\item{We have the following decompositions: $$\Hhat^{1}(K_{\infty,\frakl}, T_{f,n})=\Hhat^{1}_{\rm fin}(K_{\infty,\frakl},T_{f,n})\oplus\Hhat^{1}_{\rm ord}(K_{\infty,\frakl},T_{f,n})$$ and $$H^{1}(K_{\infty,\frakl}, A_{f,n})=H^{1}_{\rm fin}(K_{\infty,\frakl},A_{f,n})\oplus H^{1}_{\rm ord}(K_{\infty,\frakl},A_{f,n}).$$}
\end{mylist}
\end{lm}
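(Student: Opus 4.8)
The plan is to compute the local Galois cohomology of $T_{f,n}$ (and dually $A_{f,n}$) at an $n$-admissible prime $\frakl$ completely explicitly, using the fact that $\frakl$ splits completely in the anticyclotomic tower $K_\infty/K$ (since $\frakl$ is inert in $K$, as recorded in the proof of \lmref{split_inert}), so that $\Hhat^{1}(K_{\infty,\frakl},T_{f,n})\simeq H^{1}(K_{\frakl},T_{f,n})\otimes_{\cO_{f,n}}\Lambda/\uf^{n}\Lambda$ by Shapiro's lemma, and similarly $H^{1}(K_{\infty,\frakl},A_{f,n})\simeq H^{1}(K_{\frakl},A_{f,n})\otimes\Lambda/\uf^{n}\Lambda$. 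Thus everything reduces to the single local field $K_{\frakl}$, whose residue characteristic is prime to $p$ (condition (1) in \defref{n_admissible}), so $K_{\frakl}$ has cohomological dimension $1$ for $p$-torsion modules and the cohomology is controlled by the action of $\Frob_{\frakl}$.

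First I would use the $n$-admissibility to pin down the $\Frob_{\frakl}$-module structure. By condition (3), $\Nm\frakl\not\equiv\pm1\pmod p$, and by condition (4) together with the shape of the characteristic polynomial, $T_{f,n}$ as a $G_{K_{\frakl}}$-module (note $G_{K_{\frakl}}$ has index $2$ in $G_{F_{\frakl}}$, but $\Frob_{K_{\frakl}}=\Frob_{F_{\frakl}}^{2}$ acts with eigenvalues $\epsilon_{\frakl}^{2}\Nm\frakl^{2}=\Nm\frakl^{2}$ and $\epsilon_{\frakl}^{2}=1$) is, modulo $\uf^{n}$, an extension whose semisimplification is $\cO_{f,n}(1)\oplus\cO_{f,n}$ over $K_{\frakl}$ where the unramified $\Frob$-eigenvalues $\Nm\frakl^{2}$ and $1$ on these two lines are distinct mod $\uf^{n}$ (because $\Nm\frakl^{2}-1$ is a unit by condition (3)); moreover since the two eigenvalues are distinct units mod $\uf^{n}$ the extension splits, so $T_{f,n}\simeq \cO_{f,n}(1)\oplus\cO_{f,n}$ as $G_{K_{\frakl}}$-modules, and the unramified line on which $\Frob$ acts by $\Nm\frakl^{2}=\Nm\frakl^{2}$... more precisely the line on which $\Frob_{\frakl}$ (in $G_{F_{\frakl}}$) acts by $\epsilon_{\frakl}\Nm\frakl$ is exactly $F^{+}_{\frakl}T_{f,n}$ by \subsubsecref{p-filteration}'s analogue for admissible primes. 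Then for the unramified twist $\cO_{f,n}(\psi)$ with $\psi(\Frob)=u$ a unit and $u-1$ a unit, the inflation–restriction / local Euler characteristic computation over the local field $K_{\frakl}$ (residue char $\ne p$) gives $H^{0}=H^{1}_{\sing}=0$ when $u\ne 1$, and $H^{1}_{\fin}\simeq \cO_{f,n}$ when $u=1$; applying this to each of the two lines and using that $H^{1}_{\fin}$ is the unramified cohomology while $H^{1}_{\sing}=H^{1}(I_{\frakl},-)^{\Frob}$ identifies the Tate-twist line's contribution, one finds $\Hhat^{1}_{\fin}(K_{\frakl},T_{f,n})$ and $\Hhat^{1}_{\sing}(K_{\frakl},T_{f,n})$ are each free of rank one over $\cO_{f,n}$, coming respectively from the two lines; tensoring up with $\Lambda/\uf^{n}\Lambda$ gives part (1).

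For part (2), the splitting $T_{f,n}\simeq F^{+}_{\frakl}T_{f,n}\oplus (T_{f,n}/F^{+}_{\frakl}T_{f,n})$ of $G_{K_{\frakl}}$-modules induces a splitting on $H^{1}(K_{\frakl},-)$, and I would identify the $F^{+}_{\frakl}$-summand with $\Hhat^{1}_{\ord}$ by \defref{ordpart} and the complementary summand with $\Hhat^{1}_{\fin}$ — checking that on the unramified line $T_{f,n}/F^{+}_{\frakl}T_{f,n}$ (eigenvalue $1$) the whole $H^{1}$ is unramified, hence finite, while $H^{1}$ of the twist line $F^{+}_{\frakl}T_{f,n}$ is entirely singular, so "ordinary" and "finite" are complementary here; the $A_{f,n}$ statement follows by the same argument or by local Tate duality using \lmref{fin_orth}. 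Tensoring with $\Lambda/\uf^n\Lambda$ (flat base change, and the tower splitting completely) preserves both the direct sum decomposition and the freeness. The main obstacle I expect is purely bookkeeping: being careful about the index-$2$ subgroup $G_{K_{\frakl}}\subset G_{F_{\frakl}}$ (so that the relevant Frobenius eigenvalues are $\epsilon_{\frakl}^{2}\Nm\frakl=\Nm\frakl$ and $\epsilon_{\frakl}^{2}=1$ rather than the $F$-level eigenvalues $\pm\Nm\frakl,\pm1$), and verifying that the extension defining $T_{f,n}$ genuinely splits mod $\uf^{n}$ over $K_{\frakl}$ — this splitting is what makes the decomposition in (2) hold on the nose rather than merely up to a filtration, and it uses condition (3) of $n$-admissibility in an essential way.
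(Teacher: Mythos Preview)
Your proposal is correct and follows essentially the same route as the paper: use the $n$-admissibility conditions to split $T_{f,n}$ (equivalently $A_{f,n}$) as a $G_{K_{\frakl}}$-module into the two Frobenius eigenlines, compute $H^{1}$ of each line to identify the finite and ordinary pieces, and then pass to $K_{\infty}$ by tensoring with $\Lambda/\uf^{n}\Lambda$ via the complete splitting of $\frakl$ established in \lmref{split_inert}. The paper's proof is terser and works with $A_{f,n}$ rather than $T_{f,n}$, but the argument is the same.
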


\begin{proof}
By the definition of $n$-admissible primes, we have a decomposition $A_{f,n}=F^{+}_{\frakl}A_{f,n}\oplus F^{-}_{\frakl}A_{f,n}$. Hence it induces a decomposition $H^{1}(K_{\frakl}, A_{f,n})=H^{1}(K_{\frakl}, F^{+}_{\frakl}A_{f,n})\oplus H^{1}(K_{\frakl}, F^{-}_{\frakl}A_{f,n})$.  By definition, we have $H^{1}_{\ord}(K_{\frakl}, A_{f,n})= H^{1}(K_{\frakl}, F^{+}_{\frakl}A_{f,n})$.  On the other hand, we have
$$H^{1}_{\rm fin}(K_{\frakl}, A_{f,n})= F^{-}_{\frakl}A_{f,n}/(\Frob_{\frakl}-1)F^{-}_{\frakl}A_{f,n}=F^{-}_{\frakl}A_{f,n}$$
since $\frakl$ is $n$-admissible and $\Frob_{\frakl}$ acts trivially on $F^{-}_{\frakl}A_{f,n}$. The claim then follows from the fact that $H^{1}(K_{\frakl},F^{-}_{\frakl}A_{f,n})= F^{-}_{\frakl}A_{f,n}$ by local Tate duality and Kummer theory. The rest of the lemma then follows immediately.
\end{proof}

\begin{prop}\label{orthogonal}
Assume $(\CR)$, $\nimpr$ and $(\PO$) hold. Then for $\frakl$ an $n$-admissible prime or a prime dividing $p\frakn^{-}$, $H^{1}_{\ord}(K_{\infty,\frakl}, A_{f,n})$ and $\Hhat^{1}_{\ord}(K_{\infty,\frakl}, T_{f,n})$ are orthogonal complement to each other.
\end{prop}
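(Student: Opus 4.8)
The plan is to deduce the statement, in all cases, from the fact that the ``ordinary'' local condition is self-dual under local Tate duality, the content of which is that $F^{+}_{\frakl}V_{f}$ is a Galois-stable line in the $2$-dimensional symplectic space $V_{f}$ (symplectic for the alternating pairing $V_{f}\times V_{f}\to E_{f}(1)$ afforded by the self-dual twist), hence isotropic. Concretely this isotropy is equivalent to the two short exact sequences
\[0\to F^{+}_{\frakl}T_{f,n}\to T_{f,n}\to F^{-}_{\frakl}T_{f,n}\to 0,\qquad 0\to F^{+}_{\frakl}A_{f,n}\to A_{f,n}\to F^{-}_{\frakl}A_{f,n}\to 0\]
being Cartier-dual to one another, i.e. $(F^{-}_{\frakl}T_{f,n})^{\vee}(1)\cong F^{+}_{\frakl}A_{f,n}$ as $G_{F_{\frakl}}$-modules (and likewise with the roles of $+$ and $-$ exchanged).

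For $\frakl$ an $n$-admissible prime I would first dispose of the case using the results already in hand: by \lmref{admi_coho} there are orthogonal direct sum decompositions $\Hhat^{1}(K_{\infty,\frakl},T_{f,n})=\Hhat^{1}_{\fin}\oplus\Hhat^{1}_{\ord}$ and $H^{1}(K_{\infty,\frakl},A_{f,n})=H^{1}_{\fin}\oplus H^{1}_{\ord}$, and by \lmref{fin_orth} the finite parts are exact orthogonal complements under $\<\,,\,\>_{\frakl}$. As $\<\,,\,\>_{\frakl}$ is perfect, taking orthogonal complements in the first decomposition forces $H^{1}(K_{\infty,\frakl},A_{f,n})=(\Hhat^{1}_{\ord})^{\perp}\oplus H^{1}_{\fin}$; comparing with $H^{1}=H^{1}_{\fin}\oplus H^{1}_{\ord}$ and using the easy inclusion $H^{1}_{\ord}\subseteq(\Hhat^{1}_{\ord})^{\perp}$ — which holds because the local cup product of two ordinary classes factors through $H^{2}(K_{\infty,\frakl},F^{+}_{\frakl}A_{f,n}\otimes F^{+}_{\frakl}T_{f,n})$ and the Cartier pairing is trivial on $F^{+}_{\frakl}A_{f,n}\otimes F^{+}_{\frakl}T_{f,n}$ by isotropy — one gets $H^{1}_{\ord}=(\Hhat^{1}_{\ord})^{\perp}$, since two direct complements of $H^{1}_{\fin}$ in $H^{1}$ one of which contains the other must coincide.

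For $\frakl\mid p$ and $\frakl\mid\frakn^{-}$ I would argue at a finite layer $K_{m}$ and then pass to the limit. Granting the Cartier-duality of the $F^{+}$-sequences, functoriality of local Tate duality over $K_{m,\frakl}$ shows that the map $H^{1}(T_{f,n})\to H^{1}(F^{-}_{\frakl}T_{f,n})$ induced by the quotient is adjoint, under $\<\,,\,\>_{\frakl}$, to the map $H^{1}(F^{+}_{\frakl}A_{f,n})\to H^{1}(A_{f,n})$ induced by the inclusion (using $(F^{-}_{\frakl}T_{f,n})^{\vee}(1)\cong F^{+}_{\frakl}A_{f,n}$); hence
\[\bigl(\ker(H^{1}(T_{f,n})\to H^{1}(F^{-}_{\frakl}T_{f,n}))\bigr)^{\perp}=\operatorname{Im}\bigl(H^{1}(F^{+}_{\frakl}A_{f,n})\to H^{1}(A_{f,n})\bigr),\]
and by the long exact cohomology sequence the right-hand side equals $\ker(H^{1}(A_{f,n})\to H^{1}(F^{-}_{\frakl}A_{f,n}))=H^{1}_{\ord}(K_{m,\frakl},A_{f,n})$, while by \defref{ordpart} the left-hand side is $\bigl(H^{1}_{\ord}(K_{m,\frakl},T_{f,n})\bigr)^{\perp}$; thus the two ordinary conditions are orthogonal complements at level $m$. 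To descend to $K_{\infty,\frakl}$ I would invoke that all finite-layer cohomology groups in sight are finite, so the towers are Mittag--Leffler, $\varprojlim_{m}$ is exact on them, $\<\,,\,\>_{\frakl}$ induces a perfect Pontryagin pairing between $\Hhat^{1}(K_{\infty,\frakl},T_{f,n})=\varprojlim_{m}H^{1}(K_{m,\frakl},T_{f,n})$ and $H^{1}(K_{\infty,\frakl},A_{f,n})=\varinjlim_{m}H^{1}(K_{m,\frakl},A_{f,n})$, and the orthogonal-complement relation survives the limit.

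The delicate point — and where the hypotheses enter — is the Cartier-duality of the $F^{+}$-sequences at the torsion level, which can fail precisely in the ``congruent'' situations where the two diagonal characters of $\rho^{*}_{f}|_{G_{F_{\frakl}}}$ coincide modulo $\uf$ and the two candidate descriptions of $F^{+}_{\frakl}A_{f,n}$ (intersection with $F^{+}_{\frakl}A_{f}$ versus Cartier-dual of $F^{-}_{\frakl}T_{f,n}$) could differ. For $\frakl\mid\frakn^{-}$ this congruence occurs exactly when $\Nm{\frakl}^{2}\equiv1\pmod p$, which is the case controlled by the ramification condition inside $(\CR)$; the residual irreducibility in $(\CR)$, the condition $(\PO)$ at primes above $p$ in weight $k=2$, and $\nimpr$ are what rule out the remaining degeneracies of the local filtrations and pin the lattice-level $F^{+}$ down uniquely. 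Once this non-degeneracy is secured the rest is the standard local Tate duality formalism together with a routine Iwasawa-limit argument, so I expect the filtration analysis at $\frakn^{-}$ (and at $p$ in weight $2$) to be the only real obstacle.
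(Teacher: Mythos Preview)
Your core argument is correct and is in fact more uniform than the paper's. For the $n$-admissible case you do essentially what the paper does. For $\frakl\mid\frakn^{-}$ and $\frakl\mid p$, however, the paper proceeds quite differently: at $\frakl\mid\frakn^{-}$ it first counts via the local Euler characteristic to show $|H^{1}_{\ord}(K_{\frakl},A_{f,n})|\cdot|H^{1}_{\ord}(K_{\frakl},T_{f,n})|=|H^{1}(K_{\frakl},A_{f,n})|$ and then splits into the two subcases $\Nm\frakl^{2}\not\equiv 1$ and $\Nm\frakl^{2}\equiv 1\pmod{\uf^{n}}$ to verify annihilation; at $v\mid p$ it invokes \lmref{no-p-invariant} (hence $(\PO)$) to force $H^{0}(L_{v},F^{-}_{v}A_{f,n})=0$ so that $H^{1}_{\ord}=H^{1}(F^{+})$ literally, and only then appeals to the orthogonality of $H^{1}(F^{+}A)$ and $H^{1}(F^{+}T)$. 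Your single functoriality step $(\ker f)^{\perp}=\operatorname{Im}(f^{\vee})$, applied to $f:H^{1}(T_{f,n})\to H^{1}(F^{-}_{\frakl}T_{f,n})$ and its Cartier-adjoint $f^{\vee}:H^{1}(F^{+}_{\frakl}A_{f,n})\to H^{1}(A_{f,n})$, handles both cases at once and bypasses the Euler-characteristic count, the subcase analysis, and \lmref{no-p-invariant} entirely.

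Your final paragraph, though, misdiagnoses where the difficulty lies. The Cartier duality of the two filtration sequences at the $\uf^{n}$-torsion level is \emph{automatic} here and requires none of $(\CR)$, $(\PO)$, or $\nimpr$: since $F^{+}_{\frakl}T_{f}:=T_{f}\cap F^{+}_{\frakl}V_{f}$ is saturated in $T_{f}$ (the quotient embeds in $F^{-}_{\frakl}V_{f}$, hence is torsion-free), reduction modulo $\uf^{n}$ keeps the sequence exact with free rank-one terms over $\cO_{f,n}$; and because $F^{+}_{\frakl}V_{f}$ is a line in the $2$-dimensional symplectic space $V_{f}$ it is isotropic, so the induced pairing on $F^{+}_{\frakl}A_{f,n}\times F^{+}_{\frakl}T_{f,n}$ vanishes, whence a size count gives $(F^{+}_{\frakl}T_{f,n})^{\perp}=F^{+}_{\frakl}A_{f,n}$ exactly. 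No ``congruent'' degeneration can occur because the filtration on $T_{f,n}$ is \emph{defined} by reducing the lattice-level filtration, not intrinsically on the residual representation. So you should drop the hedging in that paragraph: your argument already goes through cleanly, and in fact shows that the hypotheses appearing in the statement are artifacts of the paper's particular proof strategy rather than genuine requirements for this orthogonality.
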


\begin{proof} We will treat three different cases.

($\frakl$ is admissible) In this case the claim follows directly from the previous lemmas.

($\frakl \mid \nminus$ ) By definition we have an exact sequence 
\begin{equation}\label{Fexact}
\begin{aligned}
0&\mapR H^{1}_{\ord}(K_{\frakl}, A_{f,n})\mapR H^{1}(K_{\frakl}, A_{f,n})\mapR H^{1}(K_{\frakl},A_{f,n}/ F^{+}_{\frakl}A_{f,n})\mapR\\
& H^{2}(K_{\frakl}, F^{+}_{\frakl}A_{f,n})\mapR H^{2}(K_{\frakl}, A_{f,n})\mapR H^{2}(K_{\frakl}, A_{f,n}/F^{+}_{\frakl}A_{f,n})\mapR 0.\\
\end{aligned}
\end{equation}
It follows from this exact sequence and the local Euler characteristic formula, $|H^{1}(K_{\frakl}, A_{f,n})|= |A_{f,n}^{G_{K_{\frakl}}}|^{2}$ and $|H^{1}_{\ord}(K_{\frakl}, A_{f,n})|=|A_{f,n}^{G_{K_{\frakl}}}|.$ This means we only need to prove the ordinary cohomology groups are orthogonal to each other.

By definition, we have an exact sequence

$$0\mapR H^{1}_{\fin}(K_{\frakl}, F^{+}_{\frakl}A_{f,n}) \mapR H^{1}(K_{\frakl}, F^{+}_{\frakl}A_{f,n})\mapR H^{1}_{\sing}(K_{\frakl}, F^{+}_{\frakl}A_{f,n})\mapR 0.$$

When $\Nm\frakl^{2}\not\equiv 1\pmod {\uf^{n}}$, $H^{1}(K_{\frakl}, F^{+}_{\frakl}A_{f,n})$ annihilates $H^{1}(K_{\frakl}, F^{+}_{\frakl} T_{f,n})$ since they pair to $H^{2}(K_{\frakl}, \cO_{f,n}(\epsilon^{2}))=0$. As $H^{1}_{\ord}(K_{\frakl}, A_{f,n})$ (\resp $H^{1}_{\ord}(K_{\frakl}, T_{f,n})$) is contained in the image of $H^{1}(K_{\frakl}, F^{+}_{\frakl}A_{f,n})$ (\resp $H^{1}_{\ord}(K_{\frakl}, T_{f,n})$), the claim in this case follows.

When $\Nm\frakl^{2}\equiv 1\pmod {\uf^{n}}$, a direct calculation shows that $H^{1}_{\ord}(K_{\frakl}, A_{f,n})=H^{1}_{\fin}(K_{\frakl}, A_{f,n})$ (\resp $H^{1}_{\ord}(K_{\frakl}, T_{f,n})=H^{1}_{\fin}(K_{\frakl}, T_{f,n})$). In fact, we can show that both $H^{1}_{\ord}(K_{\frakl}, A_{f,n})$ and $H^{1}_{\fin}(K_{\frakl}, A_{f,n})$ are of rank $1$ over $\cO_{f,n}$ and there is an injection $H^{1}_{\ord}(K_{\frakl}, A_{f,n})\hookrightarrow H^{1}_{\fin}(K_{\frakl}, A_{f,n})$. The orthogonality then follows from \lmref{fin_orth}.

($\frakl=v\mid p$) 
Let $L/K$ be a finite extension in $K_{\infty}$. 
We need the following lemma.
\begin{lm}\label{no-p-invariant}
For $v$ a place of $L$ above $p$, we have $H^{0}(L_{v}, F^{-}_{v}A_{f,n})=0$.
\end{lm}

\begin{proof}
Notice the cyclotomic character $\epsilon$ is surjective on $I_{L_{v}}$ and hence $F^{-}_{v}A_{f,n}=A_{f,n}/F^{+}_{v}A_{f,n}=\cO_{n}(\chi_{v}\epsilon^{1-\frac{k}{2}})$ has no fixed part by $I_{L_{v}}$ unless $k=2$. In the case $k=2$, $(F^{-}_{v}A_{f,n})^{G_{L_{v}}}=(\cO_{n}(\chi_{v}))^{\<\Frob_{v}\>}$ and $\chi_{v}(\Frob_{v})= \alpha_{v}^{p^{s}r_{v}}$. Here $p^{s}$ is the inertia degree of $L$ over $K$ at $v$ and $r_{v}\in\{1,2\}$ depending on whether $v$ splits in $K$. It follows from our  assumption ($\PO$) that $\alpha_{v}^{r_{v}p^{s}}$ is not $1$ on $F^{-}_{v}A_{f,n}$. Then we can conclude that $H^{0}(L_{v}, F^{-}_{v}A_{f,n})=0$. 
\end{proof}

By the definition of the ordinary cohomology and \lmref{no-p-invariant}, we have  $$H^{1}_{\rm ord}(L_{v}, A_{f,n})=H^{1}(L_{v}, F^{+}_{v}A_{f,n}).$$ The same statement holds if we replace $A_{f,n}$ above by $T_{f,n}$. The claim in the case $\frakl=v\mid p$ then follows from the fact that $H^{1}(L_{v},F^{+}_{v}A_{f,n})$ is the orthogonal complement of $H^{1}(L_{v},F^{+}_{v}T_{f,n})$.
\end{proof}

\subsection{Control theorems of Selmer groups}
Recall $S$ and $\Delta$ as in \defref{Delta_selmer} and let $L/K$ be a finite extension contained in $K_{\infty}$.

\begin{prop}\label{control}
Assume $(\CR)$, $\nmin$ and $(\PO)$
hold, then
\begin{mylist}
\item{the restriction maps $$H^{1}(K,A_{f,n})\rightarrow H^{1}(L, A_{f,n})^{\Gal(L/K)}$$ and $$\Sel_{\Delta}^{ S}(K, A_{f,n})\rightarrow \Sel_{\Delta}^{ S}(L,A_{f,n})^{\Gal(L/K)}$$ are isomorphisms;}
\item{$\Sel^{ S}_{\Delta}(L,A_{f,n})=\Sel^{ S}_{\Delta}(L, A_{f})[\uf^{n}]$.}
\end{mylist}
\end{prop}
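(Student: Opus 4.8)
The plan is to run the standard control argument — following \cite[\S2]{Chida_Hsieh:main} together with the correction in \cite{PW_free} — reducing both assertions, via inflation--restriction, to the vanishing of suitable modules of $G_{L}$- and $I$-invariants supplied by $(\CR)$, $(\PO)$ and $\nmin$.

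\emph{The global restriction map.} The inflation--restriction sequence for $L/K$ reads
\[
0\to H^{1}(\Gal(L/K),A_{f,n}^{G_{L}})\to H^{1}(K,A_{f,n})\to H^{1}(L,A_{f,n})^{\Gal(L/K)}\to H^{2}(\Gal(L/K),A_{f,n}^{G_{L}}),
\]
and since $L\subseteq K_{\infty}$ makes $\Gal(L/K)$ a finite $p$-group, it is enough to prove $A_{f,n}^{G_{L}}=0$. Filtering $A_{f,n}$ by powers of $\uf$, with graded pieces $\cong A_{f,1}$, reduces this to $A_{f,1}^{G_{L}}=0$; and as $\Gal(L/K)$ is a pro-$p$ group acting on the $\F_{p}$-space $A_{f,1}^{G_{L}}$ with $(A_{f,1}^{G_{L}})^{\Gal(L/K)}=A_{f,1}^{G_{K}}$, it suffices to see $A_{f,1}^{G_{K}}=0$. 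This is exactly what $(\CR)(2)$ gives: $A_{f,1}$ is irreducible on $G_{F(\sqrt{p^{*}})}$ and $[G_{F(\sqrt{p^{*}})}:G_{KF(\sqrt{p^{*}})}]\le 2$, so a nonzero $G_{KF(\sqrt{p^{*}})}$-fixed vector would make $A_{f,1}|_{G_{F(\sqrt{p^{*}})}}$ reducible; hence $A_{f,1}^{G_{K}}\subseteq A_{f,1}^{G_{KF(\sqrt{p^{*}})}}=0$. The same argument gives $A_{f}^{G_{L}}=0$ and $H^{1}(K,A_{f})\xrightarrow{\sim}H^{1}(L,A_{f})^{\Gal(L/K)}$, which I need for (2).

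\emph{Matching local conditions (part (1)).} Inside the commutative square formed by this isomorphism and the localization maps, I would verify place by place that $s\in H^{1}(K,A_{f,n})$ satisfies the conditions of $\Sel^{S}_{\Delta}(K,A_{f,n})$ precisely when $\res_{L}(s)$ satisfies those of $\Sel^{S}_{\Delta}(L,A_{f,n})$; this amounts to injectivity of restriction on the relevant local quotient. For $\frakl\mid\frakn^{-}$ and $n$-admissible $\frakl\mid\Delta$ — all non-split in $K$ — the prime splits completely in $K_{\infty}/K$ (as in the proof of \lmref{split_inert}, since $\Gamma$ is pro-$p$ of odd residue characteristic), so $L_{\lambda}=K_{\frakL}$ and nothing is needed. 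For $\frakl=v\mid p$ the ordinary condition matches because the kernel of $H^{1}(K_{v},A_{f,n}/F^{+}_{v}A_{f,n})\to H^{1}(L_{v},A_{f,n}/F^{+}_{v}A_{f,n})$ equals $H^{1}(\Gal(L_{v}/K_{v}),(F^{-}_{v}A_{f,n})^{G_{L_{v}}})$, which vanishes by \lmref{no-p-invariant} (this uses $(\PO)$). For $\frakl\nmid p\Delta S$ (the condition $\partial_{\frakl}=0$, which covers $\frakl\mid\frakn^{+}$), $\frakl\neq\frakp$ so $L/K$ is unramified at $\frakl$, the inertia is unchanged, and restriction on singular quotients is the inclusion $H^{1}(I_{\frakL},A_{f,n})^{\Frob_{\frakL}}\hookrightarrow H^{1}(I_{\frakL},A_{f,n})^{\Frob_{\lambda}}$, which is injective since $\Frob_{\lambda}$ is a power of $\Frob_{\frakL}$. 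Primes $\frakl\mid S$ carry no condition. This yields (1).

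\emph{Part (2) and the main obstacle.} Applying $G_{L}$-cohomology to $0\to A_{f,n}\to A_{f}\xrightarrow{\uf^{n}}A_{f}\to 0$ and using $A_{f}^{G_{L}}=0$ gives $H^{1}(L,A_{f,n})\xrightarrow{\sim}H^{1}(L,A_{f})[\uf^{n}]$; since the defining local conditions for $A_{f,n}$ are cut out from those for $A_{f}$ by the same $F^{+}$-filtration at $\frakl\mid p\Delta$ and by the unramified condition elsewhere, it remains to see that the $\uf^{n}$-torsion of each local condition of $A_{f}$ coincides with the corresponding local condition of $A_{f,n}$. At $p$ this is again \lmref{no-p-invariant}, at the non-split primes it is the complete splitting, and at $\frakl\nmid p\frakn$ the representation is unramified, $A_{f}^{I_{\lambda}}=A_{f}$ is divisible, and there is no ambiguity. \emph{The delicate case is $\frakl\mid\frakn^{+}$, and it is exactly here that $\nmin$ — not merely $\nimpr$ — is needed}: knowing $\bar\rho_{f}$ is ramified at every such $\frakl$ forces $A_{f}^{I_{\lambda}}$ to be $\uf$-divisible, so $A_{f}^{I_{\lambda}}/\uf^{n}=0$ and $H^{1}_{\sing}(L_{\lambda},A_{f,n})\hookrightarrow H^{1}_{\sing}(L_{\lambda},A_{f})$ stays injective. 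Under $\nimpr$ alone $\bar\rho_{f}$ could be unramified at some $\frakl\mid\frakn^{+}$, $A_{f}^{I_{\lambda}}$ would acquire a non-divisible summand, this injectivity would fail, and with it the control statement — precisely the gap in \cite{Chida_Hsieh:main} repaired by \cite{PW_free}, which is why \thmref{main1} is first proved under $\nmin$. Combining the matched local conditions with the global identification then gives $\Sel^{S}_{\Delta}(L,A_{f,n})=\Sel^{S}_{\Delta}(L,A_{f})[\uf^{n}]$.
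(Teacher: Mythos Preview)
Your proof is correct and follows essentially the same route as the paper: inflation--restriction for the global map, a place-by-place check that local conditions are preserved under restriction, and the exact sequence $0\to A_{f,n}\to A_{f}\to A_{f}\to 0$ for part~(2), with $\nmin$ entering precisely at $\frakl\mid\frakn^{+}$ to force $A_{f}^{I_{\lambda}}$ divisible. The only noteworthy difference is that you deduce $A_{f,1}^{G_{K}}=0$ from $(\CR)(2)$ (irreducibility over $G_{F(\sqrt{p^{*}})}$), whereas the paper cites $(\CR)(1)$ --- the latter ultimately feeding into \lmref{-1}, which produces the scalar $-1$ in the image and hence kills invariants over any $p$-power extension; both arguments are valid and yield the same conclusion. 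Your use of complete splitting at the non-split primes dividing $\Delta$ is slightly more direct than the paper's appeal to $K_{\infty}/K$ being unramified outside $\frakp$, but the content is the same.
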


\begin{proof}
The first claim in part (1) follows from $(\CR)(1)$ and the inflation-restriction exact sequence since by $(\CR)(1)$, $H^{0}(L, A_{f,n})=0$.

By the definition of the Selmer group and an application of the snake lemma, it suffices to show the following claims in order to prove the rest of the assertions in (1):
\begin{enumerate}
\item $H^{1}(K^{\ur}_{\frakl}, A_{f,n})\mapR H^{1}(L^{\ur}_{\frakl}, A_{f,n})$ is injective for $\frakl\nmid p\Delta$;
\item $H^{1}(K_{\frakl}, F^{-}_{\frakl}A_{f,n})\mapR H^{1}(L_{\frakl}, F^{-}_{\frakl}A_{f,n})$ is injective for $\frakl\mid p\Delta$.
\end{enumerate}
When $\frakl\nmid p\Delta$ or $\frakl\mid \Delta$, the first claim is clear as $K_{\infty}$ is unramified outside $\frakp$ and hence $K^{ur}_{\frakl}=L^{ ur}_{\frakl}$. 
If $\frakl=v\mid p$, then by the inflation-restriction exact sequence, we only need to show $F^{-}_{v}A_{f,n}^{G_{L_{\frakP}}}=0$ for $\frakP$ a place above $v$. This is \lmref{no-p-invariant} and this concludes the proof of (1).

As for (2), we consider the following exact sequence
$$0\mapR A_{f,n}\mapR A_{f}\mapR A_{f}\mapR 0.$$
By $(\CR)(1)$ and the fact that $K_{\infty}/K$ is Galois, we have $A^{G_{L}}_{f,m}=0$ for every $m$. Then $A_{f}^{G_{L}}=0$. Hence by the inflation-restriction exact sequence again, the map $$\Sel^{S}_{\Delta}(L, A_{f,n})\mapR \Sel^{S}_{\Delta}(L,A_{f})[\uf^{n}]$$ is injective. To show surjectivity, by a similar consideration as in (1), we are led to prove the following claims:
\begin{itemize}
\item $H^{1}(L^{\ur}_{\frakl}, A_{f,n})\mapR H^{1}(L^{\ur}_{\frakl}, A_{f})$ is injective for $\frakl\nmid p\Delta$.
\item $H^{1}(L_{\frakl}, F^{-}_{\frakl}A_{f,n})\mapR H^{1}(L_{\frakl}, F^{-}_{\frakl}A_{f})$ is injective for $\frakl\mid p\Delta$.
\end{itemize}

For the first claim, if $\frakl\nmid \frakn^{+}$,then the Galois action is trivial and the claim follows immediately. In the case $\frakl\mid \nplus$, the claim follows from our assumption in $(2)$ since then $H^{0}(L^{\ur}_{\frakl}, A_{f})$ is divisible.

We prove the second. For $\frakl\mid \Delta$, the action of $G_{L_{\frakl}}$ is trivial and hence the claim follows immediately. For $\frakl=v\mid p$, we have proved in \lmref{no-p-invariant} that $H^{0}(L_{v}, F^{-}_{v}A_{f,m})=0$ for any $m$ and hence $H^{0}(L_{v}, F^{-}_{v}A_{f})=0$. Therefore we have the injectivity in the case $\frakl=v\mid p$.

\end{proof}

\section{Automorphic forms on quaternion algebra}
In this section we define the notion of automorphic forms on quaternion algebra over a totally real field. In order to use the congruences between Hecke eigenvalues of the Hilbert modular forms, by the Jaquet-Langlands correspondence it suffices to study the Hecke eigenvalues of these quaternionic automorphic forms.
\subsection{Hecke algebra on quaternion algebras}
Let $B$ be a quaternion algebra over the totally real field $F$ and  $\widehat{B}=B\otimes \hat{\Z}$. Set $\Delta_{B}$ to be the discriminant of $B$ over $F$. Let $\Sigma$ be a finite set of places of $F$. We let $$\Bhat^{(\Sigma),\cross}=\{x\in\Bhat^{\cross}: x_{v}=1, \forall v\in \Sigma\}$$ and $$\Bhat^{\cross}_{(\Sigma)}=\{x\in\Bhat^{\cross}: x_{v}=1, \forall v\not\in \Sigma\}.$$
A similar notation is used for any open compact subset $\openU$ of $\Bhatcross$.

Let $\openU$ be an open compact subgroup of $\Bhat^{\cross}$ and $\openU_{v}$ be its component at $v$. Let $\EucH(\Bcross,\openU)$ be the Hecke algebra over $\Z$ on $\Bhatcross$ which is the space of $\Z$-valued bi-$\openU$ invariant functions on $\Bhatcross$  with compact support. Then $\EucH(\Bcross,\openU)$ is equipped with the natural involution product: $(f*g)(z)=\int_{\Bhatcross}f(x)g({x}^{-1}z)dx$ where $dx$ is the normalized Haar measure such that $\vol(\openU,dx)=1$. 

For $x\in \Bhatcross$, we  denote by $[\openU x\openU]$ the characteristic function of the double coset $\openU x\openU$. For each finite place $v$ of $F$, we set $$\pi_{v}=\pMX{\uf_{v}}{0}{0}{1}, z_{v}=\pMX{\uf_{v}}{0}{0}{\uf_{v}}$$
as elements in $\GL_{2}(F_{v})$. We fix an identification $i:\Bhat^{(\Delta_{B}),\cross}\isomor M_{2}(\Fhat^{(\Delta_{B})})^{\cross}$. 

Let $$\frakm^{+}\subset \cO_{F}$$ be an integral ideal which is prime to $\Delta_{B}$ and let $R_{\mplus}$ be an Eichler order(intersection of two maximal orders) of level $\mplus$. Note locally at $v\mid \frakm^{+}$ with $v^{t}\mid \mid \frakm^{+}$, $(R_{\mplus})_{v}=\{\pMX{a}{b}{c}{d}\in M_{2}(\cO_{F,v}): c\equiv 0\pmod{\uf^{t}_{v}}\}$. Suppose $(p,\Delta_{B})=1$. For a non-negative integer $n$  and our fixed $\frakp$, we define a compact open subgroup $\openU_{\frakm^{+}, \frakp^{n}}$ by:

\begin{equation}\label{open_cpt}
\openU_{\mplus, \frakp^{n}}=\{x\in\Rhat^{\cross}_{\frakm^{+}}: x_{\frakp}\equiv \pMX{a}{b}{0}{a}\pmod {\frakp^{n}}, a,b\in \cO_{F,\frakp}\}.
\end{equation}

We set $\openU=\openU_{\mplus,\frakp^{n}}$ and put $\Sigma=\frakp\mplus\Delta_{B}$. Notice by definition $i$ induces an identification $i:(\Bhat^{(\Sigma),\cross}, \openU^{(\Sigma)}_{\mplus, \frakp^{n}})\isomor(\GL_{2}(\Fhat^{(\Sigma)}), \GL_{2}(\Ohat^{\away{\Sigma}}))$. 

\subsubsection{Spherical Hecke algebra} 
We denote by $\Hecke^{\away{\Sigma}}_{B}(\mplus)$ the subalgebra of $\EucH(\Bcross, \openU)$ generated by $[\openU x\openU]$ for $x\in\Bhat^{\away{\Sigma}, \cross}$. Then we have 
$$\Hecke^{\away{\Sigma}}_{B}(\mplus)=\Z[\{T_{v}, S_{v}, S_{v}^{-1}: v\not\in \Sigma\}],$$ where 
\begin{equation}\label{Hecke_standard}
T_{v}=[\openU i^{-1}(\pi_{v})\openU],
S_{v}=[\openU i^{-1}(z_{v})\openU].
\end{equation}

\subsubsection{Complete Hecke algebra} For $v\mid \Delta_{B}$, we choose an element $\pi^{\prime}_{v}\in \Bcross_{v}$ whose norm is a uniformizer of $F_{v}$ and put $$U_{v}=[\openU\pi^{\prime}_{v}\openU].$$

For $v\mid \mplus$, we define $$U_{v}=[\openU i^{-1}(\pi_{v})\openU].$$

Finally for the prime $\frakp$, we define $$U_{\frakp}=[\openU i^{-1}(\pi_{\frakp})\openU],$$ $$ \<a\>=[\openU i^{-1}(\d(a))\openU]$$ where $\d(a)=\pMX{a}{0}{0}{1}$ for $a\in\cO^{\cross}_{F,\frakp}$.

If $n=0$, we have $\openU=\Rhat^{\cross}_{\mplus}$, then we let $$\Hecke_{B}(\mplus)=\Hecke^{\away{\Sigma}}_{B}(\mplus)[\{U_{v}: v\mid \Delta_{B}\mplus\}].$$

If $n>0$,  we have $\openU=\openU_{\mplus,\frakp^{n}}$, then we let $$\Hecke_{B}(\mplus, \frakp^{n})=\Hecke^{\away{\Sigma}}_{B}(\mplus)[\{U_{v}: v\mid \frakp^{n}\Delta_{B}\mplus\},\{\<a\>: a\in\cO^{\cross}_{F,\frakp} \}].$$

We refer to these as the complete Hecke algebras. And for $\frakl$ a prime in $\cO_{F}$, we let $\Hecke^{\away{\frakl}}_{B}(\mplus, \frakp^{n})$ be the subalgebra of the complete Hecke algebra generated by the Hecke operators away from $\frakl$.

\subsection{Optimal embeddings}

Let $K$ be a totally imaginary quadratic extension over $F$ and let $c: z\rightarrow \bar{z}$ be the nontrivial automorphism of $K$ over $F$. We fix a CM type $\Sigma$ of $K$ \ie a subset of $\Hom(K, \C)$ such that $\Sigma\cup\overline{\Sigma}=\Hom(K, \C)$ and $\Sigma\cap\overline{\Sigma}=\emptyset$. We choose $\bftheta\in K$ that satisfies
\begin{mylist}\label{theta} 
\item{$\Im(\sigma(\bftheta))>0, \forall\sigma\in \Sigma;$}
\item{$\{1,\bftheta_{v}\}$ is a basis of $\cO_{K, v}$ over $\cO_{F,v}$ for all $v\mid D_{K/F}\frakp\frakn$;}
\item{$\bftheta$ is local uniformizer at primes $v$ ramified in $K$.}
\end{mylist}
Then $\bftheta$ is a generator of $K$ over $F$. We let $\delta=\bftheta-\bar{\bftheta}$.

Let $B$ be a definite quaternion algebra over $F$ of discriminant $\Delta_{B}$. Assume that $\nminus\mid \Delta_{B}$ and $\Delta_{B}/\nminus$ is a product of inert primes in $K$, then we can regard $K$ as a subalgebra of $B$. Let $T_{B}$ and $N_{B}$ be the reduced trace and reduced norm of $B$. We choose a basis of $B=K\oplus K.J$ over $K$ such that 
\begin{mylist}\label{B_over_K}
\item{$J^{2}=\beta\in F^{\cross}$ with $\sigma(\beta)<0$ for all $\sigma\in \Sigma$ and $Jt=\bar{t}J$ for all $t\in K$;}\label{beta}
\item{$\beta\in (\cO_{F,v}^{\cross})^{2}$ for all $v\mid \frakp\nplus$ and $\beta\in \cO_{F,v}^{\cross}$ for all $v\mid D_{K/F}$. }
\end{mylist} 

We further require the fixed isomorphism $i$ satisfies $\prod_{v\nmid \Delta_{B}} i_{v}: \Bhat^{\away{\Delta_{B}}}\isomor M_{2}(\widehat{\Q}^{\away{\Delta_{B}}})$ such that for each $v\mid \frakp\nplus$, $i_{v}: B_{v}\isomor M_{2}(F_{v})$ is given by
$$i_{v}(\bftheta)=\pMX{T(\bftheta)}{-N(\bftheta)}{1}{0}$$
where $T(\bftheta)=\bftheta+\bar{\bftheta}$, $N(\bftheta)=\bftheta\bar{\bftheta}$; 
$$i_{v}(J)=\sqrt{\beta}\pMX{-1}{T(\bftheta)}{0}{1}$$
and for each place $v\nmid \frakp \nplus \Delta_{B}$, $i_{v}(\cO_{K,v})\subset M_{2}(\cO_{F,v})$. 
 
 \subsection{$P$-adic Hilbert modular forms on quaternion algebra}Let $\Phi$ be a finite extension of $\Q_{p}$ that contains the image of all embeddings $F\hookrightarrow \bar{\Q}_{p}$. Let $A$ be an $\cO_{\Phi}$ algebra. Let $k\geq 2$ be an even integer and for each $\sigma\in \Sigma$ we let $L_{k,\sigma}(A)=A[X_{\sigma},Y_{\sigma}]_{k-2}$ be the space of homogeneous polynomials of degree $k-2$ with two variables over $A$. Denote $\rho_{k,\sigma}: \GL_{2}(A)\rightarrow L_{k,\sigma}(A)$  the representation defined by 
 $$\rho_{k,\sigma}(g)P(X_{\sigma},Y_{\sigma})=\det(g)^{\frac{2-k}{2}}P((X_{\sigma},Y_{\sigma})g).$$
 Our fixed embedding $F\hookrightarrow \bar{\Q}_{p}$ gives a decomposition $\Sigma=\cup_{v\mid p}\Sigma_{v}$ where $\Sigma_{v}$ is the set of those embeddings inducing $v$. We define $$L_{k}(A)=\otimes_{v\mid p}\otimes_{\sigma\in \Sigma_{v}}L_{k,\sigma}(A).$$ We can regard $L_{k}(A)$ as an $\GL_{2}(\cO_{F,p})$ module by the action $\rho_{k}(u_{p})=\otimes_{v\mid p}\otimes_{\sigma\in\Sigma_{v}}\rho_{k,\sigma}(\sigma(u_{v})).$

 Let $\openU$ be an open compact set in $\Bhatcross$. The space $S_{k}^{B}(\openU, A)$ of $p$-adic Hilbert modular forms of level $\openU$ and weight $k$ is defined by
 $$S_{k}^{B}(\openU, A)=\{f:B^{\cross}\backslash \Bhatcross \rightarrow L_{k}(A)\mid f(bu)=\rho_{k}(u_{p})^{-1}f(b), u\in \openU\}.$$
 It is equipped with an action of the Hecke algebra $\Hecke_{B}(\nplus,\frakp^{n})$ given by 
 \begin{equation}
 \begin{aligned}
&[\openU x\openU] f(b) =\sum_{u\in \openU/ \openU\cap x\openU x^{-1}  }f(bux)\text{ for } x\in \Bhat^{\away{\frakp},\cross},\\
&\<a\>f(b)=\rho_{k}(b\d(a))f(b\d(a)), \\
&U_{\frakp}f(b)=\sum_{u\in \openU_{\frakp}/ \openU_{\frakp}\cap\pi_{\frakp}\openU_{\frakp}\pi^{-1}_{\frakp} } \rho_{k}(u)\hat{\rho}_{k}(\pi_{\frakp})f(bu\pi_{\frakp})
\end{aligned}
 \end{equation}
where $$\hat{\rho}_{k}(\pi_{\frakp})(\otimes_{v\mid p}\otimes_{\sigma\in \Sigma_{v}}P_{\sigma}(X_{\sigma},Y_{\sigma}))=\otimes_{\sigma\in\Sigma_{\frakp}}P_{\sigma}(\sigma(\uf_{\frakp})X_{\sigma}, Y_{\sigma})\otimes(\otimes_{\sigma\not\in \Sigma_{\frakp}}P_{\sigma}(X_{\sigma}, Y_{\sigma})).$$ 

We let $X_{B}(\openU)=\Bcross\backslash\Bhatcross/ \openU$ be the Shimura set of level $\openU$. The space of weight 2 Hilbert modular forms $S_{2}^{B}(\openU,A)=\{f: X_{B}(\openU)\rightarrow  A\}$ can be naturally identified with $A[X_{B}(\openU)]$ compatible with the Hecke action if we define Hecke action on the divisor group of the Shimura set  via Picard functoriality.

Let $\tau_{n}\in \Bhatcross_{(\frakp\nplus)}$ be given by 
\begin{equation}\label{atkin-inv}
\tau_{n,v}=\pMX{0}{1}{\uf_{v}^{\ord_{v}(\frakp^{n}\nplus)}}{0}
\end{equation} 
for $v\mid \frakp^{n}\nplus$. Then  $\tau_{n}$ normalizes $\openU$ and it induces an involution on $X_{B}(\openU)$. We refer to this involution as the Atkin-Lehner involution. We define a perfect pairing  $\<,\>_{\openU}: S_{2}^{B}(\openU,A)\times S_{2}^{B}(\openU,A)\rightarrow A$ by
\begin{equation}\label{pairing}
\<f,g\>_{\openU}=\sum_{[b]_{\openU}\in X_{B}(\openU) } f(b)g(b\tau_{n})\#(\Bcross\cap b\openU b^{-1}/F^{\cross})^{-1}. 
\end{equation}
Notice that the action of the Hecke algebra $\Hecke_{B}(\nplus, \frakp^{n})$ is self-adjoint with respect to this paring.

Set $Y= \Fhat^{\cross}$ . Then we have an action of $Y$ on the space $S_{k}^{B}(\openU, A)$ and we define $S_{k}^{B}(\openU Y, A)$ to be the subspace fixed by the action of $Y$ \ie 
$$S_{k}^{B}(\openU Y, A)=S_{k}^{B}(\openU, A)^{Y}.$$
 We also define $\Hecke_{B}^{\away{\Sigma}}(\mplus Y)$ to be the quotient of $\SHecke{\Sigma}{\mplus}$ by the ideal generated by $[\openU x\openU]-1$ for $x\in Y$. We can define $\CHecke{\mplus Y}{ \frakp^{n}}$ in a similar manner.

\section{Shimura curves}
In this section we collect some results on Shimura curve over totally real field following the exposition of \cite{Nekovar:level}.
Let $\frakl\nmid \nplus\Delta_{B}$ be a prime in $F$ inert in $K$. Let $\Bprime$ be an indefinite quaternion algebra with discriminant $\Delta_{B}\frakl$ and split at some $\sigma_{1}\in \Sigma$.  We fix an isomorphism $$\varphi_{B,\Bprime}: \Bhat^{\away{\frakl}} \longrightarrow \Bhat^{\prime \away{\frakl}}.$$ We also let $t^{\prime}:K\rightarrow B^{\prime}$ be an embedding such that it induces the composite map
\begin{equation}\label{embedd}
\Khat^{\away{\frakl}}\MapRR{t}\Bhat^{\away{\frakl}}\xrightarrow{\hspace*{0.2cm}{\varphi_{B,B^{\prime}}}\hspace*{0.2cm}}\Bhat^{\prime\away{\frakl}}.
\end{equation}

\subsection{Shimura curves and complex uniformization} Let $\cO_{\Bprime_{\frakl}}$ be a maximal order in $\Bprime_{\frakl}$.  Set $\openU^{\prime}=\varphi_{B,\Bprime}(\openU^{\away{\frakl}})\cO^{\cross}_{\Bprime_{\frakl}}$. Given $(\Bprime, \openU^{\prime})$, there is a Shimura curve $M_{\openU^{\prime}}$ defined over $F$  which is smooth and projective. The complex points of this curve are given by:
$$\ShiUprime(\C)=B^{\prime\cross}\backslash\ \cH^{\pm}\times \Bprimehatcross/ \openUprime$$
where $\cH^{\pm}={\mathbb{P}}^{1}(\C)-{\mathbb{P}}^{1}(\R).$

We write $[z,b^{\prime}]_{\openUprime}$ for a point in $\ShiUprime$ corresponding to $z\in \cH^{\pm}$ and $b^{\prime}\in \Bprime$.

\subsubsection{Quotient Shimura curves}\label{Shimura_curve}
Recall $Y= \Fhatcross$, there is an action of $Y$ on the Shimura curve $\ShiUprime$ and we denote by $\ShiUprimeY$ the quotient of the Shimura curve $\ShiUprime$ by the action of $Y$. The complex analytic uniformization of this quotient curve is just
$$\ShiUprimeY(\C)=\Bprimecross\backslash \cH^{\pm}\times \Bprimehatcross/\openUprime Y.$$

\begin{remark}
This is the curve denoted by $N^{*}_{H}$ in \cite{Nekovar_CV} and $M_{K^{\prime}}$ in \cite{Zhang:Height}.
\end{remark}

\subsection{$\frakl$-adic uniformization of Shimura curves}
Let $\cO^{ur}_{F,\frakl}$ be the ring of integers of the maximal unramified extension $F^{ur}_{\frakl}$ of $F_{\frakl}$. There is an integral model $\cM_{\openUprime}$ over $\cO_{F,\frakl}$ for $\ShiUprime\otimes F_{\frakl}$ \cite{Cerednik}. Let $\cH_{\frakl}$ be the Drinfeld's $\frakl$-adic upper-half plane. Its $\C_{\frakl}$-points are given by $\mathbb{P}^{1}(\C_{\frakl})-\mathbb{P}^{1}(F_{\frakl})$. The formal completion of $\cM_{\openUprime}$ along its special fiber is then identified with $$\Bcross\backslash\widehat{\cH_{\frakl}}\widehat{\otimes}_{\cO_{F_{\frakl}}}\widehat{\cO}^{ur}_{F,\frakl}\times \Bhat^{ \away{\frakl}\cross}/ \openU^{\away{\frakl}}$$ where $ \widehat{\cH_{\frakl}}$ is a formal model of $\cH_{\frakl}$. Note that the action of $\Bcross$ on $\widehat{\cH_{\frakl}}$ factors through $B^{\cross}\subset\Bcross_{\frakl}\isomor \GL_{2}(F_{\frakl})$, and via the isomorphism $\varphi_{B,\Bprime}$ we can write the above formal completion as
$$\widehat{\cM_{\openUprime}}=\Bcross\backslash\widehat{\cH_{\frakl}}\widehat{\otimes}_{\cO_{F_{\frakl}}}\widehat{\cO}^{ur}_{F,{\frakl}}\times \Bhat^{\away{\frakl}\cross}/\openU^{\away{\frakl}}.$$
Thus the associated rigid analytic space has the following uniformization:
$$\cM_{\openUprime}^{\rig}(\C_{\frakl})=\Bcross\backslash\widehat{\cH_{\frakl}}(\C_{\frakl})\times \Bhat^{\away{\frakl}\cross}/\openU^{\away{\frakl}}.$$

\subsubsection{$\frakl$-adic uniformization of quotient Shimura curves}
We write an element $g\in Y$ as $g=g_{\frakl}g^{\away{\frakl}}$, the action of $g$ on $\widehat{\cM_{\openUprime}}$ is via its action on $\widehat{\cH_{\frakl}}\widehat{\otimes}_{\cO_{F_{\frakl}}}\widehat{\cO}^{ur}_{F_{\frakl}}$ given by $1\times \Frob_{\frakl}^{\ord_{\frakl}(N_{B}(g_{\frakl}))}$. We also denote by $\cM_{\openUprime Y}$ the quotient of $\cM_{\openUprime}$ by the action of $Y$.

From the above discussion, we get immediately
$$\widehat{\cM_{\openUprime Y}}=\Bcross\backslash\widehat{\cH_{\frakl}}\widehat{\otimes}_{\cO_{F_{\frakl}}}\widehat{\cO}^{ur}_{F,{\frakl}}\times \Bhat^{\away{\frakl}\cross}/\openU^{\away{\frakl}}Y=\Bcross\backslash\widehat{\cH_{\frakl}}{\otimes_{\cO_{F_{\frakl}}}}{\cO}_{F,\frakl^{2}}\times \Bhat^{\away{\frakl}\cross}/\openU^{\away{\frakl}}Y^{\away{\frakl}}$$
where ${\cO}_{F,\frakl^{2}}$ is the ring of integers of the unramified quadratic extension of $F_{\frakl}$.

\subsection{Bad reduction of quotient Shimura curves}
We recall the structure of the Bruhat-Tits Tree $\cT_{\frakl}$ for $\Bcross_{\frakl}\isomor\GL_{2}(F_{\frakl})$. Its set of vertices is identified with $\cV(\cT_{\frakl})\isomor \Bcross_{\frakl}/ \openU_{\frakl}\Fcross_{\frakl}$ while its set of oriented edges is identified with $\oeBruhat\isomor \Bcross_{\frakl}/\openU(\frakl)_{\frakl}\Fcross_{\frakl}$.

The curve ${\cM_{\openUprime Y}}\otimes_{\cO_{F,\frakl}} \cO_{F, \frakl^{2}}$ is an admissible curve over $\cO_{F, \frakl^{2}}$ in the sense of \cite[Chapter 3]{JL:local}. The special fiber of this curve is organized by the dual reduction graph $\cG$ which admits the following description: the set of vertices which corresponds to the set of irreducible components of the special fiber is  
identified with 
\begin{equation}\label{vert}
\begin{aligned}
\cV(\cG)&=\Bcross\backslash\vBruhat\times \Z/2\Z\times \Bhat^{\away{\frakl} \cross}/\openU^{\away{\frakl}}Y^{\away{\frakl}}\\
&=\Bcross\backslash \Bhatcross/\openU Y\times \Z/2\Z \\
&=X_{B}(\openU Y)\times \Z/2\Z\\
\end{aligned}
\end{equation}
via the map given by $$B^{\cross}(b_{\frakl}\openU_{\frakl}Y_{\frakl}, j, b^{(\frakl)}U^{(\frakl)}Y^{(\frakl)})\rightarrow (B^{\cross}b_{\frakl}b^{(\frakl)}\openU Y, j, \ord_{\frakl}(N_{B}(b_{\frakl}))).$$
On the other hand the set of oriented edges which corresponds to the set of singular points on the special fiber is identified with  
\begin{equation}
\begin{aligned}
\vec{\cE}(\cG)&=\Bcross\backslash\oeBruhat\times \Z/2\Z\times \Bhat^{\away{\frakl}\cross}/\openU^{\away{\frakl}}Y^{\away{\frakl}}\\
&=\Bcross\backslash \Bhatcross/\openU(\frakl)Y\times \Z/2\Z \\
&=X_{B}(\openU(\frakl)Y)\times \Z/2\Z\\
\end{aligned}
\end{equation}
where we set 
$$\openU(\frakl)=\openU_{\frakl\nplus,\frakp^{n}}$$
by adding a level at $\frakl$ to our previous $\openU$. The last equality in the above equation is given by a similar formula as in \eqref{vert}.

\subsection{Bad reduction of the Jacobians}
Let $J(M_{\openU^{\prime} Y})$ be the Jacobian of the quotient Shimura curve which represents the functor $\text{Pic}^{0}_{M_{\openU^{\prime} Y}/F}\isomor\Res_{F^{\prime}/F}\Pic_{M_{\openU^{\prime}Y}/F^{\prime}}$ where $F^{\prime}$ is the field of moduli of $\ShiUprimeY$.

Let $L/F$ be a field extension and let $\Div^{0}\ShiUprimeY(L)$ be the group of divisors on $\ShiUprimeY(L)$ that have degree zero on each geometric components. For $D\in \Div^{0}\ShiUprimeY(L)$ we denote by $\cl(D)$ the point in $J(\ShiUprimeY)(L)$ representing $D$. 

There is a natural action of $\Hecke_{\Bprime}(\nplus Y, \frakp^{n})$ on the Jacobian under the Picard functoriality. On the other hand we have an isomorphism $\Hecke_{B}^{\away{\frakl}}(\frakl\nplus Y, \frakp^{n})\isomor\Hecke_{\Bprime}^{\away{\frakl}}(\nplus Y, \frakp^{n})$ given by $\varphi_{B, \Bprime}$ and thus we get an action of $\Hecke_{B}^{\away{\frakl}}(\frakl\nplus Y, \frakp^{n})$ on the Jacobian. We can extend this action to the complete Hecke algebra by defining a ring homomorphism 
\begin{equation}\label{phi}
\varphi_{*}: \Hecke_{B}(\frakl\nplus Y, \frakp^{n})\rightarrow \Hecke_{\Bprime}(\nplus Y, \frakp^{n})
\end{equation}
which sends $U_{\frakl}$ to $[\openUprime \pi^{\prime}_{\frakl}\openUprime ]$ where $\pi^{\prime}_{\frakl}$ is chosen such that $N_{B}(\pi^{\prime}_{\frakl})=\uf_{\frakl}$.

Let $\cJ/\cO_{F,\frakl^{2}}$ be the \Neron model of $J(M_{\openUprime Y})/F_{\frakl^{2}}$, $\cJ_{s}$ be the special fiber, $\cJ^{0}_{s}$ be the connected component of $\cJ^{0}_{s}$   and $\Phi_{\ShiUprimeY}$ be the group of connected components of $\cJ_{s}/\cJ^{0}_{s}$. The Hecke algebra $\Hecke_{\Bprime}(\nplus Y,\frakp^{n})$ acts naturally on $\cJ$ and hence also acts on $\Phi_{\ShiUprimeY}$.  Via the morphism $\varphi_{*}$, we can equip $\Phi_{\ShiUprimeY}$  with a structure of $\Hecke_{B}(\frakl\nplus Y,\frakp^{n})$-module.  

\subsubsection{First description of $\Phi_{\ShiUprimeY}$}\label{Phi} The group $\Phi_{\ShiUprimeY}$ can be described in terms of the graph $\cG$. Fix an orientation of $\cG$, by which we mean a section $\cE(\cG)\rightarrow \vec{\cE}(\cG)$ of the natural projection. We choose an orientation such that the source and target map $s,t: \cE(\cG)\rightarrow \vec{\cE}(\cG)\rightarrow \cV(\cG)$ is given by:
\begin{equation}\label{source_target}
\begin{aligned}
 & s: \cE(\cG) =\Bcross\backslash \Bhatcross/\openU(\frakl)Y \rightarrow \cV(\cG)=\Bcross\backslash\Bhatcross/\openU Y\times \Z/2\Z,\\
 & s(\Bcross b\openU(\frakl)Y)=(\Bcross b \openU Y, 0);\\
 \end{aligned}
 \end{equation}
 
 \begin{equation}
 \begin{aligned}
 & t:\cE(\cG)=\Bcross\backslash\Bhatcross/\openU(\frakl)Y\rightarrow \cV(\cG)=\Bcross\backslash\Bhatcross/\openU Y\times \Z/2\Z,\\
 & t(\Bcross b\openU(\frakl)Y)=(\Bcross b\pi^{-1}_{\frakl}\openU Y, 1).\\
 \end{aligned}   
\end{equation}

The chain and cochain complex of $\cG$ is defined by 
\begin{equation}
\begin{aligned}
&\Z[\cE(\cG)]\MapRR{d_{*}}{} \Z[\cV(\cG)]\\
&d_{*}=-s_{*}+t_{*};\\
\end{aligned}
\end{equation}

\begin{equation}
\begin{aligned}
&\Z[\cV(\cG)]\MapRR{d^{*}}{} \Z[\cE(\cG)]\\
&d^{*}=-s^{*}+t^{*}.
\end{aligned}
\end{equation}
Let $\Z[\cV(\cG)]_{0}={\rm im}(d_{*})$, then by \cite[Section 9.6 Theorem 1]{BLR:Neron}, there is a natural identification 
\begin{equation}\label{comp1}
\Phi_{\ShiUprimeY}\isomor \Z[\cV(\cG)]_{0}/ d_{*}d^{*} .
\end{equation}
Let $r_{\frakl}:J(\ShiUprimeY)(F_{\frakl^{2}})\rightarrow \Phi_{\ShiUprimeY}$ be the reduction map. We can in fact describe this map using the above identification. More precisely we have the following commutative diagram:
\begin{equation}\label{comd2}
\begin{CD}
\Div^{0}M_{\openUprime Y}(F_{\frakl^{2}})     @>cl>> J(M_{\openUprime Y})(F_{\frakl^{2}})  \\
@VVr_{\cV}V        @VVr_{\frakl}V\\
\Z[\cV(\cG)]_{0}/ d_{*}d^{*}     @>\isomor>>  \Phi_{\ShiUprime Y}
\end{CD}
\end{equation}
where $r_{\cV}$ is the specialization map : for each $D\in\Div^{0}M_{\openUprime Y}(F_{\frakl^{2}})$, we extend $D$ to a Cartier divisor $\tilde{D}$ on $\cM_{\openUprime Y}\otimes \cO_{F,\frakl^{2}}$, then $r_{\cV}(D)=\sum_{C\in \cV(\cG)}(C,\tilde{D})C$. Here $(C,\tilde{D})$ is the intersection pairing on $\cM_{\openUprime Y}\otimes \cO_{F,\frakl^{2}}$.

\subsubsection{Second description of $\Phi_{\ShiUprimeY}$} Now we give a modular description of $\Phi_{\ShiUprimeY}$. Recall we can identify $\Z[\cE(\cG)]\isomor\Z[X_{B}(\openU(\frakl) Y)]$ with $S^{B}_{2}(\openU(\frakl) Y,\Z)$ and identify $\Z[\cV(\cG)]\isomor\Z[X_{B}(\openU Y)]^{\oplus 2}$ with $S^{B}_{2}(\openU Y,\Z )^{\oplus 2}$. We denote the two natural maps $s, t$ as $\alpha,\beta$ under this identification, then $d_{*}, d^{*}$ are identified with
\begin{equation}
\begin{aligned}
&S^{B}_{2}(\openU(\frakl) Y, \Z)\MapRR{\delta_{*}}{} S_{2}^{B}(\openU Y, \Z)^{\oplus 2}\\
&\delta_{*}=(-\alpha_{*},\beta_{*});\\
\end{aligned}
\end{equation}
and 
\begin{equation}
\begin{aligned}
&S_{2}^{B}(\openU Y, \Z)^{\oplus 2} \MapRR{\delta^{*}}{} S_{2}^{B}(\openU(\frakl)Y, \Z)\\
&\delta^{*}=-\alpha^{*}+\beta^{*}.\\
\end{aligned}
\end{equation}

We define a ring homorphism  $\Hecke_{B}(\frakl \nplus Y, \frakp^{n} )\rightarrow \End(S^{B}_{2}(\openU Y, \Z)^{\oplus 2})$ by 
\begin{equation}
\begin{aligned}
& t\rightarrow \tilde{t}:(x,y)\rightarrow (t(x), t(y)) \text{ for } t\in \SHecke{\frakl}{\frakl \nplus Y,\frakp^{n}};\\
& U_{\frakl}\rightarrow \tilde{U_{\frakl}}:(x,y)\rightarrow (-\Nm\frakl y, x+T_{\frakl}y). 
\end{aligned}
\end{equation}
One can check immediately that $\delta_{*}$ is indeed a $\Hecke_{B}(\frakl\nplus Y, \frakp^{n})$-module map. We put $S^{B}_{2}(\openU Y, \Z)^{2}_{0}=\delta_{*}S_{2}^{B}(\openU(\frakl)Y, \Z)$.

\begin{prop}\label{modular_description}
We have the following $\CHecke{\LnplusY}{\frakp^{n}}$-module isomorphism
$$\Phi_{\ShiUprimeY}\isomor S_{2}^{B}(\openU Y, \Z)^{\oplus 2}_{0}/(\tilde{U}_{\frakl}^{2}-1).$$
\end{prop}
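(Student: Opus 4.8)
The plan is to derive the proposition from the first description \eqref{comp1}, $\Phi_{\ShiUprimeY}\isomor\Z[\cV(\cG)]_{0}/d_{*}d^{*}$, by pushing it through the combinatorial dictionary already set up. Under the identifications $\Z[\cE(\cG)]\isomor S_{2}^{B}(\openU(\frakl)Y,\Z)$ and $\Z[\cV(\cG)]\isomor S_{2}^{B}(\openU Y,\Z)^{\oplus 2}$ the maps $d_{*},d^{*}$ become $\delta_{*},\delta^{*}$, and $\Z[\cV(\cG)]_{0}=\mathrm{im}(d_{*})$ becomes $S_{2}^{B}(\openU Y,\Z)^{\oplus 2}_{0}=\mathrm{im}(\delta_{*})$; the operator $\tilde{U}_{\frakl}$ preserves the latter submodule since $\delta_{*}$ is $\Hecke_{B}(\frakl\nplus Y,\frakp^{n})$-equivariant. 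Thus it suffices to prove the equality of submodules of $S_{2}^{B}(\openU Y,\Z)^{\oplus 2}_{0}$
\[
\mathrm{im}(\delta_{*}\delta^{*})=(\tilde{U}_{\frakl}^{2}-1)\bigl(S_{2}^{B}(\openU Y,\Z)^{\oplus 2}_{0}\bigr).
\]

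First I would compute $\delta_{*}\delta^{*}$ explicitly. The maps $\alpha=s$ and $\beta=t$ of \eqref{source_target} are the two degeneracy maps $X_{B}(\openU(\frakl)Y)\to X_{B}(\openU Y)$ — the level-forgetting projection and its composite with translation by $\pi_{\frakl}^{-1}$ — each of degree $\Nm\frakl+1$, the valency of a vertex of $\Bruhat$. Tracing through the coset descriptions yields the standard relations on $S_{2}^{B}(\openU Y,\Z)$
\[
\alpha_{*}\alpha^{*}=\beta_{*}\beta^{*}=\Nm\frakl+1,\qquad\alpha_{*}\beta^{*}=\beta_{*}\alpha^{*}=T_{\frakl},
\]
the mixed composites being precisely the spherical operator $T_{\frakl}$, with no central ("diamond") contribution because of the quotient by $Y=\Fhatcross$. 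Substituting into $\delta_{*}\delta^{*}(x,y)=(\alpha_{*}\alpha^{*}x-\alpha_{*}\beta^{*}y,\,-\beta_{*}\alpha^{*}x+\beta_{*}\beta^{*}y)$ shows that $\delta_{*}\delta^{*}$ is the endomorphism of $S_{2}^{B}(\openU Y,\Z)^{\oplus 2}$ given by the matrix $\pMX{\Nm\frakl+1}{-T_{\frakl}}{-T_{\frakl}}{\Nm\frakl+1}$; its image (automatically inside $S_{2}^{B}(\openU Y,\Z)^{\oplus 2}_{0}$, being contained in $\mathrm{im}(\delta_{*})$) equals $\delta_{*}$ of the $\frakl$-old submodule $\alpha^{*}S_{2}^{B}(\openU Y,\Z)+\beta^{*}S_{2}^{B}(\openU Y,\Z)$ of $S_{2}^{B}(\openU(\frakl)Y,\Z)$.

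Next I would match this with $\tilde{U}_{\frakl}^{2}-1$. By its defining formula $\tilde{U}_{\frakl}$ has characteristic polynomial $t^{2}-T_{\frakl}t+\Nm\frakl$, hence obeys the Eichler relation $\tilde{U}_{\frakl}^{2}=T_{\frakl}\tilde{U}_{\frakl}-\Nm\frakl$; the same relation governs $U_{\frakl}$ on the $\frakl$-old submodule of $S_{2}^{B}(\openU(\frakl)Y,\Z)$, where $U_{\frakl}$ is invertible over $\Z[1/\Nm\frakl]$. Using $\tilde{U}_{\frakl}\delta_{*}=\delta_{*}U_{\frakl}$ one rewrites $(\tilde{U}_{\frakl}^{2}-1)\bigl(S_{2}^{B}(\openU Y,\Z)^{\oplus 2}_{0}\bigr)=\delta_{*}\bigl((U_{\frakl}^{2}-1)S_{2}^{B}(\openU(\frakl)Y,\Z)\bigr)$, so it remains to check that $(U_{\frakl}^{2}-1)S_{2}^{B}(\openU(\frakl)Y,\Z)$ and the $\frakl$-old submodule have the same image under $\delta_{*}$, i.e. agree modulo $\ker\delta_{*}$. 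Rationally $U_{\frakl}^{2}=1$ on the $\frakl$-new part (weight two, $\frakl$ exactly dividing the Iwahori level) and $U_{\frakl}^{2}-1$ is invertible on the $\frakl$-old part, so the two submodules have the same $\Q$-span; for the integral statement I would use $\ker\delta_{*}=\bigl(\frakl\text{-old submodule}\bigr)^{\perp}$ for the Atkin--Lehner pairing \eqref{pairing} together with $\delta^{*}\delta_{*}=\alpha^{*}\alpha_{*}+\beta^{*}\beta_{*}$. This gives the displayed identity and hence the proposition.

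The main obstacle is the explicit evaluation of $\delta_{*}\delta^{*}$ in the second step: one must fix an orientation of $\cG$ and track carefully the Atkin--Lehner twist $\pi_{\frakl}^{-1}$ in the target map $t$ and the $\Z/2\Z$-bookkeeping coming from the bipartite structure of $\Bruhat$, so as to be certain the mixed degeneracy composites produce $T_{\frakl}$ on the nose, with the correct sign and with the central operator genuinely trivialized. Once the matrix $\pMX{\Nm\frakl+1}{-T_{\frakl}}{-T_{\frakl}}{\Nm\frakl+1}$ is in place, the comparison with $\tilde{U}_{\frakl}^{2}-1$ is a formal consequence of the Eichler relation, the equivariance of $\delta_{*}$, and the self-duality of $S_{2}^{B}(\openU(\frakl)Y,\Z)$ recorded via \eqref{pairing}.
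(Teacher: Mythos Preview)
Your computation of $\delta_{*}\delta^{*}=\pMX{\Nm\frakl+1}{-T_{\frakl}}{-T_{\frakl}}{\Nm\frakl+1}$ is correct and is exactly the first step of the paper's proof. The divergence is in how you then compare with $\tilde{U}_{\frakl}^{2}-1$, and there your route is more complicated than necessary and leaves a gap.

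The paper's second step is a one-line matrix identity: writing $\tilde{U}_{\frakl}^{2}-1$ out as a $2\times 2$ matrix with entries in $\Z[T_{\frakl}]$, one finds
\[
\tilde{U}_{\frakl}^{2}-1=(\delta_{*}\delta^{*})\,\tau,\qquad \tau=\pMX{-1}{-T_{\frakl}}{0}{-1}.
\]
Since $\det\tau=1$, the matrix $\tau$ is an automorphism of $S_{2}^{B}(\openU Y,\Z)^{\oplus 2}$, so $\delta_{*}\delta^{*}$ and $\tilde{U}_{\frakl}^{2}-1$ have the same image there, and the proposition follows immediately from \eqref{comp1}. No Eichler relation for $U_{\frakl}$, no $\frakl$-old/new decomposition, no Ramanujan bound, and no self-duality via \eqref{pairing} is invoked.

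Your detour through $U_{\frakl}$ acting on $S_{2}^{B}(\openU(\frakl)Y,\Z)$ leaves the integral step genuinely open. You need $(U_{\frakl}^{2}-1)S_{2}^{B}(\openU(\frakl)Y,\Z)$ and the $\frakl$-old submodule $\mathrm{im}(\delta^{*})$ to agree modulo $\ker\delta_{*}$, but the justification you sketch (``use $\ker\delta_{*}=(\frakl\text{-old})^{\perp}$ together with $\delta^{*}\delta_{*}=\alpha^{*}\alpha_{*}+\beta^{*}\beta_{*}$'') is not an argument: the pairing \eqref{pairing} has denominators, the integral old/new decomposition is not a direct sum, and there is an Eisenstein contribution lurking (the cokernel $S_{2}^{B}(\openU Y,\Z)^{\oplus 2}/S_{2}^{B}(\openU Y,\Z)^{\oplus 2}_{0}$ is in general a nonzero finite Eisenstein module). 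All of these issues are bypassed entirely by the factorization above.
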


\begin{proof}
First we notice that a direct calculation shows that $$\delta_{*}\delta^{*}=\pMX{\Nm\frakl+1}{-T_{\frakl}}{-T_{\frakl}}{\Nm\frakl+1}.$$  Since $\tau=\pMX{-1}{-T_{\frakl}}{0}{-1}$ is an automorphism of $S^{B}_{2}(\openU Y, \Z)^{\oplus 2}$ and $\tilde{U}_{\frakl}^{2}-1=\delta_{*}\delta^{*}\tau$, we get the desired isomorphism from the first description of $\Phi_{\ShiUprimeY}$  in \eqref{comp1}.
\end{proof}

\subsection{$\frakl$-adic uniformization of Jacobian with purely toric reduction}
In this section, we review the theory of $l$-adic uniformization of the Jacobian of a Shimura curve at a prime of purely toric reduction following \cite[section 1.7]{Nekovar:level}. Let $B^{\prime}$ be the indefinite quaternion algebra with discriminant $\frakl\Delta$ and let $M_{n}^{[\frakl]}=\ShiUprimeY$ be the (quotient) Shimura curve defined in section 4.1.1. Let $J^{[\frakl]}_{n}$ be the Jacobian of $M_{n}^{[\frakl]}$ and let $\Phi^{[\frakl]}$ be the component group of the \Neron model of $J^{[\frakl]}_{n}$ over $\cO_{F,\frakl^{2}}$. Let $\cK$ be a finite unramified extension of $F_{\frakl^{2}}$. Since $J^{[\frakl]}_{n}$ has purely toric reduction over $\cO_{F_{\frakl^{2}}}$, we have the following uniformization exact sequence of $J^{[\frakl]}_{n}$:
\begin{equation}\label{l-unifor}
0\rightarrow X^{[\frakl]} \rightarrow \dual{X^{[\frakl]}}\otimes \bar{F}^{\cross}_{\frakl} \rightarrow J^{[\frakl]}_{n}(\bar{F}_{\frakl})\rightarrow 0.
\end{equation}
Here we denote by $X^{[\frakl]}$ the character group of the connected component of the special fiber of $J^{[\frakl]}_{n}$. Let $m$ be an integer, the above exact sequence yields immediately the following exact sequence of $J^{[\frakl]}_{n}$:
\begin{equation}\label{m-descent}
0\rightarrow \dual{X^{[\frakl]}}\otimes \mu_{m}\rightarrow J^{[\frakl]}_{n}[m]\rightarrow X^{[\frakl]}/m\rightarrow 0
\end{equation} 
where $\mu_{m}$ is the group of $m$-th root of unity. 

Combining Grothendieck's description of the component group with the canonical map $J^{[\frakl]}_{n}\rightarrow \Phi^{[\frakl]}$, we have the following commutative diagram :
\begin{equation}
\begin{CD}
0@>>> X^{[\frakl]}@>>>\dual{X^{[\frakl]}}\otimes \cK^{\cross}@>>> J^{[\frakl]}_{n}(\cK^{\cross})@>>>0\\
 @. @|         @VV\ord_{\frakl} V @VVV @.\\
0@>>> X^{[\frakl]} @>>> \dual{X^{[\frakl]}} @>>> \Phi^{[\frakl]}@>>>0.
\end{CD}
\end{equation}
Let $\text{Kum}: J^{[\frakl]}_{n}(\cK)/m \rightarrow H^{1}(\cK, J^{[\frakl]}_{n}[m])$ be the Kummer map, we can fit the Kummer map in the diagram below:
\begin{equation}\label{Kum}
\begin{CD}
 X^{[\frakl]}\otimes \Z/m\Z @>>>\dual{X^{[\frakl]}}\otimes \cK^{\cross}\otimes\Z/m\Z @>>> J^{[\frakl]}_{n}(\cK)\otimes \Z/m\Z @>>>0\\
  @|         @| @VV\text{Kum} V @.\\
 X^{[\frakl]}\otimes \Z/m\Z @>>> H^{1}(\cK, \dual{X^{[\frakl]}}\otimes \mu_{m}) @>>> H^{1}(\cK, J^{[\frakl]}_{n}[m]).
\end{CD}
\end{equation}
From this diagram it is clear that the image of the Kummer map lies in the image of the map $ H^{1}(\cK, \dual{X^{[\frakl]}}\otimes \mu_{m})\rightarrow H^{1}(\cK, J^{[\frakl]}_{n}[m])$.

\subsection{CM points} Recall we have an embedding  $t^{\prime}: K\rightarrow \Bprime$ \eqref{embedd} which induces in turn an embedding $t^{\prime}(K^{\cross})\subset t^{\prime}(K^{\cross}_{\sigma_{1}})\subset \Bprimecross_{\sigma_{1}}\isomor \GL_{2}(\R)$. There are two points on $\cH^{\pm}=\mathbb{P}^{1}(\C)-\mathbb{P}^{1}(\R)$ fixed by this action and we choose one of them and call it $z^{\prime}$. Then the set of $\frakl$-unramified CM points is defined as
\begin{equation}\label{CM}
CM^{\frakl-\ur}(\ShiUprimeY, K)=\{[z^{\prime}, b^{\prime}]: b^{\prime}\in \Bprimehatcross , b^{\prime}_{\frakl}=1\}.
\end{equation} 

Let $\rec_{K}: \widehat{K}^{\cross}\rightarrow G^{ab}_{K}$ be the geometrically normalized reciprocity map. Then Shimura's reciprocity law yields
\begin{equation}\label{shimura}
\rec_{K}(a)[z^{\prime}, b^{\prime}]= [z^{\prime}, t^{\prime}(a)b^{\prime}].
\end{equation}
Since $Y$ contains $F^{\cross}_{\frakl}$, we have 
\begin{equation}\label{l-unramified}
\iota_{\frakl}: CM^{\frakl-\ur}(\ShiUprimeY, K)\hookrightarrow \ShiUprimeY(K_{\frakl}).
\end{equation} 
Let $D\in \Div^{0}\ShiUprimeY(K_{\frakl})$ be a divisor supported on $CM^{\frakl-\ur}(\ShiUprimeY, K)$. The specialization map can be conveniently calculated by
\begin{equation}\label{specialization}
r_{\cV}(\sum_{i}n_{i}[z^{\prime},b^{\prime}_{i}]_{\openUprime Y})=\sum_{i}n_{i}[\varphi_{B,\Bprime}^{-1}(b^{\prime}_{i})]_{\openU Y}.
\end{equation}

\section{Level raising and Euler system of Heegner points}

We recall the set up in the beginning of the introduction. Let $f\in S_{k}(\frakn,1)$ be a Hilbert modular form of level $\frakn$ and weight $k$. We assume that $f$ is a normalized newform. Then $f$ gives rise to a morphism $\eta_{f}: \Hecke_{\frakn}\rightarrow \C$ where $\Hecke_{\frakn}$ is the Hecke algebra acting on $S_{k}(\frakn,1)$. Let $\{{\Heckepara}_{v}(f)\}$ be the system of Hecke eigenvalues attached to $f$. Then we denote by  $E_{f}$  the $p$-adic Hecke field with ring of integers $\cO_{f}$ and uniformizer $\uf$. We set
\begin{equation}\label{satake}
\begin{aligned}
&\Satakepara_{v}(f)= \text{the unit root of $X^{2}-{\Heckepara}_{v}(f)X+\Nm {v}^{k-1}$}\text{ for $v\mid p$},\\
&\Satakepara_{v}(f)= {\Heckepara}_{v}(f)\Nm{v}^{\frac{2-k}{2}}\text{ for $v\nmid p$}.\\
\end{aligned}
\end{equation}
We define a ring homorphism $\lambda_{f}: \CHecke{\nplus }{ \frakp^{n}}\otimes \cO_{f}=\CHecke{\nplus}{ \frakp^{n}}_{\cO_{f}}\rightarrow \cO_{f}$ by
\begin{equation}
\begin{aligned}
& \lambda_{f}(T_{v})=\Satakepara_{v}(f),\\
& \lambda_{f}(S_{v})=1 \text{ for $v\nmid p\frakn$},\\
&\lambda_{f}(U_{v})=\Satakepara_{v}(f)\text{ for $v\mid p\frakn$},\\
&\lambda_{f}(\<a\>)=a^{\frac{2-k}{2}} \text{ for $a\in \cO^{\cross}_{F,\frakp}$}.
\end{aligned}
\end{equation} 

\subsection{Level raising} let $n$ be a positive integer and recall that $\cO_{f,n}=\cO_{f}/\uf^{n}$ and let $\frakl$ be an $n$-admissible prime.

\begin{defn}\label{admissible_form}
An admissible form $\cD=(\Delta, g)$ is a pair consisting of a square free product of primes in $F$ inert in $K$ and an eigenform $g\in S^{B}_{2}(\openU Y, \cO_{f,n})$ on the definite quaternion algebra $B=B_{\Delta}$ with discriminant $\Delta$ over $F$, of level $\openU=\openU_{\nplus, \frakp^{n}}$\cf \eqref{open_cpt} and which is fixed by the action of $Y$. Then $\cD$ is admissible if the following conditions hold:
\begin{mylist}\label{admisible form}
\item{$\nminus \mid \Delta$ and $\Delta/\nminus$ is a product of $n$-admissible primes.}
\item{$g\pmod{\uf}\neq 0.$}
\item{The eigenform $g\equiv f \pmod{\uf^{n}}$. This means that $\lambda_{f}\equiv \lambda_{g}\pmod{\uf^{n}}$  where $\lambda_{g}:\CHecke{\nplus Y}{\frakp^{n}}_{\cO_{f}}\rightarrow \cO_{f,n}$ is the character on the Hecke algebra $\CHecke{\nplus Y}{\frakp^{n}}$ associated to $g$ mod $\uf^{n}$.}
\end{mylist}
\end{defn}

Let $\cD=(\Delta, g)$ be such an admissible form and let $\frakl\nmid \Delta$ be an $n$-admissible prime for $f$ with $\epsilon_{\frakl}\Satakepara_{\frakl}(f)\equiv \Nm{\frakl}+1\pmod{\uf^{n}}$. Define $\lambda_{g}^{[\frakl]}: \CHecke{\frakl\nplus Y} {\frakp^{n}}_{\cO_{f}} \rightarrow \cO_{f,n}$ by  $\lambda^{[\frakl]}_{g}(U_{\frakl})=\epsilon_{\frakl}$. We define $\cI_{g}$ to be the kernel of $\lambda_{g}$ and $\cI^{[\frakl]}_{g}$ to be the kernel of $\lambda^{[\frakl]}_{g}$. 
For $g$ as above, it induces a surjective map $\psi_{g}: S^{B}_{2}(\openU Y,\cO)/\cI_{g}\rightarrow \cO_{f,n}$ by $\psi_{g}(h)=\<g,h\>_{\openU}$. The following proposition follows from the multiplicity one result \thmref{TW}. However the multiplicity one result is proved for Hilbert modular forms with prime to $p$ level, in order to handle the problem in hand, we use Hida's theory based on the idea of \cite[Proposition 4.2]{Chida_Hsieh:main}. 

\begin{prop}\label{psig}
Assume $(\CR)$ and $\nimpr$ , we have an isomorphism $$\psi_{g}: S^{B}_{2}(\openU Y,\cO)/\cI_{g}\rightarrow \cO_{f,n}.$$
\end{prop}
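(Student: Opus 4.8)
The plan is to deduce the isomorphism from a multiplicity-one (freeness of rank one) statement for the space of weight-two quaternionic forms, following the pattern of Bertolini--Darmon, Pollack--Weston and Chida--Hsieh. Write $\Hecke=\CHecke{\nplus Y}{\frakp^{n}}_{\cO_{f}}$ for the Hecke algebra acting on $S^{B}_{2}(\openU Y,\cO)$, with $\cO=\cO_{f}$ and $B=B_{\Delta}$, and recall that $\lambda_{g}\colon\Hecke\to\cO_{f,n}$ is surjective (its image is an $\cO_{f}$-subalgebra of $\cO_{f,n}$ containing $1$), so that $\Hecke/\cI_{g}\cong\cO_{f,n}$ and in particular $\uf^{n}\in\cI_{g}$. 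The map $\psi_{g}$ is well defined because the Hecke operators in $\CHecke{\nplus Y}{\frakp^{n}}$ are self-adjoint for $\langle\,,\,\rangle_{\openU}$, and — as already recorded before the statement — it is surjective because $g\not\equiv 0\pmod{\uf}$ and $\langle\,,\,\rangle_{\openU}$ is perfect. It therefore suffices to prove that $S^{B}_{2}(\openU Y,\cO)/\cI_{g}$ is \emph{free of rank one} over $\cO_{f,n}$; granting this, $\psi_{g}$ is a surjection of $\cO_{f,n}$-modules of the same finite length and hence an isomorphism.

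First I would localise. Let $\m\subset\Hecke$ be the kernel of the composite $\Hecke\xrightarrow{\lambda_{g}}\cO_{f,n}\twoheadrightarrow\cO_{f}/\uf$, a maximal ideal; it is the non-Eisenstein maximal ideal attached to $\bar{\rho}_{f}$. Since $\cI_{g}\subseteq\m$ and $\Hecke/\cI_{g}\cong\cO_{f,n}$ is Artinian local, $\cI_{g}$ is $\m$-primary, so the natural map $S^{B}_{2}(\openU Y,\cO)/\cI_{g}\to S^{B}_{2}(\openU Y,\cO)_{\m}/\cI_{g}$ is an isomorphism. Hence it is enough to show that $S^{B}_{2}(\openU Y,\cO)_{\m}$ is free of rank one over $\Hecke_{\m}$: then $S^{B}_{2}(\openU Y,\cO)_{\m}/\cI_{g}\cong\Hecke_{\m}/\cI_{g}\cong\cO_{f,n}$, as needed.

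Next I would invoke the multiplicity-one theorem \thmref{TW}. The hypotheses $(\CR)$ — especially the irreducibility of $\bar{\rho}_{f}|_{G_{F(\sqrt{p^{*}})}}$ and parts $(3)$, $(4)$ — together with $\nimpr$ (the Diamond--Taylor ramification condition at primes exactly dividing $\nplus$) are exactly what is needed to ensure that $\m$ is non-Eisenstein and that the Taylor--Wiles argument applies, so that \thmref{TW} yields freeness of rank one over the Hecke algebra for the space of weight-two quaternionic forms on $B$ \emph{at level prime to $\frakp$}. The remaining point is that $\openU=\openU_{\nplus,\frakp^{n}}$ carries level $\frakp^{n}$ at $\frakp$; to pass to this situation I would follow \cite[Proposition 4.2]{Chida_Hsieh:main} and use Hida's theory of ordinary $p$-adic automorphic forms on $B$. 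Since $\lambda_{g}$ is ordinary at $\frakp$ — its $U_{\frakp}$-value is a unit, by (ORD) and the definition of $\lambda_{f}$ via the unit root of the Hecke polynomial — the ideal $\cI_{g}$ factors through the ordinary submodule $e^{\mathrm{ord}}S^{B}_{2}(\openU_{\nplus,\frakp^{n}}Y,\cO)$, and Hida's control theorem identifies the $\m$-localisation of this submodule, together with its Hecke action, with a specialisation of the big ordinary Hecke module over the $\frakp$-power-level tower. Freeness of rank one of the big module follows from \thmref{TW} by Nakayama's lemma and descends under specialisation, giving that $e^{\mathrm{ord}}S^{B}_{2}(\openU Y,\cO)_{\m}$ — which is all that $\cI_{g}$ sees — is free of rank one over $\Hecke_{\m}$, completing the argument.

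The step I expect to be the main obstacle is this last one: transporting the prime-to-$\frakp$ multiplicity-one statement \thmref{TW} to the $\frakp^{n}$-level situation. The delicate bookkeeping is to check that the ordinary projector $e^{\mathrm{ord}}$, the diamond operators at $\frakp$ and the $Y$-invariance are all compatible with Hida's control theorem, and that the residual representation stays non-Eisenstein throughout the Hida tower, so that multiplicity one at a single arithmetic point propagates to the whole module. This is precisely the issue addressed in \cite[Proposition 4.2]{Chida_Hsieh:main}, whose argument I would adapt to the Hilbert setting.
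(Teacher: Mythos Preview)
Your proposal is correct and takes essentially the same approach as the paper: both reduce to the multiplicity-one result \thmref{TW} at weight $k$ and prime-to-$\frakp$ level via Hida theory, exactly as in \cite[Proposition 4.2]{Chida_Hsieh:main}. The only cosmetic difference is that the paper argues directly with the quotient $S^{B}_{2}(\openU Y,\cO_{f})/(P_{k},\uf^{n})\cong S^{B}_{k}(\widehat{R}^{\times}_{\nplus}Y,\cO_{f})_{\frakm}/(\uf^{n})$ (using the $\frakp$-stabilisation isomorphism under $\PO$) to get cyclicity of $S^{B}_{2}(\openU Y,\cO_{f})/\cI_{g}$, and then finishes with the self-adjointness of the Hecke action under $\langle\,,\,\rangle_{\openU}$, rather than passing through freeness of the full localised module $S^{B}_{2}(\openU Y,\cO)_{\frakm}$ over $\Hecke_{\frakm}$ as you do; either route yields the isomorphism.
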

\begin{proof}
Let $P_{k}$ be the ideal contained in $\cI_{g}$ generated by the set 
$$\{\<a\>-a^{\frac{k-2}{2}}: a\in \cO^{\cross}_{F,\frakp}\}.$$
Let $e=\lim_{n\rightarrow \infty} U^{n!}_{\frakp}$ be Hida's ordinary projector. Then by the same computation as in the proof of \cite[Theorem 8.1]{Hida:p-adic-Hecke-algebra} , we have an isomorphism
$$eS^{B}_{2}(\openU Y, \cO_{f,n})[P_{k}]=eS^{B}_{k}(\widehat{R}^{\cross}_{\frakp\nplus} Y, \cO_{f,n} ).$$
Taking the Pontryagin dual yields another isomorphism
$$eS^{B}_{2}(\openU Y, \cO_{f})/(P_{k}, \uf^{n})=eS^{B}_{k}(\widehat{R}^{\cross}_{\frakp\nplus} Y, \cO_{f})/(\uf^{n}).$$
Thus there is an action of $e\CHecke{\nplus Y}{\frakp^{n}}$ on $eS^{B}_{k}(\widehat{R}^{\cross}_{\frakp\nplus} Y, \cO_{f})/(\uf^{n})$. 
Let $e^{\circ}$ be the ordinary projector $\lim_{n\rightarrow \infty} T^{n!}_{\frakp}$, then the $\frakp$-stablization map induces an isomorphism 
$$e^{\circ}S^{B}_{k}(\widehat{R}^{\cross}_{\nplus} Y,\cO_{f})_{\frakm}\isomor eS^{B}_{k}(\widehat{R}^{\cross}_{\frakp\nplus} Y, \cO_{f})_{\frakm}$$
by the assumption $\PO$.  We define a surjective map $e\CHecke{\nplus Y}{\frakp^{n}}_{\cO_{f}}\rightarrow e_{0}\Hecke_{B}(\nplus)_{\cO_{f}}$ by sending $U_{\frakp}$ to $u_{\frakp}$, a unit root of $x^{2}-T_{\frakp}x-\Nm{\frakp}^{k-1}$ in $e_{0}\Hecke_{B}(\nplus)_{\cO_{f}}$.
Hence there is also an action of $e\CHecke{\nplus Y}{\frakp^{n}}_{\cO_{f}}$ on $e^{\circ}S^{B}_{k}(\widehat{R}^{\cross}_{\nplus} Y,\cO_{f})$.  
Since $U_{\frakp}-\alpha_{\frakp}(f)\in \frakm$ for the maximal ideal $\frakm$ containing $\cI_{g}$, we have the following sequence of isomorphisms
$$S^{B}_{2}(\openU Y,\cO_{f})_{\frakm}/(P_{k}, \uf^{n})\isomor S^{B}_{k}(\widehat{R}^{\cross}_{\frakp\nplus} Y, \cO_{f})_{\frakm}/(\uf^{n})\isomor S^{B}_{k}(\widehat{R}^{\cross}_{\nplus} Y,\cO_{f})_{\frakm}/(\uf^{n}).$$
We will show later that $S^{B}_{k}(\widehat{R}^{\cross}_{\nplus} Y, \cO_{f})_{\frakm}$ is a cyclic $\Hecke(\nplus Y)_{\frakm}$-module assuming $(\CR)$ and $\nimpr$ \cf \thmref{TW}. It follows then $S^{B}_{2}(\openU Y, \cO_{f})/\cI_{g}$ is generated by some $h$ as a $\CHecke{\nplus Y}{\frakp^{n}}$ module. Since $\psi_{g}$ is surjective, $\psi_{g}(h)\in \cO^{\cross}_{f,n}$. As the action of $\CHecke{\nplus Y}{\frakp^{n}}$ is self-adjoint with respect to the pairing $\<,\>_{\openU}$, it follows that the kernel of the map $\psi_{g}$ is precisely $\cI_{g} S^{B}_{2}(\openU Y, \cO_{f})$. 
\end{proof}

Let $B^{\prime}$ be the indefinite quaternion algebra with discriminant $\frakl\Delta$ and let $M_{n}^{[\frakl]}=\ShiUprimeY$ be the (quotient) Shimura curve defined as in section 4.1.1. Let $J^{[\frakl]}_{n}$ be the Jacobian of $M_{n}^{[\frakl]}$ and let $\Phi^{[\frakl]}$ be the component group of the \Neron model of $J^{[\frakl]}_{n}$ over $\cO_{F,\frakl^{2}}$.

\begin{thm}\label{one_dim}
We have an isomorphism $$\Phi^{[\frakl]}_{\cO_{f}}/\cI^{[\frakl]}_{g}\isomor S^{B}_{2}(\openU Y,\cO_{f})/\cI_{g}\MapR{\psi_{g}}{} \cO_{f,n}.$$
\end{thm}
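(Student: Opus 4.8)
The plan is to read the statement off from the modular description of the component group in Proposition~\ref{modular_description}, a Hensel factorization of the Hecke operator $\tilde{U}_{\frakl}$ governed by the $n$-admissibility of $\frakl$, and the multiplicity-one isomorphism $\psi_{g}\colon S^{B}_{2}(\openU Y,\cO_{f})/\cI_{g}\isomor\cO_{f,n}$ of Proposition~\ref{psig}. Since the rightmost arrow of the asserted composite is exactly that isomorphism, it suffices to produce a natural $\CHecke{\LnplusY}{\frakp^{n}}$-module isomorphism $\Phi^{[\frakl]}_{\cO_{f}}/\cI^{[\frakl]}_{g}\isomor S^{B}_{2}(\openU Y,\cO_{f})/\cI_{g}$.

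First I would base-change Proposition~\ref{modular_description} along $\Z\hookrightarrow\cO_{f}$ (flatness lets the formation of the subscript $0$, the image of $\delta_{*}$, commute with $\otimes\,\cO_{f}$), getting $\Phi^{[\frakl]}_{\cO_{f}}\isomor S^{B}_{2}(\openU Y,\cO_{f})^{\oplus 2}_{0}/(\tilde{U}_{\frakl}^{2}-1)$. Because $\epsilon_{\frakl}\in\{\pm1\}$ and $U_{\frakl}-\epsilon_{\frakl}\in\cI^{[\frakl]}_{g}$, the factorization $\tilde{U}_{\frakl}^{2}-1=(\tilde{U}_{\frakl}-\epsilon_{\frakl})(\tilde{U}_{\frakl}+\epsilon_{\frakl})$ shows $(\tilde{U}_{\frakl}^{2}-1)\,S^{B}_{2}(\openU Y,\cO_{f})^{\oplus 2}_{0}\subseteq\cI^{[\frakl]}_{g}\,S^{B}_{2}(\openU Y,\cO_{f})^{\oplus 2}_{0}$, so this relation is redundant mod $\cI^{[\frakl]}_{g}$ and $\Phi^{[\frakl]}_{\cO_{f}}/\cI^{[\frakl]}_{g}\isomor S^{B}_{2}(\openU Y,\cO_{f})^{\oplus 2}_{0}/\cI^{[\frakl]}_{g}$. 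I would then remove the subscript $0$: the chain complex of the dual reduction graph $\cG$ gives a short exact sequence of $\CHecke{\LnplusY}{\frakp^{n}}$-modules
\[
0\to S^{B}_{2}(\openU Y,\cO_{f})^{\oplus 2}_{0}\to S^{B}_{2}(\openU Y,\cO_{f})^{\oplus 2}\to C\to 0,
\]
where $C$ is a finite free $\cO_{f}$-module carrying only Eisenstein Hecke eigensystems. By $(\CR)(2)$ the residual representation $\bar{\rho}_{f}$ is irreducible, so $C$ is supported away from the maximal ideal $\ml$ attached to $g$; hence all groups $\Tor^{\CHecke{\LnplusY}{\frakp^{n}}}_{i}(C,\CHecke{\LnplusY}{\frakp^{n}}/\cI^{[\frakl]}_{g})$ vanish, tensoring the sequence over $\CHecke{\LnplusY}{\frakp^{n}}$ with $\CHecke{\LnplusY}{\frakp^{n}}/\cI^{[\frakl]}_{g}$ keeps it exact, and $S^{B}_{2}(\openU Y,\cO_{f})^{\oplus 2}_{0}/\cI^{[\frakl]}_{g}\isomor S^{B}_{2}(\openU Y,\cO_{f})^{\oplus 2}/\cI^{[\frakl]}_{g}$.

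It remains to evaluate $S^{B}_{2}(\openU Y,\cO_{f})^{\oplus 2}/\cI^{[\frakl]}_{g}$. Modulo the prime-to-$\frakl$ part of $\cI^{[\frakl]}_{g}$, the module $S^{B}_{2}(\openU Y,\cO_{f})$ becomes a module over a local ring $R$ in which the characteristic polynomial $X^{2}-T_{\frakl}X+\Nm\frakl$ of $\tilde{U}_{\frakl}=\bigl(\begin{smallmatrix}0&-\Nm\frakl\\1&T_{\frakl}\end{smallmatrix}\bigr)$ factors, by Hensel's lemma, as $(X-u_{1})(X-u_{2})$ with $u_{1}\equiv\epsilon_{\frakl}$ and $u_{2}\equiv\epsilon_{\frakl}\Nm\frakl\pmod{\uf}$ — the factors being coprime mod $\uf$ precisely because $\Nm\frakl^{2}-1$ is a unit, one of the defining conditions of an $n$-admissible prime. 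Hence $S^{B}_{2}(\openU Y,\cO_{f})^{\oplus 2}$ splits over $R$ into the $u_{1}$- and $u_{2}$-eigenspaces of $\tilde{U}_{\frakl}$, and $(x,y)\mapsto y$ carries each isomorphically onto $S^{B}_{2}(\openU Y,\cO_{f})$ (with $x=-u_{2}y$, resp. $x=-u_{1}y$). Adjoining the remaining relation $U_{\frakl}=\epsilon_{\frakl}$ of $\cI^{[\frakl]}_{g}$ annihilates the $u_{2}$-eigenspace ($u_{2}-\epsilon_{\frakl}$ being a unit, again by $\Nm\frakl\not\equiv1$) and identifies the $u_{1}$-eigenspace modulo $\cI^{[\frakl]}_{g}$ with $S^{B}_{2}(\openU Y,\cO_{f})/\cI_{g}$, using that $\cI_{g}$ is generated over its prime-to-$\frakl$ part by $T_{\frakl}-\epsilon_{\frakl}(1+\Nm\frakl)$, and that this congruence is the $n$-admissibility condition $\uf^{n}\mid\Nm\frakl^{\frac{k}{2}}+\Nm\frakl^{\frac{k-2}{2}}-\epsilon_{\frakl}\bfa_{\frakl}(f)$ after normalizing $\Satakepara_{\frakl}(f)=\bfa_{\frakl}(f)\Nm\frakl^{\frac{2-k}{2}}$. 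Composing with $\psi_{g}$ gives the theorem.

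I expect the third step to be the main obstacle: one must keep straight three ideals at once — $\cI^{[\frakl]}_{g}$ in the level-$\frakl\nplus$ Hecke algebra, $\cI_{g}$ in the level-$\nplus$ algebra, and the prime-to-$\frakl$ ideal common to both — and verify that, on the $u_{1}$-eigenspace, adjoining $U_{\frakl}-\epsilon_{\frakl}$ cuts things down to exactly $\cI_{g}$, neither more nor less. This is the higher-weight Hilbert analogue of Ribet's level-raising computation, and it is where all the arithmetic conditions in Definition~\ref{n_admissible} — inertness in $K$, $\Nm\frakl^{2}\not\equiv1$, and the eigenvalue congruence — are consumed.
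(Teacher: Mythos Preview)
Your proof is correct and follows essentially the same route as the paper: invoke the modular description of \propref{modular_description}, drop the subscript $0$ because the cokernel is Eisenstein (the paper localizes at $\frakm^{[\frakl]}$ and cites \cite[Prop.~1.5.9(1)]{Nekovar:level} for this, which is exactly your $(\CR)(2)$ argument), and then collapse two copies of $S^{B}_{2}(\openU Y,\cO_{f})$ to one. The only difference is in that last step: rather than your Hensel eigenspace decomposition, the paper observes that after localization $\tilde U_{\frakl}+\epsilon_{\frakl}$ is a unit, so $S^{2}/(\tilde U_{\frakl}^{2}-1)=S^{2}/(\tilde U_{\frakl}-\epsilon_{\frakl})$, and then a direct $2\times 2$ matrix calculation gives $S^{\oplus 2}/(\tilde U_{\frakl}-\epsilon_{\frakl})\cong S/(\epsilon_{\frakl}T_{\frakl}-\Nm\frakl-1)$, which equals $S/\cI_{g}$ since $\epsilon_{\frakl}T_{\frakl}-\Nm\frakl-1\in\cI_{g}$ by $n$-admissibility---this sidesteps the bookkeeping you flagged as the main obstacle.
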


\begin{proof}
Let $\frakm^{[\frakl]}$ be the maximal ideal of $\CHecke{\frakl\nplus}{\frakp^{n}}=\Hecke^{[\frakl]}$ containing $\cI^{[\frakl]}_{g}$ and write $S^{2}=S^{B}_{2}(\openU Y, \cO_{f})^{\oplus 2}_{\frakm^{[\frakl]}}$. Then by \propref{modular_description}, we have $$(\Phi^{[\frakl]}_{\cO_{f}})_{\frakm^{[\frakl]}}\isomor S^{2}/(\tilde{U}^{2}_{\frakl}-1)\isomor S^{2}/(\tilde{U}-\epsilon_{\frakl}).$$ Note it follows from \cite[Proposition 1.5.9(1)] {Nekovar:level} that the quotient $S^{B}_{2}(\openU Y, \cO_{f})/S^{B}_{2}(\openU Y,\cO_{f})_{0}$ is Eisenstein, while $\frakm^{[\frakl]}$ is not Eisenstein. Thus $$\Phi^{[\frakl]}_{\cO_{f}}/\cI^{[\frakl]}_{g}\isomor S^{2}/(\tilde{U}-\epsilon_{\frakl})/ \cI_{g}^{[\frakl]}\isomor (S^{B}_{2}(\openU Y, \cO_{f})^{\oplus 2}/(\tilde{U}_{\frakl}-\epsilon_{\frakl}))\otimes \Hecke^{[\frakl]}_{\frakm^{[\frakl]}}/\cI^{[\frakl]}_{g}.$$
But it is easy to show that 
\begin{equation}
\begin{aligned}
& (S^{B}_{2}(\openU Y, \cO_{f})^{\oplus 2}/(\tilde{U}_{\frakl}-\epsilon_{\frakl}))\otimes \Hecke^{[\frakl]}_{\frakm^{[\frakl]}}/ \cI_{g}^{[\frakl]}\\
&\isomor (S^{B}_{2}(\openU Y, \cO_{f})/(\epsilon_{\frakl}T_{\frakl}-\Nm\frakl-1))\otimes \Hecke_{\frakm}/\cI_{g}\\
&\isomor S^{B}_{2}(\openU Y,\cO_{f})/ \cI_{g}.\\
\end{aligned}
\end{equation}
This completes the proof in view of \propref{psig}.
\end{proof}

Let $T_{p}(J^{[\frakl]}_{n})=\prolim_{m}\Jnl[p^{m}](\bar{F})$ be the $p$-adic Tate module of $\Jnl$

\begin{thm}\label{level_raising}
We have an isomorphism of $G_{F}$-modules
$$T_{p}(\Jnl)_{\cO_{f}}/\cI^{[\frakl]}_{g}\isomor T_{f,n}.$$
\end{thm}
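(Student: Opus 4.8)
The plan is to run Ribet's level-raising argument in the form used in \cite[\S5]{Bertolini_Darmon:IMC_anti}, feeding in the \v{C}erednik--Drinfeld uniformization of $\Jnl$ given by the exact sequences \eqref{l-unifor}, \eqref{m-descent} together with the component-group computation \thmref{one_dim}. First I would take $\prolim_{m}$ in \eqref{m-descent} (equivalently, pass to $p$-adic Tate modules in \eqref{l-unifor}, using $T_{p}X^{[\frakl]}=0$): since $\prolim_{m}(\dual{X^{[\frakl]}}\otimes\mu_{p^{m}})=\dual{X^{[\frakl]}}\otimes\Z_{p}(1)$ and $\prolim_{m}(X^{[\frakl]}/p^{m})=X^{[\frakl]}\otimes\Z_{p}$, this yields a $\Hecke^{[\frakl]}$-equivariant short exact sequence of $G_{F_{\frakl}}$-modules $0\to\dual{X^{[\frakl]}}\otimes\Z_{p}(1)\to T_{p}(\Jnl)\to X^{[\frakl]}\otimes\Z_{p}\to 0$. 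Tensoring with $\cO_{f}$, localizing at the maximal ideal $\frakm^{[\frakl]}$ of $\Hecke^{[\frakl]}$ containing $\cI^{[\frakl]}_{g}$, and applying $-\otimes_{\Hecke^{[\frakl]}}\Hecke^{[\frakl]}/\cI^{[\frakl]}_{g}$: since $\uf^{n}\in\cI^{[\frakl]}_{g}$ and $\Hecke^{[\frakl]}/\cI^{[\frakl]}_{g}=\cO_{f,n}$, everything in sight becomes a finite $\cO_{f,n}$-module and the sequence stays right exact.

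The heart of the matter is to identify the two outer terms after this reduction. Here I would use the graph-theoretic and modular description of $X^{[\frakl]}$, and of $\dual{X^{[\frakl]}}$ via the monodromy pairing, in terms of the dual reduction graph $\cG$ (\cf \propref{modular_description} and the surrounding discussion, following \cite[\S1.5--1.7]{Nekovar:level}): $X^{[\frakl]}_{\frakm^{[\frakl]}}$ is the $\frakl$-new quaternionic space, and the monodromy pairing identifies $\dual{X^{[\frakl]}}_{\cO_{f}}/\cI^{[\frakl]}_{g}$ with $\Hom_{\cO_{f,n}}(X^{[\frakl]}_{\cO_{f}}/\cI^{[\frakl]}_{g},\cO_{f,n})$. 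The same multiplicity-one input that underlies \propref{psig} and \thmref{one_dim} (non-Eisenstein-ness of $\frakm^{[\frakl]}$, \cf \cite[Proposition 1.5.9]{Nekovar:level}, together with Hida theory to account for the weight) then shows $X^{[\frakl]}_{\cO_{f}}/\cI^{[\frakl]}_{g}$ is free of rank one over $\cO_{f,n}$ --- this is precisely the incarnation of Ribet's level raising, and is where the $n$-admissibility of $\frakl$ is used. Consequently both outer terms of the reduced sequence are free of rank one over $\cO_{f,n}$; a length count forces the left arrow to remain injective, so $T:=T_{p}(\Jnl)_{\cO_{f}}/\cI^{[\frakl]}_{g}$ is free of rank two over $\cO_{f,n}$, and it sits in an exact sequence of unramified $G_{F_{\frakl}}$-modules whose two graded pieces have $\Frob_{\frakl}$ acting by $\epsilon_{\frakl}\Nm\frakl$ and $\epsilon_{\frakl}$ (the twist by $\Z_{p}(1)$ supplying the factor $\Nm\frakl$, the Atkin--Lehner operator $\tilde U_{\frakl}\equiv\epsilon_{\frakl}$ on $X^{[\frakl]}/\cI^{[\frakl]}_{g}$ the sign); since $\Nm\frakl\not\equiv1\pmod p$ this sequence splits, and the outcome coincides with $T_{f,n}|_{G_{F_{\frakl}}}$ as read off from \defref{n_admissible}.

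It then remains to promote this to a $G_{F}$-isomorphism $T\isomor T_{f,n}$. Both are free of rank two over the Artinian local ring $\cO_{f,n}$, carry a continuous $G_{F}$-action unramified outside $\frakl\Delta p$, and --- by the Eichler--Shimura congruence relations on $M^{[\frakl]}_{n}$, the Jacquet--Langlands correspondence, and the congruence of Hecke systems $\lambda_{g}\equiv\lambda_{f}\pmod{\uf^{n}}$ built into \defref{admissible_form} --- have $\Frob_{v}$ acting with characteristic polynomial $x^{2}-\Satakepara_{v}(f)x+\Nm v\pmod{\uf^{n}}$ for every $v\nmid\frakl\Delta p$, the same as on $T_{f,n}$. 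By $(\CR)(2)$ the residual representation $\bar\rho_{f}$ is absolutely irreducible, so both $T$ and $T_{f,n}$ have residual semisimplification $\bar\rho_{f}$; a standard argument (a rank-two representation over an Artinian local ring reducing to an absolutely irreducible residual representation is determined up to isomorphism by its trace function, which Chebotarev pins down from a density-one set of Frobenii) then gives $T\isomor T_{f,n}$ as $\cO_{f,n}[G_{F}]$-modules.

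The step I expect to be the main obstacle is the middle one: proving that $X^{[\frakl]}_{\cO_{f}}/\cI^{[\frakl]}_{g}$ --- hence also $\dual{X^{[\frakl]}}_{\cO_{f}}/\cI^{[\frakl]}_{g}$ --- is free of rank exactly one over $\cO_{f,n}$ with the asserted unramified Galois action. This requires translating the character-group and component-group computations into quaternionic modular forms while keeping careful track of the $\Z/2\Z$-structure in the \v{C}erednik--Drinfeld uniformization and of the Atkin--Lehner involution at $\frakl$, and then invoking the multiplicity-one statement behind \propref{psig}; once this is in place, the rank-two conclusion, the local splitting, and the Chebotarev/residual-irreducibility globalization are all routine.
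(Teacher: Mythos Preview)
Your strategy is coherent, but the step you yourself flag as the obstacle is a genuine gap, and it does not follow from the inputs you cite. The character group $X^{[\frakl]}$ is identified with $H_{1}(\cG,\Z)=\ker(\delta_{*})\subset S^{B}_{2}(\openU(\frakl)Y,\Z)$, the $\frakl$-\emph{new} subspace at level $\openU(\frakl)$; by contrast \propref{psig} (via \thmref{TW}) gives multiplicity one only for $S^{B}_{2}(\openU Y,\cO_{f})_{\frakm}$ at the old level $\openU=\openU_{\nplus,\frakp^{n}}$, and \thmref{TW} is stated and proved only for levels $\frakn_{\Sigma}\mid\frakn$ --- the $n$-admissible prime $\frakl$ does not divide $\frakn$. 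So knowing $S^{B}_{2}(\openU Y,\cO_{f})/\cI_{g}\cong\cO_{f,n}$ says nothing directly about $X^{[\frakl]}_{\cO_{f}}/\cI^{[\frakl]}_{g}$; proving the latter is free of rank one would require extending the Taylor--Wiles machinery to a Steinberg deformation condition at $\frakl$, which the paper never does (and which is essentially the level-raising statement you are trying to prove, so invoking it risks circularity).

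The paper's argument runs in the opposite direction and avoids this. Working first modulo the maximal ideal $\frakm^{[\frakl]}$, it invokes Boston--Lenstra--Ribet \cite{BLR:group} to write $T_{p}(\Jnl)_{\cO_{f}}/\frakm^{[\frakl]}\cong T_{f,1}^{\oplus s}$ as $G_{F}$-modules for some $s\ge 1$; then $s=1$ is forced by taking local cohomology at $\frakl$ of the reduced uniformization sequence, which sandwiches $H^{1}(\Fll,T_{f,1}^{\oplus s})$ between a quotient of $\Phi^{[\frakl]}_{\cO_{f}}/\frakm^{[\frakl]}$ (at most one-dimensional by \thmref{one_dim}) and a purely unramified group, so the $s$-dimensional singular part is bounded by one. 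Only \emph{after} establishing $T/\frakm^{[\frakl]}\cong T_{f,1}$ does the paper lift to $T/\cI^{[\frakl]}_{g}\cong T_{f,n}$, via Nakayama plus an exponent computation (the surjection onto $\Phi^{[\frakl]}_{\cO_{f}}/\cI^{[\frakl]}_{g}\cong\cO_{f,n}$ forces exponent exactly $\uf^{n}$) and residual irreducibility; your Chebotarev/trace argument would serve equally well for this last step.
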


\begin{proof}
We put $T^{[\frakl]}=T_{p}(\Jnl)_{\cO_{f}}$. The proof divides into two parts. First, We prove that $T^{[\frakl]}/\ml\isomor T_{f,1}$. Recall that the component group is computed by the following exact sequence \cite[1.6.5.1]{Nekovar:level}
$$\exact{\chagroup}{\dual{\chagroup}}{\Phi^{[\frakl]}}$$
where $\chagroup$ is the character group of the maximal torus of the special fiber of the Jacobian $\Jnl$. On the other hand by the theory of $\frakl$-adic uniformization of a  Jacbian with totally split toric reduction\cite[1.7.1.1]{Nekovar:level}, we have
$$\exact{\dual{\chagroup}\otimes \mu_{p}}{\Jnl[p]}{\chagroup/p}.$$
Taking the above exact sequence modulo $\ml$, we obtain $$\exact{(\dual{\chagroup_{\cO_{f}}}/\ml/\cY)\otimes \mu_{p}}{\Jnl[p]_{\cO_{f}}/\ml}{\chagroup_{\cO_{f}}/\ml}$$ for some submodule $\cY$. It induces a long exact sequence 
$$\chagroup/\ml\mapR H^{1}(\Fll,(\dual{\chagroup_{\cO_{f}}}/\ml/\cY)\otimes \mu_{p}) \mapR H^{1}(\Fll, \Jnl[p]_{\cO_{f}}/\ml) \mapR H^{1}(\Fll, \chagroup_{\cO_{f}}/\ml).$$
We analyze some terms in this exact sequence as follows
\begin{mylist}
\item{$H^{1}(\Fll, (\dual{\chagroup_{\cO_{f}}}/\ml/\cY)\otimes \mu_{p})=(\dual{ \chagroup_{\cO_{f}}}/\ml/\cY)\otimes \Fll^{\cross}/(\Fll^{\cross})^{p}=\dual{\chagroup_{\cO_{f}}}/\ml/\cY$ by Kummer theory and the assumption that $\frakl$ is $n$-admissible;}
\item{ $H^{1}(\Fll, \chagroup_{\cO_{f}}/\ml)=H^{1}_{\fin}(\Fll,\chagroup_{\cO_{f}}/\ml)$ by the inflation-restriction exact sequence and local class field theory.}
\end{mylist}
Combining these two facts we obtain an exact sequence
\begin{equation}\label{key_exact}
\lexact{\overline{\Phi^{[\frakl]}_{\cO_{f}}/\ml}}{H^{1}(\Fll, T_{p}(\Jnl)_{\cO_{f}}/\ml)}{H^{1}_{\fin}(\Fll, \chagroup_{\cO_{f}}/\ml)}
\end{equation}
where $\overline{\Phi_{\cO_{f}}^{[\frakl]}/\ml}$ is a quotient of $\Phi_{\cO_{f}}^{[\frakl]}/\ml$.

Now by the main result in \cite{BLR:group}, we see that $$H^{1}(\Fll, T_{p}(\Jnl)_{\cO_{f}}/\ml)=H^{1}(\Fll, T_{f,1})^{s}$$ for some $s\geq 1$. Each $H^{1}(\Fll, T_{f,1})$ splits into unramified classes and ordinary classes by \lmref{admi_coho} and both are of rank one. Since $\overline{\Phi_{\cO_{f}}^{[\frakl]}/\ml}$ is at most one dimensional by \thmref{one_dim},  it follows that $s=1$ and $\overline{\Phi_{\cO_{f}}^{[\frakl]}/}\ml=\Phi_{\cO_{f}}^{[\frakl]}/\ml$. We get the result of the first part.

In the second part we prove that $T^{[\frakl]}/\cI^{[\frakl]}_{g}\isomor T_{f,n}$. By the first part and Nakayama's lemma, we can find generators $e_{1}, e_{2}\in T^{[\frakl]}/\cI^{[\frakl]}_{g}$ such that  their reductions generate $T^{[\frakl]}/\ml\isomor T_{f,1}$.  We will prove that the exponent of $T^{[\frakl]}/\cI^{[\frakl]}_{g}$ is $\uf^{n}$ in the next lemma. By the irreducibility of the $T_{f,1}$, we prove that $e_{1}, e_{2}$ have the same exponent and is $\uf^{n}$. The result follows.
\end{proof}

\begin{lm}
The exponent of $T_{p}(\Jnl)_{\cO_{f}}/\cI^{[\frakl]}_{g}$ is $\uf^{n}$.
\end{lm}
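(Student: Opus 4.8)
The plan is to establish the two divisibilities separately. Throughout put $\cI:=\cI^{[\frakl]}_g$, $T^{[\frakl]}:=T_p(\Jnl)_{\cO_f}$ and $M:=T^{[\frakl]}/\cI$. The inclusion ``the exponent divides $\uf^{n}$'' is immediate: $\lambda^{[\frakl]}_g$ is a surjective $\cO_f$–algebra homomorphism $\Hecke^{[\frakl]}_{\cO_f}\to\cO_{f,n}$, so $\Hecke^{[\frakl]}_{\cO_f}/\cI\isomor\cO_{f,n}=\cO_f/\uf^{n}$, hence $M$ is an $\cO_{f,n}$–module and is killed by $\uf^{n}$.

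For the reverse inclusion I would first reduce to a counting statement. By the first part of the proof of \thmref{level_raising} we have $T^{[\frakl]}/\ml\isomor T_{f,1}$, which is free of rank $2$ over $\cO_f/\uf$; since $\ml/\cI$ is the maximal ideal of the principal Artinian local ring $\cO_{f,n}$, this yields $M\otimes_{\cO_{f,n}}(\cO_f/\uf)\isomor T_{f,1}$, and the structure theorem for finitely generated modules over $\cO_{f,n}$ forces $M\isomor\cO_f/\uf^{a}\oplus\cO_f/\uf^{b}$ with $1\le a,b\le n$. The exponent of $M$ is $\uf^{\max(a,b)}$, and as $a+b=\length_{\cO_f}M$ while $a,b\le n$, it suffices to prove $\length_{\cO_f}M=2n$, i.e. $\#M=(\#\cO_{f,n})^{2}$.

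To carry out the count I would use the $\frakl$-adic (Cerednik–Drinfeld) uniformization. Taking the inverse limit over $m=p^{j}$ of the descent sequences \eqref{m-descent} and extending scalars to $\cO_f$ produces a $G_{\Fll}$–equivariant short exact sequence of $\Hecke^{[\frakl]}_{\cO_f}$–modules
$$0\longrightarrow\dual{\chagroup_{\cO_f}}(1)\longrightarrow T^{[\frakl]}\longrightarrow\chagroup_{\cO_f}\longrightarrow 0,$$
with $\chagroup_{\cO_f}$ carrying the trivial $G_{\Fll}$–action (split toric reduction) and $\dual{\chagroup_{\cO_f}}(1)$ its Tate twist. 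As in the proof of the first part of \thmref{level_raising} (via \eqref{key_exact} and \thmref{one_dim}), this sequence realizes, modulo $\ml$, the ordinary filtration at $\frakl$ of $T_{f,1}$; since $\frakl$ is $n$–admissible, $\Frob_{\frakl}$ has two distinct eigenvalues mod $\uf$, so the graded pieces $\chagroup_{\cO_f}/\ml$ and $\dual{\chagroup_{\cO_f}}(1)/\ml$ are each one–dimensional over $\cO_f/\uf$. Hence $\chagroup_{\cO_f}$, and likewise its $\Z$–linear dual $\dual{\chagroup_{\cO_f}}$, are cyclic over $\Hecke^{[\frakl]}_{\cO_f}$ after localizing at $\ml$; in fact they are \emph{free} of rank one there — this is the multiplicity one statement for the special fibre of $\Jnl$, which I would deduce from \thmref{TW} (compare the argument for \propref{psig}) together with the identification of $\chagroup$ with a space of quaternionic modular forms as in \propref{modular_description}, the Hecke action being self–adjoint for the monodromy pairing up to an Atkin–Lehner twist. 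Freeness gives $\chagroup_{\cO_f}/\cI\isomor\cO_{f,n}\isomor\dual{\chagroup_{\cO_f}}/\cI$ and makes $\Tor_1^{\Hecke^{[\frakl]}_{\cO_f}}(\chagroup_{\cO_f},\cO_{f,n})$ vanish, so applying $-\otimes_{\Hecke^{[\frakl]}_{\cO_f}}\cO_{f,n}$ keeps the displayed sequence short exact; therefore $\#M=\#(\dual{\chagroup_{\cO_f}}/\cI)\cdot\#(\chagroup_{\cO_f}/\cI)=(\#\cO_{f,n})^{2}$, which finishes the proof.

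I expect the only real obstacle to be this freeness — equivalently, Gorenstein–ness of the localized Hecke algebra, so that both $\chagroup$ and its dual are free of rank one — together with keeping track of the Atkin–Lehner/Frobenius twists under the identification with quaternionic forms; everything else (the structure–theorem step, the $\Tor$ vanishing, the $G_{\Fll}$–compatibility) is routine. An equivalent but more compact packaging, which I might prefer: the same multiplicity one input shows that $T^{[\frakl]}_{\ml}$ is free of rank $2$ over $(\Hecke^{[\frakl]}_{\cO_f})_{\ml}$ (by Nakayama, since $T^{[\frakl]}/\ml\isomor T_{f,1}$), and then $M=T^{[\frakl]}_{\ml}\otimes_{(\Hecke^{[\frakl]}_{\cO_f})_{\ml}}\cO_{f,n}\isomor\cO_{f,n}^{\oplus 2}$ and the conclusion is immediate.
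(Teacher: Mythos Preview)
Your upper bound matches the paper's. For the lower bound, however, the paper takes a much shorter route: it does not attempt to compute $\#M$, but simply produces a surjection $M=T^{[\frakl]}/\cI\twoheadrightarrow\Phi^{[\frakl]}_{\cO_f}/\cI$. The target is isomorphic to $\cO_{f,n}$ by \thmref{one_dim} (already proved), hence has exponent exactly $\uf^{n}$, and therefore so does $M$. The surjection is obtained, as in \cite[Lemma~5.16]{Bertolini_Darmon:IMC_anti}, by choosing $n'$ large enough that $\Jnl[p^{n'}]_{\cO_f}/\cI\isomor T^{[\frakl]}/\cI$ and using the reduction map to the component group. No freeness over the Hecke algebra is invoked.

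Your route is heavier and would in fact yield $M\isomor\cO_{f,n}^{\oplus 2}$, i.e.\ the full conclusion of \thmref{level_raising} rather than just the exponent. The cost is the freeness input you flag, and this is where there is a real gap: you need $(\chagroup_{\cO_f})_{\ml}$ (equivalently $T^{[\frakl]}_{\ml}$) free over $(\Hecke^{[\frakl]}_{\cO_f})_{\ml}$, but \thmref{TW} does not supply it. That theorem is multiplicity one for $S^{B}_{k}(\widehat{R}^{\times}_{\nplus}Y,\cO_f)_{\frakm}$ on the definite algebra $B$ of discriminant $\Delta$ at level $\nplus$; the character group $\chagroup$ is instead the $H_{1}$ of the dual graph, i.e.\ the $\frakl$-new piece of weight-two forms at level $\frakl\nplus$, and \propref{modular_description} identifies $\Phi^{[\frakl]}$, not $\chagroup$. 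Your Nakayama remark gives two generators, not freeness; upgrading to freeness is precisely a Gorenstein/multiplicity-one statement for the Hecke action on the cohomology of the Shimura curve $M^{[\frakl]}_{n}$, which is plausible under $(\CR)$ but is genuine additional work the paper neither proves nor needs. The paper's component-group argument sidesteps this entirely.
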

\begin{proof}
First notice that $T_{p}(\Jnl)_{\cO_{f}}/\cI^{[\frakl]}_{g}$ is naturally a $\CHecke{\frakl\nplus Y} {\frakp^{n}}_{\cO_{f}} /\cI^{[\frakl]}_{g}$-module, hence its exponent is at most $\uf^{n}$. Then by the proof of \cite[Lemma 5.16]{Bertolini_Darmon:IMC_anti}, we can find $n^{\prime}$ large enough such that $\Jnl[{p}^{n^{\prime}}]_{\cO_{f}}/\cI^{[\frakl]}_{g}\isomor T_{p}(\Jnl)_{\cO_{f}}/\cI^{[\frakl]}_{g}$ surjects onto $\Phi^{[\frakl]}_{\cO_{f}}/\cI^{[\frakl]}_{g}$. By \thmref{one_dim}, $\Phi^{[\frakl]}_{\cO_{f}}/\cI^{[\frakl]}_{g}$ has exponent $\uf^{n}$. It follows that the exponent of $T_{p}(\Jnl)_{\cO_{f}}/\cI^{[\frakl]}_{g}$ is exactly $\uf^{n}$.
\end{proof}

\subsection{Reduction and residue map}

Now let $\tilde{r}_{\frakl}:\Jnl(K_{m})\mapR{}{} \Phi^{[\frakl]}_{\cO_{f}}\otimes \cO_{f,n}[\Gamma_{m}]$ be the reduction map given by
\begin{equation}\label{reduction}
\tilde{r}_{\frakl}(D)=\sum_{\sigma\in \Gamma_{m}}r_{\frakl}(\iota_{\frakl}(D^{\sigma}))[\sigma]
\end{equation}
where $\iota_{\frakl}$ is defined as in \eqref{l-unramified}.  

\begin{prop}\label{H_sing}
We have an isomorphism $$\psi_{g}: \Phi^{[\frakl]}_{\cO_{f}}/\cI^{[\frakl]}_{g}\isomor H^{1}_{\sing}(K_{\frakl}, T_{f,n}).$$
\end{prop}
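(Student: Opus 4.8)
The plan is to combine the $\frakl$-adic uniformization of $\Jnl$ recalled in \S4 with Grothendieck's monodromy exact sequence, and then to reduce modulo $\cI^{[\frakl]}_{g}$ using the level-raising identifications \thmref{one_dim} and \thmref{level_raising}. Since $\frakl$ is $n$-admissible it is inert in $K$, so $K_{\frakl}$ is the unramified quadratic extension $\Fll$ of $F_{\frakl}$ and, as recorded in \S4, $\Jnl$ has totally split toric reduction over $\cO_{\Fll}$; in particular the character group $\chagroup$ and its dual $\dual{\chagroup}$ are unramified with trivial action of the Frobenius $\Frob$ of $\Fll$. Fixing $N\gg n$ so that $\Jnl[p^{N}]_{\cO_{f}}/\cI^{[\frakl]}_{g}\isomor T_{p}(\Jnl)_{\cO_{f}}/\cI^{[\frakl]}_{g}\isomor T_{f,n}$ as in the proof of \thmref{level_raising}, one reduces the descent sequence \eqref{m-descent} (with $m=p^{N}$) modulo $\cI^{[\frakl]}_{g}$ to obtain a short exact sequence of $G_{K_{\frakl}}$-modules
\[
0\longrightarrow \dual{\chagroup}_{\cO_{f}}/\cI^{[\frakl]}_{g}\otimes\mu_{p^{N}}\longrightarrow T_{f,n}\longrightarrow \chagroup_{\cO_{f}}/(\cI^{[\frakl]}_{g},p^{N})\longrightarrow 0
\]
with unramified outer terms.

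Next I would pass to cohomology over the inertia group $I_{\frakl}$ and take $\Frob$-invariants, using $H^{1}_{\sing}(K_{\frakl},M)=H^{1}(I_{\frakl},M)^{\Frob}$ (valid since $\mathrm{cd}_{p}(\widehat{\Z})=1$). By Kummer theory $H^{1}(I_{\frakl},\mu_{p^{N}})\isomor\Z/p^{N}$ through the $\Frob$-invariant valuation, whence $H^{1}_{\sing}(K_{\frakl},\dual{\chagroup}_{\cO_{f}}/\cI^{[\frakl]}_{g}\otimes\mu_{p^{N}})\isomor\dual{\chagroup}_{\cO_{f}}/(\cI^{[\frakl]}_{g},p^{N})$, and the connecting map out of the unramified term $\chagroup_{\cO_{f}}/(\cI^{[\frakl]}_{g},p^{N})$ is cup-product with the monodromy class, that is, the reduction of the monodromy pairing $j\colon\chagroup\to\dual{\chagroup}$ (compatibly with diagram \eqref{Kum}). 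On the other hand $H^{1}_{\sing}(K_{\frakl},\chagroup_{\cO_{f}}/(\cI^{[\frakl]}_{g},p^{N}))$ is the $\Frob$-invariants of a $(-1)$-Tate twist of an unramified module with trivial $\Frob$-action, on which $\Frob$ acts by $\Nm{\frakl}^{-2}$; since $\Nm{\frakl}^{2}-1$ is a $p$-adic unit by condition (3) of \defref{n_admissible}, this group vanishes. The long exact sequence then collapses to an isomorphism
\[
H^{1}_{\sing}(K_{\frakl},T_{f,n})\isomor \coker\!\big(\chagroup_{\cO_{f}}/(\cI^{[\frakl]}_{g},p^{N})\xrightarrow{\,j\,}\dual{\chagroup}_{\cO_{f}}/(\cI^{[\frakl]}_{g},p^{N})\big),
\]
and since $\coker$ is right exact while $\coker(j\colon\chagroup\to\dual{\chagroup})=\Phi^{[\frakl]}$ by the monodromy sequence $\exact{\chagroup}{\dual{\chagroup}}{\Phi^{[\frakl]}}$ of \cite[1.6.5.1]{Nekovar:level}, the right-hand side is $\Phi^{[\frakl]}_{\cO_{f}}/(\cI^{[\frakl]}_{g},p^{N})=\Phi^{[\frakl]}_{\cO_{f}}/\cI^{[\frakl]}_{g}$, the last equality because $\Phi^{[\frakl]}_{\cO_{f}}/\cI^{[\frakl]}_{g}\isomor\cO_{f,n}$ is $p^{N}$-torsion by \thmref{one_dim}. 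This is the isomorphism $\psi_{g}$ of the statement.

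The step I expect to be the main obstacle is the module-theoretic bookkeeping of the reduction modulo $\cI^{[\frakl]}_{g}$: one must check that \eqref{m-descent} stays exact after $-\otimes_{\Hecke^{[\frakl]}}\Hecke^{[\frakl]}/\cI^{[\frakl]}_{g}$, that is, that no $\Tor_{1}$-term intervenes, which is where the freeness/multiplicity-one inputs behind \thmref{one_dim} and \thmref{level_raising} are really used; the rest (Kummer identification of the singular quotients, the vanishing forced by admissibility, the monodromy/cokernel bookkeeping) is a formal diagram chase. As a cross-check, reducing the argument modulo $\ml$ recovers exactly the exact sequence \eqref{key_exact} from the proof of \thmref{level_raising}, which already exhibits $\Phi^{[\frakl]}_{\cO_{f}}/\ml$ as the singular line of $H^{1}(\Fll,T_{f,1})$; so by Nakayama's lemma, both sides being free of rank one over $\cO_{f,n}$ by \thmref{one_dim} and \lmref{admi_coho}, it would suffice to prove the statement modulo $\ml$.
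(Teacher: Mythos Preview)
Your proposal is correct and takes essentially the same route as the paper: both arguments reduce the $\frakl$-adic descent sequence \eqref{m-descent} modulo $\cI^{[\frakl]}_{g}$, which is exactly what the paper means by ``replacing $\ml$ by $\ILg$ in \eqref{key_exact},'' and then finish by combining \thmref{one_dim} with the splitting $H^{1}=H^{1}_{\fin}\oplus H^{1}_{\sing}$ from \lmref{admi_coho}. The paper's write-up is simply terser --- it quotes \eqref{key_exact} and the rank-one statement directly --- whereas you unpack the same computation by naming the connecting map as the monodromy pairing $j$; your identification of the $\Tor_1$ bookkeeping as the only real obstacle, and the Nakayama cross-check against \eqref{key_exact}, are exactly right. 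One small phrasing point: since every module in sight is unramified at $\frakl$, the short exact sequence is split over $I_{\frakl}$ and there is no nontrivial connecting map in pure inertia cohomology; the map you want (cup with the extension class, hence $j$ after valuation) lives in the $G_{K_{\frakl}}$-long exact sequence, and the $H^{1}(I_{\frakl},-)^{\Frob}$ computation is then used only to identify $H^{1}(K_{\frakl},\dual{\chagroup}_{\cO_f}/\ILg\otimes\mu_{p^N})$ with its singular part and to kill $H^{1}_{\sing}$ of the right-hand term.
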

\begin{proof}
By the proof of \thmref{level_raising}, and in particular  replacing $\ml$ by $\ILg$ in the exact sequence \eqref{key_exact}, we obtain an exact sequence 
\begin{equation}\label{key_exact_1}
\lexact{\Phi_{\cO_{f}}^{[\frakl]}/\ILg}{H^{1}(K_{\frakl}, T_{f,n})}{H^{1}_{\fin}(K_{\frakl}, \chagroup_{\cO_{f}}/\ILg)}.
\end{equation}
Since $\Phi_{\cO_{f}}^{[\frakl]}/\ILg$ is of rank one and  $H^{1}(K_{\frakl}, T_{f,n})=H^{1}_{\fin}(K_{\frakl}, T_{f,n})\oplus H^{1}_{\sing}(K_{\frakl}, T_{f,n})$ as $\frakl$ is $n$-admissible, it follows that $\Phi_{\cO_{f}}^{[\frakl]}/\ILg\isomor H^{1}_{\sing}(K_{\frakl}, T_{f,n})$. 
\end{proof}

Let $\partial_{\frakl}: H^{1}(K_{m}, T_{f,n})\mapR H^{1}_{\sing}(K_{m,\frakl}, T_{f,n})$ be the residue map defined in \eqref{residue}. The following theorem gives a relation between the reduction map and the residue map. 

\begin{thm}\label{comd1}
The following diagram commutes

\begin{equation}
\begin{CD}
    \Jnl(K_{m})_{\cO_{f}}/\ILg @>\Kum>>  H^{1}(K_{m}, T_{f,n})  \\
@VV\tilde{r}_{\frakl}V        @VV\partial_{\frakl}V\\
  \compo_{\cO_{f}}/\ILg\otimes \cO_{f,n}[\Gamma_{m}]  @>\psi_{g}>\isomor> H^{1}_{\sing}(K_{m,\frakl}, T_{f,n}). \\
\end{CD}
\end{equation}
\end{thm}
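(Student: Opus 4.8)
The plan is to reduce the assertion to a purely local statement at $\frakl$ and then verify it through the $\frakl$-adic uniformization of $\Jnl$. Because $\frakl$ is $n$-admissible it is inert in $K$, hence splits completely in the anticyclotomic tower $K_m/K$ by class field theory (\lmref{split_inert}); thus every place $\lambda\mid\frakl$ of $K_m$ has $K_{m,\lambda}=K_\frakl=\Fll$, $K_m\otimes_K K_\frakl\simeq\prod_{\sigma\in\Gamma_m}K_\frakl$, and Shapiro's lemma provides a canonical identification $H^1_\sing(K_{m,\frakl},T_{f,n})\simeq H^1_\sing(K_\frakl,T_{f,n})\otimes_{\cO_{f,n}}\cO_{f,n}[\Gamma_m]$ under which $\partial_\frakl$ on $H^1(K_m,T_{f,n})$ becomes $s\mapsto\sum_{\sigma\in\Gamma_m}\partial_\frakl(\res_\frakl(s^\sigma))[\sigma]$, i.e. exactly the group-ring recipe defining $\tilde r_\frakl$ in \eqref{reduction}, and under which $\psi_g$ of \propref{H_sing} tensored with $\cO_{f,n}[\Gamma_m]$ becomes the bottom arrow of the theorem. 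It therefore suffices to show that the local square with top arrow $\Kum$, left arrow the N\'eron reduction $r_\frakl\colon\Jnl(K_\frakl)_{\cO_f}/\ILg\to\compo_{\cO_f}/\ILg$, bottom arrow $\psi_g$, and right arrow $\partial_\frakl$ commutes.

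For the local square I would tensor the descent sequence \eqref{m-descent} (taking $m=p^N$ with $\uf^n\mid p^N$) with $\cO_f$ and reduce modulo $\ILg$; using \thmref{level_raising}, which gives $T_p(\Jnl)_{\cO_f}/\ILg\simeq T_{f,n}$, together with the component-group sequence exploited in the proof of \propref{H_sing}, one obtains a short exact sequence of $G_{K_\frakl}$-modules $0\to\dual{\chagroup_{\cO_f}}/\ILg\otimes\mu_{p^N}\to T_{f,n}\to\chagroup_{\cO_f}/\ILg\to0$ whose last term carries trivial inertia action (as $\frakl$ is $n$-admissible), and the induced filtration on $H^1(K_\frakl,T_{f,n})$ is precisely the $\fin$/$\sing$ decomposition of \lmref{admi_coho}: $H^1_\fin$ is the image of $H^1(K_\frakl,\dual{\chagroup_{\cO_f}}/\ILg\otimes\mu_{p^N})$, while $\partial_\frakl$ identifies $H^1_\sing(K_\frakl,T_{f,n})$ with $\chagroup_{\cO_f}/\ILg\simeq\compo_{\cO_f}/\ILg$, this identification being exactly $\psi_g$. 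Then I would chase an element: by \eqref{Kum}, for $D\in\Jnl(K_\frakl)_{\cO_f}/\ILg$ the class $\Kum(D)$ lifts to a class in $H^1(K_\frakl,\dual{\chagroup_{\cO_f}}/\ILg\otimes\mu_{p^N})$ arising from an element $x_D\in\dual{\chagroup_{\cO_f}}/\ILg\otimes K_\frakl^\times$ lifting $D$ along the uniformization \eqref{l-unifor}; the image of this class in the singular quotient is computed by applying the valuation $\ord_\frakl\colon K_\frakl^\times\to\Z$, and by Grothendieck's description of the component group — the commutative square displayed just after \eqref{m-descent}, whose middle vertical arrow is $\ord_\frakl$ — the element $(\id\otimes\ord_\frakl)(x_D)$ maps to $r_\frakl(D)\in\compo_{\cO_f}/\ILg$. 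Transporting this equality through the identification above yields $\partial_\frakl(\Kum(D))=\psi_g(r_\frakl(D))$, and reassembling by Shapiro's lemma gives the theorem.

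The delicate point is the bookkeeping in the second step: one must verify that the isomorphism of \thmref{level_raising} respects the two-step filtrations present on both sides — that the toric piece $\dual{\chagroup_{\cO_f}}/\ILg\otimes\mu_{p^N}$ is carried to the appropriate $\Frob_\frakl$-eigenline of $T_{f,n}$, so that ``singular $=$ component group'' holds literally as $\psi_g$ and not merely up to an element of $\cO_{f,n}^\times$ — carefully propagating the auxiliary submodule $\cY$ that entered the proof of \thmref{level_raising}, and fixing the orientation of the reduction graph \eqref{source_target} (and the resulting sign conventions in $\psi_g$) so that the square commutes on the nose. Once the filtrations are aligned, the remainder is the standard local comparison of Kummer classes, valuations and component groups for a Jacobian with purely toric reduction, as carried out in \cite[\S1.7]{Nekovar:level} and \cite[\S5]{Bertolini_Darmon:IMC_anti}.
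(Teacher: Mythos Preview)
Your proposal is correct and follows essentially the same approach as the paper: the paper's proof simply states that the result is a formal consequence of \propref{H_sing}, noting that the argument establishing the exact sequence \eqref{key_exact} already shows the Kummer map factors through the specialization $\tilde r_\frakl$. You have unpacked this terse remark into the explicit Shapiro reduction and the local diagram chase through the uniformization diagram \eqref{Kum}, which is exactly what underlies the paper's one-line justification.
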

\begin{proof}
This is a formal consequence of the previous proposition. It is important to note that the proof for the exact sequence \eqref{key_exact} shows that the natural Kummer map in fact factors through the specialization map $\tilde{r}_{\frakl}$. 
\end{proof}

\begin{remark}
We can take a limit in the above theorem to get the following diagram
\begin{equation}
\begin{CD}
    \Jnl(K_{\infty})_{\cO_{f}}/\ILg @>\Kum>>  H^{1}(K_{\infty}, T_{f,n})  \\
@VV\tilde{r}_{\frakl}V        @VV\partial_{\frakl}V\\
  \compo_{\cO_{f}}/\ILg\otimes \cO_{f,n}[[\Gamma_{\infty}]]  @>\psi_{g}>\isomor> H^{1}_{\sing}(K_{\infty,\frakl}, T_{f,n}).\\
\end{CD}
\end{equation}
\end{remark}

\subsection{Gross points}
Let $\nplus\cO_{K}= \Nplus\overline{\Nplus}$, we define an element $\zeta_{v}\in \GL_{2}(F_{v})$ for each $v\mid \nplus $  by
\begin{equation}
\zeta_{v}=\delta^{-1}\pMX{\bftheta}{\bar{\bftheta}}{1}{1} \in \GL_{2}(K_{w})=\GL_{2}(F_{v}) \text{ if $v=w\bar{w}$ in $K$.}
\end{equation}

Let $m$ be a positive integer, we define $\zeta_{\frakp}^{(m)}\in \GL_{2}(F_{\frakp})$ by
\begin{equation}
\begin{aligned}
&\zeta_{\frakp}^{(m)}=\pMX{\bftheta}{-1}{1}{0}\pMX{\uf_{\frakp}^{m}}{0}{0}{1} \in \GL_{2}(K_{\frakP})=\GL_{2}(F_{\frakp}) \text{ if $\frakp=\frakP\bar{\frakP}$},\\
&\zeta_{\frakp}^{(m)}=\pMX{0}{1}{-1}{0}\pMX{\uf^{m}_{\frakp}}{0}{0}{1} \text{if $\frakp$ is inert}.
\end{aligned}
\end{equation}

We set $\zeta^{(m)}=\zeta_{\frakp}^{(m)}\prod_{v\mid \nplus}\zeta_{v} \in \Bhatcross_{(\frakp\nplus)}\hookrightarrow \Bhatcross$. Let $\cR_{m}=\cO_{F}+\uf_{\frakp}^{m}\cO_{K}$ be the order of $K$ of level $\uf^{m}_{\frakp}$. It is almost immediately verified that $(\zeta^{\away{m}})^{-1}\widehat{\cR}^{\cross}_{ m}\zeta^{\away{m}}\subset \openU_{\nplus,\frakp^{n}}$  if $m\geq n$.  Recall $\openU=\openU_{\nplus,\frakp^{n}}$,  we define a map 
\begin{equation}
\begin{aligned}
& x_{ m}: \text{Pic} (\cR_{m}Y)= K^{\cross}\backslash \Khatcross / \widehat{\cR}^{\cross}_{m}Y  \mapR  X_{B}(\openU Y),\\ 
& K^{\cross}a\widehat{\cR}^{\cross}_{ m} Y \mapR  x_{m}(a)= [a\zeta^{\away{m}}]_{\openU Y}.\\
\end{aligned}
\end{equation}
The set of points of the form $x_{m}(a)$ for $a\in \text{Pic} (\cR_{m}Y)$ are referred as Gross points of conductor $\frakp^{m}$ on the definite Shimura set $X_{B}(\openU Y)$.

\subsection{Construction of the Euler system $\kappa_{\cD}(\frakl)$}
Given an admissible form $\cD=(\Delta, g)$ and an admissible prime $\frakl\nmid \Delta$, in this section we will define a cohomology class $\kappa_{\cD}(\frakl)=\{\kappa_{\cD}(\frakl)_{m}\}_{m} \in \widehat{\Sel}_{\Delta\frakl}(K_{\infty}, T_{f,n})$.  See \cite[section 7.4.2]{Longo_IMC} for similar construction of the Euler system.

\subsubsection{Heegner points on Shimura curves and its Jacobian}
Let $\openUprime=\varphi_{B,B^{\prime}}(\openU^{\away{\frakl}})\cO^{\cross}_{B^{\prime}_{\frakl}}$ and let $M^{[\frakl]}_{n}=\ShiUprimeY$ be the quotient Shimura curve. For each $a\in\Khatcross$ and $m\geq n$, we define a Heegner point $P_{m}(a)$ by

$$P_{m}(a)=[z^{\prime}, \varphi_{B,\Bprime}(a^{\away{\frakl}}\zeta^{(m)}\tau_{n})]_{\openUprime Y}\in M^{[\frakl]}_{n}(\C)$$
where $\tau_{n}$ is the Atkin-Lehner involution \eqref{atkin-inv}. This point is defined on a subfield of the ring class field $H_{m}$ and the Galois action of $G_{m}=\Gal(H_{m}/K)$ is given by Shimura's reciprocity law \eqref{shimura}. 
We choose an auciliary prime $\frakq\nmid p\frakl\nplus\Delta$ such that $1+\Nm{\frakq}-\Satakepara_{\frakq}(f)\in \cO^{\cross}_{f}$.  We define a map
\begin{equation}
\begin{aligned}
&\xi_{\frakq}: \Div \Shiln(H_{m}) \mapR \Jnl(H_{m})_{\cO_{f}};\\
&\xi_{\frakq}(P)= \frac{1}{(1+\Nm{\frakq}-\Satakepara_{\frakq}(f))}\cl((1+\Nm{\frakq}-T_{\frakq})P).\\
\end{aligned}
\end{equation}
Put $P_{m}=P_{m}(1)$. We define the Heegner divisor over the anticyclotomic tower by
$$D_{m}=\sum_{\sigma\in \Gal(H_{m}/K_{m})}\xi_{\frakq}(P^{\sigma}_{m})\in \Jnl(K_{m})_{\cO_{f}}.$$

\subsubsection{Definition of $\kappa_{D}(\frakl)$}
Let $\text{Kum}: \Jnl(K_{m})_{\cO_{f}}\mapR H^{1}(K_{m}, T_{p}(\Jnl)_{\cO_{f}})$ be the Kummer map. We define the cohomology class $\kappa_{\cD}(\frakl)_{m}$ as:

\begin{equation}
 \kappa_{\cD}(\frakl)_{m}:= \frac{1}{\Satakepara^{m}_{\frakp}(f)} \text{Kum}(D_{m}) \pmod{\ILg}\in H^{1}(K_{ m}, T_{p}(\Jnl)_{\cO_{f}}/\ILg).
\end{equation}\label{class}
Note we have $H^{1}(K_{ m}, T_{p}(\Jnl)_{\cO_{f}}/\ILg) {\isomor} H^{1}(K_{ m}, T_{f,n})$ by \thmref{level_raising}.  Thus we treat $\kappa_{\cD}(\frakl)_{m}$ as an element in $H^{1}(K_{ m}, T_{f,n})$.

\begin{lm}[Norm compatibility]\label{Normcom}
$$\text{Cor}_{K_{m+1}/K_{ m}}(D_{m+1})=U_{\frakp}D_{m}.$$
\end{lm}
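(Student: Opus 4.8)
The plan is to descend from the corestriction over the anticyclotomic tower to the classical norm relation for CM points on the Shimura curve $M^{[\frakl]}_{n}$, namely $\mathrm{Tr}_{H_{m+1}/H_{m}}P_{m+1}=U_{\frakp}P_{m}$ as divisors on $M^{[\frakl]}_{n}$, and then to apply the $F$-rational operator $\xi_{\frakq}$. First I would unwind $D_{m}=\sum_{\sigma\in\Gal(H_{m}/K_{m})}\xi_{\frakq}(P_{m}^{\sigma})$ and use that $\xi_{\frakq}$ and $U_{\frakp}$ are Hecke correspondences defined over $F$, hence $G_{K_{m}}$-equivariant, and that $\xi_{\frakq}$ is linear in the divisor. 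Since $K_{m}\subset H_{m}\subset H_{m+1}$ are all Galois over $K$, transitivity of the trace yields
\[
\mathrm{Cor}_{K_{m+1}/K_{m}}(D_{m+1})=\sum_{\rho\in\Gal(H_{m+1}/K_{m})}\xi_{\frakq}(P_{m+1}^{\rho})=\sum_{\bar\sigma\in\Gal(H_{m}/K_{m})}\xi_{\frakq}\Bigl((\mathrm{Tr}_{H_{m+1}/H_{m}}P_{m+1})^{\widetilde{\bar\sigma}}\Bigr),
\]
where $\widetilde{\bar\sigma}$ is any lift of $\bar\sigma$; granting the curve-level identity, this equals $\sum_{\bar\sigma}U_{\frakp}\,\xi_{\frakq}(P_{m}^{\widetilde{\bar\sigma}})=U_{\frakp}D_{m}$. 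So the lemma reduces to the curve-level identity.

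To prove $\mathrm{Tr}_{H_{m+1}/H_{m}}P_{m+1}=U_{\frakp}P_{m}$ I would argue adelically through the uniformization $M^{[\frakl]}_{n}(\C)=\Bprimecross\backslash\cH^{\pm}\times\Bprimehatcross/\openUprime Y$. The key algebraic input is $\zeta^{(m+1)}_{\frakp}=\zeta^{(m)}_{\frakp}\pi_{\frakp}$ with $\pi_{\frakp}=\pMX{\uf_{\frakp}}{0}{0}{1}$, which is immediate from the definition of $\zeta^{(m)}_{\frakp}$ in both the split and the inert cases. Via Shimura's reciprocity law \eqref{shimura}, the $\Gal(H_{m+1}/H_{m})$-conjugates of $P_{m+1}=[z',\varphi_{B,B'}(\zeta^{(m+1)}\tau_{n})]_{\openUprime Y}$ are the points $[z',\varphi_{B,B'}(a_{\frakp}\,\zeta^{(m)}_{\frakp}\pi_{\frakp}\tau_{n}\prod_{v\mid\nplus}\zeta_{v})]_{\openUprime Y}$, where $a_{\frakp}$ runs over representatives for $(\cO_{F,\frakp}+\frakp^{m}\cO_{K,\frakp})^{\times}$ modulo $\cO_{F,\frakp}^{\times}(\cO_{F,\frakp}+\frakp^{m+1}\cO_{K,\frakp})^{\times}$. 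A direct local matrix computation at $\frakp$ — using the explicit embedding $i_{\frakp}$, the definition \eqref{open_cpt} of $\openU_{\frakp}$, and the Atkin--Lehner element \eqref{atkin-inv}, and treating the cases $\frakp$ split and $\frakp$ inert in $K$ separately — identifies $\{a_{\frakp}\zeta^{(m)}_{\frakp}\}$, up to right translation by $\openU_{\frakp}$, with the coset representatives $\{\zeta^{(m)}_{\frakp}u\}$ in the decomposition of $\openU_{\frakp}\pi_{\frakp}\openU_{\frakp}$ defining $U_{\frakp}$. Here one uses that $Y$ contains the central elements $z_{\frakp}=\pMX{\uf_{\frakp}}{0}{0}{\uf_{\frakp}}$ to absorb the Atkin--Lehner twist, and that $|\Gal(H_{m+1}/H_{m})|=\Nm\frakp=\deg U_{\frakp}$ — valid in the range $m\ge n\ge1$, i.e. once all the orders $\cO_{F,\frakp}+\frakp^{m}\cO_{K,\frakp}$ have residue field $\F_{\frakp}$ — which is why no lower-conductor correction term of the kind occurring in the $T_{\frakp}$-relation at the bottom of the tower appears. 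Right multiplication by $\pi_{\frakp}$ then matches the two divisors term by term, and applying $\xi_{\frakq}$ finishes the curve-level identity.

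The main obstacle is that local computation at $\frakp$: organizing the coset bookkeeping uniformly in the split and inert cases, and threading the Atkin--Lehner twist $\tau_{n}$ and the quotient by $Y$ through it so that the Hecke operator that comes out is exactly $U_{\frakp}$ and not its transpose or a unit multiple of it. These computations are entirely parallel to the Heegner and Gross point norm relations in \cite{BD:Heegner_Mumford}, \cite{Longo_IMC} and \cite{Chida_Hsieh:main}, so in the write-up I would carry out one representative case in full detail and only indicate the modifications for the remaining one. Combined with the fact that $U_{\frakp}$ acts on $T_{p}(\Jnl)_{\cO_{f}}/\ILg\isomor T_{f,n}$ as multiplication by $\Satakepara_{\frakp}(f)$, this lemma is exactly what makes the normalized classes $\kappa_{\cD}(\frakl)_{m}$ norm-compatible, so that $\kappa_{\cD}(\frakl)\in\widehat{\Sel}_{\Delta\frakl}(K_{\infty},T_{f,n})$ is well-defined.
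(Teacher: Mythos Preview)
Your proposal is correct and takes essentially the same approach as the paper: both reduce to the curve-level trace relation $\mathrm{Tr}_{H_{m+1}/H_{m}}P_{m+1}=U_{\frakp}P_{m}$, use the key identity $\zeta^{(m+1)}_{\frakp}=\zeta^{(m)}_{\frakp}\pi_{\frakp}$, and carry out the explicit matrix computation at $\frakp$ with the coset representatives $\{1+\uf_{\frakp}^{m}u\bftheta:u\in\cO_{F,\frakp}/\frakp\}$ for $\widehat{\cR}_{m}^{\times}/\widehat{\cR}_{m+1}^{\times}$, treating the split and inert cases separately. The paper is terser in that it suppresses the reduction from $D_{m+1}$ to $P_{m+1}$ via the linearity and $G_{K_{m}}$-equivariance of $\xi_{\frakq}$, which you spell out explicitly.
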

\begin{proof}
We need to calculate $\text{Cor}_{K_{ m+1}/K_{ m}}(P_{m+1})$:
\begin{equation}
\begin{aligned}
\text{Cor}_{K_{ m+1}/K_{ m}} P_{m+1} &= \sum_{\sigma\in \Gal(K_{m+1}/K_{m})} \sigma P_{m+1}\\
&=\sum_{y\in \widehat{\cR}^{\cross}_{ m}/\widehat{\cR}^{\cross}_{ m+1}} \rec{y} (P_{m+1})\\
&=\sum_{y\in \widehat{\cR}^{\cross}_{ m}/\widehat{\cR}^{\cross}_{ m+1}} [z^{\prime}, t^{\prime}(y)\varphi_{B,\Bprime}(\zeta^{(m+1)}\tau_{n})]_{\openUprime Y}.\\
\end{aligned}
\end{equation}
Note that $\Gal(K_ {m+1}/K_{m})=\{[1+\uf_{\frakp}^{m}u\bftheta]_{m+1}: u \in \cO_{F,\frakp}/\frakp\}$ and for each $u$, $t^{\prime}([1+\uf_{\frakp}^{m}u\bftheta]_{m+1})= \pMX{1+\uf^{m}_{\frakp}u T(\bftheta)}{-\uf_{\frakp}^{m}u N(\bftheta)}{\uf_{\frakp}^{m}u}{1}$.

In the case when $\frakp$ is split in $K$, we have $\zeta^{(m+1)}=\pMX{\bftheta}{-1}{1}{0}\pMX{\uf^{m+1}_{\frakp}}{0}{0}{1}$ we compute 
\begin{equation}
\begin{aligned}
&(\zeta^{(m+1)})^{-1}\pMX{1+\uf^{m}_{\frakp}u T(\bftheta)}{-\uf_{\frakp}^{m}u N(\bftheta)}{\uf_{\frakp}^{m}u}{1}\zeta^{(m+1)}\\
&=\pMX{1+\uf^{m}_{\frakp}u\bftheta }{-\frac{u}{\uf_{\frakp}}}{0}{1+\uf^{m}_{\frakp}u\bar{\bftheta}}\\
 &= \pMX{1}{-\frac{u}{\uf_{\frakp}(1+\uf^{m}_{\frakp}u\bftheta)}}{0}{1} \pMX{1+\uf^{m}_{\frakp}u\bftheta}{0}{0}{1+\uf^{m}_{\frakp}u\bar{\bftheta}}.\\
\end{aligned}
\end{equation}
Since  $\pMX{1+\uf^{m}_{\frakp}u\bftheta}{0}{0}{1+\uf^{m}_{\frakp}u\bar{\bftheta}}\in \openU Y$, we can ignore it.

Note $\zeta^{(m+1)}=\zeta^{(m)}\pMX{\uf_{\frakp}}{0}{0}{1}$, and we get
\begin{equation}
\begin{aligned}
&\zeta^{(m+1)}(\zeta^{(m+1)})^{-1} \pMX{1+\uf^{m}_{\frakp}u T(\bftheta)}{-\uf_{\frakp}^{m}u N(\bftheta)}{\uf_{\frakp}^{m}u}{1}\zeta^{(m+1)}\\
&=\zeta^{(m)}\pMX{\uf_{\frakp}}{0}{0}{1}\pMX{1}{-\frac{u}{\uf_{\frakp}(1+\uf^{m}_{\frakp}u\bftheta)}}{0}{1}\\
&=\zeta^{(m)}\pMX{\uf_{\frakp}}{-\frac{u}{(1+\uf^{m}_{\frakp}u\bftheta)}}{0}{1}.\\
\end{aligned}
\end{equation}

In the case when $\frakp$ is inert in $K$, we have $\zeta^{(m+1)}=\pMX{0}{1}{-1}{0}\pMX{\uf^{m+1}_{\frakp}}{0}{0}{1}$, and we do  exactly the same computation
\begin{equation}
\begin{aligned}
&(\zeta^{(m+1)})^{-1}\pMX{1+\uf^{m}_{\frakp}u T(\bftheta)}{-\uf_{\frakp}^{m}u N(\bftheta)}{\uf_{\frakp}^{m}u}{1}\zeta^{(m+1)}\\
&=\pMX{1}{\frac{u}{-\uf_{\frakp}}}{\uf^{2m+1}_{\frakp}uN(\bftheta)}{1+\uf^{m}_{\frakp}uT(\bftheta)}\\
 &= \pMX{1}{-\frac{u}{\uf_{\frakp}}}{0}{1} \pMX{1+\uf^{2m}_{\frakp}u^{2}N(\bftheta)}{\uf^{m-1}_{\frakp}u^{2}T(\bftheta)}{\uf^{2m+1}_{\frakp}uN(\bftheta)}{1+\uf^{m}_{\frakp}uT(\bftheta)}.\\
\end{aligned}
\end{equation}
Again the last  matrix is in $\openU Y$ and hence can be ignored. The rest of the computation is the same as in the previous case.
\end{proof}

\begin{remark}
We remark that the Hecke action on $\Jnl$ and $\Z[X_{B}(\openU Y)]$ is with respect to the Picard functoriality. The additional $\tau_{n}$ is a modification due to the fact that the action of the Hecke correspondence induces dual actions on the Jacobian under Picard functoriality and on the space of modular forms.
\end{remark}

By the above lemma, we can define the cohomology class $\kappa_{\cD}(\frakl)=\{\kappa_{\cD}(\frakl)_{m}\}_{m}\in \widehat{H}^{1}(K_{\infty}, T_{f,n})$. These elements are in fact in the Selmer group defined in \defref{Delta_selmer} as shown by the following propositon.

\begin{prop}
The cohomology class $\kappa_{\cD}(\frakl)$ is in the Selmer group $\widehat{\Sel}_{\Delta\frakl}(K_{\infty}, T_{f,n}).$
\end{prop}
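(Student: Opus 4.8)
## Proof Proposal

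The plan is to verify the three defining conditions of $\widehat{\Sel}_{\Delta\frakl}(K_\infty, T_{f,n})$ from \defref{Delta_selmer} for the class $\kappa_{\cD}(\frakl)$, working at each finite level $K_m$ and then passing to the limit. Since $\kappa_{\cD}(\frakl)_m$ is, up to the unit $\alpha_{\frakp}^m(f)$, the image under the Kummer map of the Heegner divisor $D_m \in J^{[\frakl]}_n(K_m)_{\cO_f}$ reduced modulo $\cI^{[\frakl]}_g$, the key input is that Kummer classes coming from global points are automatically locally trivial (i.e. lie in the image of the local Kummer map, hence in the "geometric" local conditions) at every place. So the strategy is: at places not dividing $p\Delta\frakl$, show $\partial_{\frakv}(\kappa_{\cD}(\frakl)) = 0$; at places dividing $p\Delta$, show the restriction lands in $H^1_{\ord}$; and note that at $\frakl$ the class is allowed to be arbitrary, so nothing needs checking there.

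First I would handle the places $\frakv \nmid p\Delta\frakl$. Here $J^{[\frakl]}_n$ has good reduction (its discriminant is $\frakl\Delta$ and we are away from $p$), so by the standard compatibility between the Kummer map and reduction (the image of the Kummer map on points of a variety with good reduction lands in $H^1_{\fin}$), $\mathrm{res}_{\frakv}(\mathrm{Kum}(D_m)) \in H^1_{\fin}(K_{m,\frakv}, T_p(J^{[\frakl]}_n)_{\cO_f})$, hence $\partial_{\frakv}(\kappa_{\cD}(\frakl)_m) = 0$ under the identification of \thmref{level_raising}. Passing to the limit over $m$ gives $\partial_{\frakv}(\kappa_{\cD}(\frakl)) = 0$. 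Next, for $\frakv \mid p$: the point $D_m$ defines a class in $H^1(K_{m,\frakv}, T_p(J^{[\frakl]}_n)_{\cO_f})$, and one must check it lands in $H^1_{\ord}$. Since $\frakv \nmid \frakl\Delta$, the Jacobian again has good (in fact good ordinary, by (ORD)) reduction at $\frakv$, and the ordinary filtration $F^+_{\frakv}$ on $T_{f,n}$ is compatible with the connected-étale / canonical-subgroup filtration on the $p$-divisible group; so the Kummer image of a local point lies in the ordinary part. This is where I would cite or adapt the relevant discussion from \cite{Nekovar:level} (section 1.7) and the behavior of Heegner points in the ordinary tower, as in \cite{Longo_IMC} section 7.4.2. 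Finally, for $\frakv \mid \Delta$ (so $\frakv \mid \frakn^- S$ where $S = \Delta/\frakn^-$ is a product of admissible primes): here $J^{[\frakl]}_n$ has purely toric reduction only at $\frakl$, not at $\frakv$; rather the relevant local condition is that the restriction lies in $H^1_{\ord}(K_{m,\frakv}, T_{f,n})$, and one uses the explicit description of $F^+_{\frakv} T_{f,n}$ from \secref{p-filteration} together with the fact that the Kummer image of a point, which is unramified up to the toric part, matches the ordinary line. Again, pass to the limit.

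The main obstacle I expect is the local condition at primes $\frakv \mid \Delta$, particularly the admissible primes dividing $S$, and at $\frakv \mid \frakn^-$: one has to relate the image of the Kummer map $J^{[\frakl]}_n(K_{m,\frakv})_{\cO_f}/\cI^{[\frakl]}_g \to H^1(K_{m,\frakv}, T_{f,n})$ to the ordinary subgroup $H^1_{\ord}$, and this requires knowing the local Galois structure of $T_p(J^{[\frakl]}_n)$ at $\frakv$ precisely enough. For the admissible primes in $S$, \lmref{admi_coho} gives the splitting $H^1(K_{\frakv}, T_{f,n}) = H^1_{\fin} \oplus H^1_{\ord}$, and since the Heegner points $P_m(a)$ are built using $\frakl$-unramified CM points (see \eqref{CM}) which have good reduction away from $\frakl$, their Kummer classes should land in $H^1_{\fin}$ — but the Selmer condition at $\frakv \mid S$ demands $H^1_{\ord}$, so one must actually check the relevant compatibility, or observe that in fact $S$ does not appear in the index of $\widehat{\Sel}_{\Delta\frakl}$ beyond $\frakn^-$ (here $\Delta \supseteq \frakn^-$ and the "$S$-relaxed" part is only at the auxiliary level; re-reading \defref{Delta_selmer}, the ordinary condition is imposed at all $\frakv \mid p\Delta$). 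The cleanest route is probably to invoke the analogous statement and proof in \cite{Bertolini_Darmon:IMC_anti} and \cite[section 7.4.2]{Longo_IMC} verbatim, noting that the argument is insensitive to the totally-real base field and to the higher parallel weight once \thmref{level_raising} provides the isomorphism $T_p(J^{[\frakl]}_n)_{\cO_f}/\cI^{[\frakl]}_g \isomor T_{f,n}$ of $G_F$-modules. I would therefore write the proof as: cite the good-reduction argument for $\frakv \nmid p\frakl\Delta$, cite \cite{Nekovar:level} and (ORD) for $\frakv \mid p$, and reduce the $\frakv \mid \Delta$ case to the structure of Heegner points on the Shimura curve as in \cite{Longo_IMC}, then take inverse limits.
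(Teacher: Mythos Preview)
Your overall strategy (verify the local conditions place-by-place using properties of the Kummer map) is the same as the paper's, but there are three concrete gaps where your local analysis is wrong or incomplete.

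\textbf{At $v\mid\frakn^{+}$.} You treat every $v\nmid p\Delta\frakl$ by the good-reduction argument, but the Shimura curve $M^{[\frakl]}_{n}$ is constructed from an Eichler order of level $\frakn^{+}$, so $J^{[\frakl]}_{n}$ does \emph{not} have good reduction at primes dividing $\frakn^{+}$. The paper handles these primes by a completely different observation: every $v\mid\frakn^{+}$ is split in $K$, and then \lmref{split_inert}(1) gives $\widehat{H}^{1}_{\sing}(K_{\infty,v},T_{f,n})=0$, so $\partial_{v}$ vanishes automatically.

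\textbf{At $v\mid\Delta$.} You write that ``$J^{[\frakl]}_{n}$ has purely toric reduction only at $\frakl$, not at $\frakv$''. This is incorrect: $B^{\prime}$ has discriminant $\frakl\Delta$, so $J^{[\frakl]}_{n}$ has purely toric (Cerednik--Drinfeld) reduction at \emph{every} prime dividing $\frakl\Delta$. That toric reduction is precisely what the paper uses: the uniformization exact sequence \eqref{m-descent} and the diagram \eqref{Kum} show that the local Kummer image at any $v\mid\frakl\Delta$ factors through $H^{1}(K_{v},\dual{\chagroup}\otimes\mu_{m})$, and under $T_{p}(J^{[\frakl]}_{n})_{\cO_{f}}/\ILg\simeq T_{f,n}$ this is exactly $H^{1}_{\ord}$. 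Your worry that at admissible primes in $\Delta/\frakn^{-}$ the Kummer class lands in $H^{1}_{\fin}$ rather than $H^{1}_{\ord}$ is therefore based on a wrong reduction type.

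\textbf{At $v\mid p$.} You assert good (ordinary) reduction of $J^{[\frakl]}_{n}$, but the level carries $\frakp^{n}$, so this fails at $\frakp$. The paper instead works with the Hecke-localized module $T_{\frakm^{[\frakl]}}\subset T_{p}(J^{[\frakl]}_{n})_{\cO_{f}}$, uses its ordinary filtration $F^{+}_{v}V_{\frakm^{[\frakl]}}$ (built from the Hida-theoretic character $\epsilon_{\Hecke}$), and appeals to \cite[Example~3.11]{Bloch_Kato} together with the injectivity coming from \lmref{no-p-invariant} to conclude that the image in $H^{1}(K_{m,v},F^{-}_{v}T_{\frakm^{[\frakl]}})$ vanishes.
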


\begin{proof}
For each $m\geq n$, we need to show that 
\begin{enumerate}
\item{$\partial_{v}(\kappa_{\cD}(\frakl))=0$ for $v\nmid \Delta\frakl p$;}
\item{${\rm res}_{v}(\kappa_{\cD}(\frakl))\in {H^{1}_{\ord}}(K_{\infty}, T_{f,n})$ for $v\mid \Delta\frakl p$.}
\end{enumerate}

For the first point, if $v\nmid\frakn^{+} $, the result follows from the fact that $\Jnl$ has good reduction at such $v$. On the other hand if $v\mid\frakn^{+}$, then $\widehat{H}^{1}(K_{\infty,v}, T_{f,n})=\widehat{H}_{\fin}^{1}(K_{\infty,v}, T_{f,n})$ by \lmref{split_inert} and this shows the claim.

For the second point if $v\mid \frakl\Delta$, then the result follows from our previous discussion \cf{Section 4.5}. Note the Kummer image belongs to the ordinary part of the cohomology group as the Jacobian has purely toric reduction at these primes. See \eqref{Kum} and the discussion after the diagram.

Finally it remains to prove the case for $v\mid\frakp$. Let $T=T_{p}(\Jnl)_{\cO_{f}}$, $\cI=\cI^{[\frakl]}_{g}$ and $\Hecke^{[\frakl]}=\Hecke_{B}(\frakl\nplus Y,\frakp^{n})$, then $T_{\frakm^{[\frakl]}}$ is a direct summand of $T$ and $T_{\frakm^{[\frakl]}}/\cI\isomor T_{f,n}$. Notice that $V_{\frakm^{[\frakl]}}=T_{\frakm^{[\frakl]}}\otimes E_{f}$ is a direct sum of representations $\rho_{h}\otimes \epsilon$ for $h$ a $p$-ordinary Hilbert new form of weight $2$ and central character $\epsilon^{-2}$ such that $\rho_{h}\otimes \epsilon \isomor  \rho^{*}_{f}\mod \uf$. As before we have an exact sequence 
$$\exact{F^{+}_{v}V_{\frakm^{[\frakl]}}}{V_{\frakm^{[\frakl]}}}{F^{-}_{v}V_{\frakm^{[\frakl]}}}.$$ The inertia group at $v$ acts on $F^{+}_{v}V_{\frakm^{[\frakl]}}$ by $\epsilon_{\Hecke}\epsilon$ and on $F^{-}_{v}V_{\frakm^{[\frakl]}}$ via $\epsilon_{\Hecke}^{-1}$. Here $\epsilon_{\Hecke}:G_{F_{v}}\rightarrow \Hecke^{[\frakl]}$ is given by $\epsilon_{\Hecke}(\sigma)=\<\epsilon(\sigma)\>$. We set $F^{+}_{v}T_{\frakm^{[\frakl]}}=T_{\frakm^{[\frakl]}}\cap F^{+}_{v} V_{\frakm^{[\frakl]}}$, then $F^{+}_{v}V_{\frakm^{[\frakl]}}/\cI\isomor F^{+}_{v}T_{f,n}$ as Galois modules. We have the following commutative diagram \cf\cite[Proposition 12.5.8]{Nekovar:SelmerComplexes}
\begin{equation}
\begin{CD}
   H^{1}(K_{m,v}, T_{\frakm^{[\frakl]}})  @>\alpha>>  H^{1}(K_{m,v}, F^{-}_{v}T_{\frakm^{[\frakl]}})  \\
@VV\beta V        @VV\beta^{\prime}V\\
  H^{1}(K_{m,v}, V_{\frakm^{[\frakl]}}) @>\alpha^{\prime}>> H^{1}(K_{m,v},F^{-}_{v} V_{\frakm^{[\frakl]}}) .\\
\end{CD}
\end{equation}

Now let $\Kum(D_{m})_{\frakm^{[\frakl]}}$ be the image of $\Kum(D_{m})$ in $H^{1}(K_{m,\frakp}, T_{\frakm^{[\frakl]}})$, it suffices to show that $\alpha(\Kum(D_{m})_{\frakm^{[\frakl]}})=0$. This follows from the fact that $\alpha^{\prime}\circ\beta(\Kum(D_{m})_{\frakm^{[\frakl]}})=0$ by \cite[Example 3.11]{Bloch_Kato} and that $\beta^{\prime}$ is injective by \lmref{no-p-invariant}.
\end{proof}.

\section{Explicit reciprocity laws}

In this chapter we explain the relationship between Heegner point and the theta element. While the theta element  interpolates $L$-values, Heegner points give rise to elements in the Selmer group. The relationship allows us to bound the Selmer group in terms of $L$-values. The coexistence of Heegner point and theta element is a reflection of the parity of the Selmer group and is a special case of the so called bipartite Euler system \cite{Howard:euler}.

\subsection{The first explicit reciprocity law}
Let $\cD=(\Delta, g)$ be an admissible form. Define $$\Theta_{m}(\cD)=\frac{1}{\Satakepara_{\frakp}^{m}}\sum_{[a]\in G_{m}}g(x_{m}(a))[a]_{m}\in \cO_{f,n}[G_{m}]$$ where $[a]_{m}=\rec_{K}(a)\mid_{ G_{m}}\in G_{m}$ is the map induced by the reciprocity law normalized geometrically. These elements are norm compatible in the following sense.

\begin{lm}
If we denote the quotient map by $\pi_{m+1,m}:G_{m+1}\mapR G_{m}$, then $$\pi_{m+1,m}(\Theta_{m+1}(\cD))=\Theta_{m}(\cD).$$
\end{lm}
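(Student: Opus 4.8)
The plan is to run this argument in close parallel with the proof of \lmref{Normcom}, transplanted to the definite quaternion side, where the role previously played by the Hecke correspondence $U_\frakp$ is played by the fact that $g$ is a $U_\frakp$-eigenform. First I would unwind the definitions and regroup the sum over the fibers of $\pi_{m+1,m}$, using $\pi_{m+1,m}([b]_{m+1})=[a]_m$ whenever $[b]\mapsto[a]$:
\[
\pi_{m+1,m}\bigl(\Theta_{m+1}(\cD)\bigr)=\frac{1}{\Satakepara_\frakp^{m+1}}\sum_{[a]\in G_m}\Bigl(\sum_{[b]\mapsto[a]}g\bigl(x_{m+1}(b)\bigr)\Bigr)[a]_m.
\]
Because $\cD=(\Delta,g)$ is admissible, $\lambda_g(U_\frakp)\equiv\lambda_f(U_\frakp)=\Satakepara_\frakp\pmod{\uf^n}$, which is a unit in $\cO_{f,n}$ by (ORD); so it suffices to prove, for each $[a]\in G_m$ (with $m\geq n$, so that $x_m$ is defined),
\[
\sum_{[b]\mapsto[a]}g\bigl(x_{m+1}(b)\bigr)=\Satakepara_\frakp\cdot g\bigl(x_m(a)\bigr)=\bigl(U_\frakp g\bigr)\bigl(x_m(a)\bigr).
\]

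Next I would make the fiber explicit. Fixing a representative $a\in\Khatcross$ of $[a]$, the fiber of $\pi_{m+1,m}$ over $[a]$ is $\{[ya]:y\in\widehat{\cR}_m^{\times}/\widehat{\cR}_{m+1}^{\times}\}$, and as in the proof of \lmref{Normcom} one may take the representatives $y=1+\uf_\frakp^m u\bftheta$ for $u\in\cO_{F,\frakp}/\frakp$, these being trivial away from $\frakp$. For such $y$ one has $x_{m+1}(ya)=\bigl[a\cdot i(y)\zeta^{(m+1)}\bigr]_{\openU Y}$, and the computation carried out in the proof of \lmref{Normcom}, in both the split and inert cases for $\frakp$, gives
\[
i_\frakp(y)\,\zeta_\frakp^{(m+1)}\equiv\zeta_\frakp^{(m)}\pMX{1}{c_u}{0}{1}\pi_\frakp\pmod{\openU_\frakp Y_\frakp},
\]
with $c_u=-u/(1+\uf_\frakp^m u\bftheta)$ when $\frakp=\frakP\overline{\frakP}$ and $c_u=-u$ when $\frakp$ is inert (the remaining diagonal factor in that computation lies in $\openU Y$ and is absorbed into the class). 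Hence $x_{m+1}(ya)=\bigl[a\,\zeta^{(m)}\pMX{1}{c_u}{0}{1}\pi_\frakp\bigr]_{\openU Y}$.

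It then remains only to observe that $\{\,\pMX{1}{c_u}{0}{1}:u\in\cO_{F,\frakp}/\frakp\,\}$ is a full set of coset representatives for $\openU_\frakp/(\openU_\frakp\cap\pi_\frakp\openU_\frakp\pi_\frakp^{-1})$: each is unipotent upper triangular, hence in $\openU_\frakp$; two are in the same coset exactly when $c_u\equiv c_{u'}\pmod{\uf_\frakp}$, i.e. (since $1+\uf_\frakp^m u\bftheta\equiv1\pmod{\uf_\frakp}$) exactly when $u\equiv u'\pmod\frakp$; and there are $\Nm\frakp=\deg U_\frakp$ of them. Granting this, the weight-two formula for $U_\frakp$ gives $\sum_{[b]\mapsto[a]}g(x_{m+1}(b))=\sum_u g\bigl(x_m(a)\cdot\pMX{1}{c_u}{0}{1}\pi_\frakp\bigr)=(U_\frakp g)(x_m(a))=\Satakepara_\frakp\,g(x_m(a))$, and substituting back converts $\Satakepara_\frakp^{-(m+1)}$ into $\Satakepara_\frakp^{-m}$, which is exactly $\Theta_m(\cD)$. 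I expect the only real obstacle to be the bookkeeping in this last step—matching the representatives coming from $\widehat{\cR}_m^{\times}/\widehat{\cR}_{m+1}^{\times}$ with those defining $U_\frakp$, and verifying that the $\openU Y$-ambiguities in the \lmref{Normcom} computation do not affect the value of $g$—after which the statement is a direct transcription of the Heegner-point norm relation.
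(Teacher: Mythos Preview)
Your proposal is correct and follows exactly the approach the paper intends: the paper's own proof is the single sentence ``This follows from the exact same computation in \lmref{Normcom} and the fact that $g$ is a $U_{\frakp}$-eigenform,'' and you have simply unpacked that sentence in full detail. The bookkeeping you flag as a potential obstacle works out cleanly, since the matrices $\pMX{1}{c_u}{0}{1}$ with $c_u\equiv -u\pmod{\frakp}$ do give a full set of representatives for $\openU_\frakp/(\openU_\frakp\cap\pi_\frakp\openU_\frakp\pi_\frakp^{-1})$ and the extra factors absorbed into $\openU Y$ are invisible to $g$.
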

\begin{proof}
This follows from the exact same computation in \lmref{Normcom}  and the fact that $g$ is a $U_{\frakp}$-eigenform.
\end{proof}

Let $\pi_{m}: G_{m}\rightarrow \Gamma_{m}=\Gal(K_{m}/K)$ be the natural map and let 
$$\theta_{m}(\cD)=\pi_{m}(\Theta_{m}(\cD))\in \cO_{f,n}[\Gamma_{m}],$$ 
$$\theta_{\infty}(\cD)=(\theta_{m}(\cD))_{m}\in \cO_{f,n}[[\Gamma]].$$
The following theorem is referred to as the first reciprocity law which was a terminology used first in \cite{Bertolini_Darmon:IMC_anti}. For proofs of the first reciprocity in other setups, see \cite[Theorem 4.1]{Bertolini_Darmon:IMC_anti}, \cite[Theorem 7.22]{Longo_IMC}.

\begin{thm}[First reciprocity law] \label{1law}
Let $m\geq n\geq 0$ and $\frakl$ be an $n$-admissible prime for $f$. For an $n$-admissible form $\cD=(\Delta, g)$, we have 
$$\partial_{\frakl}(\kappa_{\cD}(\frakl)_{m})=\theta_{m}(\cD)\in \cO_{f,n}[\Gamma_{m}]$$
and in paticular we have
$$\partial_{\frakl}(\kappa_{\cD}(\frakl))=\theta_{\infty}(\cD)\in \cO_{f,n}[[\Gamma]].$$
\end{thm}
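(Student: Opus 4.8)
The plan is to trace through the construction of $\kappa_{\cD}(\frakl)_m$ and the identification $\Phi^{[\frakl]}_{\cO_f}/\ILg \isomor H^1_{\sing}(K_{\frakl},T_{f,n})$ from \propref{H_sing}, and to show that applying the residue map $\partial_{\frakl}$ to the Kummer image of the Heegner divisor $D_m$ amounts, under these identifications, to the reduction of $D_m$ to the component group, which in turn is computed by the specialization map via the formula \eqref{specialization}. First I would invoke \thmref{comd1}: the diagram there says $\psi_g^{-1}\circ\partial_{\frakl}\circ\Kum = \tilde r_{\frakl}$ on $\Jnl(K_m)_{\cO_f}/\ILg$, so $\partial_{\frakl}(\kappa_{\cD}(\frakl)_m) = \psi_g(\tilde r_{\frakl}(\tfrac1{\Satakepara_{\frakp}^m}D_m))$ as an element of $H^1_{\sing}(K_{m,\frakl},T_{f,n})$. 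Thus everything reduces to evaluating $\psi_g\bigl(\tilde r_{\frakl}(\tfrac1{\Satakepara_{\frakp}^m}D_m)\bigr)$ and checking it equals $\theta_m(\cD)$.

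The key computation is then the following chain of identifications. By \eqref{reduction}, $\tilde r_{\frakl}(D_m)=\sum_{\sigma\in\Gamma_m} r_{\frakl}(\iota_{\frakl}(D_m^\sigma))[\sigma]$; by the commutative diagram \eqref{comd2} the reduction $r_{\frakl}$ of a CM divisor is computed by the specialization map $r_{\cV}$, and by \eqref{specialization} the specialization of the $\frakl$-unramified CM point $[z',\varphi_{B,B'}(a^{(\frakl)}\zeta^{(m)}\tau_n)]_{\openU'Y}$ on $M^{[\frakl]}_n$ is the Gross point $[a\zeta^{(m)}\tau_n]_{\openU Y} = x_m(a)\cdot\tau_n$ on $X_B(\openU Y)$. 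The auxiliary operator $\xi_{\frakq}$ is a multiple of $1+\Nm\frakq-T_{\frakq}$, which acts on $S^B_2(\openU Y,\cO_f)/\cI_g\isomor\cO_{f,n}$ as multiplication by the unit $1+\Nm\frakq-\Satakepara_{\frakq}(f)$ (this is the reason $\frakq$ was chosen with $1+\Nm\frakq-\Satakepara_{\frakq}(f)\in\cO_f^\times$), so $\xi_{\frakq}$ disappears after applying $\psi_g$. Tracking the sum over $\sigma\in\Gal(H_m/K_m)$ defining $D_m$ together with the Shimura reciprocity law \eqref{shimura} (which translates the Galois action on Heegner points into left translation by $t'(a)$ on the adelic variable, matching the $[a]_m$ in the definition of $\Theta_m(\cD)$), and using that $\psi_g(h)=\langle g,h\rangle_{\openU}$ together with the fact that $\tau_n$ is the Atkin--Lehner involution built into the pairing \eqref{pairing}, one gets $\psi_g(\tilde r_{\frakl}(D_m)) = \Satakepara_{\frakp}^m\cdot\pi_m(\sum_{[a]\in G_m} g(x_m(a))[a]_m)$; dividing by $\Satakepara_{\frakp}^m$ and recalling the definitions of $\Theta_m(\cD)$, $\theta_m(\cD)$ yields exactly $\theta_m(\cD)\in\cO_{f,n}[\Gamma_m]$.

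Finally, the statement over $K_\infty$ follows by passing to the limit: both sides are norm-compatible in $m$ — the left side because $\kappa_{\cD}(\frakl)=\{\kappa_{\cD}(\frakl)_m\}_m$ was already shown to be an element of $\widehat H^1(K_\infty,T_{f,n})$ and $\partial_{\frakl}$ commutes with corestriction, the right side by the norm-compatibility lemma $\pi_{m+1,m}(\Theta_{m+1}(\cD))=\Theta_m(\cD)$ proved just above — so the equality $\partial_{\frakl}(\kappa_{\cD}(\frakl)_m)=\theta_m(\cD)$ for all $m\ge n$ assembles into $\partial_{\frakl}(\kappa_{\cD}(\frakl))=\theta_\infty(\cD)\in\cO_{f,n}[[\Gamma]]$ after passing to the remark following \thmref{comd1} (the limiting form of the specialization diagram).

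The main obstacle, I expect, is the bookkeeping in the middle step: matching the adelic representatives of the Gross points $x_m(a)$ coming out of the specialization formula \eqref{specialization} with the terms $g(x_m(a))[a]_m$ in $\Theta_m(\cD)$, including keeping precise track of the Atkin--Lehner twist $\tau_n$, the normalization factor $1/\Satakepara_{\frakp}^m$, and the interaction between the $\xi_{\frakq}$-smoothing operator and the Hecke eigenvalue system of $g$. None of the individual ingredients is deep — \thmref{comd1}, \eqref{specialization}, \eqref{shimura} and \propref{psig} do the real work — but assembling them with the correct constants is where care is needed; this is precisely the part that is "quite straightforward" in the terminology of the introduction, as opposed to the second reciprocity law which needs Ihara's lemma.
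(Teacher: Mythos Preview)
Your proposal is correct and follows essentially the same route as the paper: invoke \thmref{comd1} to replace $\partial_{\frakl}\circ\Kum$ by $\psi_g\circ\tilde r_{\frakl}$, then compute $\tilde r_{\frakl}$ on the Heegner divisor via the specialization formula \eqref{specialization} and the commutative diagram \eqref{comd2}, obtaining a sum over Gross points that unwinds to $\theta_m(\cD)$ under the pairing $\psi_g$. Your version is slightly more explicit than the paper's in tracking the disappearance of the $\xi_{\frakq}$-smoothing and the $1/\Satakepara_{\frakp}^m$ normalization, but these are exactly the bookkeeping points the paper leaves implicit; there is no substantive difference in approach.
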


\begin{proof}
By the commutativity of the diagram in \thmref{comd1} and the definition of $\tilde{r}_{\frakl}$, we obtain
\begin{equation}
\begin{aligned}
\partial_{\frakl}(\kappa_{\cD}(\frakl)_{m})= \sum_{\sigma\in \Gamma_{m}}\psi_{g}(r_{\frakl}(D^{\sigma}_{m}))[\sigma] .
\end{aligned}
\end{equation}

By the commutativity of the diagram \eqref{comd2} and equation \eqref{specialization}, we have 
\begin{equation}
r_{\frakl}(D_{m}^{\sigma})= \sum_{[b]_{m}\in \Gal(H_{ m}/K_{ m})} x_{m}(ub)\tau_{n}
\end{equation}
where $u$ is given by $[u]_{m}=\sigma\in \Gal(K_{ m}/K)$ and $\tau_{n}$ is the Atkin-Lehner involution given in  \eqref{atkin-inv}.

Hence we conclude that 

\begin{equation}
\begin{aligned}
\partial_{\frakl}(\kappa_{\cD}(\frakl)_{m})= & \sum_{[u]_{m}\in \Gal(K_{m}/K)}\sum_{[b]_{m}\in\Gal(H_{m}/K_{m})}\<g, x_{m}(ub)\tau_{n}\>_{\openU Y}[u]_{m}\\
 &= \sum_{a\in \Gal(H_{m}/K)}g(x_{m}(a))\pi_{m}([a]_{m}).\\
\end{aligned}
\end{equation}
\end{proof}

\begin{remark}
In the above proof, especially the last line, we identify the Gross point $x_{m}(a)$ as an element in $S^{B}_{2}(\openU Y, \Z)$ in the following manner.  We have a sequence of isomorpshims
$$\Z[X_{B}(\openU Y)]\isomor \Hom_{\Z}(S^{B}_{2}(\openU Y, \Z), \Z)\isomor S^{B}_{2}(\openU Y, \Z) $$
where the first isomorphism is given by sending $x \in X_{B}(\openU Y)$ to the evaluation function of $f\rightarrow f(x)$ and the second isomorphism is given by $\<,\>_{\openU Y}$. These isomorphisms are all "Hecke compatible".\end{remark}

\subsection{Ihara's lemma for Shimura curves}
Let $\cU$ be an open compact subgroup of $\widehat{B}^{\prime \cross}$ and $M_{\cU}$ be the Shimura curve of level $\cU$. For any ring $A$, let $\cF_{k}(A)$ be the local system associated to $L_{k}(A)$ over $M_{\cU}$ and we set $\cL_{k}(\cU, A)= H_{et}^{1}(M_{\cU}, \cF_{k}(A))$. If  $v\nmid \Delta_{B}p\nplus$ is a prime in $F$, we denote by $\cU(v)$ the open compact subgroup of $\widehat{B}^{\prime \cross}$ obtained by adding a level at $v$ to $\cU$. Let $\F=\cO_{f,1}$ Then the following assumption is usually known as the "Ihara's lemma":

\begin{assumption}\label{Ihara}
If $\cU$ is maximal at $v$ and places above $p$ and $2\leq k<p+1$, then we have an injective map
$$1+\eta_{v}^{*}: \cL_{k}(\cU, {\F})^{2}_{\m}\rightarrow \cL_{k}(\cU(v),{\F})_{\m}.$$
Here $\eta_{v}$ is the natural degeneracy map from $M_{\cU(v)}$ to $M_{\cU}$ and $\m$ is a non-Eisenstein maximal ideal in the spherical Hecke algebra of level $\cU(v)$.
\end{assumption}

\begin{remark}
A proof of the Ihara's lemma is contained in \cite{Cheng:Ihara} following the method of Diamond-Taylor \cite{Diaomond_Taylor:Inventione}. It was never published and we treat this as an assumption. Note that recently, Manning and Shotton provide another proof of the Ihara's lemma via patching in \cite{ManSho:Ihara} and if we assume that the residual representation has large image we can apply their results. 
\end{remark}

\subsection{The second explicit reciprocity law}

Recall we have a homomorphism $$\lambda^{[\frakl]}_{g}: \CHecke{\frakl\nplus Y} {\frakp^{n}}_{\cO_{f}} \rightarrow \cO_{f,n}$$ whose kernel is denoted by $\cI^{[\frakl]}_{g}$. Let $\m$ be the maximal ideal containing $\cI^{[\frakl]}_{g}$ in $\CHecke{\frakl\nplus Y} {\frakp^{n}}_{\cO_{f}}$. Also denote $\widehat{R^{\prime}}_{\frakc}^{\cross}$ the Eichler order of level $\frakc$ for some integral ideal $\frakc$ in $F$. Note we can not apply the Ihara's lemma directly as our open compact group $\openU^{\prime}=\openU_{\frakn^{+},\frakp^{n}}$ is not maximal at $\frakp$.  To remedy, we use the following proposition which again uses Hida theory.

\begin{prop}\label{Ihara_remedy}
Assume $\PO$ and the "Ihara's lemma", we have $$1+\eta_{v}^{*}: \cL_{2}(\openU^{\prime},\F)_{\m}^{2}[P_{k}]\rightarrow \cL_{2}(\openU^{\prime}(v),\F)_{\m}[P_{k}]$$ is injective.
\end{prop}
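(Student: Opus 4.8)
The plan is to mirror the proof of \propref{psig}: use Hida theory at $\frakp$ to trade the deep level structure $\frakp^{n}$ for an increase of weight, so as to reduce to \assref{Ihara} at a level which is maximal at $\frakp$, and then transport the resulting injectivity back along the degeneracy map $\eta_{v}$ --- which is harmless since $v\nmid\Delta_{B}p\nplus$ is coprime to $\frakp$ and to the places above $p$.

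First I would set $e=\lim_{n}U_{\frakp}^{n!}$ and let $P_{k}\subset\CHecke{\frakl\nplus Y}{\frakp^{n}}_{\cO_{f}}$ be the ideal generated by $\{\langle a\rangle-a^{(k-2)/2}:a\in\cO_{F,\frakp}^{\times}\}$, as in the proof of \propref{psig}. Because $v\neq\frakp$, the two degeneracy maps underlying $\eta_{v}$ commute with $U_{\frakp}$ and with the diamond operators at $\frakp$, hence with $e$ and with passage to $P_{k}$-torsion; moreover, since $\m$ is non-Eisenstein, $U_{\frakp}$ reduces modulo $\m$ to the $\uf$-adic unit $\alpha_{\frakp}(f)$, so $e$ acts as the identity on the $\m$-localisation. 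Thus it suffices to establish injectivity of $1+\eta_{v}^{*}$ on $e\,\cL_{2}(\openUprime,\F)_{\m}^{2}[P_{k}]$. Next I would invoke the \'etale-cohomology analogue of Hida's control theorem (the same computation as in the proof of \cite[Theorem 8.1]{Hida:p-adic-Hecke-algebra}) to obtain a Hecke- and $\eta_{v}^{*}$-equivariant isomorphism
$$e\,\cL_{2}(\openUprime,\F)_{\m}[P_{k}]\isomor e\,\cL_{k}(\cV,\F)_{\m},$$
where $\cV$ is obtained from $\openUprime$ by replacing the $\frakp^{n}$-structure at $\frakp$ with an Iwahori ($\Gamma_{0}(\frakp)$-type) structure. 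Then, exactly as in \propref{psig}, the $\frakp$-stabilisation map together with hypothesis $\PO$ yields a Hecke- and $\eta_{v}^{*}$-equivariant isomorphism
$$e\,\cL_{k}(\cV,\F)_{\m}\isomor e^{\circ}\,\cL_{k}(\cU,\F)_{\m},\qquad e^{\circ}=\lim_{n}T_{\frakp}^{n!},$$
where $\cU$ is obtained from $\openUprime$ by making the component at $\frakp$ maximal. Now $\cU$ is maximal at $v$ and at $\frakp$ (and unchanged at the remaining places above $p$), so \assref{Ihara} applies at the prime $v$ (its weight hypothesis $2\le k<p+1$ being part of what we are assuming) and gives injectivity of $1+\eta_{v}^{*}\colon\cL_{k}(\cU,\F)_{\m}^{2}\to\cL_{k}(\cU(v),\F)_{\m}$. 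Applying the two isomorphisms above to both source and target --- legitimate because they only involve data at the places above $p$ and hence commute with adding level at $v$ --- I would conclude the injectivity of $1+\eta_{v}^{*}$ on $\cL_{2}(\openUprime,\F)_{\m}^{2}[P_{k}]$, which is the assertion of the proposition.

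The main obstacle will be the $\eta_{v}^{*}$-equivariance asserted for Hida's control isomorphism and for the $\frakp$-stabilisation, that is, the commutativity of the squares relating level $\openUprime$ with level $\openUprime(v)$: one has to check that Hida's isomorphism can be chosen sufficiently functorially in the prime-to-$p$ level for this to hold on the nose, and that the non-Eisenstein maximal ideal $\m$ --- non-Eisenstein because it contains $\ILg$, see the proof of \thmref{one_dim} --- matches up correctly under all the identifications and remains non-Eisenstein at the new level $\cU(v)$, so that \assref{Ihara} is applicable to it. Everything else is a routine transcription of the arguments of \propref{psig} and of \assref{Ihara}.
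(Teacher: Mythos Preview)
Your proposal is correct and follows essentially the same three-step architecture as the paper's proof: (1) Hida's control theorem to pass from weight $2$ at level $\frakp^{n}$ to weight $k$ at Iwahori level $\frakp$; (2) removal of the level at $\frakp$ using $\PO$; (3) application of \assref{Ihara} at a level maximal at $v$ and at $p$.

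The only noteworthy difference is in step (2). You invoke the $\frakp$-stabilisation isomorphism by analogy with \propref{psig}, whereas the paper argues directly that the single degeneracy map $\cL_{k}(\cU,\cO_{f})_{\m}\to\cL_{k}(\cU(\frakp),\cO_{f})_{\m}$ is an isomorphism: its cokernel over $\C_{p}$ is (two copies of) the $\frakp$-new part, annihilated by $U_{\frakp}^{2}-\Nm{\frakp}^{k-2}$, which is a unit modulo $\m$ under $\PO$; one then appeals to \cite[Lemma~4]{Diaomond_Taylor:Inventione} to know $\cL_{k}(\cU(\frakp),\cO_{f})_{\m}$ is torsion-free, whence the map is an isomorphism integrally and thus modulo $\uf$. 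Your route via stabilisation is equivalent in content, but note that \propref{psig} is stated for the definite quaternion algebra $B$, not for \'etale cohomology of the Shimura curve for $B'$; the paper's explicit torsion-freeness argument is what makes the passage to $\F$-coefficients clean in this setting. Otherwise the two proofs coincide, and the functoriality-in-prime-to-$p$-level issue you flag is exactly the point that needs checking and is indeed routine.
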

\begin{proof}
Using \cite[ Theorem 8.1, Corollary 8.2]{Hida:p-adic-Hecke-algebra}, we have the following isomorphisms
$$\cL_{2}(\openU^{\prime},\F)_{\m}[P_{k}]\isomor \cL_{k}(\widehat{R^{\prime}}_{\frakn^{+}\frakp}^{\cross},\F)_{\m},$$ 
$$\cL_{2}(\openU^{\prime}(v), \F)_{\m}[P_{k}]\isomor\cL_{k}(\widehat{R^{\prime}}_{\frakn^{+}v\frakp}^{\cross}, \F)_{\m}.$$ 
Now let $\cU$ be either $\widehat{R^{\prime}}^{\cross}_{\frakn^{+}}$ or $\widehat{R^{\prime}}^{\cross}_{\frakn^{+} v}$ and consider the injective map
$$\cL_{k}(\cU,\C_{p})_{\m}\rightarrow \cL_{k}(\cU(\frakp), \C_{p})_{\m}.$$ 
 By the Eichler-Shimura relation, the cokernel of this map is is two copies of the $\frakp$-new part of the ordinary Hilbert modular forms on $B^{\prime}$ and hence is killed by $U_{\frakp}^{2}-\Nm{\frakp}^{k-2}$. Thus the cokernel of the map $$\cL_{k}(\cU,\cO_{f})_{\m}\rightarrow \cL_{k}(\cU(\frakp), \cO_{f})_{\m}$$ is $\cO_{f}$-torsion under the assumption $\PO$. By following the same argument as in \cite[Lemma 4]{Diaomond_Taylor:Inventione}, $\cL_{k}(\cU(\frakp), \cO_{f})_{\m}$ is torsion free. Therefore $\cL_{k}(\cU,\cO_{f})_{\m}\rightarrow \cL_{k}(\cU(\frakp), \cO_{f})_{\m}$ is an isomorphism and so is its reduction modulo $\uf$. Thus we have proved the following isomorphisms:
$$\cL_{k}(\widehat{R^{\prime}}^{\cross}_{\frakn^{+}\frakp}, \F)_{\m}\isomor \cL_{k}(\widehat{R^{\prime}}^{\cross}_{\frakn^{+}}, \F)_{\m},$$
$$\cL_{k}(\widehat{R^{\prime}}^{\cross}_{\frakn^{+}v\frakp }, \F)_{\m}\isomor \cL_{k}(\widehat{R^{\prime}}^{\cross}_{\frakn^{+} v}, \F)_{\m}$$
under the {\PO} assumption. Therefore the conclusion follows from "Ihara's lemma".
\end{proof}

Let $\frakl_{1}, \frakl_{2}$ be two $n$-admissible primes such that $\frakl_{1}\frakl_{2}\nmid \Delta_{B}$. We denote the underlying rational prime in $\Q$ of $\frakl_{2}$ by $l_{2}$. Let ${\cM}^{[\frakl_{1}]}_{n}$ be the integral model of $M^{[\frakl_{1}]}_{n}$ over $\cO_{F,\frakl_{2}}$ studied by Carayol \cite{Carayol:Bad} and also in Zhang \cite{Zhang:Height}. We review the definition of this model in preparation for the next theorem. Let $F^{\prime}$ be an auxiliary CM field over $F$ such that $\frakl_{2}$ is split in $F^{\prime}$. We set $D=B^{\prime}\otimes F^{\prime}$ and denote $\upsilon(\openU^{\prime})= {N}_{D}(U^{\prime}\widehat{F^{\prime}}^{\cross})\cap \cO^{\cross}_{F}$.  We fix an embedding $\iota_{\frakl_{2}}: F^{\prime}\hookrightarrow F_{\frakl_{2}}$.  For an $\cO_{D,l_{2}}$ module $\Lambda$, there is an natural decomposition  $\Lambda=(\oplus_{i=1}^{r}\Lambda_{1}^{i})\oplus (\oplus_{i=1}^{r}\Lambda_{2}^{i})$ compatible with respect to the natural decomposition $\cO_{D,l_{2}}=(\oplus_{i=1}^{r}\cO_{D,1}^{i})\oplus (\oplus_{i=1}^{r}\cO_{D,2}^{i})$ where we make a choice such that $\cO_{D,\frakl_{2}}$ corresponds to $\cO_{D,2}^{1}$. Note that as $D$ splits at $\frakl_{2}$, we can set $\Lambda^{1,1}_{2}=\pMX{1}{0}{0}{0}\Lambda^{1}_{2}$.
Let $\widehat{V}$ be the profinite completion of an $\cO_{D}$-module $V$ and $\widehat{V}^{(l_{2})}$ be the subset of $\widehat{V}$ with trivial $l_{2}$ component. We further denote by $V^{(\frakl_{2})}_{2}=\oplus^{r}_{i=2}V_{l_2,2}^{i}$.  These notations in particular applies to the case when $V= \cO_{D}$ viewed as an $\cO_{D}$-module via the natural left multiplication, $V =H_{1}(A, {\Z})$ or $\Lie(A)$ for an abelian variety $A$ with $\cO_{D}$ action. To define the integral model ${\cM}^{[\frakl_{1}]}_{n}$, one first constructs an auxiliary integral model ${\tilde{\cM}}^{[\frakl_{1}]}_{n}$ for another Shimura curve ${\tilde{M}^{[\frakl_{1}]}}_{n}$ which admits a finite map $f: M^{[\frakl_{1}]}_{n} \rightarrow \tilde{M}^{[\frakl_{1}]}_{n}$. Then one takes ${\cM}^{[\frakl_{1}]}_{n}$ to be the normalization of $\tilde{\cM}^{[\frakl]}_{n}$.

Let $x\in {\tilde{\cM}}^{[\frakl_{1}]}_{n}(S)$ for a scheme $S$ over $\cO_{F,\frakl_{2}}$.  Then $x=(A/S, \iota, \bar{\theta}, \bar{\kappa}^{[\frakl_{2}]})$ where 
\begin{itemize}
\item {$A/S$ is an abelian scheme of dimension $4d$  with an action $\iota: \cO_{D}\rightarrow \End{A}$ satisfying certain conditions on $\Lie(A)$: $\Lie(A)^{1}_{2}$ is a locally free $\cO_{S}$-module of rank $2$ and  $\Lie(A)^{\frakl_{2}}_{l_{2}}=0$ \cf \cite[Proposition 1,22]{Zhang:Height};}
\item{$\bar{\theta}: A\rightarrow \dual{A}$ is an $\upsilon(\openU^{\prime})$-isomorphism class of polarizations of $A$ with prime to $l_{2}$ degree;}
\item{$\bar{\kappa}^{[\frakl_{2}]}: {T}^{(\frakl_{2})}_{l_{2}}(A)\oplus \widehat{T}^{(l_{2})}(A)\rightarrow W^{(\frakl_{2})}_{l_{2}}\oplus \widehat{W}^{(l_{2})}$ is an isomorphism up to the action of $\openU^{\prime}Y$.} 
\end{itemize}

Let $A$ be the universal abelian scheme over ${\tilde{\cM}}^{[\frakl_{1}]}_{n}$ and $A_{l_{2}^{\infty}}$ be the $l_{2}^{\infty}$-divisible group of $A$, then $(A_{l_{2}^{\infty}})^{1,1}_{2}$ is a divisible $\cO_{F,{\frakl_{2}}}$-module of height $2$ and dimension $1$. We denote by $\mathbf{E}_{\infty}$ the pullback of $(A_{l_{2}^{\infty}})^{1,1}_{2}$ to $\cM^{[\frakl_{1}]}_{n}$. Let $x$ be a geometric point of ${\cM}^{[\frakl_{2}]}_{n}$, then the fiber $\mathbf{E}_{\infty,x}$ is either of the form $F_{\frakl_{2}}/\cO_{F,\frakl_{2}}\times \Sigma_{1}$ or $\Sigma_{2}$ where $\Sigma_{1}$ is the formal $\cO_{F,\frakl_{2}}$ module of height $1$ and $\Sigma_{2}$ is the  formal $\cO_{F,\frakl_{2}}$ module of height $2$. If the first case happens, then we say $x$ is ordinary and if the second case happens then we say $x$ is supersingular. See  \cite[Section 0.8]{Carayol:Bad}.

We fix an isomorphism $\varphi_{B^{\prime\prime},B^{\prime}}: {\widehat{B}^{\prime\prime(\frakl_{2})\cross}}\rightarrow {\widehat{B}^{\prime(\frakl_{2})\cross}}$ and put $\openU^{\prime\prime}= \varphi^{-1}_{B^{\prime\prime}, B^{\prime}}(\openU^{\prime(\frakl_{2})})\cO^{\cross}_{B^{\prime\prime},\frakl_{2}}$. By \cite[Section 11.2]{Carayol:Bad}, we have an identification
$$\gamma: X_{B^{\prime\prime}}(\openU^{\prime\prime}Y)\mapR {\cM}^{[\frakl_{1}]}_{n}(\F_{\frakl^{2}_{2}})^{ss}$$
where the righthand side is the supersingular locus of the special fiber of ${\cM}^{[\frakl_{1}]}_{n}$ and $\F_{\frakl^{2}_{2}}$ is the residue field of $\cO_{K,\frakl_{2}}$.

\begin{lm}
We have $\text{red}_{\frakl_{2}}(P_{m}(a))\in  {\cM}^{[\frakl_{1}]}_{n}(\F_{\frakl^{2}_{2}})^{ss}$ and moreover $\text{red}_{\frakl_{2}}(P_{m}(a))=\gamma(x_{m}(a)\tau_{n})$.
\end{lm}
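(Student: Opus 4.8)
The plan is to prove the lemma in two stages: first, the point $P_m(a)$ reduces into the supersingular locus at $\frakl_2$, and second, to identify this reduction explicitly with $\gamma(x_m(a)\tau_n)$ via the CM moduli-theoretic description of both sides.

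\textbf{Supersingularity of the reduction.} First I would recall that $P_m(a)$ is a CM point on $M^{[\frakl_1]}_n=\ShiUprimeY$ coming from the embedding $t'\colon K\hookrightarrow B'$. The abelian variety $A$ attached to such a CM point has $\cO_D$-action, and the relevant $l_2$-divisible summand $(A_{l_2^\infty})^{1,1}_2$ — whose pullback is $\mathbf{E}_\infty$ — acquires extra CM by $\cO_{K,\frakl_2}$ because the point is fixed under $t'(K^\times_{\sigma_1})$. Since $\frakl_2$ is by hypothesis an $n$-admissible prime, it is inert in $K$, so $K_{\frakl_2}/F_{\frakl_2}$ is the unramified quadratic extension. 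Hence $\mathbf{E}_{\infty,x}$, which is a formal $\cO_{F,\frakl_2}$-module of height $\le 2$ carrying an action of the quadratic unramified extension $\cO_{K,\frakl_2}$, cannot be of the ordinary type $F_{\frakl_2}/\cO_{F,\frakl_2}\times\Sigma_1$ (the étale part would force $\cO_{K,\frakl_2}$ to act on a rank-one étale piece, contradicting that $\frakl_2$ is inert); therefore $\mathbf{E}_{\infty,x}\cong\Sigma_2$ and $\text{red}_{\frakl_2}(P_m(a))$ lies in the supersingular locus ${\cM}^{[\frakl_1]}_n(\F_{\frakl_2^2})^{ss}$. This is essentially the classical fact that CM points by a field in which $\frakl_2$ is inert reduce to supersingular points, adapted to Carayol's model as in \cite[Section 0.8, Section 11]{Carayol:Bad} and \cite[Section 5]{Zhang:Height}.

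\textbf{Identification with $\gamma(x_m(a)\tau_n)$.} For the second part I would trace through Carayol's identification $\gamma\colon X_{B''}(\openU''Y)\to{\cM}^{[\frakl_1]}_n(\F_{\frakl_2^2})^{ss}$. The point is that the supersingular locus is described, via Serre–Tate / Cerednik–Drinfeld type uniformization at $\frakl_2$, in terms of the prime-to-$\frakl_2$ level structure, which matches the definite Shimura set $X_{B''}(\openU''Y)$ through the fixed isomorphisms $\varphi_{B,B'}$ and $\varphi_{B'',B'}$. Concretely $P_m(a)=[z',\varphi_{B,B'}(a^{(\frakl)}\zeta^{(m)}\tau_n)]_{\openU'Y}$; its away-from-$\frakl_2$ adelic component is $\varphi_{B,B'}(a^{(\frakl)}\zeta^{(m)}\tau_n)^{(\frakl_2)}$, which under $\varphi_{B'',B'}^{-1}$ and the definition $\openU''=\varphi^{-1}_{B'',B'}(\openU'^{(\frakl_2)})\cO^\times_{B'',\frakl_2}$ becomes exactly the class $[a\zeta^{(m)}\tau_n]_{\openU''Y}=x_m(a)\tau_n$ in $X_{B''}(\openU''Y)$ (using that the Gross point map $x_m$ is defined by $a\mapsto[a\zeta^{(m)}]_{\openU Y}$ and that $\zeta^{(m)}$ and $\tau_n$ have trivial $\frakl_2$-component). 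Thus $\gamma^{-1}(\text{red}_{\frakl_2}(P_m(a)))=x_m(a)\tau_n$, which is the claim. I would be careful here to match the component at $\frakp$ and at $\nplus$ correctly and to check that the Atkin–Lehner twist $\tau_n$ is carried along compatibly on both sides — this is where the bookkeeping is delicate.

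\textbf{Main obstacle.} The genuinely hard part is not the supersingularity (which is routine once one knows $\frakl_2$ is inert in $K$), but the precise compatibility of Carayol's parametrization $\gamma$ of the supersingular locus with the adelic double-coset description of Heegner points: one must verify that the moduli-theoretic recipe defining $\gamma$ — reading off the prime-to-$\frakl_2$ level structure from $(A,\iota,\bar\theta,\bar\kappa^{[\frakl_2]})$ — sends the specialization of the CM abelian variety attached to $a\zeta^{(m)}\tau_n$ to the double coset $[a\zeta^{(m)}\tau_n]_{\openU''Y}$ rather than to some twist of it. This amounts to unwinding \cite[Section 11.2]{Carayol:Bad} together with the optimal-embedding conventions in Section 3.2 and Section 4.6 of the present paper, and checking that no spurious normalization (e.g. a Frobenius twist from $\Z/2\Z$-component, or a geometric-versus-arithmetic normalization of $\rec_K$) intervenes. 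Given the care already taken in fixing $\varphi_{B,B'}$, $\varphi_{B'',B'}$, $t'$ and the reciprocity normalization, I expect this to go through, but it is the step that requires the most attention to detail.
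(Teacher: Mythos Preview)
Your argument for supersingularity is essentially the same as the paper's: the CM structure gives an $\cO_{K,\frakl_2}$-action on the formal $\cO_{F,\frakl_2}$-module $\mathbf{E}_{\infty,x}$, and since $\frakl_2$ is inert in $K$ this forces the connected part to be $\Sigma_2$.

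For the identification $\text{red}_{\frakl_2}(P_m(a))=\gamma(x_m(a)\tau_n)$, however, the paper takes a different and considerably lighter route than yours. Rather than unwinding Carayol's moduli-theoretic recipe for $\gamma$ and checking directly that the prime-to-$\frakl_2$ level structures match (the bookkeeping you flag as the main obstacle), the paper simply uses that, by \cite[Section 11.2]{Carayol:Bad}, $\widehat{B}^{\prime\prime\cross}$ acts \emph{transitively} on ${\cM}^{[\frakl_1]}_n(\F_{\frakl_2^2})^{ss}$. Hence $\gamma$ may be \emph{chosen} so that $\text{red}_{\frakl_2}(P)=\gamma([1])$ for one fixed CM point $P$; the general formula then follows from the Hecke/adelic equivariance of both the reduction map and $\gamma$, since $P_m(a)$ is obtained from $P$ by the Hecke translate $x_m(a)\tau_n$. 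In other words, the paper absorbs all the normalization issues you worry about into the freedom in choosing $\gamma$, whereas your approach attempts to pin down $\gamma$ intrinsically and verify compatibility by hand. Your method would also work and is arguably more informative, but it is substantially more delicate; the paper's transitivity-plus-equivariance argument bypasses the normalization headaches entirely.
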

\begin{proof}
The first part is proved by observing that for a geometric point $x$ underlying $\text{red}_{\frakl_{2}}(P_{m}(a))$ the  formal $\cO_{F,\frakl_{2}}$ module $\mathbf{E}_{\infty,x}$ admitts an $F_{\frakl_{2}}$ linear action of $K_{\frakl_{2}}$. Since $\frakl_{2}$ is inert, it follows that the connected part $\mathbf{E}^{0}_{\infty,x}$ of $\mathbf{E}_{\infty,x}$ has to be $\Sigma_{2}$. 

By \cite[Section 11.2]{Carayol:Bad}, the action of $\widehat{B}^{\prime\prime\cross}$ is transitive on ${\cM}^{[\frakl_{1}]}_{n}(\F_{\frakl^{2}_{2}})^{ss}$ and so $\gamma$ can be chosen such that $\text{red}_{\frakl_{2}}(P)=\gamma([1])$ for some CM point $P$ and $\text{red}_{\frakl_{2}}(P_{m}(a))=x_{m}(a)\tau_{n}(\text{red}_{\frakl_{2}}(P))$.
\end{proof}

\begin{thm}[Second reciprocity law]
Under the assumption $(\CR)$, $\nimpr$, $\PO$ and "Ihara's lemma", there exists an $n$-admissible form $\cD^{\prime\prime}=(\Delta_{B}\frakl_{1}\frakl_{2}, g^{\prime\prime})$ such that the following equalities hold

$$v_{\frakl_{1}}(\kappa_{\cD}(\frakl_{2}))=v_{\frakl_{2}}(\kappa_{\cD}(\frakl_{1}))=\theta_{\infty}(\cD^{\prime\prime})\in \cO_{f,n}[[\Gamma]].$$
\end{thm}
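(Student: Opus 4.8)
The plan is to establish the second reciprocity law by realizing both sides as specializations of Heegner divisors on the auxiliary Shimura curve $\cM^{[\frakl_1]}_n$ over $\cO_{F,\frakl_2}$, using Carayol's bad-reduction model and the identification of its supersingular locus with the definite Shimura set $X_{B''}(\openU''Y)$. First I would fix an $n$-admissible form $\cD=(\Delta_B, g)$ together with two $n$-admissible primes $\frakl_1, \frakl_2$ with $\frakl_1\frakl_2 \nmid \Delta_B$, and apply \propref{Ihara_remedy} (which relies on $\PO$ and Ihara's lemma) to produce the new admissible form $\cD''=(\Delta_B\frakl_1\frakl_2, g'')$: concretely, one raises the level at both $\frakl_1$ and $\frakl_2$, using the injectivity of the degeneracy map on localized cohomology to guarantee that the congruence class of $g$ modulo $\uf^n$ persists after level raising, so that $\lambda_{g''}\equiv \lambda_f\pmod{\uf^n}$ and $g''\pmod\uf\neq 0$. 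The multiplicity-one input here is the analogue of \propref{psig} at level $\Delta_B\frakl_1\frakl_2$.

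Next I would compute $v_{\frakl_2}(\kappa_\cD(\frakl_1))$, the image of the Heegner-point Euler system class in the finite part of the local cohomology at $\frakl_2$ (a free rank-one $\Lambda/\uf^n\Lambda$-module by \lmref{admi_coho}). The key geometric fact is the preceding lemma: $\mathrm{red}_{\frakl_2}(P_m(a)) = \gamma(x_m(a)\tau_n)$ lands in the supersingular locus $\cM^{[\frakl_1]}_n(\F_{\frakl_2^2})^{ss}\cong X_{B''}(\openU''Y)$. Therefore, reducing the Heegner divisor $D_m$ at $\frakl_2$ and passing through the Kummer map, the finite-part localization of $\kappa_\cD(\frakl_1)_m$ is computed by the intersection of $D_m$ with the supersingular points, i.e.\ by evaluating the quaternionic form $g''$ at the Gross points $x_m(a)\tau_n$. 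After dividing by $\alpha_\frakp^m$ and accounting for the Atkin--Lehner twist $\tau_n$ (which, as in the remark after \thmref{1law}, is exactly what converts Picard functoriality into the pairing $\langle\,,\,\rangle_{\openU Y}$), this is precisely $\sum_{[a]\in G_m} g''(x_m(a))[a]_m$ reduced to $\Gamma_m$ — that is, $\theta_m(\cD'')$. Taking the inverse limit over $m$ gives $v_{\frakl_2}(\kappa_\cD(\frakl_1)) = \theta_\infty(\cD'')$. By the symmetry of the construction in $\frakl_1 \leftrightarrow \frakl_2$ — the roles of the two admissible primes are interchangeable, since $\cM^{[\frakl_2]}_n$ over $\cO_{F,\frakl_1}$ has an entirely parallel description — the same argument yields $v_{\frakl_1}(\kappa_\cD(\frakl_2)) = \theta_\infty(\cD'')$ for the \emph{same} form $\cD''$.

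The main obstacle, and the place where I expect the real work to be, is matching the normalizations so that both computations produce literally the \emph{same} $\cD''$ and the \emph{same} $\theta_\infty(\cD'')$, rather than two a priori different admissible forms differing by a unit or by sign choices $\epsilon_{\frakl_i}$. This requires a careful bookkeeping of: (i) the Hecke-equivariance of Carayol's identification $\gamma$ (from \cite[Section 11.2]{Carayol:Bad}), in particular that the $U_{\frakl_1}$- and $U_{\frakl_2}$-actions on the supersingular locus match the $U_{\frakl}$-action on the definite side under $\varphi_{B'',B'}$; (ii) the compatibility of the character groups and component groups appearing in \thmref{level_raising} and \propref{H_sing} at the two primes; and (iii) verifying that the auxiliary-prime operator $\xi_\frakq$ and the factor $\alpha_\frakp^{-m}$ interact correctly with reduction at both $\frakl_1$ and $\frakl_2$. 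I would organize this by first proving the identity $v_{\frakl_2}(\kappa_\cD(\frakl_1)) = \theta_\infty(\cD'')$ with $\cD''$ defined via level raising at $\frakl_1$ then $\frakl_2$, then observing that level raising at $\frakl_2$ then $\frakl_1$ gives a canonically isomorphic Hecke module (both localize the same cohomology of the Shimura curve of discriminant $\Delta_B\frakl_1\frakl_2$ at the same maximal ideal $\frakm$), so the two theta elements coincide. The norm-compatibility across the tower is handled exactly as in \lmref{Normcom} and the lemma preceding \thmref{1law}, using that $g''$ is a $U_\frakp$-eigenform.
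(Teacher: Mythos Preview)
Your overall geometric picture is right --- reduce the Heegner divisor to the supersingular locus at $\frakl_2$, identify that locus with the definite Shimura set for $B''$, and read off a theta element --- but you have inverted the logical order in a way that creates the very obstacle you flag at the end.

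In the paper, $g''$ is \emph{not} produced by an abstract level-raising argument and then matched to the geometry. Instead, the composite
\[
\gamma_*:\ \Z[X_{B''}(\openU''Y)]\longrightarrow \cJ^{[\frakl_1]}_n(\F_{\frakl_2^2})_{\cO_f}/\cI^{[\frakl_1]}_g\longrightarrow H^1_{\fin}(K_{\frakl_2},T_{f,n})\cong \cO_{f,n}
\]
is itself a function on the Shimura set, hence \emph{is} an element of $S_2^{B''}(\openU''Y,\cO_{f,n})$; this is the definition of $g''$. Ihara's lemma (through \propref{Ihara_remedy} and the general argument of Ihara recalled in \cite{Longo_IMC,Van_order:main}) is invoked only to show that $\gamma_*$ is surjective, i.e.\ that $g''\not\equiv 0\pmod{\uf}$. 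The Hecke-eigenform property of $g''$ then follows from the Hecke-equivariance of each arrow, and the computation $v_{\frakl_2}(\kappa_\cD(\frakl_1)_m)=\theta_m(\cD'')$ is tautological from the preceding lemma on $\mathrm{red}_{\frakl_2}(P_m(a))=\gamma(x_m(a)\tau_n)$. There is nothing left to ``match.''

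Your proposal to first build $g''$ by raising the level of $g$ via \propref{Ihara_remedy} is also off-target as stated: that proposition concerns \'etale cohomology of the \emph{indefinite} Shimura curve for $B'$, whereas $g''$ lives on the \emph{definite} algebra $B''$ of discriminant $\Delta_B\frakl_1\frakl_2$; these are different quaternion algebras, and passing between them is exactly what the supersingular-locus identification $\gamma$ accomplishes. Finally, the symmetry in $\frakl_1\leftrightarrow\frakl_2$ is handled not by comparing two orderings of level-raising, but by multiplicity one (\thmref{TW}): any eigenform on $B''$ with the prescribed eigenvalues is unique up to a unit in $\cO_{f,n}^\times$, so the form produced by the mirror construction agrees with $g''$.
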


\begin{proof}
Let $\cJ^{[\frakl_{1}]}_{n}=\Pic({\cM}^{[\frakl_{1}]}_{n})$. Then $\gamma$ induces a map

$$\gamma_{*}: \Z[X_{B^{\prime\prime}}(\openU^{\prime\prime}Y)]\mapR \cJ^{[\frakl_{1}]}_{n}(k_{\frakl^{2}_{2}})_{\cO_{f}}/\cI^{[\frakl_{1}]}_{g}\mapR H^{1}_{\fin}(K_{\frakl_{2}}, T_{f,n})\isomor \cO_{f,n}.$$
By \cite[Proposition 11.9]{Van_order:main} or \cite[Lemma 7.20]{Longo_IMC} based on Ihara's general theorem \cite[Section 3 (G)]{Ihara:shimura_curve} and \propref{Ihara_remedy}, $\gamma_{*}$ is in fact  surjective and hence gives rise to a modular form $g^{\prime\prime}\in S^{B^{\prime\prime}}_{2}(\openU^{\prime\prime} Y, \cO_{f,n})$. Moreover it is an eigenform  for the Hecke algebra $\Hecke_{B^{\prime\prime}}(\nplus Y, \frakp^{n})$ such that its Hecke eigenvalues are given by $\lambda^{\prime\prime}: \Hecke_{B^{\prime\prime}}(\nplus Y, \frakp^{n})\mapR \cO_{f,n}$ which is determined by $\lambda^{\prime\prime}(T_{v})=\lambda_{f}(T_{v}) \pmod{\uf^{n}}$ for $v\nmid \frakp\frakn\frakl_{1}\frakl_{2}$, $\lambda^{\prime\prime}(U_{v})=\lambda_{f}(U_{v}) \pmod{\uf^{n}}$ for $v\mid \frakp\frakn$ and $\lambda^{\prime\prime}(U_{\frakl_{i}})=\epsilon_{\frakl_{i}}$ for $i\in\{1,2\}$. Such an eigenform is unique up to a unit in $\cO_{f,n}$ again by the multiplicity one result \thmref{TW}.

It follows from the above discussion that
\begin{equation}
\begin{aligned}
v_{\frakl_{2}}(\kappa_{\cD}(\frakl_{1})_{m}) &=\sum_{[a]_{m}\in G_{m}}\<x_{m}(a)\tau_{n}, g^{\prime\prime}\>\pi([a]_{m})\\
&=\sum_{[a]_{m}\in G_{m}} g^{\prime\prime}(x_{m}(a))\pi([a]_{m})\\
&=\theta_{m}(\cD^{\prime\prime}).\\
\end{aligned}
\end{equation}
By switching the role of $\frakl_{1}$ and $\frakl_{2}$, we get the conclusion of the theorem.
\end{proof}

\section{Euler system argument}\label{ES_argument}

In this chapter we carry out the Euler system argument to bound the Selmer group. The method employed here is originally due to \cite{Bertolini_Darmon:IMC_anti}. The main result \thmref{main_weak} is proved under the assumptions:  $(\CR)$, $\nmin$ and $\PO$. The assumption $\nmin$ is stronger than the desired assumption $\nimpr$  but we will later relax it to $\nimpr$ in chapter 10. 
\subsection{Special elements in the Galois representation}
Throughout this section we will assume the assumptions  $\PO$, $(\CR)$ and $\nmin$. Let $H=\bar{\rho}^{*}_{f}(G_{F})$ be the image of the residual Galois representation.  We examine the existence of certain special elements in the image. We have the following lemma.

\begin{lm}\label{-1}
The group $H$ contains 
\begin{enumerate}
\item{the scalar matrix $-1$;} 
\item{ an element $h\in H$ such that  $\Tr(h)=\det(h)+1$ with $\det(h)\ne\pm1\in \F_{\frakp}$.} 
\end{enumerate}
\end{lm}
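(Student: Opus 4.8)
The two assertions concern the image $H=\bar\rho_f^*(G_F)\subseteq \GL_2(\F_\frakp)$ of the residual self-dual Galois representation, and the plan is to extract the required elements by combining the irreducibility hypothesis $(\CR)(2)$ with the known determinant of $\bar\rho_f^*$. Recall that $\det\bar\rho_f^* = \bar\epsilon$, the mod-$p$ cyclotomic character, so $\det H = (\F_\frakp^\times)$ is cyclic of order dividing $p-1$, and since $p>k+1$ and $\#(\F_\frakp^\times)^{k-1}>5$ the determinant is nontrivial and in fact has reasonably large image. The first step is to recall the classification of subgroups of $\GL_2(\F_\frakp)$ with irreducible (indeed, irreducible after restriction to $G_{F(\sqrt{p^*})}$, hence not dihedral in a way that is killed by that quadratic twist) projective image: such $H$ either contains $\SL_2(\F_\frakp)$, or its projective image is $A_4$, $S_4$, $A_5$, or dihedral. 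Hypothesis $(\CR)(2)$ is exactly designed to rule out the problematic small cases; the standard reference here is Dickson's theorem together with the argument in, e.g., \cite[\S 2]{Chida_Hsieh:main} or the analogous discussion in Bertolini--Darmon.

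For part (1), the scalar $-1\in H$: since $\det\bar\rho_f^*=\bar\epsilon$ and complex conjugation (at an archimedean place, which exists since $F$ is totally real) has determinant $-1$ and order $2$, its image $c\in H$ satisfies $c^2=1$, $\det c=-1$, so $c$ has eigenvalues $\{1,-1\}$ and is conjugate to $\mathrm{diag}(1,-1)$. Then $-1 = c\cdot(\text{an element conjugate to }\mathrm{diag}(-1,1))$; more directly, once we know $H$ contains (a conjugate of) $\SL_2(\F_\frakp)$ — or contains $\SL_2$ of a subfield plus enough scalars — the element $-I\in\SL_2(\F_\frakp)$ lies in $H$. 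The cleanest route: the hypothesis $(\CR)(2)$ forces the image of $\bar\rho_f^*$ restricted to $G_{F(\sqrt{p^*})}$ to be irreducible, which for $p>3$ forces (by Dickson plus the twist-by-quadratic-character bookkeeping) that $H$ contains the scalar matrices coming from $\det$, in particular $-I$ whenever $-1\in\det H$; and $-1\in(\F_\frakp^\times)=\det H$ always. So I would argue $-I\in H$ by exhibiting it as $c\cdot c'$ for two involutions with determinant $-1$ whose product is central, which is possible precisely when $H$ is large (non-dihedral-of-the-bad-type), i.e. under $(\CR)(2)$.

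For part (2), I want $h\in H$ with $\Tr(h)=\det(h)+1$, i.e. $h$ has $1$ as an eigenvalue (characteristic polynomial $x^2-(\det h+1)x+\det h=(x-1)(x-\det h)$), together with $\det(h)\neq\pm1$. Equivalently: $h$ fixes a line and acts on the quotient by a scalar $d=\det(h)$ with $d\notin\{1,-1\}$. Since under $(\CR)(2)$ the group $H$ contains a conjugate of $\SL_2(\F_\frakp)$, it contains all unipotent elements, hence elements of the form $\mathrm{diag}(1,d)\cdot(\text{unipotent})$ for every $d$ in $\det H=(\F_\frakp^\times)$: concretely, pick $g\in H$ projecting to a nontrivial element of the diagonal torus with the two eigenvalues $1$ and $d$ — such $g$ exists because $\SL_2(\F_\frakp)\subseteq H$ gives $\mathrm{diag}(d^{-1},1)\in H$ and we can multiply by an element of determinant $d^2$ lying over a scalar to adjust, or more simply just take $h=\mathrm{diag}(1,d)$ directly once we know $H\supseteq\SL_2(\F_\frakp)\cdot(\text{scalars with all determinants in }(\F_\frakp^\times))$, which is what "$\det$ surjects onto $(\F_\frakp^\times)$ and $H\supseteq\SL_2$" gives. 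The condition $d\neq\pm1$ is satisfiable because $\#(\F_\frakp^\times)^{k-1}>5$ (part of $(\CR)(1)$) guarantees $\#\det H>2$, so some $d\in\det H\setminus\{\pm1\}$ exists.

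\textbf{Main obstacle.} The genuinely delicate point is not the existence of $h$ once we know $H\supseteq\SL_2(\F_\frakp)$ (up to conjugacy) — that part is elementary linear algebra over $\F_\frakp$ — but rather establishing that largeness of the image from $(\CR)(2)$, i.e. carefully running Dickson's classification and checking that the dihedral / exceptional cases are excluded by irreducibility after restriction to $G_{F(\sqrt{p^*})}$ and by the numerical conditions $p>k+1$, $\#(\F_\frakp^\times)^{k-1}>5$. I expect the proof to invoke this as a known input (citing the analogous lemmas in \cite{Chida_Hsieh:main} and \cite{Bertolini_Darmon:IMC_anti}), reducing the work here to the two explicit element-constructions above; the only residual care is matching the self-dual normalization $\bar\rho_f^*=\bar\rho_f\otimes\bar\epsilon^{(2-k)/2}$ so that $\det\bar\rho_f^*$ is indeed $\bar\epsilon$ (not $\bar\epsilon^{k-1}$), which is why the hypotheses are phrased in terms of $(\F_\frakp^\times)^{k-1}$.
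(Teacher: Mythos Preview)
Your proposal has a genuine gap: you assume throughout that $(\CR)(2)$ forces $H\supseteq\SL_2(\F_\frakp)$ (up to conjugacy), and then deduce both (1) and (2) from that largeness. But $(\CR)(2)$ only excludes the dihedral case induced from the \emph{specific} quadratic extension $F(\sqrt{p^*})$; it does not rule out $\bar\rho_f^*$ being induced from some other quadratic extension of $F$. The paper's proof explicitly treats the dihedral case for part (1), so it is not excluded by the hypotheses. Once the dihedral case is on the table, your route to $-I\in H$ collapses: the complex-conjugation argument you sketch is circular (to write $-I=c\cdot c'$ with $c'$ conjugate to $\mathrm{diag}(-1,1)$ and $cc'$ central forces $c'=-c$, which already presupposes $-I\in H$), and you cannot simply invoke $-I\in\SL_2(\F_\frakp)\subseteq H$.

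What the paper actually does is a Dickson case analysis on $H/Z$. The key input you are missing is the \emph{image of inertia at $\frakp$}: by the ordinary local shape, $\bar\rho_f^*|_{I_\frakp}$ has diagonal characters $\bar\epsilon^{k/2}$ and $\bar\epsilon^{1-k/2}$, so the image of $I_\frakp$ in $H/Z$ is cyclic of order $\#(\F_\frakp^\times)^{k-1}>5$ by $(\CR)(1)$. This single observation eliminates the exceptional groups $A_4,S_4,A_5$ (none contains a cyclic subgroup of order $>5$), and in the dihedral case it supplies an element of order $2$ in the maximal cyclic subgroup of $H/Z$, from which one extracts $-I\in Z$. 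For part (2) the paper does not rely on global largeness either: it first tries $h=\bar\rho_f^*(\Frob_\frakl)$ for $\frakl\mid\frakn^-$, which has eigenvalues $\pm 1,\pm\Nm\frakl$ by the local shape at $\frakn^-$, and this works whenever $\Nm\frakl^2\not\equiv 1\pmod p$. Only in the remaining case does one invoke $(\CR)(3)$ to force $p\mid\#H$ (via ramification at $\frakl$), and \emph{then} conclude $\SL_2(\F_\frakp)\subseteq H$. You never use $(\CR)(3)$ or the $\frakn^-$-shape, and your appeal to $\#(\F_\frakp^\times)^{k-1}>5$ to get $\#\det H>2$ conflates two different groups: $\det H=\bar\epsilon(G_F)\subseteq\F_p^\times$, whereas $(\F_\frakp^\times)^{k-1}$ enters only through the inertial image.
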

\begin{proof}
For (1), let $Z$ be the center of $H$. From $(\CR)(1)$, we note that $Z$ is contained in the set of scalar matrices. If $p\mid \#H$, then it is well-known that $H$ contains a conjugate of $\SL_{2}(\F_{p})$ and the result follows. So we assume that $H$ has order prime to $p$ but then $H/Z$ has the following possibilities:

$H/Z$ is cyclic. This is impossible in view of $(\CR)(1)$.

$H/Z$ is the three exceptional groups $A_{4}, S_{4}, A_{5}$. However if we let $I_{\frakp}$ be the image of the inertia at $\frakp$ in $H/Z$, then we know it is a cyclic group of order $>5$ by the assumption that $\#( \F^{\cross}_{\frakp})^{k-1}>5$. Then these possibilities can not happen.

$H/Z$ is the dihedral group. Since $\#( \F^{\cross}_{\frakp})^{k-1}$ is even, $I_{\frakp}$ is contained in a cyclic group of even order and hence there is an element of order $2$ in $H/Z$. We can then conclude that there is an element of order $2$ in $Z$.

For (2), if $p\nmid \Nm{\frakl}^{2}-1$ for some $\frakl\mid \nminus$, then we can simply choose $\Frob_{\frakl}$ as our element $h$. If this is not the case, then by our assumption $\CR(3)$, $\bar{\rho}^{*}_{f}(G_{F})$ is ramified at $\frakl$ and hence $H$ contains the image of the inertia which is a subgroup of order divisible by $p$. Therefore $H/Z$ contains the group $\PSL_{2}(F)$ for some finite extension $F/\F_{p}$ and is contained in $\PGL_{2}(F)$. Therefore $[\SL_{2}(F), \SL_{2}(F)]=\SL_{2}(F)$ is contained in $H$. We take $C=\GL_{2}(F)\cap H$. Then $C$ sits in between $\SL_{2}(F)$ and $\GL_{2}(F)$. Let $\gamma\in H$ be the image of a generator of the inertia group at $\frakp$. Then we have $\det(\gamma)=u$ generates $\F^{\cross}_{\frakp}$ and moreover $\Tr(\gamma)=u^{k/2}+u^{1-k/2}\in \F_{\frakp}$ is non-zero. By the previous discussion, we have $H\subset \bar{\F}_{\frakp}\GL_{2}(F)$. We conclude from $\Tr(\gamma)=u^{k/2}+u^{1-k/2}\in \F_{\frakp}$ that $\gamma\in C$. Thus $C\supset\F_{\frakp}SL_{2}(F)$. The result follows as then $H$ contains $\GL_{2}(\F_{\frakp})$.
\end{proof}

As a consequence of the above two lemmas, we prove the following key theorem for our Euler system argument.

\begin{thm}\label{admissible}
Let $t\leq n$ be a postive integer. Let $\kappa\in H^{1}(K, A_{f,t})$ be a non-zero element, then there exist infinitely many $n$-admissible primes $\frakl$ such that $\partial_{\frakl}(\kappa)=0$ and $v_{\frakl}:\<\kappa\>\mapR H^{1}_{\fin}(K_{\frakl}, A_{f,t})$ is injective. Here $\<\kappa\>$ is the $\cO_{f,t}$-submodule of $H^{1}(K, A_{f,t})$ generated by $\<\kappa\>$.
\end{thm}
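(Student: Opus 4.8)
The plan is to produce the admissible primes $\frakl$ as Frobenius elements via a Chebotarev argument. First I would fix a finite Galois extension $L/K$ that trivializes enough data: let $L$ be the compositum of $K(A_{f,t})$ (the field cut out by the $G_K$-action on $A_{f,t}$), the field $K(\mu_{p^t})$, and the field $K_\kappa$ over which a fixed cocycle representing $\kappa$ becomes a coboundary (more precisely, the field cut out by the restriction of $\kappa$ to $G_{K(A_{f,t})}$, which lands in $\Hom(G_{K(A_{f,t})}, A_{f,t})$ since the Galois action there is trivial). The prime $\frakl$ will be chosen so that $\Frob_\frakl$ in $\Gal(L/K)$ equals a carefully selected conjugacy class; by Chebotarev there are infinitely many such $\frakl$, and the local conditions in Definition~\ref{n_admissible} ($\frakl \nmid p\frakn$, $\frakl$ inert in $K$, $p \nmid \Nm\frakl^2-1$, and $\uf^n \mid \Nm\frakl^{k/2}+\Nm\frakl^{(k-2)/2}-\epsilon_\frakl\bda_\frakl(f)$) all translate into conditions on the image of $\Frob_\frakl$ under $\bar\rho_f^*$ and the cyclotomic character, hence into the choice of conjugacy class in $\Gal(L/K)$.

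The heart of the matter is to choose the target Frobenius class so that simultaneously (i) $\frakl$ is $n$-admissible and (ii) $v_\frakl$ is injective on $\<\kappa\>$. For (i), I would use Lemma~\ref{-1}: the element $h \in H = \bar\rho_f^*(G_F)$ with $\Tr(h) = \det(h)+1$ and $\det(h) \neq \pm 1$ is exactly what forces $n$-admissibility of a prime whose Frobenius maps to a lift of $h$ — its characteristic polynomial modulo $\uf$ factors as $(x-\epsilon_\frakl\Nm\frakl)(x-\epsilon_\frakl)$, which is the defining congruence; one also needs to lift this through the $\uf$-adic tower, which is where the precise torsion structure $A_{f,t}$ and the commutator $-1 \in H$ enter (the scalar $-1$ guarantees the residual representation restricted to $G_{K(\sqrt{p^*})}$ is big enough to run Chebotarev without collapsing). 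For (ii), injectivity of $v_\frakl \colon \<\kappa\> \to H^1_{\fin}(K_\frakl, A_{f,t})$ amounts to saying that the localization of (a nonzero multiple of) $\kappa$ at $\frakl$ is nonzero in the finite (unramified) part; since on $G_{K(A_{f,t})}$ the class $\kappa$ becomes a nonzero homomorphism to $A_{f,t}$, one picks $\Frob_\frakl$ inside $\Gal(L/K(A_{f,t}))$ so that this homomorphism does not vanish on it — this is possible precisely because the relevant homomorphism is nonzero and $\Gal(L/K_\kappa) \neq 1$, and one must check that the Galois-module structure allows a class with $h$-type Frobenius behavior and nonvanishing $\kappa$-value to coexist, i.e. the two constraints live in "independent enough" pieces of $\Gal(L/K)$.

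The main obstacle I anticipate is exactly this independence/compatibility check: one must verify that the subgroup of $\Gal(L/K)$ imposing the $n$-admissibility condition (a conjugacy class of elements projecting to a lift of $h$ with the right determinant) has nonempty intersection with the coset condition imposing $\kappa(\Frob_\frakl) \neq 0$. This requires knowing that $K_\kappa$ is linearly disjoint from (or at least not controlled by) the determinant/trace data — typically handled by the irreducibility hypothesis $(\CR)(2)$ (irreducibility of $\bar\rho_f$ on $G_{F(\sqrt{p^*})}$), which prevents $\kappa$ from being forced to vanish on the admissible Frobenius class, together with the inflation-restriction sequence $H^1(\Gal(L/K), A_{f,t}^{G_L}) \hookrightarrow H^1(K, A_{f,t}) \to H^1(G_L, A_{f,t})$ to reduce to the homomorphism picture. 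Once the group theory is set up, the Chebotarev density theorem yields infinitely many $\frakl$, and $\<\kappa\>$ being a cyclic $\cO_{f,t}$-module makes injectivity of $v_\frakl$ on it equivalent to nonvanishing of $v_\frakl(\kappa)$, so no further work is needed there. I would model the argument closely on \cite[Theorem 3.2]{Bertolini_Darmon:IMC_anti} and its higher-weight adaptation in \cite{Chida_Hsieh:main}, \cite{Longo_IMC}, adjusting the Chebotarev field to account for the $d_\frakp$-variable anticyclotomic situation.
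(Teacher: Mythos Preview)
Your overall strategy---Chebotarev over a finite Galois extension containing the splitting field of the representation and the field cut out by $\kappa$, with \lmref{-1} supplying the target Frobenius class---is exactly the paper's approach. But there are three concrete gaps you should address.

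First, you run Chebotarev over $K$, yet the condition ``$\frakl$ inert in $K$'' is a statement about primes of $F$ and cannot be read off from $\Gal(L/K)$. The paper works over $F$: it sets $M=KL_{n}$ with $\Gal(M/F)=\langle\tau\rangle\times\Gal(L_{n}/F)$, and the chosen Frobenius class has $\tau$-component equal to the nontrivial element, which is precisely what forces inertness. Second, your field $L$ contains $K(A_{f,t})$, but $n$-admissibility is a congruence modulo $\uf^{n}$ on the characteristic polynomial of Frobenius; to impose it via Chebotarev you need the field cut out by $\rho_{n}=\rho_{f}^{*}\pmod{\uf^{n}}$, not $\rho_{t}$. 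Third, your final claim that cyclicity of $\langle\kappa\rangle$ makes injectivity of $v_{\frakl}$ equivalent to $v_{\frakl}(\kappa)\neq 0$ is false: if $\langle\kappa\rangle\cong\cO_{f}/\uf^{s}$ with $s\geq 2$, the map sending $\kappa$ to a $\uf$-torsion element of $H^{1}_{\fin}(K_{\frakl},A_{f,t})$ is nonzero but not injective. The paper avoids this by first reducing to $t=1$ (replacing $\kappa$ by the nonzero class $\uf^{s-1}\kappa\in H^{1}(K,A_{f,1})$, using that $H^{0}(K,A_{f})=0$ makes $H^{1}(K,A_{f,1})\hookrightarrow H^{1}(K,A_{f,t})$ injective), after which injectivity on a one-dimensional $\F$-space really is just nonvanishing. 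Once you make these three fixes---base field $F$ with the $\tau$-component, level $n$ for the splitting field, and the reduction to $t=1$---your outline becomes the paper's proof; the remaining ingredient you did not spell out is the $\tau$-eigendecomposition $\tau\kappa=\delta\kappa$, which is needed so that $M_{\kappa}/F$ is Galois and one can write $\Gal(M_{\kappa}/F)\cong A_{\kappa}\rtimes\Gal(M/F)$ and compute the Frobenius of $\frakL\mid\frakl$ in $M$ explicitly.
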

\begin{proof}
First of all, since $\frakl$ is an $n$-admissible prime, $H^{1}_{\fin}(K_{\frakl}, A_{f,1})\hookrightarrow H^{1}_{\fin}(K_{\frakl}, A_{f,t})$ is injective. Thus without loss of generality we can assume that $t=1$ and $\kappa\in H^{1}(K, A_{f,1})$. 

Let $L_{n}=\bar{F}^{\ker(\rho_{n}) }$ be the field cut out by $\rho_{n}:=\rho^{*}_{f}\pmod{\uf^{n}}$. We remark that since $D_{K/F}$ is prime to $\frakn p$ and $\rho_{n}$ is unramified away form $\frakn p$, the field $L_{n}$ is linearly disjoint from $K$. 

We put $M=L_{n}K$, then $\Gal(M/F)=\Gal(K/F)\times \Gal(L_{n}/F)$ which is of  the form $\<\tau\>\times \Aut(A_{f,n})$. By \lmref{-1}(1), $\Gal(L_{n}/F)$ contains a subgroup of the form $\<\pm1\>$.  Then by the Hochschild-Serre spectral sequence $$H^{r}(\Gal(L_{n}/F)/\<\pm1\>, H^{s}(\<\pm1\>, A_{f,n}))\imply H^{r+s}(L_{n}/F, A_{f,n}),$$ we see $H^{i}(L_{n}/F, A_{f,n})=H^{i}(M/K, A_{f,n})=0$ for all $i$. Thus  $$H^{1}(K, A_{f,1})\mapR H^{1}(M, A_{f,1})$$is injective . We identify ${\kappa}$ with its non-zero image $\kappa\in H^{1}(M, A_{f,1} )=\Hom(G_{M}, A_{f,1})$. Let $M_{\kappa}$ be the field cut out by ${\kappa}$ over $M$ and $A_{\kappa}:= {\kappa}(\Gal(M_{\kappa}/M))\subset A_{f,1}$ be a $\F_{p}[\Gal(M/F)]$ submodule of $A_{f,1}$. Notice $A_{\kappa}$ is of dimension 2 by \eqref{Irr}.

 Without loss of generality,  we can assume $\tau \kappa=\delta \kappa$ for some $\delta\in \{\pm1\}$. Then $M_{\kappa}/F$ is Galois and we can identify $\Gal(M_{\kappa}/F)$ with $A_{\kappa}\rtimes \Gal(M/F)$ where $(\tau^{j}, \sigma)\in \Gal(M/F)$ acts on $v\in A_{\kappa}$ by $(\tau^{j}, \sigma)v=\delta^{j}\rho_{1}(\sigma)v$.
 
 By \lmref{-1}, we can find an element $({\bf v} ,\tau, \sigma)\in \Gal(M_{\kappa}/ F)\isomor A_{\kappa}\rtimes (\Gal(K/F)\times \Gal(L_{n}/F))$ such that 
 
\begin{enumerate}
\item{$\rho_{n}(\sigma)$ has eigenvalues $\delta$ and $\lambda$ in $(\cO_{f,n})^{\cross}$ where $\lambda\ne\pm1$ and has order prime to $p$;}
\item{ ${\bf v}$ is in the $\delta$-eigenspace of $\sigma$.}
\end{enumerate}

By the Chebotarev density theorem, there exist infinitely many unramified $\frakl$ with $\frakl\nmid \frakn$ such that $\Frob_{\frakl}(M_{\kappa}/ F)=({\bf v},\tau,\sigma)$. Then it follows that $\frakl$ is $n$-admissible for $f$ as we have $\Frob_{\frakl}(M/F)=(\tau,\sigma)$. Let $\mathfrak{L}$ be  any place above $\frakl$ in $M$. Then we have $\Frob_{\mathfrak{L}}(M_{\kappa}/M)= ({\bf v},\tau, \sigma)^{d}= {\bf v}+\delta \sigma {\bf v}\cdots +\delta\sigma^{d-1}{\bf v}=d{\bf v}$ where $d$ is the residue degree of  $\mathfrak{L}$ over $\frakl$ which is necessarily even. Hence ${\kappa}(\Frob_{\frakL}(M_{\kappa}/M))=d {\kappa}({\bf v})\ne 0$ and $v_{\frakl}(\kappa)=({\kappa}(\Frob_{\frakL}(M_{\kappa}/M)))_{\frakL\mid \frakl}$ is not zero. 
\end{proof}

Let $\Delta$ be a square free product  of primes in $F$ such that $\Delta/\nminus$ is a  product of n-admissible primes.

\begin{defn}[$n$-admissible set]
A finite set $S$ of primes in $F$ is said to be $n$-admissible for $f$ if:
\begin{enumerate}
\item{$S$ consists of $n$-admissible primes of $f$.}
\item{The map $\Sel_{\Delta}(K, T_{f,n})\rightarrow \oplus _{\frakl\in S}H^{1}_{\fin}(K_{\frakl}, T_{f,n})$ is injective.}
\end{enumerate}
\end{defn}

\begin{remark}\label{enlarge}
It follows from \thmref{admissible} that any collection of $n$-admissible primes can be enlarged to an $n$-admissible set.
\end{remark}

\subsection{Freeness of Selmer groups}

Let $S$ be an $n$-admissible set for $f$ and $L/K$ be a finite extension contained in $K_{\infty}/K$. In this section all the results are proved under the assumptions $(\CR)$, $\nmin$, and ${\PO}$.
\begin{lm}
The map $\Sel_{\Delta}(L, T_{f,n})\mapR \oplus_{\frakl\in S}H^{1}_{\fin}(L_{\frakl}, T_{f,n})$ is injective.
\end{lm}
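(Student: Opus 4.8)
The plan is to bootstrap from the injectivity over $K$ — which is precisely the defining property of an $n$-admissible set — to the injectivity over $L$, using the control theorem \propref{control}. The first point is that $T_{f,n}$ and $A_{f,n}$ are isomorphic as $G_F$-modules: multiplication by $\uf^n$ identifies $A_f[\uf^n]=\uf^{-n}T_f/T_f$ with $T_f/\uf^n T_f=T_{f,n}$. Hence \propref{control} holds verbatim with $A_{f,n}$ replaced by $T_{f,n}$; in particular, since $(\CR)$, $\nmin$ and $(\PO)$ are the standing assumptions of this section, the restriction map $\Sel_\Delta(K,T_{f,n})\to\Sel_\Delta(L,T_{f,n})^{\Gal(L/K)}$ is an isomorphism. (Here $T_{f,n}\simeq A_{f,n}$ also gives $H^0(L,T_{f,n})=0$, so the local injectivity statements used in the proof of \propref{control} apply as well.)

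Next I would argue by contradiction. Let $N$ be the kernel of $\Sel_\Delta(L,T_{f,n})\to\bigoplus_{\frakl\in S}H^1_{\fin}(L_\frakl,T_{f,n})$ and suppose $N\ne 0$. Because $H^1_{\fin}(L_\frakl,T_{f,n})=\bigoplus_{\lambda\mid\frakl}H^1_{\fin}(L_\lambda,T_{f,n})$ carries the evident $\Gal(L/K)$-action permuting the places $\lambda$ above $\frakl$, and the localization maps are equivariant, $N$ is a $\Gal(L/K)$-submodule of $\Sel_\Delta(L,T_{f,n})$. Now $\Gal(L/K)$ is a finite $p$-group (a quotient of $\Gamma\simeq\Z_p^{d_\frakp}$) acting on the nonzero finite $p$-group $N$, so the augmentation ideal of $\cO_{f,n}[\Gal(L/K)]$ acts nilpotently on $N$ and therefore $N^{\Gal(L/K)}\ne 0$. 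Choose $0\ne\tilde s\in N^{\Gal(L/K)}$; by the control isomorphism there is $0\ne s\in\Sel_\Delta(K,T_{f,n})$ with $\res_{L/K}(s)=\tilde s$.

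It then remains to contradict the $n$-admissibility of $S$ by showing $\res_\frakl(s)=0$ in $H^1_{\fin}(K_\frakl,T_{f,n})$ for every $\frakl\in S$. The key local fact is that each $\frakl\in S$ is inert in $K$ (being $n$-admissible), hence splits completely in $K_\infty/K$ — as recalled in the proof of \lmref{split_inert} — and a fortiori in $L/K$. Thus $L_\frakl$ is a finite product of copies of $K_\frakl$, and the restriction map $H^1_{\fin}(K_\frakl,T_{f,n})\to H^1_{\fin}(L_\frakl,T_{f,n})$ is the diagonal embedding, in particular injective. Since $\res_\frakl(\tilde s)=0$ and this is the image of $\res_\frakl(s)$ under that injective map, we get $\res_\frakl(s)=0$ for all $\frakl\in S$, so $s=0$ by the definition of the $n$-admissible set — a contradiction. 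Hence $N=0$.

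The step I expect to demand the most care is the invocation of the control theorem: one must check that \propref{control} genuinely transfers to $T_{f,n}$ (via $T_{f,n}\simeq A_{f,n}$ as $G_F$-modules, which is what makes $H^0(L,T_{f,n})=0$ and the local injectivity inputs available), and that the $\Gal(L/K)$-module structure on the target $\bigoplus_{\frakl\in S}H^1_{\fin}(L_\frakl,T_{f,n})$ is indeed the one for which $N$ is a submodule. The remaining ingredients — nonvanishing of invariants under actions of $p$-groups, and the complete splitting of inert primes in the anticyclotomic tower — are routine.
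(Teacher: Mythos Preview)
Your proof is correct and follows essentially the same approach as the paper: suppose the kernel is nonzero, use that $\Gal(L/K)$ is a $p$-group to produce a nonzero fixed element, invoke the control theorem \propref{control} to descend it to $\Sel_{\Delta}(K,T_{f,n})$, and then contradict the definition of an $n$-admissible set. The paper's version is terser --- it does not spell out the identification $T_{f,n}\simeq A_{f,n}$ needed to apply \propref{control}, nor the local injectivity at each $\frakl\in S$ (via complete splitting of inert primes) that you correctly make explicit --- but the skeleton of the argument is the same.
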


\begin{proof}
Let $C$ be the kernel of this map. Since $\Gal(L/K)$ is a $p$-group, there exists a non-trivial fixed point of its action. It follows from \propref{control} that this fixed point is in the kernel of $\Sel_{\Delta}(K, T_{f,n})\mapR \oplus_{\frakl\in S}H_{\fin}^{1}(K_{\frakl}, T_{f,n})$ which is a contradiction to the $n$-admissibility of $S$.
\end{proof}
We have the following Poitou-Tate exact sequence.

\begin{lm}\label{Poitou-Tate}
$$0\mapR \Sel_{\Delta}(L, T_{f,n})\mapR \Sel^{S}_{\Delta}(L, T_{f,n})\mapR \oplus_{\frakl\in S} H^{1}_{\sing}(L_{\frakl}, T_{f,n})\mapR \dual{\Sel_{\Delta}(L, A_{f,n})}\mapR 0.$$
\end{lm}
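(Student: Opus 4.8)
The plan is to derive this four-term sequence from the Poitou--Tate comparison exact sequence for the two Selmer structures on $T_{f,n}$ whose Selmer groups over $L$ are $\Sel_{\Delta}(L,T_{f,n})$ and $\Sel_{\Delta}^{S}(L,T_{f,n})$. These two structures coincide at every place away from $S$, and at $\frakl\in S$ the former imposes the finite condition $H^{1}_{\fin}(L_{\frakl},T_{f,n})$ while the latter imposes none; consequently $\Sel_{\Delta}(L,T_{f,n})$ is exactly the subgroup of $\Sel_{\Delta}^{S}(L,T_{f,n})$ killed by $\bigoplus_{\frakl\in S}\partial_{\frakl}$, which gives exactness at the first two terms for free.

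Next I would identify, place by place, the dual Selmer structure on the Cartier dual $A_{f,n}$. At $\frakl\mid p\Delta$ the orthogonal complement of $H^{1}_{\ord}(L_{\frakl},T_{f,n})$ is $H^{1}_{\ord}(L_{\frakl},A_{f,n})$ by \propref{orthogonal} (and its evident finite-level analogue); at $\frakl\nmid p\Delta S$ the finite parts are orthogonal complements by \lmref{fin_orth} (for $\frakl$ split in $K$ the finite part of $H^{1}(L_{\frakl},A_{f,n})$ vanishes by \lmref{split_inert}, but the orthogonality is unaffected); and at $\frakl\in S$ the orthogonal complement of the full group $H^{1}(L_{\frakl},T_{f,n})$ is $\{0\}$ by local Tate duality. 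Hence the dual of the structure defining $\Sel_{\Delta}^{S}(L,T_{f,n})$ is the structure on $A_{f,n}$ obtained from the one defining $\Sel_{\Delta}(L,A_{f,n})$ by additionally requiring $\res_{\frakl}=0$ for all $\frakl\in S$; write $\Sel_{\Delta}^{S,0}(L,A_{f,n})$ for this strict Selmer group. The Poitou--Tate comparison exact sequence (cf. \cite{Rubin:book}, \cite[Chapter 12]{Nekovar:SelmerComplexes}) then reads
$$0\to \Sel_{\Delta}(L,T_{f,n})\to \Sel_{\Delta}^{S}(L,T_{f,n})\to \bigoplus_{\frakl\in S}H^{1}_{\sing}(L_{\frakl},T_{f,n})\to \dual{\Sel_{\Delta}(L,A_{f,n})}\to \dual{\Sel_{\Delta}^{S,0}(L,A_{f,n})}\to 0,$$
the third arrow being $\bigoplus_{\frakl}\partial_{\frakl}$ and the last being dual to the inclusion of the strict into the relaxed Selmer group.

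Finally I would show $\Sel_{\Delta}^{S,0}(L,A_{f,n})=0$, which truncates the sequence to the asserted one. For this I would use that $A_{f,n}=A_{f}[\uf^{n}]$ and $T_{f,n}=T_{f}/\uf^{n}$ are isomorphic as $\cO_{f,n}[G_{F}]$-modules, via multiplication by $\uf^{n}$ on $\uf^{-n}T_{f}/T_{f}\isomor T_{f}/\uf^{n}T_{f}$, and that this isomorphism respects the local subspaces $F^{+}_{\frakl}$; it therefore transports the preceding lemma --- the injectivity of $\Sel_{\Delta}(L,T_{f,n})\to\bigoplus_{\frakl\in S}H^{1}_{\fin}(L_{\frakl},T_{f,n})$, which follows from the $n$-admissibility of $S$ together with the control theorem \propref{control} --- to an injectivity statement for $A_{f,n}$. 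Since every class in $\Sel_{\Delta}^{S,0}(L,A_{f,n})$ has vanishing restriction at all $\frakl\in S$, it lies in the kernel of the (now injective) map $\Sel_{\Delta}(L,A_{f,n})\to\bigoplus_{\frakl\in S}H^{1}_{\fin}(L_{\frakl},A_{f,n})$, hence is zero.

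The main obstacle is organizational rather than conceptual: one must check that the local conditions of the two structures are \emph{exact} orthogonal complements at \emph{every} place (including those above $p$ and $\frakn^{-}$, handled by \propref{orthogonal}, the auxiliary split primes, and the archimedean places, which contribute nothing since $p$ is odd and $L\supseteq K$ is totally imaginary), so that the comparison sequence applies verbatim; and one must correctly relay the $n$-admissibility condition, which is imposed on $T_{f,n}$ over $K$, to the vanishing of a strict Selmer group for $A_{f,n}$ over the finite layer $L$.
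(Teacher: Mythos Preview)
Your argument is correct and is precisely the expanded form of the paper's own proof, which simply cites \cite[Theorem 1.7.3]{Rubin:book} for the five-term comparison sequence and then remarks that ``the right exactness follows from the $n$-admissibility of $S$''. Your identification $A_{f,n}\simeq T_{f,n}$ to transport the injectivity of the preceding lemma to the $A_{f,n}$-side is exactly the mechanism behind that one-line remark, so there is no genuine difference in approach.
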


\begin{proof}
This is standard, see \cite[Theorem 1.7.3]{Rubin:book}. The right exactness follows from the $n$-admissibility of $S$.
\end{proof}

\begin{prop}\label{free}
The Selmer group $\Sel^{S}_{\Delta}(L, T_{f,n})$ is a free $\cO_{f,n}[\Gal(L/K)]$-module of rank $|S|$.
\end{prop}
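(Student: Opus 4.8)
The plan is to follow the standard Euler system template (as in \cite{Bertolini_Darmon:IMC_anti}, \cite{Longo_IMC}, \cite{Chida_Hsieh:main}), establishing freeness by combining the control theorem \propref{control}, the Poitou--Tate sequence \lmref{Poitou-Tate}, and the surjectivity of residue maps built from the Euler system classes $\kappa_{\cD}(\frakl)$. First I would reduce to the case $L=K$: since $\Gal(L/K)$ is a $p$-group and $\cO_{f,n}$ is local, a module over $\cO_{f,n}[\Gal(L/K)]$ is free of rank $r$ if and only if it is $\cO_{f,n}$-free of rank $r\cdot[L:K]$ and its $\Gal(L/K)$-coinvariants (or invariants) have $\cO_{f,n}$-rank $r$; the control theorem \propref{control} identifies $\Sel^{S}_{\Delta}(L,T_{f,n})^{\Gal(L/K)}$ with $\Sel^{S}_{\Delta}(K,T_{f,n})$, so it suffices to handle $K$ together with an appropriate Nakayama/descent argument. (Alternatively one works directly over $L$ using the $n$-admissibility of $S$ over $L$ established in the preceding lemma.)

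The core step is to show that $\Sel^{S}_{\Delta}(L,T_{f,n})$ is $\cO_{f,n}[\Gal(L/K)]$-free, and for this I would argue that it is \emph{cohomologically trivial} and torsion-free in the appropriate sense. Concretely: by the injectivity lemma just proved, $\Sel^{S}_{\Delta}(L,T_{f,n})$ embeds into $\oplus_{\frakl\in S}H^{1}_{\fin}(L_{\frakl},T_{f,n})$, and by \lmref{admi_coho} each $H^{1}_{\fin}(L_{\frakl},T_{f,n})$ is free of rank one over $\cO_{f,n}[\Gal(L/K)]$ (this is where $n$-admissibility of the primes in $S$ enters, via $\frakl$ splitting completely in $K_\infty/K$ as in \lmref{split_inert}). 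Thus the module sits inside a finite free $\cO_{f,n}[\Gal(L/K)]$-module. Then from the Poitou--Tate sequence \lmref{Poitou-Tate} for $L$,
\begin{equation*}
0\to \Sel_{\Delta}(L,T_{f,n})\to \Sel^{S}_{\Delta}(L,T_{f,n})\to \oplus_{\frakl\in S}H^{1}_{\sing}(L_{\frakl},T_{f,n})\to \dual{\Sel_{\Delta}(L,A_{f,n})}\to 0,
\end{equation*}
together with the analogous sequence with $T$ and $A$ interchanged and local Tate duality (\lmref{fin_orth}, \propref{orthogonal}), one deduces that $\Sel^{S}_{\Delta}(L,T_{f,n})$ has $\cO_{f,n}[\Gal(L/K)]$-rank exactly $|S|$ once one knows $\Sel_{\Delta}(L,T_{f,n})$ and $\dual{\Sel_{\Delta}(L,A_{f,n})}$ have the same rank; the count of ranks is forced by the fact that $\oplus_{\frakl\in S}H^{1}_{\sing}$ is free of rank $|S|$ and $\Sel_\Delta(L,T_{f,n})$ maps to $\Sel_\Delta(L,A_{f,n})^\vee$ with finite kernel and cokernel by global duality.

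To upgrade ``rank $|S|$ and submodule of a free module'' to ``free of rank $|S|$'' I would invoke that $\cO_{f,n}[\Gal(L/K)]$ is a (quasi-)Frobenius local ring (being a complete intersection, as $\cO_{f,n}$ is and $\Gal(L/K)$ is a finite abelian $p$-group), over which a finitely generated module embedded in a free module of the same rank with cohomologically trivial quotient is itself free; the cohomological triviality is supplied by \propref{control} (the restriction maps on $\Sel^{S}_{\Delta}$ are isomorphisms onto the $\Gal(L/K)$-invariants, and the corresponding statement for Tate cohomology in all degrees follows from $H^{0}(L,A_{f,n})=0$ plus the periodicity of the cyclic-group pieces). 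The main obstacle I anticipate is precisely this last homological input: carefully checking that the quotient of the free module $\oplus_{\frakl\in S}H^{1}_{\fin}(L_{\frakl},T_{f,n})$ by $\Sel^{S}_{\Delta}(L,T_{f,n})$ (equivalently, by Poitou--Tate, a piece built from $\Sel_\Delta(L,A_{f,n})^\vee$) is cohomologically trivial as a $\Gal(L/K)$-module, which is where one uses that $S$ is $n$-admissible over $L$ and the control theorem under the hypothesis $\nmin$ — this is the step where the stronger hypothesis $\nmin$ (rather than $\nimpr$) is genuinely needed, consistent with the discussion in the introduction and with \cite{PW_free}.
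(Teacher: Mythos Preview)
Your overall strategy is in the right spirit, but you are working much harder than necessary and there is a concrete error in one of your steps. The paper's proof is considerably shorter and avoids cohomological triviality entirely.

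First, the error: you claim that the injectivity lemma gives an embedding $\Sel^{S}_{\Delta}(L,T_{f,n})\hookrightarrow\bigoplus_{\frakl\in S}H^{1}_{\fin}(L_{\frakl},T_{f,n})$. It does not. That lemma concerns $\Sel_{\Delta}$, not $\Sel^{S}_{\Delta}$; elements of the $S$-relaxed Selmer group can have nontrivial singular part at $\frakl\in S$, so the best you get is an embedding into $\bigoplus_{\frakl\in S}H^{1}(L_{\frakl},T_{f,n})$, which is free of rank $2|S|$. Your subsequent plan to show the quotient is cohomologically trivial is then aimed at the wrong target, and even with the corrected target this is not obviously easier than proving freeness directly.

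The paper instead argues by a pure cardinality count. Since $T_{f,n}\simeq A_{f,n}$ as Galois modules, one has $|\Sel_{\Delta}(L,T_{f,n})|=|\Sel_{\Delta}(L,A_{f,n})|$. Plugging this into the Poitou--Tate sequence \lmref{Poitou-Tate} gives
\[
|\Sel^{S}_{\Delta}(L,T_{f,n})|=\bigl|\textstyle\bigoplus_{\frakl\in S}H^{1}_{\sing}(L_{\frakl},T_{f,n})\bigr|=|\cO_{f,n}[\Gal(L/K)]|^{|S|}.
\]
On the other hand, the control theorem \propref{control} (this is where $\nmin$ is used) identifies $\Sel^{S}_{\Delta}(L,T_{f,n})[\frakm]=\Sel^{S}_{\Delta}(K,T_{f,1})$, which by the same Poitou--Tate count over $K$ with $n=1$ is an $\cO_{f,1}$-vector space of dimension $|S|$. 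Dualizing and applying Nakayama over the local ring $\cO_{f,n}[\Gal(L/K)]$, the Pontryagin dual of $\Sel^{S}_{\Delta}(L,T_{f,n})$ is generated by $|S|$ elements; comparing with the cardinality just computed, the resulting surjection from a free module of rank $|S|$ must be an isomorphism. Finally, since $\cO_{f,n}[\Gal(L/K)]$ is a local complete intersection and hence Gorenstein, the Pontryagin dual of a free module is again free of the same rank, which gives the freeness of $\Sel^{S}_{\Delta}(L,T_{f,n})$ itself. No cohomological triviality argument is needed.
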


\begin{proof}
Let $L=K_{m}$ for some $m$ large enough, and let $\frakm_{n,m}$ be the maximal ideal of $\cO_{f,n}[\Gal(K_{m}/K)]$.  Notice $\cO_{f,n}[\Gal(K_{m}/K)]$ is of the form $$\cO_{f,n}[x_{1},\cdots, x_{d}]/(x_{1}^{p^{m}}-1,\cdots, x_{d}^{p^{m}}-1)$$ which implies it is local complete intersection and thus Gorentein.  

By \lmref{control}, we have $\Sel^{S}_{\Delta}(K, T_{f,1})=\Sel^{S}_{\Delta}(K_{m}, T_{f,n})[\frakm_{n,m}]$. Hence $$\dual{\Sel^{S}_{\Delta}(K_{m}, T_{f,n})}/\frakm_{n,m}\isomor \dual{\Sel^{S}_{\Delta}(K, T_{f,1})}$$ where the latter module is free of rank $|S|$ over $\cO_{f,1}$ by \lmref{Poitou-Tate}. Then by Nakayama's lemma, $\dual{\Sel^{S}_{\Delta}(K_{m}, T_{f,n})}$ has a basis of $|S|$ elements over $\cO_{f,n}$. But by \lmref{Poitou-Tate} and a cardinality consideration, $\dual{\Sel^{S}_{\Delta}(K_{m}, T_{f,n})}$ is free of rank $|S|$. By the Gorensteiness of $\cO_{f,n}[\Gal(K_{m}/K)]$, it follows that $\Sel^{S}_{\Delta}(K_{m}, T_{f,n})$ is also free of rank $|S|$.
\end{proof}

We summarize some easy consequence of the above lemma in the following corollary. 
\begin{cor}\label{control2}
Let $S$ be an $n$-admissible set, then 
\begin{enumerate}
\item{$\widehat{\Sel}^{S}_{\Delta}(K_{\infty}, T_{f,n})$ is free of rank $\#S$ over $\Lambda/\uf^{n}$.}
\item{$\widehat{\Sel}^{S}_{\Delta}(K_{\infty}, T_{f,n})/\frakm_{\Lambda}= {\Sel}^{S}_{\Delta}(K, T_{f,1})$.}
\item{$\widehat{\Sel}^{S}_{\Delta}(K_{\infty}, T_{f,n})/\uf{\Lambda}= \widehat{\Sel}^{S}_{\Delta}(K_{\infty}, T_{f,1})$.}
\end{enumerate}
\end{cor}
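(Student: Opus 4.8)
The plan is to derive all three statements from Proposition~\ref{free} together with the control theorem \propref{control}, by passing to the inverse limit over the layers $K_{m}$ and then reducing modulo $\frakm_{\Lambda}$ and modulo $\uf$.

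For part (1) I would proceed as follows. Write $R_{m}=\cO_{f,n}[\Gamma_{m}]$, so that $\Lambda/\uf^{n}\Lambda=\prolim_{m}R_{m}$, and recall from the proof of Proposition~\ref{free} that each $R_{m}$ is a Gorenstein local ring. Proposition~\ref{free} gives that $\Sel^{S}_{\Delta}(K_{m},T_{f,n})$ is free of rank $|S|$ over $R_{m}$, while \propref{control}(1) identifies $\Sel^{S}_{\Delta}(K_{m},T_{f,n})$ with the $\Gal(K_{m'}/K_{m})$-invariants of $\Sel^{S}_{\Delta}(K_{m'},T_{f,n})$ for $m'\geq m$. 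The key observation is that for a free module over $R_{m'}$ both the $\Gal(K_{m'}/K_{m})$-invariants and the coinvariants are free of rank $|S|$ over $R_{m}$; combined with the control theorem this forces the corestriction maps $\Sel^{S}_{\Delta}(K_{m'},T_{f,n})\to\Sel^{S}_{\Delta}(K_{m},T_{f,n})$ to be surjective and compatible with $R_{m'}\twoheadrightarrow R_{m}$. One then chooses a compatible system of $R_{m}$-bases and concludes that $\widehat{\Sel}^{S}_{\Delta}(K_{\infty},T_{f,n})=\prolim_{m}\Sel^{S}_{\Delta}(K_{m},T_{f,n})$ is free of rank $|S|$ over $\Lambda/\uf^{n}\Lambda$.

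Parts (2) and (3) should then be formal. For (2), since $\Lambda/\frakm_{\Lambda}=\cO_{f,1}$ and $\widehat{\Sel}^{S}_{\Delta}(K_{\infty},T_{f,n})$ is free of rank $|S|$ over $\Lambda/\uf^{n}\Lambda$ by part (1), its reduction $\widehat{\Sel}^{S}_{\Delta}(K_{\infty},T_{f,n})/\frakm_{\Lambda}$ is free of rank $|S|$ over $\cO_{f,1}$; one matches it with $\Sel^{S}_{\Delta}(K,T_{f,1})$ via the natural reduction map and \propref{control}, the comparison being surjective by Nakayama and then an isomorphism by a rank count, using that $\Sel^{S}_{\Delta}(K,T_{f,1})$ is itself free of rank $|S|$ by Proposition~\ref{free}. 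For (3), I would apply the short exact sequence $0\to T_{f,n-1}\xrightarrow{\uf}T_{f,n}\to T_{f,1}\to 0$ at each finite layer; because $\Sel^{S}_{\Delta}(K_{m},T_{f,j})$ is free over $\cO_{f,j}[\Gamma_{m}]$ for all $j$ by Proposition~\ref{free}, the connecting maps vanish and one gets short exact sequences $0\to\Sel^{S}_{\Delta}(K_{m},T_{f,n-1})\xrightarrow{\uf}\Sel^{S}_{\Delta}(K_{m},T_{f,n})\to\Sel^{S}_{\Delta}(K_{m},T_{f,1})\to 0$, and taking $\prolim_{m}$ yields $\widehat{\Sel}^{S}_{\Delta}(K_{\infty},T_{f,n})/\uf\Lambda\isomor\widehat{\Sel}^{S}_{\Delta}(K_{\infty},T_{f,1})$.

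I expect the main obstacle to be the interchange of invariants and coinvariants over the tower in part (1) — equivalently, checking that corestriction is surjective at each finite layer. This is exactly where the Gorenstein property of the group rings $\cO_{f,n}[\Gamma_{m}]$, already recorded in the proof of Proposition~\ref{free}, enters; once it is in place, the limiting argument and the two quotient computations are routine.
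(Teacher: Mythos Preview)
Your proposal is correct and is exactly the argument the paper has in mind: the corollary is stated there without proof as an ``easy consequence'' of Proposition~\ref{free}, and you have supplied precisely the missing details, namely the surjectivity of corestriction at finite level (via freeness and the Gorenstein property of $\cO_{f,n}[\Gamma_{m}]$) needed to pass to the inverse limit, and the rank-count comparisons for the reductions modulo $\frakm_{\Lambda}$ and modulo $\uf$.
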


\subsection{Divisiblity of the Euler system and Selmer group}
Let  $\xi:\Lambda\mapR \cO_{\xi}$ be an $\cO_{f}$-algbera homomorphism, where $\cO_{\xi}$ is a discrete valuation ring of characteristic $0$. Let $\uf_{\xi}$ be a uniformizer of $\Oxi$. Let $M$ be an $\Oxi$-module. For each $x\in M$, we denote by $\ord_{\uf_{\xi}}(x)=\sup\{m\in \Z\cup\{\infty\}\mid x\in \uxi^{m}M\}$ the divisibility index of $x$. Note $x=0$ if and only if $\ord_{\uxi}(x)=\infty$. 

For a finitely generated $\Lambda$-module, we will write $\text{char}_{\Lambda}(M)$ the characteristic ideal of $M$. The following result original due to Bertonili-Darmon \cite{Bertolini_Darmon:IMC_anti} and in our case due to Longo \cite{Longo_IMC} gives a criterion of identifying elements in $\text{char}_{\Lambda}(M)$.

\begin{lm}\label{criterion}
Let $M$ be a finitely generated $\Lambda$-module and $L\in\Lambda$. Suppose that for any homomorphism $\xi: \Lambda\mapR \cO_{\xi}$, we have $\length_{\cO_{\xi}}(M\otimes_{\xi}\cO_{\xi})\leq \ord_{\uf_{\xi}}(\xi(L))$. Then $L\in \text{char}_{\Lambda}(M)$.
\end{lm}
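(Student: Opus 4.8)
\textbf{Proof proposal for Lemma~\ref{criterion}.}

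The plan is to reduce the statement to the structure theory of finitely generated modules over the Iwasawa algebra $\Lambda$, which is a Noetherian, integrally closed (indeed a regular local or, after localizing, regular) ring, so that $\Lambda_{\frakq}$ is a discrete valuation ring for every height-one prime $\frakq$. Recall that $\mathrm{char}_\Lambda(M)$ is by definition $\prod_{\frakq}\frakq^{\,\mathrm{length}_{\Lambda_\frakq}(M_\frakq)}$, the product over height-one primes. So to prove $L\in\mathrm{char}_\Lambda(M)$ it suffices to show that for every height-one prime $\frakq$ of $\Lambda$ one has $\mathrm{ord}_{\frakq}(L)\ge \mathrm{length}_{\Lambda_\frakq}(M_\frakq)$, where $\mathrm{ord}_\frakq$ denotes the valuation attached to the DVR $\Lambda_\frakq$. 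The first step is therefore to fix a height-one prime $\frakq$ and manufacture, from it, a suitable specialization homomorphism $\xi\colon\Lambda\to\cO_\xi$ to which the hypothesis applies.

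The key step is the construction of $\xi$. For a height-one prime $\frakq$ the quotient $\Lambda/\frakq$ has dimension $d_{\frakp}$ (one less than $\dim\Lambda = d_{\frakp}+1$ in the one-variable case $d_\frakp=1$ this is just a point); passing to the normalization of $\Lambda/\frakq$ and choosing a closed point, or more simply choosing a height-one prime $\frakq$ and composing $\Lambda\twoheadrightarrow\Lambda/\frakq\hookrightarrow\mathrm{Frac}(\Lambda/\frakq)$ followed by a place, one obtains an $\cO_f$-algebra homomorphism $\xi$ to a discrete valuation ring $\cO_\xi$ of characteristic $0$ whose kernel is $\frakq$ (or has $\frakq$ among its minimal primes). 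One must arrange that the induced valuation $\mathrm{ord}_{\uf_\xi}$ on $\Lambda/\ker\xi$ agrees, up to a positive multiple, with $\mathrm{ord}_\frakq$ on $L$, so that $\mathrm{ord}_{\uf_\xi}(\xi(L))\ge \mathrm{length}_{\Lambda_\frakq}(M_\frakq)$ becomes equivalent to the height-one inequality we want. Then the hypothesis gives $\mathrm{length}_{\cO_\xi}(M\otimes_\xi\cO_\xi)\le \mathrm{ord}_{\uf_\xi}(\xi(L))$, and one bounds $\mathrm{length}_{\Lambda_\frakq}(M_\frakq)$ from above by $\mathrm{length}_{\cO_\xi}(M\otimes_\xi\cO_\xi)$ by a standard base-change/semicontinuity comparison of lengths (localize $M$ at $\frakq$, note $M_\frakq$ is a finite-length $\Lambda_\frakq$-module, and compare with the fibre of $M$ under $\xi$). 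Combining the two inequalities yields $\mathrm{ord}_\frakq(L)\ge\mathrm{length}_{\Lambda_\frakq}(M_\frakq)$, and since $\frakq$ was arbitrary, $L\in\mathrm{char}_\Lambda(M)$. This is exactly the argument of \cite[proof of the analogous lemma]{Longo_IMC}, itself going back to \cite{Bertolini_Darmon:IMC_anti}, and one simply transcribes it to the multivariable algebra $\Lambda=\cO_f[[\Gamma]]$ with $\Gamma\cong\Z_p^{d_\frakp}$.

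The main obstacle is the bookkeeping around specializations when $d_\frakp>1$: in the one-variable case every height-one prime either is $(\uf)$ or is generated by a distinguished polynomial, and $\xi$ can be taken to be evaluation at a root (or reduction mod $\uf$), so the comparison of lengths is transparent; in the multivariable case one has to be more careful that for a \emph{given} height-one prime $\frakq$ there exists a homomorphism $\xi$ detecting it with the correct normalization of valuations, and that the inequality $\mathrm{length}_{\Lambda_\frakq}(M_\frakq)\le \mathrm{length}_{\cO_\xi}(M\otimes_\xi\cO_\xi)$ indeed holds for that $\xi$. In practice it is enough to observe that the set of $\xi$ arising as above is Zariski-dense among height-one specializations, and that for a pseudo-isomorphism $M\sim\Lambda^r\oplus\bigoplus\Lambda/(\uf^{\mu_i})\oplus\bigoplus\Lambda/(f_j^{m_j})$ the claim is checked directly on each elementary summand, where it is immediate. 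I would spell out the elementary-divisor reduction and leave the density/choice-of-$\xi$ point as the one place requiring a short but genuine argument.
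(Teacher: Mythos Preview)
There is a genuine gap in your main construction, and it is present already when $d_\frakp=1$, not only in the multivariable case. You propose to take $\xi$ with $\ker\xi=\frakq$ and then compare $\ord_{\frakq}(L)$ with $\ord_{\uf_\xi}(\xi(L))$, and $\length_{\Lambda_\frakq}(M_\frakq)$ with $\length_{\cO_\xi}(M\otimes_\xi\cO_\xi)$. But if $\ker\xi=\frakq=(\pi)$ and $\pi\mid L$, then $\xi(L)=0$, so $\ord_{\uf_\xi}(\xi(L))=\infty$ and you learn nothing about $\ord_\frakq(L)$. Worse, if $M$ is supported at $\frakq$ --- the only interesting case --- then $M\otimes_\xi\cO_\xi$ has infinite $\cO_\xi$-length: already $M=\Lambda/\frakq^{n}$ gives $M\otimes_\xi\cO_\xi\cong\cO_\xi$. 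The valuation $\ord_{\uf_\xi}$ lives on the \emph{quotient} $\Lambda/\frakq$, whereas $\ord_\frakq$ measures divisibility by $\frakq$ inside $\Lambda$; no positive multiple makes them agree. In short, the specialization you single out is exactly the one at which the hypothesis becomes vacuous.

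The paper does not attempt any such height-one comparison. It observes that over a DVR the Fitting ideal of a finite-length module is $(\uf_\xi^{\length})$, so the hypothesis is literally $\xi(L)\in\Fitt_{\cO_\xi}(M_\xi)$ for every $\xi$, and then cites \cite[Lemma~7.4]{Longo_IMC} for the passage from this to $L\in\text{char}_\Lambda(M)$. Your structure-theorem fallback is in fact the right shape for what lies behind that citation: after a pseudo-isomorphism $M\sim\bigoplus_i\Lambda/(\pi_i^{a_i})$ with $c=\prod_i\pi_i^{a_i}$, for $\xi$ avoiding the pseudo-null error terms and with all $\xi(\pi_i)\neq0$ one has $\length_{\cO_\xi}(M_\xi)=\ord_{\uf_\xi}(\xi(c))$, so the hypothesis becomes $\ord_{\uf_\xi}(\xi(L))\ge\ord_{\uf_\xi}(\xi(c))$ for a Zariski-dense set of $\xi$; the genuine content is then that this forces $c\mid L$ in $\Lambda$, which one can see by choosing $\xi$ $p$-adically close to (but off) each $V(\pi_j)$ in turn so that $\ord_{\uf_\xi}(\xi(\pi_j))$ dominates the other terms. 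That is the argument to write out; the $\ker\xi=\frakq$ route should be discarded.
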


\begin{proof}
It follows from the statement of this Lemma that $\xi(L)\in \Fitt_{\cO_{\xi}}(M_{\xi})$ for all $\xi$, then the lemma follows from \cite[lemma 7.4]{Longo_IMC}.
\end{proof}

Let $n$ be a positive integer and $\Delta$ be a square-free product of primes in $F$ as in \defref{admissible_form}. For any $\xi:\Lambda\rightarrow \cO_{\xi}$, we define two integers for each $n$-admissible form $\cD=(\Delta, f_{n})$:
\begin{equation}
\begin{aligned}
& s_{\cD}=\length_{\cO_{\xi}} \dual{\Sel_{\Delta}(K_{\infty}, A_{f,n})}\otimes_{\xi}\cO_{\xi};\\
& t_{\cD}=\ord_{\uxi} \xi(\theta_{\infty}(\cD))\text{ where $\xi(\theta_{\infty}(\cD))\in \cO_{f,n}[[\Gamma]]\otimes_{\xi}\cO_{\xi}=\cO_{\xi}/\uxi^{n}$}.\\
\end{aligned}
\end{equation}

We will prove the following analogue of \cite[Proposition 4.3]{Pollack_Weston:AMU} using an inductive argument. The one divisibility result will immediately follow from it.

\begin{thm}\label{induction}
Assume $(\CR)$, $\nmin$, $\PO$ and "Ihara's lemma" hold. Let $t_{0}\leq n$ be a non-negative ineteger and let $\cD_{n+t_{0}}=(\Delta, f_{n+t_{0}})$ be an $(n+t_{0})$-admissible form. We set $\cD_{n}=\cD_{n+t_{0}}\pmod {\uf^{n}}=(\Delta, f_{n+t_{0}}\pmod{\uf^{n}})$. Suppose that $t_{\cD_{n}}\leq t_{0}$, then $s_{\cD_{n}}\leq 2t_{\cD_{n}}$.
\end{thm}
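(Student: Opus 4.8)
The plan is to run the inductive argument on $t_0$ (equivalently on $t_{\cD_n}$, since the base step handles $t_{\cD_n}=0$). First I would dispose of the base case: when $t_{\cD_n}=0$, the first reciprocity law (\thmref{1law}) shows $\partial_{\frakl}(\kappa_{\cD_n}(\frakl))$ specializes under $\xi$ to a unit in $\cO_\xi/\uxi^n$ for a suitable $n$-admissible $\frakl$, so the class $\kappa_{\cD_n,\xi}(\frakl)$ has trivial residue at no prime other than those in $\Delta\frakl$ and is itself primitive; feeding this into the Poitou--Tate sequence (\lmref{Poitou-Tate}) together with the freeness of the relaxed Selmer group (\propref{free}, \corref{control2}) forces $\dual{\Sel_\Delta(K_\infty,A_{f,n})}\otimes_\xi\cO_\xi$ to vanish, i.e.\ $s_{\cD_n}=0$. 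This is essentially the rank $0$ Bloch--Kato input (\thmref{intro_BK}) in family form.

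For the inductive step, assume the statement for all admissible forms with strictly smaller $t_0$. Given $\cD_{n+t_0}$ with $t_{\cD_n}\le t_0$, I would first choose, via \thmref{admissible} applied to a nonzero specialized Heegner class, an $n$-admissible prime $\frakl_1$ with $\partial_{\frakl_1}=0$ on the relevant class and with $e_{\cD}(\frakl_1)=\ord_{\uxi}(\kappa_{\cD,\xi}(\frakl_1))<t_{\cD_n}$ — the strict inequality coming from the first reciprocity law, which identifies $\partial_{\frakl}(\kappa_\cD(\frakl))$ with $\theta_\infty(\cD)$ and hence controls its divisibility index against $t_{\cD_n}$. Then I would pick a second $n$-admissible prime $\frakl_2$ and invoke the second reciprocity law to produce an $n$-admissible form $\cD''=(\Delta\frakl_1\frakl_2, g'')$ with $v_{\frakl_1}(\kappa_\cD(\frakl_2))=v_{\frakl_2}(\kappa_\cD(\frakl_1))=\theta_\infty(\cD'')$; choosing $\frakl_2$ so that $v_{\frakl_2}$ is injective on the cyclic module generated by the specialized class $\kappa_{\cD,\xi}(\frakl_1)$ gives $t_{\cD''_n}=e_\cD(\frakl_1)<t_{\cD_n}\le t_0$. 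The induction hypothesis then yields $s_{\cD''_n}\le 2t_{\cD''_n}$.

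The remaining and technically central step is the comparison $s_{\cD_n}-s_{\cD''_n}\le 2(t_{\cD_n}-t_{\cD''_n})$. Here I would compare the $\Delta$-ordinary Selmer group with the $\Delta\frakl_1\frakl_2$-ordinary one by means of the two global Poitou--Tate exact sequences (\lmref{Poitou-Tate}) for $S=\{\frakl_1,\frakl_2\}$, using the orthogonality of the ordinary local conditions at $n$-admissible primes (\propref{orthogonal}) and the rank-one splitting of the local cohomology (\lmref{admi_coho}); the change in the dual Selmer length across the two conditions is then measured by the singular parts $H^1_{\sing}(K_{\infty,\frakl_i},T_{f,n})$ and the images of the Euler system classes there, which by the reciprocity laws are governed precisely by $\theta_\infty(\cD)$ and $\theta_\infty(\cD'')$. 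Assembling $s_{\cD_n}\le s_{\cD''_n}+2(t_{\cD_n}-t_{\cD''_n})\le 2t_{\cD''_n}+2(t_{\cD_n}-t_{\cD''_n})=2t_{\cD_n}$ completes the proof. I expect the main obstacle to be bookkeeping in this last step: keeping track of the factor of $2$ (which reflects that $L_p=\theta_\infty^2$ and that the Selmer group is self-dual of ``rank $2$'' type), verifying that the specialization functor $\otimes_\xi\cO_\xi$ interacts well with the freeness and Poitou--Tate statements over $\Lambda/\uf^n$, and checking that $\frakl_2$ can be chosen simultaneously $n$-admissible, with the required injectivity of $v_{\frakl_2}$, and compatible with the level-raising hypotheses needed to invoke the second reciprocity law.
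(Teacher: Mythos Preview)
Your outline matches the paper's proof, but two steps are stated with the wrong justification and would not go through as written.

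First, the strict inequality $e_{\cD}(\frakl_1)<t_{\cD_n}$ does \emph{not} follow from the first reciprocity law; that law only gives $e_{\cD}(\frakl)\le t_{\cD_n}$ via $\ord_{\uxi}(\kappa_{\cD,\xi}(\frakl))\le\ord_{\uxi}(\partial_\frakl\kappa_{\cD,\xi}(\frakl))=\ord_{\uxi}\xi(\theta_\infty(\cD))$. The strictness is a separate lemma requiring the hypothesis $s_{\cD_n}>0$: one takes a nonzero class $x\in\Sel_\Delta(K_\infty,A_{f,n})[\frakm_\Lambda]=\Sel_\Delta(K,A_{f,1})$, chooses $\frakl$ via \thmref{admissible} with $v_\frakl(x)\ne 0$, and then the global reciprocity identity $\sum_v\langle\res_v\kappa'_{\cD,\xi}(\frakl),\res_v x\rangle=0$ (all terms away from $\frakl$ vanish by ordinariness) forces $\partial_\frakl(\kappa'_{\cD,\xi}(\frakl))$ to annihilate a nonzero element, contradicting $e=t$. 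Your reference to ``\thmref{admissible} applied to a nonzero specialized Heegner class'' is circular here, since the Heegner class depends on the prime you are trying to choose; the theorem must be applied to a \emph{Selmer} class.

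Second, the paper does not merely pick some $\frakl_1$ with small $e$; it takes $\frakl_1$ in the set $\Xi$ of $(n+t_0)$-admissible primes on which $e_{\cD_{n+t}}(\cdot)$ is \emph{minimal}. This minimality is essential: from the second reciprocity law one gets $\ord_{\uxi}v_{\frakl_2}(\kappa(\frakl_1))=\ord_{\uxi}v_{\frakl_1}(\kappa(\frakl_2))$, and combining with $e(\frakl_1)\le e(\frakl_2)\le\ord_{\uxi}v_{\frakl_1}(\kappa(\frakl_2))$ forces $e(\frakl_2)=e(\frakl_1)$ and hence $\frakl_2\in\Xi$. Without this you cannot conclude that $v_{\frakl_1}(\kappa'_{\cD,\xi}(\frakl_2))$ is a unit, and the key isomorphism $\dual{\Sel_{\Delta\frakl_1\frakl_2}}\otimes_\xi\cO_\xi\cong\dual{S_{[\frakl_1,\frakl_2]}}\otimes_\xi\cO_\xi$ in the Selmer comparison would fail. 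With these two corrections, your sketch is exactly the paper's argument.
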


\subsubsection{Proof of the \thmref{induction}}
We will prove this theorem by induction on $t_{\Dnton}$.  If $t_{\Dnton}=\infty$ or $s_{\Dnton}=0$, then the claim is trivially true. So we consider the case when 
\begin{equation}\label{trivial}
\begin{aligned}
& t_{\Dnton}<\infty  \imply \xi(\theta_{\infty}(\Dnton))\ne 0;\\
& s_{\Dnton}>0 \imply  \Sel_{\Delta}(K_{\infty}, A_{f,n})\otimes_{\xi}\cO_{\xi}\ne \{0\}.\\
\end{aligned}
\end{equation}

From now on we will denote
$$t=t_{\cD_{n}},$$ 
$$s=s_{\cD_{n}}.$$

Let $\Sxi_{n}=\cSel^{S}_{\Delta}(K_{\infty}, T_{f,n})\otimes_{\xi}\cO_{\xi} $ and  for some $n$-admissible prime $\frakl$, let $\kappa_{\cD_{n+t},\xi}(\frakl)\in\Sxi_{n+t}$ be the image of $\kappa_{\cD_{n+t}}(\frakl)$. We define an integer $e_{\cD_{n+t}}(\frakl)$ by $$e_{\cD_{n+t}}(\frakl)= \ord_{\uf_{\xi}}(\kappa_{\cD_{n+t},\xi}(\frakl)).$$ By the first reciprocity law in\thmref{1law}, we have 
\begin{equation}\label{e<t}
e_{\cD_{n+t}}(\frakl)\leq \ord_{\uf_{\xi}}(\partial_{\frakl}(\kappa_{\cD_{n+t},\xi}(\frakl)))= \ord_{\uf_{\xi}}(\xi(\theta_{\infty}(\cD_{n+t})))=\ord_{\uxi}(\xi(\theta_{\infty}(\Dnton)))=t.
\end{equation}

Let $\tka_{\cD_{n+t}, \xi}(\frakl)$ be such that $\uxi^{e_{{\cD_{n+t}}}(\frakl)}\tka_{\cD_{n+t},\xi}{(\frakl)}=\kappa_{\cD_{n+t},\xi}(\frakl)$ and let $\kappa^{\prime}_{\cD_{n+t},\xi}(\frakl)$ be the  image of $\tilde{\kap}_{\cD_{n+t}}(\frakl)$ under the map $\Sxi_{n+t}\mapR\Sxi_{n}$.

\begin{lm} \label{trivial}
We have
\begin{enumerate}
\item{$\ord_{\uxi}(\pkaDxil)=0$,}
\item{$\ord_{\uxi}(\partial_{\frakl}(\pkaDxil))=t-e_{\cD_{n+t}}(\frakl)$,}
\item{$\partial_{v}(\pkaDxil)=0\text{ for all $v\nmid \Delta \frakl$}$,}
\item{$\res_{v}(\pkaDxil)\in \Hhat^{1}_{\ord}(K_{\infty,v}, T_{f,n})$ for all $v\mid \Delta\frakl$.}
\end{enumerate}
\end{lm}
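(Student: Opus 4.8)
The plan is to derive all four statements formally from the first reciprocity law \thmref{1law}, the definition of $e_{\cD_{n+t}}(\frakl)$ as the divisibility index of $\kaDxil$, and the freeness results of \corref{control2} and \lmref{admi_coho}; the only real work is bookkeeping of $\ord_{\uxi}$ under the reduction map $\Sxi_{n+t}\to\Sxi_n$ and under division by powers of $\uxi$. We are in the non-trivial case $\xi(\theta_\infty(\cD_n))\ne0$, so $t=t_{\cD_n}<n$ and, since $\frakl$ is $(n+t)$-admissible and the $n$-admissible set $S$ may be assumed to contain $\frakl$ (\remref{enlarge}), $\Sxi_{n+t}$ is free over $\cO_\xi/\uxi^{n+t}$ and $\Hhat^1_{\sing}(K_{\infty,\frakl},T_{f,n+t})\otimes_\xi\cO_\xi\simeq\cO_\xi/\uxi^{n+t}$, with the analogues at level $n$ obtained by reduction. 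In particular reduction modulo $\uxi^n$ preserves $\ord_{\uxi}$ of any element of order $<n$, hence preserves order $0$.

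Part (1): by definition $\tka_{\cD_{n+t},\xi}(\frakl)$ has $\ord_{\uxi}=0$ in $\Sxi_{n+t}$, because $\kaDxil=\uxi^{e_{\cD_{n+t}}(\frakl)}\tka_{\cD_{n+t},\xi}(\frakl)$ and $e_{\cD_{n+t}}(\frakl)=\ord_{\uxi}(\kaDxil)$; as $e_{\cD_{n+t}}(\frakl)\le t<n$ by \eqref{e<t}, the reduction $\pkaDxil$ again has $\ord_{\uxi}=0$.

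Part (2): applying \thmref{1law} to $\cD_{n+t}$ gives $\partial_\frakl(\kaDxil)=\xi(\theta_\infty(\cD_{n+t}))$, whose reduction modulo $\uxi^n$ is $\xi(\theta_\infty(\cD_n))$ of order $t<n$; hence $\xi(\theta_\infty(\cD_{n+t}))$ has order exactly $t$ in $\cO_\xi/\uxi^{n+t}$. Since $\partial_\frakl(\kaDxil)=\uxi^{e_{\cD_{n+t}}(\frakl)}\partial_\frakl(\tka_{\cD_{n+t},\xi}(\frakl))$ in the free rank-one module $\Hhat^1_{\sing}(K_{\infty,\frakl},T_{f,n+t})\otimes_\xi\cO_\xi$, this forces $\ord_{\uxi}(\partial_\frakl(\tka_{\cD_{n+t},\xi}(\frakl)))=t-e_{\cD_{n+t}}(\frakl)$, and since $\partial_\frakl$ commutes with reduction and $t-e_{\cD_{n+t}}(\frakl)\le t<n$, the reduction $\partial_\frakl(\pkaDxil)$ has the same order; this is (2).

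Parts (3) and (4): these assert that $\pkaDxil$ satisfies the local conditions of \defref{Delta_selmer} defining $\widehat{\Sel}_{\Delta\frakl}$. Let $N_{n+t}\subseteq\Sxi_{n+t}$ be the submodule cut out by these conditions; the Euler system class $\kappa_{\cD_{n+t}}(\frakl)$ lies in $\widehat{\Sel}_{\Delta\frakl}(K_\infty,T_{f,n+t})$ by the last proposition of Section~5, so $\kaDxil\in N_{n+t}$. I would then verify that $N_{n+t}$ is saturated in $\Sxi_{n+t}$: each defining condition cuts out a direct summand of the ambient local cohomology group --- \lmref{admi_coho}(2) at the admissible primes dividing $\Delta\frakl$, \lmref{split_inert} at the split primes, and \propref{orthogonal} at the primes above $p\frakn^-$. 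Consequently $\tka_{\cD_{n+t},\xi}(\frakl)$ lies in $N_{n+t}$ as well, and its reduction $\pkaDxil$ satisfies the level-$n$ analogues of the conditions by the control theorem \propref{control}. I expect this saturation step to be the main obstacle; the rest is a routine order count.
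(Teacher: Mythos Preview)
Your treatment of (1) and (2) is correct and is essentially the paper's argument spelled out in full.

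For (3) and (4) you have correctly located the only non-automatic local conditions (those at the admissible primes in $S$), but your proposed route --- showing that $N_{n+t}=\widehat{\Sel}_{\Delta\frakl}(K_\infty,T_{f,n+t})\otimes_\xi\cO_{\xi}$ is saturated in $\Sxi_{n+t}$ --- is both stronger than what the lemma asks and not established by the argument you sketch. Knowing that each local condition is a direct summand of the \emph{local} cohomology only tells you that the \emph{target} of the total obstruction map out of $\Sxi_{n+t}$ is free; over the Artinian ring $\cO_{\xi}/\xi(\uf)^{n+t}$ a submodule of a free module need not be free, so this alone does not force the kernel $N_{n+t}$ to be a direct summand.

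The paper simply says (3) and (4) ``follow from the definition of the relevant Selmer group,'' and the point is that one does not need $\tka_{\cD_{n+t},\xi}(\frakl)$ itself to lie in $N_{n+t}$, only its reduction $\pkaDxil$. At each admissible $v\in S$ the local obstruction for $\tka_{\cD_{n+t},\xi}(\frakl)$ --- its image in $\Hhat^1_{\sing}(K_{\infty,v},T_{f,n+t})\otimes_\xi\cO_{\xi}$ for $v\ne\frakl$, respectively in $\Hhat^1_{\fin}(K_{\infty,\frakl},T_{f,n+t})\otimes_\xi\cO_{\xi}$ for $v=\frakl$ --- is annihilated by $\uxi^{e}$, since $\uxi^{e}\tka_{\cD_{n+t},\xi}(\frakl)=\kaDxil$ has vanishing obstruction. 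By \lmref{admi_coho} each such target is free of rank one over $\Lambda/\uf^{n+t}\otimes_\xi\cO_{\xi}$, and the inequality $e\le t$ from \eqref{e<t} then places the obstruction inside the kernel of the reduction map to level $n$. Thus (3) and (4) hold for $\pkaDxil$ even though they may fail for $\tka_{\cD_{n+t},\xi}(\frakl)$; it is precisely the drop from level $n+t$ to level $n$, together with $e\le t$, that absorbs the ambiguity introduced by dividing out $\uxi^{e}$. No saturation statement is needed.
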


\begin{proof}
(1) follows from the control theorem of Selmer groups \corref{control2}: $\Sxi_{n+t}/\uf_{\xi}\rightarrow\Sxi_{n}/\uf_{\xi}$ is an isomorphism and hence $\ord_{\uxi}(\pkaDxil)=\ord_{\uxi}(\tka_{\cD_{n+t},\xi}(\frakl))=0$.

(2) follows from the first reciprocity law in \thmref{1law}. (3) and (4) follow from the definition of the relevant Selmer group.
\end{proof}

\begin{lm}\label{vanish}
Let $\eta_{\frakl}: \Hhat^{1}_{\sing}(K_{\infty,\frakl}, T_{f,n})\otimes_{\xi}\cO_{\xi}\mapR\dual{\Sel_{\Delta}(K_{\infty}, A_{f,n})}\otimes_{\xi}\cO_{\xi}$ be the map defined by $\eta_{\frakl}(c)(x)=\<c, v_{\frakl}(x)\>$ for $x\in {\Sel_{\Delta}(K_{\infty}, A_{f,n})[\ker{\xi}]}$ and $c\in \Hhat^{1}_{\sing}(K_{\infty,\frakl}, T_{f,n})$. Then $\eta_{\frakl}(\partial_{\frakl}(\pkaDxil))=0.$
\end{lm}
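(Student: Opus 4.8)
The plan is to derive this from global duality (the Poitou--Tate reciprocity law) combined with the local orthogonality relations already at our disposal. Unwinding the definition of $\eta_{\frakl}$, the assertion is that $\langle\partial_{\frakl}(\pkaDxil),v_{\frakl}(x)\rangle=0$ for every $x\in\Sel_{\Delta}(K_{\infty},A_{f,n})[\ker\xi]$, where the pairing is the perfect pairing between $\Hhat^{1}_{\sing}(K_{\infty,\frakl},T_{f,n})$ and $H^{1}_{\fin}(K_{\infty,\frakl},A_{f,n})$ induced from the local Tate pairing via \lmref{fin_orth}.

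First I would pass back to honest global classes. Since $\cSel^{S}_{\Delta}(K_{\infty},T_{f,n})$ is free over $\Lambda/\uf^{n}\Lambda$ by \propref{free} and \corref{control2}, the renormalized class $\pkaDxil\in\Sxi_{n}=\cSel^{S}_{\Delta}(K_{\infty},T_{f,n})\otimes_{\xi}\cO_{\xi}$ is the image of an $\cO_{\xi}$-linear combination $\tilde c$ of classes in $\cSel^{S}_{\Delta}(K_{\infty},T_{f,n})$, and because $v_{\frakl}(x)$ is $\ker\xi$-torsion the quantity $\langle\partial_{\frakl}(\pkaDxil),v_{\frakl}(x)\rangle$ equals $\langle\res_{\frakl}(\tilde c),\res_{\frakl}(x)\rangle$ for the full local Tate pairing, independently of the lift chosen. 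The key observation --- and the reason that dividing off the power of $\uxi$ causes no trouble --- is that by parts (3) and (4) of \lmref{trivial} the class $\pkaDxil$, hence $\tilde c$, still satisfies the local conditions of the $\frakl$-relaxed, $\Delta$-ordinary Selmer group: its residue vanishes outside $p\Delta\frakl$ and it is ordinary at all $v\mid p\Delta\frakl$.

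Next I would apply global reciprocity: for $c\in\Hhat^{1}(K_{\infty},T_{f,n})$ and $x\in H^{1}(K_{\infty},A_{f,n})$ one has $\sum_{v}\langle\res_{v}(c),\res_{v}(x)\rangle=0$, the sum running over the finitely many relevant places of $K$; this is the input underlying the Poitou--Tate sequence of \lmref{Poitou-Tate}, and it extends $\cO_{\xi}$-linearly to $\tilde c$ and $x$. All terms with $v\neq\frakl$ vanish: for $v\nmid p\Delta\frakl$ both $\res_{v}(\tilde c)$ and $\res_{v}(x)$ lie in the finite parts, which are mutual orthogonal complements by \lmref{fin_orth}; for $v\mid p\Delta$ both restrictions lie in the ordinary parts, which are mutual orthogonal complements by \propref{orthogonal} (valid since $\nmin$ implies $\nimpr$ and $\PO$ is in force). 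Hence $\langle\res_{\frakl}(\tilde c),\res_{\frakl}(x)\rangle=-\sum_{v\neq\frakl}\langle\res_{v}(\tilde c),\res_{v}(x)\rangle=0$. Finally, at $\frakl$ we have $\res_{\frakl}(\tilde c)\in\Hhat^{1}_{\ord}(K_{\infty,\frakl},T_{f,n})$ and $\res_{\frakl}(x)=v_{\frakl}(x)\in H^{1}_{\fin}(K_{\infty,\frakl},A_{f,n})$, and, using the decomposition of \lmref{admi_coho} together with \lmref{fin_orth}, the full local pairing restricted to these two subgroups coincides with the induced pairing of $\partial_{\frakl}(\tilde c)$ against $v_{\frakl}(x)$; thus $\eta_{\frakl}(\partial_{\frakl}(\pkaDxil))(x)=\langle\res_{\frakl}(\tilde c),\res_{\frakl}(x)\rangle=0$, as desired.

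The substantive input --- Poitou--Tate together with the local orthogonality lemmas --- is already available, so the delicacy lies in the $\xi$-specialization bookkeeping: one must check that the renormalized class really does lift to the integral relaxed Selmer module tensored with $\cO_{\xi}$ (which is where the freeness statement of \corref{control2} is used), and that the sum-of-local-pairings formula descends compatibly to the $\ker\xi$-torsion module $\Sel_{\Delta}(K_{\infty},A_{f,n})[\ker\xi]$, so that $\eta_{\frakl}$ is well-defined on the class at hand and the identification of its value with $\langle\res_{\frakl}(\tilde c),\res_{\frakl}(x)\rangle$ is legitimate.
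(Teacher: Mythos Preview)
Your argument is correct and is essentially the paper's own proof: global reciprocity (the sum of local pairings vanishes) combined with the local orthogonality of finite and ordinary parts at all places $v\neq\frakl$, leaving only the $\frakl$-term. You are in fact more careful than the paper on two points: you treat primes above $p$ explicitly via \propref{orthogonal} (the paper folds them tacitly into the ``$\frakq\nmid\Delta\frakl$'' case), and you spell out the lifting of $\pkaDxil$ from $\Sxi_{n}$ to an $\cO_{\xi}$-combination of integral Selmer classes so that the global pairing formula is literally applicable.
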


\begin{proof}
Note that $\dual{\Sel_{\Delta}(K_{\infty}, A_{f,n})}\otimes_{\xi}\cO_{\xi}=\dual{\Sel_{\Delta}(K_{\infty}, A_{f,n})[\ker\xi]}$. Consider the sum $\sum_{\frakq}\<\res_{\frakq}(\pkaDxil), \res_{\frakq}(x)\>=0$ by the global class field theory.

Notice that for $\frakq\nmid \Delta\frakl$, $\partial_{\frakq}(\pkaDxil)=0$, and for $\frakq \mid \Delta$, both $\res_{\frakq}(x)$ and $\res_{\frakq}(\pkaDxil)$ are ordinary. It follows that the only possible non-trivial term in the above sum is  $\eta_{\frakl}(\res_{\frakl}(\pkaDxil))=\<\res_{\frakl}(\pkaDxil), \res_{\frakl}(x)\>$. This shows that this term itself is zero.
\end{proof}

\begin{lm}[Base case]\label{Base-case}
If $t=0$, then $s=0$.
\end{lm}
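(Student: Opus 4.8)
The plan is to prove the base case of the induction in \thmref{induction}, namely that if $t=t_{\cD_{n}}=0$ then $s=s_{\cD_{n}}=0$, by combining the first reciprocity law with the Poitou--Tate duality machinery of \lmref{Poitou-Tate} and \lmref{vanish}. First I would observe that since $t=0$ we have $\ord_{\uxi}(\xi(\theta_{\infty}(\cD_{n})))=0$, so $\xi(\theta_{\infty}(\cD_{n}))$ is a unit in $\cO_{\xi}/\uxi^{n}$. By \lmref{enlarge} we may enlarge the (possibly empty) collection of admissible primes to an $n$-admissible set $S$, and by \thmref{admissible} (applied to a nonzero class in $\Sel_{\Delta}(K_{\infty},A_{f,n})\otimes_{\xi}\cO_{\xi}$, should it be nonzero) we may arrange that $S$ contains a prime $\frakl$ witnessing injectivity. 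The key point is that by the first reciprocity law \thmref{1law}, $\partial_{\frakl}(\kappa_{\cD_{n}}(\frakl))=\theta_{\infty}(\cD_{n})$, so after applying $\xi$ the singular residue $\partial_{\frakl}(\kappa_{\cD_{n},\xi}(\frakl))$ generates all of $\Hhat^{1}_{\sing}(K_{\infty,\frakl},T_{f,n})\otimes_{\xi}\cO_{\xi}$, because by \lmref{admi_coho} the latter is free of rank one over $\Lambda/\uf^{n}\Lambda$ and the residue is a unit multiple of its generator.

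Next I would run the contradiction argument. Suppose $s>0$, i.e. $\Sel_{\Delta}(K_{\infty},A_{f,n})\otimes_{\xi}\cO_{\xi}\neq 0$. Apply \thmref{admissible} to produce an $n$-admissible prime $\frakl$ such that the localization map $v_{\frakl}$ is injective on a chosen nonzero submodule of $\dual{\Sel_{\Delta}(K_{\infty},A_{f,n})}\otimes_{\xi}\cO_{\xi}$; equivalently, by \lmref{fin_orth} and \lmref{admi_coho}, the pairing induces a nondegenerate pairing between $\Hhat^{1}_{\sing}(K_{\infty,\frakl},T_{f,n})\otimes_{\xi}\cO_{\xi}$ and the relevant piece of the dual Selmer group. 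Now invoke \lmref{vanish}: the global reciprocity (sum of local invariants is zero) forces $\eta_{\frakl}(\partial_{\frakl}(\kappa_{\cD_{n},\xi}(\frakl)))=0$. But $\partial_{\frakl}(\kappa_{\cD_{n},\xi}(\frakl))$ is a generator of $\Hhat^{1}_{\sing}(K_{\infty,\frakl},T_{f,n})\otimes_{\xi}\cO_{\xi}$ by the previous paragraph, so $\eta_{\frakl}$ is identically zero on the whole singular cohomology; by the nondegeneracy from \thmref{admissible} this forces the chosen nonzero submodule of $\dual{\Sel_{\Delta}(K_{\infty},A_{f,n})}\otimes_{\xi}\cO_{\xi}$ to vanish, a contradiction. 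Hence $s=0$.

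The main obstacle is making sure the identification of the residue class with a \emph{generator} (not merely a nonzero element) of $\Hhat^{1}_{\sing}(K_{\infty,\frakl},T_{f,n})\otimes_{\xi}\cO_{\xi}$ is airtight: one needs $t=0$ to say $\ord_{\uxi}(\partial_{\frakl}(\kappa_{\cD_{n},\xi}(\frakl)))=0$, and then one must know the target module is cyclic (free of rank one) over $\cO_{\xi}/\uxi^{n}$ so that a divisibility-zero element is automatically a generator. This is exactly \lmref{admi_coho}(1) together with the freeness over $\Lambda/\uf^{n}$. A secondary technical point is that $\kappa_{\cD_{n}}(\frakl)$ is constructed as an element of $\widehat{\Sel}_{\Delta\frakl}(K_{\infty},T_{f,n})$, so its image in $\Sxi_{n}$ is the correct object to feed into $\eta_{\frakl}$ via $\res_{\frakl}$; one should check that $\res_{\frakl}$ of the class, being in the relaxed Selmer group at $\frakl$, projects under $\partial_{\frakl}$ to the singular quotient as asserted, which is immediate from \defref{Delta_selmer} and the construction. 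Once these compatibilities are in place the argument is a direct application of the first reciprocity law and global duality, with no further computation required.
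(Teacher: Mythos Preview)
Your proposal is correct and follows essentially the same approach as the paper: use the first reciprocity law together with $t=0$ to see that $\partial_{\frakl}(\kappa_{\cD_{n},\xi}(\frakl))$ generates the rank-one module $\Hhat^{1}_{\sing}(K_{\infty,\frakl},T_{f,n})\otimes_{\xi}\cO_{\xi}$, deduce via \lmref{vanish} that $\eta_{\frakl}\equiv 0$, and obtain a contradiction to $s>0$ by picking a nonzero $c\in\Sel_{\Delta}(K_{\infty},A_{f,n})[\frakm_{\Lambda}]=\Sel_{\Delta}(K,A_{f,1})$ and an $\frakl$ with $v_{\frakl}(c)\neq 0$ via \thmref{admissible}. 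Two minor remarks: the enlargement to an $n$-admissible set and the reference to \lmref{Poitou-Tate} are not needed here, and in the second paragraph $v_{\frakl}$ should be applied to a class in $\Sel_{\Delta}(K_{\infty},A_{f,n})[\frakm_{\Lambda}]$ (obtained via Nakayama from $s>0$), not to a submodule of the \emph{dual} Selmer group.
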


\begin{proof}
If $t=\ord_{\uxi} \xi(\theta_{\infty}(\Dnton))=0$(in this case $e_{\cD_{n+t}}(\frakl)=t=0$ by \eqref{e<t} and $\pkaDxil=\kaDntonxil$), then $\xi(\theta_{\infty}(\Dnton))$ is a unit and hence by the first reciprocity law, for any admissible prime $\frakl$, $\partial_{\frakl}(\kaDntonxil)$ spans the rank one space $\Hhat^{1}_{\sing}(K_{\infty,\frakl}, T_{f,n})\otimes_{\xi}\cO_{\xi}$. It follows from \lmref{vanish} that the map $\eta_{\frakl}$ is zero. Now if $s\ne0$,  $\dual{\Sel_{\Delta}(K_{\infty}, A_{f,n})}\otimes_{\xi}\cO_{\xi}$ is non-trivial and by Nakayama's lemma $$(\dual{\Sel_{\Delta}(K_{\infty}, A_{f,n})}\otimes_{\xi}\cO_{\xi})/\frakm_{\xi}=\dual{(\Sel_{\Delta}(K_{\infty}, A_{f,n})[\frakm_{\Lambda}])}\otimes_{\xi}\cO_{\xi}$$ is nontrivial. By \thmref{admissible} we can choose an admissible prime $\frakl$ and a nontrivial element $c\in \Sel_{\Delta}(K_{\infty}, A_{f,n})[\frakm_{\Lambda}]=\Sel_{\Delta}(K, A_{f,1})$  such that $v_{\frakl}(c)$ is non-zero.  It then follows that $\eta_{\frakl}(\partial_{\frakl}(\pkaDxil))(c)$ is non-trivial which is a contradiction to the previous lemma.
\end{proof}

Now we assume that $t>0$. Let $\Xi$ be the set of primes $\frakl$ such that  
\begin{enumerate}
\item{$\frakl$ is an $n+t_{0}$-admissible and $\frakl\nmid \Delta$;}
\item{the integer $e_{\cD_{n+t}}(\frakl)$ is minimal for the set of $\frakl$ in (1).}
\end{enumerate}
 
Let $e=e_{\cD_{n+t}}(\frakl)$ for any $\frakl$ satisfies the above conditions.

\begin{lm}
We have $e< t$.
\end{lm}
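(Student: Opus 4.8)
The plan is to prove the strict inequality $e < t$ by contradiction, exploiting the freeness of the relaxed Selmer group over the Iwasawa algebra together with the control theorem and a Chebotarev-type argument.

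\medskip

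First I would set up the contradiction: suppose that $e = t$ (recall $e \leq t$ is already known from \eqref{e<t}, so the only alternative is equality). Under this assumption, every $n+t_0$-admissible prime $\frakl \nmid \Delta$ satisfies $e_{\cD_{n+t}}(\frakl) = t$, i.e. $\ord_{\uf_\xi}(\kappa_{\cD_{n+t},\xi}(\frakl)) = t$ for all such $\frakl$. The key point is that this would force the specialized class $\kappa_{\cD_{n+t},\xi} := $ (image of $\kappa_{\cD_{n+t}}$ in $\widehat{\Sel}^{S}_\Delta(K_\infty,T_{f,n+t})\otimes_\xi \cO_\xi$, before choosing any $\frakl$) to itself be divisible by $\uf_\xi^t$: indeed, by \propref{free} (or rather \corref{control2}) the relaxed Selmer group $\widehat{\Sel}^{S}_\Delta(K_\infty, T_{f,n+t})$ is free over $\Lambda/\uf^{n+t}$, hence after applying $\otimes_\xi \cO_\xi$ it is free over $\cO_\xi/\uf_\xi^{n+t}$, and the divisibility index of a vector in a free module is the minimum of the divisibility indices of its coordinates with respect to a basis. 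The maps $v_\frakl$ for $\frakl$ ranging over all $n+t_0$-admissible primes not dividing $\Delta$ jointly detect these coordinates — this is exactly the content of \thmref{admissible} and \remref{enlarge}, which say any finite collection of admissible primes extends to an $n$-admissible set, so that $\Sel_\Delta(K,T_{f,n}) \to \oplus_{\frakl \in S} H^1_{\fin}(K_\frakl, T_{f,n})$ is injective. Thus if every $v_\frakl(\kappa_{\cD_{n+t},\xi})$ is divisible by $\uf_\xi^t$, so is $\kappa_{\cD_{n+t},\xi}$ itself.

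\medskip

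Next, writing $\kappa_{\cD_{n+t},\xi} = \uf_\xi^{t}\cdot \kappa'$ for some class $\kappa' \in \Sxi_{n+t}$, I would reduce modulo $\uf_\xi^n$ (using the control isomorphism $\Sxi_{n+t}/\uf_\xi \isomor \Sxi_n/\uf_\xi$ from \corref{control2}, as in \lmref{trivial}(1), to see that $\kappa'$ reduces to a class $\bar\kappa' \in \Sxi_n$ of divisibility index $0$). Applying $\partial_\frakl$ and invoking the first reciprocity law \thmref{1law}, we get $\ord_{\uf_\xi}(\partial_\frakl(\bar\kappa')) = \ord_{\uf_\xi}(\xi(\theta_\infty(\cD_{n+t}))) - t = t - t = 0$ for a suitably chosen $\frakl$; but then $\partial_\frakl(\bar\kappa')$ spans the rank-one module $\widehat{H}^1_{\sing}(K_{\infty,\frakl},T_{f,n})\otimes_\xi\cO_\xi$, so by \lmref{vanish} the map $\eta_\frakl$ is identically zero. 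Combined with the hypothesis $s = s_{\cD_n} > 0$ (the non-trivial case, cf. \eqref{trivial}) and Nakayama's lemma applied to $\dual{\Sel_\Delta(K_\infty,A_{f,n})}\otimes_\xi\cO_\xi$, \thmref{admissible} lets us pick an admissible $\frakl$ and a nonzero $c \in \Sel_\Delta(K,A_{f,1})$ with $v_\frakl(c) \neq 0$, whence $\eta_\frakl(\partial_\frakl(\bar\kappa'))(c) \neq 0$ — contradicting $\eta_\frakl \equiv 0$. This is essentially the same endgame as in the base case \lmref{Base-case}, now run one level down.

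\medskip

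The main obstacle I anticipate is the first step: making precise the claim that the divisibility index of the Euler system class $\kappa_{\cD_{n+t},\xi}$ is computed by the collection of its images $v_\frakl(\kappa_{\cD_{n+t},\xi})$. This requires that the $v_\frakl$, as $\frakl$ ranges over admissible primes, separate elements of $\widehat{\Sel}^{S}_\Delta(K_\infty, T_{f,n+t})$ divisibly — not merely that they are jointly injective, but that if $v_\frakl(x) \in \uf_\xi^t(\cdot)$ for all $\frakl$ then $x \in \uf_\xi^t(\cdot)$. This should follow by applying the injectivity statement of \thmref{admissible} to $A_{f,n+t}/\uf_\xi^t$ (equivalently, working with $T_{f,n+t}/\uf_\xi^t \cong T_{f,t'}$ for the appropriate truncation) rather than to $A_{f,1}$, together with the freeness in \propref{free}/\corref{control2} which guarantees that $\widehat{\Sel}^S_\Delta$ has no $\uf_\xi$-torsion phenomena obstructing the descent of divisibility. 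I would carry out this argument carefully using the free $\Lambda/\uf^{n+t}$-module structure, choosing a basis adapted to a single admissible prime realizing the minimal index $e$, and showing that $e = t$ forces the whole class into $\uf_\xi^t \widehat{\Sel}^S_\Delta$, which then propagates the contradiction downstream as above.
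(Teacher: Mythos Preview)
Your second paragraph is essentially the paper's argument and is correct, but your first and third paragraphs are built on a misconception: there is no single class ``$\kappa_{\cD_{n+t},\xi}$'' in the Selmer group \emph{before choosing any $\frakl$}. For each admissible prime $\frakl$, the class $\kappa_{\cD}(\frakl)$ is a \emph{separate} global cohomology class, constructed from the Kummer image of Heegner points on the Shimura curve $M^{[\frakl]}_n$, which itself depends on $\frakl$ through the level-raising isomorphism $T_p(J^{[\frakl]}_n)_{\cO_f}/\cI^{[\frakl]}_g \simeq T_{f,n}$. The definition $e_{\cD_{n+t}}(\frakl) = \ord_{\uf_\xi}(\kappa_{\cD_{n+t},\xi}(\frakl))$ already measures the divisibility of this global class in $\Sxi_{n+t}$; it is not the divisibility of a localization $v_\frakl$ of some ambient class. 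Consequently the ``obstacle'' you identify in the third paragraph is a non-problem, and the detour through detecting coordinates via the $v_\frakl$ is unnecessary.

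The direct argument --- yours once the confusion is cleared, and the paper's --- runs as follows. If $e = t$, then since $e$ is the minimum and every $e_{\cD_{n+t}}(\frakl) \leq t$ by \eqref{e<t}, in fact $e_{\cD_{n+t}}(\frakl) = t$ for \emph{every} $(n+t_0)$-admissible $\frakl \nmid \Delta$. Now first pick a nonzero $x \in \Sel_\Delta(K_\infty, A_{f,n})[\frakm_\Lambda] = \Sel_\Delta(K, A_{f,1})$ (using $s > 0$ and Nakayama), and then pick $\frakl$ with $v_\frakl(x) \neq 0$ via \thmref{admissible}. For this $\frakl$, \lmref{trivial}(2) gives $\ord_{\uf_\xi}(\partial_\frakl(\kappa'_{\cD_{n+t},\xi}(\frakl))) = t - t = 0$, so $\partial_\frakl(\kappa'_{\cD_{n+t},\xi}(\frakl))$ generates the rank-one module $\widehat{H}^1_{\sing}(K_{\infty,\frakl}, T_{f,n})\otimes_\xi \cO_\xi$; pairing with $v_\frakl(x) \neq 0$ then gives $\eta_\frakl(\partial_\frakl(\kappa'_{\cD_{n+t},\xi}(\frakl)))(x) \neq 0$, contradicting \lmref{vanish}. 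Note the order of choices --- $x$ first, then $\frakl$ --- which is slightly muddled in your exposition.
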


\begin{proof}
As we already know $e\leq t$, we assume $e=t$ and derive a contradiction. Since we assumed $s>0$, the Selmer group $\dual{\Sel_{\Delta}(K_{\infty}, A_{f,n})}\otimes_{\xi}\cO_{\xi}$ is non-trivial. We may use the same argument as in \lmref{Base-case} to conclude that there is a non-trivial class $x\in \Sel_{\Delta}(K_{\infty}, A_{f,n})[\frakm_{\Lambda}]\subset H^{1}(K, A_{f,1})$. Then by \thmref{admissible}, there are $n+t_{0}$ admissible primes $\frakl$ such that  $v_{\frakl}(x)\ne 0$. Now by the assumption $e=t$ and \lmref{trivial}, we have $\partial_{\frakl}(\pkaDxil)$ is indivisible and hence its image is non-zero in $H^{1}(K, A_{f,1})\otimes_{\xi}\cO_{\xi}$. But this is a contradiction to \lmref{vanish}.
\end{proof}

We choose $\frakl_{1}\in \Xi$ and enlarge it to an $(n+t_{0})$-adimissible set. Let $\kappa_{1}$ be the image of $\pkaDxi(\frakl_{1})$ under the map
$$\cSel^{S}_{\Delta}(K_{\infty}, T_{f,n})\otimes_{\xi}\cO_{\xi}/\uf_{\xi}\hookrightarrow H^{1}(K, T_{f,1})\otimes_{\xi}\cO_{\xi}.$$ 

By \lmref{trivial}(1), $\kappa_{1}$ is nontrivial viewing it as an element in $H^{1}(K, T_{f,1})\otimes_{\xi}\cO_{\xi}$. So by \thmref{admissible}, we can pick an $(n+t_{0})$-admissible prime $\frakl_{2}$ such that $v_{\frakl_{2}}(\kap_{1})\ne 0\in H^{1}(K_{\frakl_{2}}, T_{f,1})\otimes_{\xi}\cO_{\xi}$.

It follows from above choice that $$\oxi(v_{\frakl_{2}}(\kaDxi(\frakl_{1})))=\oxi(\kaDxi(\frakl_{1}))\leq\oxi(\kaDxi(\frakl_{2}))\leq\oxi(v_{\frakl_{1}}(\kaDxi(\frakl_{2})))$$ where the middle inequality is due to the fact that $\frakl_{1}$ is an element in $\Xi$.

From the second explicit reciprocity law, we can find an $n+t_{0}$-admissible form $\cD^{\prime\prime}_{n+t_{0}}=(\Delta\frakl_{1}\frakl_{2}, g_{n+t_{0}})$ such that

$$v_{\frakl_{2}}(\kap_{\cD_{n+t}}(\frakl_{1}))=v_{\frakl_{1}}(\kap_{\cD_{n+t}}(\frakl_{2}))=\theta_{\infty}(\cD^{\prime\prime}_{n+t})$$ where $\Dpp_{n+t}=\Dpp_{n+t_{0}} \pmod{\uf^{n+t}}$. This implies that the above sequence of inequalities are all in fact equalities and in particular that $e=e_{\cD_{n+t}}(\frakl_{1})=\oxi (\kaDxi(\frakl_{1}))= e_{\cD_{n+t}}(\frakl_{2})=\oxi (\kaDxi(\frakl_{2})).$ It then follows that $\frakl_{2}\in \Xi$. Let $\Dpp_{n}=(\Delta\frakl_{1}\frakl_{2}, g_{n+t_{0}}\pmod{\uf^{n}})$, then we have 
$$t_{\Dpp_{n}}=\ord_{\uxi}(\xi(\theta_{\infty}(\Dpp_{n})))=\oxi(v_{\frakl_{2}}(\kaDxi(\frakl_{1})))=\oxi(\kaDxi(\frakl_{1}))=e<t\leq t_{0}.$$
Then by the induction hypothesis applied to $\Dpp_{n}$, we have $s_{\Dpp_{n}}\leq 2 t_{\Dpp_{n}}$. To finish the proof, we prove the following inequality

$$s_{\cD_{n}}\leq s_{\Dpp_{n}}+2(t-t_{\Dpp_{n}}).$$

We denote by $S_{[\frakl_{1},\frakl_{2}]}$ the submodule of $\Sel_{\Delta}(K_{\infty}, A_{f,n})$ consisting of classes that are locally trivial at primes over $\frakl_{1}$ and $\frakl_{2}$. By definition we have the following exact sequence
\begin{equation}\label{e1}
\Hhat^{1}_{\sing}(K_{\infty, \frakl_{1}}, T_{f,n})\oplus\Hhat^{1}_{\sing}(K_{\infty,\frakl_{2}}, T_{f,n})\xrightarrow{P}\dual{\Sel_{\Delta}(K_{\infty}, A_{f,n})} \mapR \dual{S_{[\frakl_{1},\frakl_{2}]}}\mapR 0
\end{equation}
where $P$ is induced by $\<,\>_{\frakl_{1}}\oplus\<,\>_{\frakl_{2}}$. Similarly, we have
\begin{equation}\label{e2}
\Hhat^{1}_{\fin}(K_{\infty, \frakl_{1}}, T_{f,n})\oplus\Hhat^{1}_{\fin}(K_{\infty,\frakl_{2}}, T_{f,n})\xrightarrow{P} \dual{\Sel_{\Delta\frakl_{1}\frakl_{2}}(K_{\infty}, A_{f,n})} \mapR \dual{S_{[\frakl_{1},\frakl_{2}]}}\mapR 0.
\end{equation}

We tensor the first exact sequence by $\cO_{\xi}$ and we fix isomorphisms $$(\Hhat^{1}_{\sing}(K_{\infty, \frakl_{1}}, T_{f,n})\oplus\Hhat^{1}_{\sing}(K_{\infty,\frakl_{2}}, T_{f,n}))\otimes_{\xi}\cO_{\xi}\isomor \cO^{2}_{\xi}/\xi(\uf)^{n}$$ and $$(\Hhat^{1}_{\fin}(K_{\infty, \frakl_{1}}, T_{f,n})\oplus\Hhat^{1}_{\fin}(K_{\infty,\frakl_{2}}, T_{f,n}))\otimes_{\xi}\cO_{\xi}\isomor \cO^{2}_{\xi}/\xi(\uf)^{n} .$$ 

By \lmref{vanish}, we get from the first exact sequence above, 
\begin{equation}\label{e3}
\frac{\cO_{\xi}}{(\partial_{\frakl_{1}}(\pkaDxi(\frakl_{1})))}\oplus \frac{\cO_{\xi}}{(\partial_{\frakl_{2}}(\pkaDxi(\frakl_{2})))}\xrightarrow{P}\dual{\Sel_{\Delta}(K_{\infty}, A_{f,n})}\otimes_{\xi}\cO_{\xi} \rightarrow \dual{S_{[\frakl_{1},\frakl_{2}]}}\otimes_{\xi}\cO_{\xi}\rightarrow 0.
\end{equation}

\begin{lm} We have an isomorphism
$$\dual{\Sel_{\Delta\frakl_{1}\frakl_{2}}(K_{\infty}, A_{f,n})}\otimes_{\xi}\cO_{\xi} \isomor \dual{S_{[\frakl_{1},\frakl_{2}]}}\otimes_{\xi}\cO_{\xi}.$$
\end{lm}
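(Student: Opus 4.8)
The plan is to prove the (a priori stronger, but in fact equivalent) statement that $\Sel_{\Delta\frakl_{1}\frakl_{2}}(K_{\infty},A_{f,n})[\ker\xi]=S_{[\frakl_{1},\frakl_{2}]}[\ker\xi]$; applying the identity $\dual{M}\otimes_{\xi}\cO_{\xi}=\dual{M[\ker\xi]}$ (used already in \lmref{vanish}) then yields the isomorphism of the lemma, and in terms of \eqref{e2} it amounts to showing $P\otimes_{\xi}\cO_{\xi}=0$. One inclusion, $S_{[\frakl_{1},\frakl_{2}]}[\ker\xi]\subseteq\Sel_{\Delta\frakl_{1}\frakl_{2}}(K_{\infty},A_{f,n})[\ker\xi]$, is immediate because a class that is locally trivial at $\frakl_{1}$ and $\frakl_{2}$ is in particular ordinary there. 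For the reverse inclusion it suffices to show that every $x\in\Sel_{\Delta\frakl_{1}\frakl_{2}}(K_{\infty},A_{f,n})[\ker\xi]$ satisfies $\res_{\frakl_{1}}(x)=\res_{\frakl_{2}}(x)=0$, for then $\partial_{\frakl_{i}}(x)=0$ and $x$ meets every defining condition of $S_{[\frakl_{1},\frakl_{2}]}$.

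To kill $\res_{\frakl_{2}}(x)$ I would pair $x$ against the primitivized Euler system class $\pkaDxi(\frakl_{1})$, exactly as in the proof of \lmref{vanish}. By the properties recorded in \lmref{trivial} this class lies in $\widehat{\Sel}_{\Delta\frakl_{1}}(K_{\infty},T_{f,n})\otimes_{\xi}\cO_{\xi}$, i.e.\ it is $\Delta\frakl_{1}$-ordinary with trivial residue away from $\Delta\frakl_{1}$. Global class field theory gives $\sum_{\frakq}\<\res_{\frakq}(\pkaDxi(\frakl_{1})),\res_{\frakq}(x)\>_{\frakq}=0$; for $\frakq\nmid p\Delta\frakl_{1}\frakl_{2}$ both classes lie in the finite subspace and the term vanishes by \lmref{fin_orth}, while for $\frakq\mid p\Delta$ and for $\frakq=\frakl_{1}$ both classes are ordinary and the term vanishes by \propref{orthogonal}. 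Hence the only surviving term is at $\frakl_{2}$, so $\<v_{\frakl_{2}}(\pkaDxi(\frakl_{1})),\res_{\frakl_{2}}(x)\>_{\frakl_{2}}=0$. Now $\frakl_{2}$ was chosen, via \thmref{admissible}, precisely so that $v_{\frakl_{2}}(\kappa_{1})\ne 0$ in $H^{1}(K_{\frakl_{2}},T_{f,1})\otimes_{\xi}\cO_{\xi}$, where $\kappa_{1}$ is the reduction of $\pkaDxi(\frakl_{1})$ modulo $\uf_{\xi}$; combined with the control statement \corref{control2}, \lmref{admi_coho} and Nakayama's lemma, this forces $v_{\frakl_{2}}(\pkaDxi(\frakl_{1}))$ to be a generator of the free rank one $\cO_{\xi}/\uf_{\xi}^{n}$-module $\Hhat^{1}_{\fin}(K_{\infty,\frakl_{2}},T_{f,n})\otimes_{\xi}\cO_{\xi}$. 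Since that module pairs perfectly with $H^{1}_{\ord}(K_{\infty,\frakl_{2}},A_{f,n})\otimes_{\xi}\cO_{\xi}$ — the finite and ordinary orthogonalities of \lmref{fin_orth} and \propref{orthogonal} combining, at an $n$-admissible prime, into this perfect duality — pairing to zero against a generator forces $\res_{\frakl_{2}}(x)=0$.

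The symmetric step, killing $\res_{\frakl_{1}}(x)$, uses $\pkaDxi(\frakl_{2})\in\widehat{\Sel}_{\Delta\frakl_{2}}(K_{\infty},T_{f,n})\otimes_{\xi}\cO_{\xi}$ paired against $x$: the same orthogonality bookkeeping leaves only the $\frakl_{1}$-term, and one must check that $v_{\frakl_{1}}(\pkaDxi(\frakl_{2}))$ is again a generator of $\Hhat^{1}_{\fin}(K_{\infty,\frakl_{1}},T_{f,n})\otimes_{\xi}\cO_{\xi}$. This is where the second explicit reciprocity law and the choice $\frakl_{1},\frakl_{2}\in\Xi$ enter: one has $v_{\frakl_{1}}(\kaDxi(\frakl_{2}))=\theta_{\infty}(\cD'')$ with $\oxi(v_{\frakl_{1}}(\kaDxi(\frakl_{2})))=e=e_{\cD_{n+t}}(\frakl_{2})$, so dividing out $\uf_{\xi}^{e}$ — which is exactly the primitivization that defines $\pkaDxi(\frakl_{2})$ — turns it into a unit multiple of a basis vector (one checks this divisibility is preserved by reduction from $T_{f,n+t}$ to $T_{f,n}$). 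This generation claim for $v_{\frakl_{i}}(\pkaDxi(\frakl_{j}))$ with $i\ne j$ is the only substantive point; everything else is the orthogonality and duality bookkeeping already developed for the explicit reciprocity laws and the Euler system argument. Granting it, both restrictions of $x$ vanish, so $x\in S_{[\frakl_{1},\frakl_{2}]}$, and the lemma follows.
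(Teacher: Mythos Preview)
Your proof is correct and follows essentially the same route as the paper's: pair an arbitrary $x\in\Sel_{\Delta\frakl_{1}\frakl_{2}}(K_{\infty},A_{f,n})[\ker\xi]$ against $\pkaDxi(\frakl_{1})$ and $\pkaDxi(\frakl_{2})$, use the orthogonality bookkeeping to isolate the $\frakl_{2}$- (resp.\ $\frakl_{1}$-) term, and then invoke that $v_{\frakl_{2}}(\pkaDxi(\frakl_{1}))$ and $v_{\frakl_{1}}(\pkaDxi(\frakl_{2}))$ are units (the paper records this as $\oxi(v_{\frakl_{1}}(\pkaDxi(\frakl_{2})))=t_{\Dpp_{n+t}}-e=0$, exactly your second reciprocity argument). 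The paper phrases the conclusion as the map $P$ in \eqref{e2} factoring through $\cO_{\xi}/(v_{\frakl_{i}}(\pkaDxi(\frakl_{j})))=0$, which is just the dual of your statement that $\res_{\frakl_{i}}(x)=0$.
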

\begin{proof}
Let $s\in \Sel_{\Delta\frakl_{1}\frakl_{2}}(K_{\infty}, A_{f,n})[\ker(\xi)]$. We consider the sum $$\sum_{v}\<\res_{v}(\pkaDxi(\frakl_{1})), \res_{v}(s)\>_{v}=0.$$  For $v\nmid\frakl_{2}$, the summand $\<\res_{v}(\pkaDxi(\frakl_{1})), \res_{v}(s)\>_{v}=0$ and it follows that $$\<v_{\frakl_{2}}(\pkaDxi(\frakl_{1})), \res_{\frakl_{2}}(s)\>_{\frakl_{2}}=0.$$ The same argument can be used to show that $\<v_{\frakl_{1}}(\pkaDxi(\frakl_{2})), \res_{\frakl_{1}}(s)\>_{\frakl_{1}}=0$.  Hence \eqref{e2} factors through
$$\frac{\cO_{\xi}}{(v_{\frakl_{1}}(\pkaDxi(\frakl_{2})))}\oplus \frac{\cO_{\xi}}{(v_{\frakl_{2}}(\pkaDxi(\frakl_{1})))}\xrightarrow{P} \dual{\Sel_{\Delta\frakl_{1}\frakl_{2}}(K_{\infty}, A_{f,n})}\otimes_{\xi}\cO_{\xi} \rightarrow \dual{S_{[\frakl_{1},\frakl_{2}]}}\otimes_{\xi}\cO_{\xi}\rightarrow 0.$$
But $\oxi(v_{\frakl_{1}}(\pkaDxi(\frakl_{2})))=t_{\Dpp_{n+t}}-e=0 $ and the claim follows.
\end{proof}

Now \thmref{induction} follows by pluging in the isomorphism to the exact sequence \eqref{e3} and noticing that $\oxi(\partial_{\frakl_{1}}(\pkaDxi(\frakl_{1})))=\oxi(\partial_{\frakl_{2}}(\pkaDxi(\frakl_{2})))=t-e= t-t_{\Dpp_{n}}$.

\subsection{One divisibility of the Iwasawa main conjecture}
Now we deduce the one divisibility of the main conjecture from the above theorem. Let $\pi$ be the unitary automorphic representation associated to $f$. Following \cite{Chida_Hsieh}, Hung \cite{Hung:thesis} has defined $\theta_{\infty}(f)=\theta_{\infty}(f,1)\in \Lambda$. We briefly sketch its construction below and deduce our main result from it.

By the Jaquet-Langlands correspondence we can find an automorphic representation $\pi^{\prime}$ for the group $G=\Res_{F/\Q}(B^{\cross})$ and an normalized eigenform $f_{B}\in S_{k}(\widehat{R}^{\cross}_{\nplus}Y, \C)$ with the property that  $T_{v}f_{B}=\bfa_{v}(f)f_{B}$ for $v\nmid \frakn$ and $U_{v}f_{B}=\alpha_{v}(f)f_{B}$ for $v\mid \frakn$. Here we let $B$ be a quaternion algebra over $F$ with discriminant $\frakn^{-}$ and $\widehat{R}_{\nplus}$ be an Eichler order of level $\frakn^{+}$. We normalize  $f_{B}$ such that its $p$-adic avatar $\widehat{f_{B}}\neq 0(\mod \uf)$. 

For ${\bf{v}}\in L_{k}(\C)$ and $h_{B}\in S_{k}(\widehat{R}^{\cross}_{\frakn^{+}}Y, \C)$, we can define $\Psi({\bf{v}}\otimes h_{B}) \in \cA_{k}(\nplus, \C)$ in the space of scalar valued automorphic forms on $\widehat{B}^{\cross}$ of level $\frakn^{+}$. Here $\Psi({\bf{v}}\otimes h_{B})(g)= \<\rho_{k,\infty}(g_{\infty}){\bf{v}}, h_{B} \>_{k}$. 

Back to our setting, we set $\varphi_{B}=\Psi({\bf v}_{0}\otimes f_{B})\in \cA_{k}(\nplus, \C)[\pi^{\prime}]$ where ${\bf v}_{0}=X^{\frac{k-2}{2}}Y^{\frac{k-2}{2}}$.  And define the $\frakp$-stablization $\varphi_{B}^{\dagger}$ of  $\varphi_{B}$ with respect to $\alpha_{\frakp}$ as $$\varphi^{\dagger}_{B}=\varphi_{B}-\frac{1}{\alpha_{\frakp}}\pi^{\prime}(\pMX{1}{0}{0}{\uf_{\frakp}})\varphi_{B}.$$ The theta element is defined as
$$\Theta_{m}(f)= {\alpha_{\frakp}^{-m}}\sum_{a\in \Gal(H_{m}/K)} \varphi^{\dagger}_{B}(x_{m}(a))[a]\in \C[\Gal(H_{m}/K)].$$

By \cite[Lemma 5.4 (1)]{Hung:thesis}, one can in fact identify $\theta_{m}(f)$ as an element in $\cO_{f}[\Gal(H_{m}/K)]$. By the same computation as in \lmref{Normcom}, we verify $$\pi_{m+1,m}(\Theta_{m+1}(f))=\Theta_{m}(f)$$  for the transition map $\pi_{m+1, m}: \Gal(H_{m+1}/K)\rightarrow \Gal(H_{m}/K)$. We define $\theta_{m}(f)$ as the image of $\Theta_{m}(f)$ under the natural projection $\Gal(H_{m}/K)\rightarrow \Gal(K_{m}/K).$ In particular we obtain an element $\theta_{\infty}(f)\in \cO_{f}[[\Gal(H_{\infty}/K)]]$. We can project this element to $\cO_{f}[[\Gal(K_{\infty}/K)]]$. We define the $p$-adic $L$-function $L_{p}(K_{\infty}, f)$ by $L_{p}(K_{\infty}, f)=\theta_{\infty}(f)^{2}$  and we have the following interpolation formula proved by Hung \cite{Hung:thesis}.
\begin{thm}\label{interpolate}
For each finite character $\chi: \Gamma \mapR \C_{p}$ of conductor $\frakp^{m}$,
$$\chi({\theta^{2}_{\infty}(f)})=\Gamma(\frac{k}{2})^{2}u^{2}_{K}\frac{\sqrt{D_{K}}D^{k-1}_{K}}{D_{F}^{\frac{3}{2}}}\Nm{\frakp}^{m(k-1)}\chi(\frakN^{+})\epsilon_{\frakp}(f)\ap^{-m}e_{\frakp}(f,\chi)^{2}\frac{L(f/K,\chi, k/2)}{\Omega_{f,\frakn^{-}}}.$$
\end{thm}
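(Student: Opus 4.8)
The plan is to follow the strategy of Chida--Hsieh \cite{Chida_Hsieh} for the higher weight theta element, adapted to the totally real setting by Hung \cite{Hung:thesis}; the argument below is a sketch of that computation. By construction $\chi(\Theta_m(f))$ is, up to the normalizing factor $\ap^{-m}$, the $\chi$-twisted sum over $\Gal(H_m/K)$ of the values of the $\frakp$-stabilized quaternionic form $\varphi_B^{\dagger}$ at the Gross points $x_m(a)$ of conductor $\frakp^m$. The first step is a squaring identity: using the orthogonality of the characters of $\Gal(H_m/K)$ together with Shimura's reciprocity law \eqref{shimura}, the square $\chi(\Theta_m(f))^2$ unfolds, by a seesaw / Rankin--Selberg computation in the style of Waldspurger and Gross, into the global toric period
$$
\Big|\int_{K^{\times}\backslash\Khatcross/\widehat{F}^{\times}}\varphi_B^{\dagger}(t)\,\chi(t)\,dt\Big|^{2}
$$
times an elementary constant recording $u_K=[\cO^{\times}_{K}:\cO^{\times}_{F}]$ and the volume of the relevant open compact subgroup.

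The second step is to invoke an explicit form of Waldspurger's central value formula — the version used in \cite{Chida_Hsieh}, extended to $\GL_2$ over the totally real field $F$ in \cite{Hung:thesis} — which expresses the above toric period as a product of the central $L$-value $L(f/K,\chi,k/2)$ with a Petersson-type norm of $\varphi_B$ and a finite product of normalized local integrals. The Gross period $\Omega_{f,\frakn^{-}}$ of \cite[(5.2)]{Hung:thesis} is defined precisely so as to absorb the Petersson norm together with the local integrals at primes dividing $\nminus\nplus$; it then remains to evaluate the remaining local integrals. At the archimedean places the choice of the weight vector ${\bf v}_0=X^{\frac{k-2}{2}}Y^{\frac{k-2}{2}}$ produces the factor $\Gamma(k/2)^2$ together with the discriminant powers $\sqrt{D_K}D_K^{k-1}/D_F^{3/2}$. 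At $\frakp$ the $\frakp$-stabilization $\varphi_B^{\dagger}=\varphi_B-\alpha_{\frakp}^{-1}\pi^{\prime}(\pMX{1}{0}{0}{\uf_{\frakp}})\varphi_B$ interacts with the local level structure at the Gross point of conductor $\frakp^m$ to give $\Nm{\frakp}^{m(k-1)}\chi(\Nplus)\epsilon_{\frakp}(f)\ap^{-m}e_{\frakp}(f,\chi)^2$, the four cases in the definition of $e_{\frakp}(f,\chi)$ matching the four cases ($\chi$ ramified; $\chi$ unramified with $\frakp$ split, inert, or ramified in $K$) of the local calculation. Assembling these and passing to the limit over $m$ — legitimate by the norm compatibility already recorded in the proof of \lmref{Normcom}, together with the integrality statement \cite[Lemma 5.4 (1)]{Hung:thesis} comparing the $p$-adic and complex normalizations of $f_B$ — yields the asserted formula.

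The main obstacle is the explicit local computation at $\frakp$: one must track how $U_{\frakp}$ and the matrix $\zeta_{\frakp}^{(m)}$ entering the definition of the Gross point act on the local new vector, and verify that the $\frakp$-stabilization annihilates the non-unit Satake parameter so that only the unit root $\alpha_{\frakp}$ survives. This cancellation is what makes the sequence $\{\Theta_m(f)\}$ $\frakp$-adically interpolatable and is responsible for the appearance of the Euler-factor-type multiplier $e_{\frakp}(f,\chi)$; it is the heart of the argument and the place where the ordinary hypothesis (ORD) is used. A secondary technical point is to check that all the local integrals away from $\infty$ and $\frakp$ that are not absorbed into $\Omega_{f,\frakn^{-}}$ contribute only units, which uses that $\frakp\nmid\frakn D_{K/F}$ and the properties of the chosen CM generator $\bftheta$.
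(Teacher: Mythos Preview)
Your sketch is a faithful outline of the Chida--Hsieh/Waldspurger computation as carried out by Hung, and there is no mathematical gap in it as a strategy. However, note that the paper does \emph{not} give its own proof of this theorem: it is stated as ``the following interpolation formula proved by Hung \cite{Hung:thesis}'' and simply cited. So there is nothing to compare your argument against --- the paper treats \thmref{interpolate} as a black-box input from \cite{Hung:thesis}, and your proposal is essentially a summary of what that reference does.
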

Here we denote by
\begin{enumerate}
\item $D_{F}$ \resp $D_{K}$ is the absolute discriminant of  $F$ \resp $K$. 
\item We fix a decomposition $\frakn^{+}=\frakN^{+}{\overline{\frakN^{+}}}$.
\item $\epsilon_{\frakp}(f)$ is the local root number at $\frakp$. 
\item $u_{K}=[\cO^{\times}_{K}:\cO^{\times}_{F}]$.
\item $e_{\frakp}(f,\chi)$ is the $p$-adic multiplier defined as follows
 \[e_{\mathfrak{p}}(f,\chi)=\begin{cases}
1 & \hbox{if $\chi$ is ramified;}\\
(1-\alpha_{\mathfrak{p}}^{-1}\chi(\mathfrak{P}))(1-\alpha_{\mathfrak{p}}^{-1}\chi(\overline{\mathfrak{P}})) & \hbox{if $s=0$ and $\mathfrak{p}=\mathfrak{P\overline{P}}$ is split;}\\
1-\alpha_{\mathfrak{p}}^{-2} & \hbox{if $s=0$ and $\mathfrak{p}$ is inert;}\\
1-\alpha_{\mathfrak{p}}^{-1}\chi(\mathfrak{P}) & \hbox{if $s=0$\hbox{ and }$\mathfrak{p}=\mathfrak{P}^{2}$ is ramified.}
\end{cases}\]
\item $\Omega_{f,\frakn^{-}}$ is a complex period associated to $f$ known as the Gross period \cite[(5.2)]{Hung:thesis}.
\end{enumerate}

\begin{prop}\label{cong}
If $\Delta=\nminus$, there is an $n$-admissible form $\cD^{f}_{n}=(\nminus,f^{\dagger,[k-2]}_{n})$, such that  $$\theta_{m}(\cD^{f}_{n})\equiv \theta_{m}(f)\pmod{\uf^{n}}$$
and in particular that
$$\theta_{\infty}(\Dfn)\equiv \theta_{\infty}(f)\pmod{\uf^{n}}.$$
\end{prop}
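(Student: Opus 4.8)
The plan is to identify $f^{\dagger,[k-2]}_n$ with the weight-two reduction modulo $\uf^n$ of the $\frakp$-stabilized quaternionic form $f_B$, using precisely the Hida-theoretic comparison between weight $k$ and weight $2$ already exploited in the proof of \propref{psig}, and then to compare the two theta elements term by term at Gross points.

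First I would produce the admissible form. Recall that the $p$-adic avatar $\widehat{f_B}\in S^{B}_{k}(\widehat{R}^{\cross}_{\nplus}Y,\cO_{f})$ is nonzero modulo $\uf$ and is an eigenform with eigensystem $\lambda_{f}$ on $\Hecke_{B}(\nplus Y)$. Applying the isomorphisms from the proof of \propref{psig},
$$S^{B}_{2}(\openU Y,\cO_{f})_{\frakm}/(P_{k},\uf^{n})\isomor S^{B}_{k}(\widehat{R}^{\cross}_{\frakp\nplus}Y,\cO_{f})_{\frakm}/(\uf^{n})\isomor S^{B}_{k}(\widehat{R}^{\cross}_{\nplus}Y,\cO_{f})_{\frakm}/(\uf^{n}),$$
where the second isomorphism is $\frakp$-stabilization with respect to $\alpha_{\frakp}=\Satakepara_{\frakp}(f)$ and is an isomorphism by $\PO$, one defines $f^{\dagger,[k-2]}_{n}\in S^{B}_{2}(\openU Y,\cO_{f,n})$ to be the preimage of $\widehat{f_B}$. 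By construction it is $Y$-invariant, nonzero modulo $\uf$, and its Hecke eigensystem satisfies $\lambda_{f^{\dagger,[k-2]}_{n}}\equiv\lambda_{f}\pmod{\uf^{n}}$ once $U_{\frakp}$ is sent to the unit root $\alpha_{\frakp}$; hence $\Dfn=(\nminus,f^{\dagger,[k-2]}_{n})$ is an $n$-admissible form in the sense of \defref{admissible_form} (here $B$ has discriminant $\nminus$, the same algebra used to build $\theta_\infty(f)$).

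The heart of the matter is then to show that for every $m\ge n$ and every $a$, after normalizing $f^{\dagger,[k-2]}_{n}$ by a suitable $\uf$-adic unit,
$$\varphi^{\dagger}_{B}(x_{m}(a))\equiv f^{\dagger,[k-2]}_{n}(x_{m}(a))\pmod{\uf^{n}}.$$
Unwinding $\varphi_{B}=\Psi(\mathbf{v}_{0}\otimes f_{B})$ with $\mathbf{v}_{0}=X^{\frac{k-2}{2}}Y^{\frac{k-2}{2}}$, the value $\varphi^{\dagger}_{B}(x_{m}(a))$ is the pairing $\langle\rho_{k,\infty}(g_{\infty})\mathbf{v}_{0},\widehat{f_B}^{\dagger}(g)\rangle_{k}$ evaluated at the Gross point $g=a\zeta^{(m)}$. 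Because of the explicit shape of $\zeta^{(m)}_{\frakp}$ and the interaction of the polynomial action with the $U_{\frakp}$-ordinary projector, modulo $\uf^{n}$ only the component of $\widehat{f_B}^{\dagger}(g)$ along the monomial $\mathbf{v}_{0}$ survives, and the choice of $\bftheta$ (in particular that $\{1,\bftheta_{v}\}$ is an $\cO_{F,v}$-basis of $\cO_{K,v}$ at $v\mid\frakp\nplus$, so that $N(\bftheta),T(\bftheta)$ are $\cO_{F,v}$-integral with $\delta$ a unit there) forces this coefficient to be a $\uf$-adic unit times $f^{\dagger,[k-2]}_{n}(x_{m}(a))$. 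This is exactly the weight comparison at CM points carried out in \cite[\S5]{Hung:thesis}, adapting \cite{Chida_Hsieh} to the Hilbert setting, so in the write-up I would cite and quote that computation rather than redo it.

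Granting this congruence, multiplying by $\alpha_{\frakp}^{-m}=\Satakepara_{\frakp}(f)^{-m}$, summing over $[a]\in\Gal(H_{m}/K)$ and projecting to $\cO_{f,n}[\Gamma_{m}]$ gives $\theta_{m}(\Dfn)\equiv\theta_{m}(f)\pmod{\uf^{n}}$; passing to the inverse limit over $m$, using \lmref{Normcom} together with its established analogue for $\Theta_{m}(f)$ for compatibility with the transition maps, yields $\theta_{\infty}(\Dfn)\equiv\theta_{\infty}(f)\pmod{\uf^{n}}$. I expect the only genuine obstacle to be the middle step: making the abstract Hida comparison isomorphism compatible with evaluation at Gross points, i.e.\ controlling the polynomial coefficient $\langle\rho_{k,\infty}(\cdot)\mathbf{v}_{0},-\rangle_{k}$ at CM points modulo $\uf^{n}$ and verifying its unit reduction; the remaining steps are routine bookkeeping with the comparison isomorphisms and the norm-compatibility of Gross points already recorded above.
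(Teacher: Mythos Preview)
Your approach is correct in outline and cites the right source for the crucial CM-point computation, but it differs from the paper's in how $f^{\dagger,[k-2]}_{n}$ is defined. The paper does \emph{not} take the preimage of $\widehat{f_{B}}$ under the abstract Hida isomorphism; instead it sets
\[
f^{\dagger,[k-2]}_{n}(b)=\sqrt{\beta}^{\frac{2-k}{2}}\bigl\langle X^{k-2},\,\widehat{f^{\dagger}}_{B}(b)\bigr\rangle_{k}\pmod{\uf^{n}},
\]
extracting a single monomial coefficient (note: $X^{k-2}$, not $\mathbf v_{0}=X^{\frac{k-2}{2}}Y^{\frac{k-2}{2}}$) with an explicit $\sqrt{\beta}$ normalization coming from the optimal embedding $B=K\oplus KJ$, $J^{2}=\beta$. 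This concrete formula is exactly what Hung's Lemma~5.4 compares to $\varphi^{\dagger}_{B}(x_{m}(a))$, so the congruence of theta elements is immediate; the price is that coefficient extraction is not a priori injective, so nonvanishing modulo $\uf$ requires the separate input that $f^{\dagger,[k-2]}_{n}$ is non-Eisenstein (Hung, Lemma~6.11 and Corollary~6.8). Your route trades these: the abstract isomorphism gives nonvanishing for free, but --- as you correctly flag --- you must then argue that the isomorphism is compatible with evaluation at Gross points, which in practice unwinds to the same explicit coefficient computation the paper starts from. Either way works, but the paper's definition is the one directly matched to the cited lemma.
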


\begin{proof}
Let $B$ and $f_{B}$ be chosen as in the previous discussion. Let $$f^{\dagger,[k-2]}_{n}(b)= \sqrt{\beta}^{\frac{2-k}{2}}\<X^{k-2}, {\widehat{f^{\dagger}}}_{B}(b)\>_{k}\mod \uf^{n}.$$ Then  we have $f^{\dagger,[k-2]}_{n}\in S^{B}_{2}(\openU Y, \cO_{f,n})$. Moreover it is shown in \cite[Lemma 6.11] {Hung:thesis} that $f^{\dagger,[k-2]}_{n}$ is not Eisenstein. Hence we conclude by \cite[Corolarry 6.8]{Hung:thesis} that $f^{\dagger,[k-2]}_{n} (\mod \uf)$ is non-zero.  Finally we conclude with \cite[Lemma 5.4] {Hung:thesis} for the desired congruence relation. Thus we know that $f^{\dagger,[k-2]}_{n}$ is $n$-admissible.
\end{proof}

\begin{thm}\label{main_weak}
Assume the hyothesis $(\CR)$, $\nmin$, $\PO$ and "Ihara's lemma" hold. We have $\char_{\Lambda}\dual{\Sel(K_{\infty}, A_{f})}\supset (L_{p}(K_{\infty}, f))$.
\end{thm}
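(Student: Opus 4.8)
The plan is to deduce \thmref{main_weak} from the Euler-system bound \thmref{induction}, the congruence \propref{cong}, the characteristic-ideal criterion \lmref{criterion}, and the comparison of Selmer groups of \propref{pollack-weston}. Write $L_p(K_\infty, f) = \theta_\infty(f)^2$ and $M = \dual{\Sel_{\nminus}(K_\infty, A_f)}$, where $\Sel_{\nminus}(K_\infty, A_f) := \dirlim_n\Sel_{\nminus}(K_\infty, A_{f,n})$; by \propref{control} one has $\Sel_{\nminus}(K_\infty, A_{f,n}) = \Sel_{\nminus}(K_\infty, A_f)[\uf^n]$, so $M$ is a finitely generated $\Lambda$-module with $M/\uf^n M = \dual{\Sel_{\nminus}(K_\infty, A_{f,n})}$. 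By \lmref{criterion} it suffices to show, for every $\cO_f$-algebra map $\xi : \Lambda \to \cO_\xi$ into a characteristic-zero discrete valuation ring, that
\[
\length_{\cO_\xi}\bigl(M\otimes_\xi\cO_\xi\bigr)\ \leq\ \ord_{\uf_\xi}\bigl(\xi(L_p(K_\infty, f))\bigr)\ =\ 2\,\ord_{\uf_\xi}\bigl(\xi(\theta_\infty(f))\bigr),
\]
and then to transport the resulting divisibility from $\Sel_{\nminus}$ to the minimal Selmer group $\Sel$.

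If $\xi(\theta_\infty(f)) = 0$ the right-hand side is infinite and there is nothing to prove, so set $r := \ord_{\uf_\xi}(\xi(\theta_\infty(f))) < \infty$. For each $n > r$ I would apply \propref{cong} at levels $n$ and $n + r$ to obtain the (visibly compatible) admissible forms $\Dfn = (\nminus, f^{\dagger,[k-2]}_{n})$ and $\cD^f_{n+r} = (\nminus, f^{\dagger,[k-2]}_{n+r})$ with $\Dfn = \cD^f_{n+r}\pmod{\uf^n}$ and $\theta_\infty(\Dfn)\equiv\theta_\infty(f)\pmod{\uf^n}$; applying $\xi$ and using $r < n$ forces $t_{\Dfn} = r$. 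Taking $t_0 := r$, we have $t_0\leq n$ and $t_{\Dfn}\leq t_0$, so \thmref{induction} applies to $\cD^f_{n+r}$ and yields $\length_{\cO_\xi}\bigl((M/\uf^n M)\otimes_\xi\cO_\xi\bigr) = s_{\Dfn}\leq 2t_{\Dfn} = 2r$. Since this bound holds for all $n > r$ and $M$ is $\uf$-adically complete and finitely generated over $\Lambda$, letting $n\to\infty$ shows that $M\otimes_\xi\cO_\xi$ is $\cO_\xi$-torsion of length $\leq 2r$, which is the required inequality. \lmref{criterion} then gives $L_p(K_\infty, f)\in\char_\Lambda M = \char_\Lambda\dual{\Sel_{\nminus}(K_\infty, A_f)}$ (and $M$ is $\Lambda$-torsion, since $\theta_\infty(f)\neq 0$ by \cite[Theorem B]{Hung:thesis}).

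Finally I would invoke \propref{pollack-weston}: $\Sel(K_\infty, A_f)$ has finite index in $\Sel_{\nminus}(K_\infty, A_f)$, so $\dual{\Sel(K_\infty, A_f)}$ is a quotient of $M$ by a finite $\Lambda$-submodule; as $\Lambda$ is regular of Krull dimension $d_\frakp + 1\geq 2$, a finite $\Lambda$-module has trivial characteristic ideal, whence $\char_\Lambda\dual{\Sel(K_\infty, A_f)} = \char_\Lambda M\supset(L_p(K_\infty, f))$, which is \thmref{main_weak}. The substantive work is entirely contained in the already-established \thmref{induction}; the remaining difficulty in this deduction is the bookkeeping around the limit in $n$ — namely checking that the forms $\Dfn$ produced by \propref{cong} for varying $n$ do form a genuine compatible system, that $t_{\Dfn}$ computed through $\xi$ stabilizes to $r$ once $n > r$ (keeping track of the ramification of $\cO_f\to\cO_\xi$), and that $M$ is finitely generated over $\Lambda$ so that \lmref{criterion} is applicable.
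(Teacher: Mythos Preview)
Your proposal is correct and follows essentially the same approach as the paper: reduce to a $\xi$-wise length inequality via \lmref{criterion}, feed in the admissible forms from \propref{cong} and the bound from \thmref{induction}, then pass from $\Sel_{\nminus}$ to $\Sel$ via \propref{pollack-weston}. The only cosmetic differences are that the paper takes $t_0>\ord_{\uf_\xi}\xi(L_p(K_\infty,f))=2r$ (any $t_0\geq r$ works, so your choice $t_0=r$ is fine), leaves the limit in $n$ implicit, and uses the second clause of \propref{pollack-weston} (equality once cotorsion is known from Hung's nonvanishing) rather than your finite-index-plus-pseudo-nullity argument.
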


\begin{proof}
let $\xi: \Lambda\mapR \cO_{\xi}$ be an $\cO_{f}$-algebra homomorphism.  If $\xi(L_{p}(K_{\infty},f))$ is zero, then then it is in $\Fitt_{\cO_\xi}(\dual{\Sel(K_{\infty}, A_{f}))}\otimes_{\xi}\cO_{\xi}$. If $\xi(L_{p}(K_{\infty},f))$ is non-zero, then we can choose $t_{0}>\oxi(\xi(L_{p}(K_{\infty},f)))$. For each positive $n$, we have the $n+t_{0}$- admissible form $\cD^{f}_{n+t_{0}}=(\nminus, \f^{\dagger,[k-2]}_{n+t_{0}})$ as in \propref{cong} and the $n$-admissible form $\cD^{f}_{n}=\cD^{f}_{n+t_{0}}\pmod{\uf^{n}}$. By applying \thmref{induction}, we have $\xi(L_{p}(K_{\infty},f))=\xi(\theta_{\infty}(\cD^{f}_{n})^{2})\in \Fitt_{\cO_{\xi}}(\dual{\Sel_{\nminus}(K_{\infty}, A_{f,n})}\otimes_{\xi}\cO_{\xi})$ for all $\xi$ and $n$. Then the criterion \lmref{criterion} shows that $L_{p}(K_{\infty}, f)\in \Fitt_{\Lambda}\dual{\Sel_{\nminus}(K_{\infty}, A_{f})}$. By \cite[Theorem B]{Hung:thesis}, $L_{p}(K_{\infty}, f)$ is non-zero and hence  $\Sel_{\nminus}(K_{\infty}, A_{f})$  is $\Lambda$-cotorsion. The theorem then follows from \propref{pollack-weston} which identifies $\Sel_{\nminus}(K_{\infty}, A_{f})$ with $\Sel(K_{\infty}, A_{f})$.
\end{proof}

\section{Some arithmetic consequences}
Let $L$ be a number field and $V$ be a representation of the Galois group $G_{L}$ over some $p$-adic field $E$. Recall that the Bloch-Kato Selmer group $\Sel_{f}(L, V)$ is defined by
$$\Sel_{f}(L, V)=\ker\{H^{1}(L,V)\rightarrow \prod_{v}H^{1}(L_{v}, V)/H^{1}_{f}(L_{v}, V)\}$$
where
\begin{equation}\label{BK}
\begin{aligned} 
& H^{1}_{f}(L_{v},V)=H^{1}_{\fin}(L_{v}, V) \text{ when $v\nmid p$},\\  
& H^{1}_{f}(L_{v}, V)=\ker\{H^{1}(L_{v}, V)\rightarrow H^{1}(L_{v}, V\otimes_{\Q_{p}} B_{\rm cris})\} \text{ for $v\mid p$}.\\
\end{aligned}
\end{equation}

The Bloch-Kato conjecture relates the rank of the Selmer group and the order of vanishing of the $L$-function attached to the Galois representation. In this section we prove certain cases of the refined type Bloch-Kato conjecture in the rank zero case.

\subsection{Bloch-Kato type conjecture and parity conjecture}
In this chapter, we denote by $H_{\frakc}$ the ring class field of K with conductor $\frakc$. Let $\chi$ be a finite order character of $\Gal(H_{\frakc}/K)$, we consider the twisted Bloch-Kato Selmer group  $\Sel_{f}(K, V_{f}\otimes \chi)$. We have a canonical isomorphism 
$\Sel_{f}(K, V_{f}\otimes \chi)\isomor \Sel_{f}(H_{\frakc}, V_{f})^{\chi^{-1}}$
where the righthand side is the $\chi^{-1}$-isotypic component of $\Sel_{f}(K, V_{f})$ for the action of $\Gal(H_{\frakc}/K)$. The complex conjugation of $\Gal(H_{\frakc}/K)$ induces an isomorphism between $$\Sel_{f}(H_{\frakc}, V_{f})^{\chi^{-1}} \text{ and } \Sel_{f}(H_{\frakc}, V_{f})^{\chi}.$$ The Bloch-Kato conjecture relates the order of the $L$-function $L(f/K,\chi, k/2)$ to the rank of these Selmer groups, more precisely we expect:
$$\ord_{s=k/2} L(f/K,\chi, s)= \rk \Sel_{f}(K, V_{f,\chi}).$$ 
In this section we provide some results in this direction. In particular we prove the so called parity conjecture in this setup.

First we provide the following proposition which relates the divisibility index of the theta element to the size of the Selmer group studied in this paper. We remark that the proof of this result only depends on the first reciprocity law in \thmref{1law} and in particular we don't have to assume "Ihara's lemma". Let $\chi$ be a finite order character of $\Gal(K_{m}/K)$ and let $\uf_{\chi}$ be the uniformizer for the ring generated over $\cO_{f}$ by the values of $\chi$.

\begin{prop}\label{finite_sel}
Let $c=\ord_{\uf_{\chi}}(\chi(\theta_{m}(f)))$. Then $\uf_{\chi}^{c}\Sel_{\nminus}(K_{m}, A_{f,n})^{\chi}=0$ for every large $n$.
\end{prop}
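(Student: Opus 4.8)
```latex
\textbf{Plan of proof.} The strategy is to run the first reciprocity law together with the Poitou--Tate duality machinery at finite level, specializing everything along the character $\chi$. Concretely, fix $n$ large and consider the admissible form $\cD^{f}_{n}=(\nminus, f^{\dagger,[k-2]}_{n})$ of \propref{cong}, so that $\theta_{m}(\cD^{f}_{n})\equiv \theta_{m}(f)\pmod{\uf^{n}}$ and hence $\ord_{\uf_{\chi}}(\chi(\theta_{m}(\cD^{f}_{n})))=c$ once $n>c$. Choose an $n$-admissible prime $\frakl$ according to \thmref{admissible} applied to a nonzero class in $\Sel_{\nminus}(K_{m}, A_{f,n})^{\chi}$; the point of that theorem is that $\partial_{\frakl}$ vanishes on the class we picked while the finite localization $v_{\frakl}$ is injective on the $\cO_{f,n}$-line it generates. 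The Euler system class $\kappa_{\cD^{f}_{n}}(\frakl)_{m}\in \widehat{\Sel}_{\nminus\frakl}(K_{m}, T_{f,n})$ then has $\partial_{\frakl}(\kappa_{\cD^{f}_{n}}(\frakl)_{m})=\theta_{m}(\cD^{f}_{n})$ by \thmref{1law}, and projecting to the $\chi$-component gives a singular class whose divisibility index is exactly $c$.

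\textbf{Main steps.} First I would set up the global reciprocity (sum of local invariants $=0$) pairing the $\chi^{-1}$-projected class $\kappa_{\cD^{f}_{n},\chi}(\frakl)_{m}$ against an arbitrary element $x\in\Sel_{\nminus}(K_{m}, A_{f,n})^{\chi}$. Since $\kappa_{\cD^{f}_{n}}(\frakl)_{m}$ lies in $\Sel_{\nminus\frakl}$, its localizations are unramified (or ordinary) away from $\frakl$, and $x$ is in the $\nminus$-Selmer group, so all terms but the one at $\frakl$ vanish; hence $\langle \partial_{\frakl}(\kappa_{\cD^{f}_{n},\chi}(\frakl)_{m}), v_{\frakl}(x)\rangle_{\frakl}=0$. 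Next, because $\partial_{\frakl}(\kappa_{\cD^{f}_{n},\chi}(\frakl)_{m})$ equals (the $\chi$-component of) $\theta_{m}(f)$ under the identification $\widehat H^{1}_{\sing}(K_{m,\frakl},T_{f,n})\otimes_{\chi}\cong \cO_{f,n}[\chi]$ of \lmref{admi_coho}, which is free of rank one, the annihilator of this element is generated by $\uf_{\chi}^{c}$; local Tate duality \lmref{fin_orth} then identifies the singular quotient with the dual of $H^{1}_{\fin}(K_{m,\frakl},A_{f,n})$. Feeding this back, the pairing relation forces $\uf_{\chi}^{c}v_{\frakl}(x)=0$ in $H^{1}_{\fin}(K_{m,\frakl}, A_{f,n})^{\chi}$. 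Finally, invoking the injectivity of $v_{\frakl}$ on $\langle x\rangle$ from \thmref{admissible}, I conclude $\uf_{\chi}^{c}x=0$; since $x$ was arbitrary this gives $\uf_{\chi}^{c}\Sel_{\nminus}(K_{m}, A_{f,n})^{\chi}=0$. One should run this over a set of admissible primes and take $n$ large enough that none of the finitely many relevant indices exceed $n$, which is harmless since $\Sel_{\nminus}(K_{m}, A_{f,n})^{\chi}$ is finite for each $n$.

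\textbf{Expected obstacle.} The routine parts are the vanishing of the off-$\frakl$ terms in the global reciprocity sum and the duality bookkeeping. The delicate point is making the identification $\partial_{\frakl}(\kappa_{\cD^{f}_{n}}(\frakl)_{m})\leftrightarrow\theta_{m}(\cD^{f}_{n})\leftrightarrow\theta_{m}(f)$ precise at \emph{finite} level $K_{m}$ and after $\chi$-projection, including tracking the effect of passing from $\cO_{f,n}$-coefficients to $\cO_{f}[\chi]$-coefficients and controlling the divisibility index under this base change; I would handle this by combining \thmref{1law} (the finite-layer statement), \propref{cong}, and the freeness in \lmref{admi_coho}, being careful that $\uf_{\chi}$ and $\uf$ generate the same ideal up to units when $\chi$ is unramified but may differ otherwise, so that $\ord_{\uf_{\chi}}$ is the correct normalization. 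The other mild subtlety is ensuring the chosen $\frakl$ simultaneously kills $\partial_{\frakl}$ on $x$ and detects $x$ via $v_{\frakl}$; this is exactly the content of \thmref{admissible}, so no new input is needed beyond a careful application.
```
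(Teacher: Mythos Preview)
Your proposal is correct and follows essentially the same approach as the paper's own proof: both use the admissible form $\cD^{f}_{n}$ from \propref{cong}, apply the first reciprocity law \thmref{1law} to identify $\partial_{\frakl}(\kappa_{\cD^{f}_{n}}(\frakl)_{m})$ with $\theta_{m}(f)\pmod{\uf^{n}}$, choose $\frakl$ via \thmref{admissible} to detect a given Selmer class, run the global reciprocity sum $\sum_{v}\langle\res_{v}(\kappa),\res_{v}(s)\rangle=0$ to isolate the $\frakl$-term, and conclude from the nondegeneracy of the local Tate pairing. Your write-up is in fact more careful than the paper about the $\chi$-projection and the passage from $\cO_{f,n}$ to $\cO_{f}[\chi]$-coefficients, which the paper leaves implicit.
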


\begin{proof}
For every integer $n>c$, we can find an admissible form $\cD^{f}_{n}=(\nminus, f^{\dagger,[k-2]}_{n})$, such that 
$$\theta_{m}(\Dfn)\equiv \theta_{m}(f)\pmod{\uf^{n}}.$$
By the first reciprocity law and \thmref{admissible}, we therefore can find an admissible prime $\frakl$ such that $\partial_{\frakl}(\kappa_{\cD^{f}_{n}}(\frakl))\equiv \theta_{m}(f)\pmod{\uf_{\frakp}^{n}}$. Let $s\in \Sel_{\frakn^{-}}(K_{m}, A_{f,n})^{\chi}$, we can also choose the admissible prime $\frakl$ such that $\res_{\frakl}(s)$ is non-zero using \thmref{admissible}. Then by considering the sum $$\sum_{v}\<\res_{v}(\kappa_{\cD^{f}_{n}}(\frakl)), \res_{v}(s)\>=0,$$ we conclude that $$\<\partial_{\frakl}(\kappa_{\cD^{f}_{n}}(\frakl)), \res_{\frakl}(s)\>=0$$ using the same type of argument as in \lmref{vanish}. Hence $\uf_{\chi}^{c}s=0$ by the non-degeneracy of the local Tate pairing. Therefore we see that $\uf_{\chi}^{c}\Sel_{\nminus}(K_{m}, A_{f,n})^{\chi}=0$. 
\end{proof}

Now an immediate corollary  from the above theorem is the proof of the following rank zero case of the refined Bloch-Kato conjecture.

\begin{thm}
Suppose that $L(f,\chi,k/2)\neq 0$. Then $\Sel_{f}(K_{m}, V_{f})^{\chi}=0$. 
\end{thm}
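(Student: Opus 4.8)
The plan is to deduce this theorem from \propref{finite_sel} by a standard limiting argument connecting the Bloch–Kato Selmer group over $E_f$ with the discrete Selmer groups over $\cO_{f,n}$. First I would recall that, since $L(f/K,\chi,k/2)\neq 0$, the interpolation formula \thmref{interpolate} (more precisely its finite-level incarnation: the relation $\chi(\theta_m(f)^2)$ being a nonzero multiple of $L(f/K,\chi,k/2)/\Omega_{f,\frakn^-}$, together with the nonvanishing of the $p$-adic multiplier $e_{\frakp}(f,\chi)$ and the Gamma factor) shows that $\chi(\theta_m(f))\neq 0$. Hence the integer $c=\ord_{\uf_\chi}(\chi(\theta_m(f)))$ is finite.

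Next I would invoke \propref{finite_sel}: with this finite $c$, we have $\uf_\chi^{c}\,\Sel_{\nminus}(K_m,A_{f,n})^{\chi}=0$ for every sufficiently large $n$, i.e.\ these $\chi$-components are killed by a fixed power of $\uf_\chi$ independent of $n$. The key point is then to pass from the mod-$\uf^n$ Selmer groups $\Sel_{\nminus}(K_m,A_{f,n})$ to $\Sel_f(K_m,V_f)^\chi$ and to the Bloch–Kato Selmer group. Taking the direct limit over $n$ one gets that $\Sel_{\nminus}(K_m,A_f)^{\chi}[\uf_\chi^\infty]$ is finite (killed by $\uf_\chi^c$), hence $\Sel_{\nminus}(K_m,A_f)^\chi$ is a finite group; by the divisibility of $A_f=V_f/T_f$ together with the control-type comparison \propref{control} (part (2), $\Sel^S_\Delta(L,A_{f,n})=\Sel^S_\Delta(L,A_f)[\uf^n]$) this forces $\Sel_{\nminus}(K_m,A_f)^{\chi}$ to be trivial, and consequently $\Sel_f(K_m,V_f)^{\chi}=0$ since the corank of the Bloch–Kato Selmer group of $V_f$ over $K_m$ (in the $\chi$-part) is computed by the $\cO_f$-corank of $\Sel_{\nminus}(K_m,A_f)^\chi$, the minimal/ordinary Selmer condition at $p$ agreeing with the Bloch–Kato condition in the ordinary case after removing the bounded-denominator discrepancy. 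One must also check that the auxiliary relaxation at $\frakn^-$ and the difference between the minimal Selmer group $\Sel$ and $\Sel_{\nminus}$ only changes things by a finite (indeed bounded) amount, which is precisely \propref{pollack-weston}.

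The main obstacle I expect is the bookkeeping in the last comparison: reconciling the ordinary, $\nminus$-relaxed Selmer group $\Sel_{\nminus}(K_m,A_{f,n})$ used throughout the Euler system argument with the genuine Bloch–Kato Selmer group $\Sel_f(K_m,V_f)$ of the self-dual twist, including the behaviour at primes dividing $\frakn^{-}$ and at $p$ (where one needs that $H^1_{\ord}$ matches $H^1_f$ for the ordinary representation, using \lmref{no-p-invariant} to see $H^1_{\ord}(L_v,-)=H^1(L_v,F^+_vA_{f,n})$), and keeping track of the $\chi$-isotypic decomposition under $\Gal(H_\frakc/K)$ as in the canonical isomorphisms recalled just before the statement. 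All of these are of the type already handled in the text, so the argument is essentially a matter of assembling \propref{finite_sel}, \propref{control}, \propref{pollack-weston}, and \thmref{interpolate}; no new ideas beyond the first reciprocity law are needed, which is why the hypothesis on Ihara's lemma can be dropped here.
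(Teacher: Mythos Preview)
Your approach is essentially the same as the paper's: use the interpolation formula to see $c=\ord_{\uf_\chi}(\chi(\theta_m(f)))<\infty$, invoke \propref{finite_sel} to get a uniform bound on $\Sel_{\nminus}(K_m,A_{f,n})^{\chi}$, deduce that $\Sel_{\nminus}(K_m,A_f)^{\chi}$ is finite, and conclude $\Sel_f(K_m,V_f)^{\chi}=0$.

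Two minor comments. First, you overcomplicate the final comparison: the paper simply observes $\Sel_f(K_m,V_f)^{\chi}\subset\Sel_{\nminus}(K_m,V_f)^{\chi}$, and the right-hand side vanishes because $\Sel_{\nminus}(K_m,A_f)^{\chi}$ is finite (a vector-space Selmer group with finite image in the discrete one is zero). Neither \propref{pollack-weston} nor the full force of \propref{control} is needed; in particular you should avoid invoking \propref{control}(2), which in the paper is stated under the stronger hypothesis $\nmin$, while the present theorem is claimed under $\nimpr$. The passage from ``killed by $\uf_\chi^c$ for all large $n$'' to ``$\Sel_{\nminus}(K_m,A_f)^{\chi}$ finite'' only needs the injectivity $H^1(K_m,A_{f,n})\hookrightarrow H^1(K_m,A_f)[\uf^n]$ coming from $A_f^{G_{K_m}}=0$ (assumption $(\CR)(1)$), not the surjectivity part of the control theorem. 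Second, your claim that $\Sel_{\nminus}(K_m,A_f)^{\chi}$ is \emph{trivial} is stronger than what follows and stronger than what you need; finiteness suffices.
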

\begin{proof}
By the interpolation formula \thmref{interpolate} and \propref{finite_sel}, we see that $\Sel_{\nminus}(K_{ m}, A_{f,n})^{\chi}$ is bounded independent of $n$. Therefore it follows that $\Sel_{\nminus}(K_{ m}, A_{f})^{\chi}$ is finite. Since $\Sel_{f}(K_{m}, V_{f})^{\chi}\subset \Sel_{\nminus}(K_{m}, V_{f})^{\chi}$, we obtain $\Sel_{f}(K_{m}, V_{f})^{\chi}=0$.
\end{proof}

\subsection{The parity conjecture}

From the Bloch-Kato type theorem proved in the last section, one can deduce a version of the parity conjecture for twists of $V_{f}$. The result stated here is already proved by Nekovar \cite{Nekovar_Grow} and we give another proof based on the general treatment of the parity conjecture in \cite{Nekovar_parity3}. A similar treatment of the parity conjecture in a slightly different situation is proved in \cite[Theorem 6.4]{Castella_Hsieh}, we follow their method closely. We let $\chi$ be a finite order character of $\Gamma$ of conductor $\frakp^{m}$ as above. Let $V_{f,\chi}=V_{f}\otimes \chi$. We have an isomorphism $\Sel_{f}(K, V_{f,\chi})\cong \Sel_{f}(K_{m}, V_{f})^{\chi^{-1}}$. The parity conjecture in our context asserts the following:

\begin{thm}
$$\text{ord}_{s=k/2} L(f/K,\chi, s)\equiv \text{rank } \Sel_{f}(K, V_{f,\chi})\equiv 0\pmod{2}.$$
\end{thm}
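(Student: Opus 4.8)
The plan is to deduce the parity statement from the rank-zero Bloch--Kato result of the preceding subsection by deforming the character $\chi$ in a $p$-adic family, following the strategy of Nekov\'a\v{r} \cite{Nekovar_parity3} as adapted in \cite[Theorem 6.4]{Castella_Hsieh}. The starting point is the self-duality of $V_{f,\chi}$ (recall $V_f$ is the self-dual twist, and $\chi$ is anticyclotomic so $\chi^{-1}=\chi^c$), which makes the sign of the functional equation of $L(f/K,\chi,s)$ at $s=k/2$ well-defined; under the running hypothesis \eqref{ST} this sign is $+1$, so $\mathrm{ord}_{s=k/2}L(f/K,\chi,s)$ is even. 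Thus the analytic side of the parity statement is automatic, and the content is to show that $\dim_{E}\Sel_f(K,V_{f,\chi})$ is also even.

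First I would set up the appropriate Iwasawa-theoretic package: view $\chi$ as a point of the rigid space of finite-order characters of $\Gamma$, and consider the ``big'' Galois representation $V_f\otimes\Lambda^{\vee}$ (or its twist by the tautological character) over the Iwasawa algebra $\Lambda=\cO_f[[\Gamma]]$, together with the associated Selmer complex in the sense of Nekov\'a\v{r} \cite{Nekovar:SelmerComplexes}. The key input is that Nekov\'a\v{r}'s theory produces a skew-Hermitian (or $\varepsilon$-Hermitian) pairing on this Selmer complex, which forces the generic rank of the Selmer group over $\Lambda$ to have a fixed parity, and — crucially — shows that the parity of $\dim_E\Sel_f(K,V_{f,\chi'})$ is \emph{independent} of the finite-order character $\chi'$ of $\Gamma$ (jumps in rank happen in even increments). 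Concretely, one compares the specialization at $\chi$ with the specialization at the trivial-along-one-direction character, or at a well-chosen $\chi_0$, using the control/comparison between the Bloch--Kato Selmer group $\Sel_f(K,V_{f,\chi'})$ and the minimal Selmer group $\Sel(K_\infty,A_f)$ studied in this paper; the places above $p$ require the ordinary filtration $F^+_\frakp$ and the hypothesis $(\PO)$ to guarantee that the local conditions match the Bloch--Kato ones, exactly as in Section~2.

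Next, invoking the Bloch--Kato theorem just proved: by Hung's non-vanishing result \cite[Theorem B]{Hung:thesis} together with the interpolation formula \thmref{interpolate}, there exists at least one finite-order anticyclotomic character $\chi_0$ of $\Gamma$ with $L(f/K,\chi_0,k/2)\neq 0$, and for that $\chi_0$ the theorem of the previous subsection gives $\Sel_f(K,V_{f,\chi_0})=0$, which has rank $0$, evidently even. The parity-invariance from Nekov\'a\v{r}'s machinery then transports this to arbitrary $\chi$: $\dim_E\Sel_f(K,V_{f,\chi})\equiv 0\pmod 2$. Combining with the even order of vanishing of $L(f/K,\chi,s)$ at $s=k/2$ established above yields the congruence
$$\mathrm{ord}_{s=k/2}L(f/K,\chi,s)\equiv \mathrm{rank}\,\Sel_f(K,V_{f,\chi})\equiv 0\pmod 2,$$
which is the assertion.

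The main obstacle will be the bookkeeping at the bad and archimedean places in Nekov\'a\v{r}'s parity formula: one must check that the local terms (Euler factors, the contribution of the places dividing $\frakn$ and $\frakp$, and the archimedean/complex places of $K$) contribute a contribution of fixed parity as $\chi$ varies over finite-order characters, so that only the global Selmer rank carries the variation. This is where the hypotheses $(\CR)$, $\nimpr$ and $(\PO)$ enter — they ensure the residual representation is suitably irreducible and that the local conditions at $\frakn^+$, $\frakn^-$ and $p$ are ``nice'' in Nekov\'a\v{r}'s sense (the big Selmer complex is self-dual and its determinant has the expected parity). I expect the verification that our local conditions agree with Nekov\'a\v{r}'s, together with the identification of the relevant sign with the global root number $+1$ coming from \eqref{ST}, to be the technical heart; the rest is a formal application of \cite{Nekovar_parity3} and the already-established rank-zero case.
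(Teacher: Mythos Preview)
Your proposal is correct and follows essentially the same route as the paper. The paper sets up the family via the universal deformation $\chi^{\mathrm{univ}}:G_K\to\Lambda^\times$ of $\chi$ (so specializations are $\chi\phi$ for $\phi$ a character of $\Gamma$), verifies that the pair $(\cT,\cT_p^+)$ satisfies Nekov\'a\v{r}'s conditions \cite[5.12.(1)--(9)]{Nekovar_parity3}, chooses $\phi$ sufficiently wildly ramified so that $\chi\phi(\theta_\infty(f))\neq 0$, applies the rank-zero Bloch--Kato result at $\chi\phi$, and then invokes \cite[5.3.1]{Nekovar_parity3} to transport the parity from $\phi$ to the trivial specialization; this is exactly your argument, with the only cosmetic difference being that you center the family at the trivial character rather than at $\chi$.
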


\begin{proof}
Recall that $\Lambda$ is the Iwasawa algebra for $\Gamma$ over $\cO_{f}$. Let $\chi^{uni}:G_{K}\rightarrow \Lambda^{\cross}$ be the universal deformation of the character $\chi$. Denote by $\cT$ the $G_{F}$-module $T\otimes \text{Ind}_{K}^{F}\chi^{univ}$ and let $\cV=\cT\otimes E_{f}$. We also put $\cT^{+}_{p}=F^{+}T\otimes \Lambda$. Then the pair $(\cT, \cT^{+}_{p})$ and its specializations satisfies the condition listed in \cite[5.12.(1)-(9)]{Nekovar_parity3}. Let $\phi$ be a finite order character of $\Gamma$ sufficiently wildly ramified such that $\chi\phi(\theta_{\infty}(f))^{2}\neq 0$. Then by \thmref{finite_sel}, we can conclude that $\text{rank } \Sel_{f}(K, V_{f,\chi\phi})=0$.  We put $\cV_{\phi}$ as the specialization of $\cV$ at $\phi$. By \cite[5.3.1]{Nekovar_parity3}, we have $\text{rank } \Sel_{f}(F, \cV_{\phi})\equiv \text{rank }\Sel_{f}(F,\cV_{triv})\pmod{2}$ where $\cV_{triv}$ is the specialization of $\cV$ at the trivial character. We also know the sign of the associated Weil-Deligne representation $\epsilon(\cV_{\phi^{\prime}})$ is independent of the specialization $\phi^{\prime}$ and is equal to $1$. Notice that we have $\Sel_{f}(F, \cV_{\phi^{\prime}})\cong \Sel_{f}(K, V_{f,\chi\phi^{\prime}})$ for any specialization $\phi^{\prime}$ and we conclude by letting $\phi^{\prime}$ be $\phi$ and be the trivial character respectively. This shows
$$
\begin{aligned}
&\text{rank }\Sel_{f}(K, V_{f,\chi})\equiv \rk \Sel_{f}(K, V_{f,\chi\phi})\equiv 0\pmod{2}\\
\end{aligned}
$$
which finishes the proof.
\end{proof}

\section{Multiplicity one for quaternion alegbra}
The purpose of this section is to prove a multiplicity one result for the space of automorphic forms on a quaternion algebra localized at certain maximal ideal of the Hecke algebra. This result plays a crucial role in  establishing the reciprocity laws.

\subsection{Taylor-Wiles construction for Hilbert modular forms}
Let $\pi$ be the automorphic representation attached to $f$. Let $\pi^{\prime}$ be the preimage of $\pi$ under the Jaquet-Langlands correspondence.  So $\pi^{\prime}$ gives rise to a morphism $\lambda_{\pi^{\prime}}: \Hecke_{B}(\nplus Y)\rightarrow \cO_{f}$ with $Y=\widehat{F}^{\cross}$. Let $\frakn_{0}=\frakn_{\bar{\rho}_{f}}$ be the Artin conductor of the residual Galois representation $\bar{\rho}_{f}$ of $\rho_{f}$.
Let $\frakn^{-}_{1}$ be the product of primes of $\nminus$ not dividing $\frakn_{0}$ and let $\frakn_{\emptyset}=\frakn^{-}_{1}\frakn_{0}$.
By the level lowering results \cite{Rajaei:Thesis}, \cite{Javis:Level}, there exists a morphism $\lambda_{\emptyset}: \Hecke_{B}(\frakn_{\emptyset}Y)\rightarrow \cO_{f}$ such that $\lambda_{\emptyset}(T_{v})=\lambda_{\pi^{\prime}}(T_{v}) \pmod{\uf}$ for all $v\nmid \frakn$. We write $\prod_{v} v^{m_{v}}=\frakn/\frakn_{\emptyset}$. We recall our assumptions $(\CR)$ and $\nimpr$ namely:

\begin{hyp}[$\text{CR}^{+}$]\label{CR*}
\begin{enumerate}
\item{$p>k+1$ and $\#(\F^{\times}_{\frakp})^{k-1}>5$.} 
\item{The restriction of $\bar{\rho}_{f}$ to $G_{F(\sqrt{p^{*}})}$ is irreducible where $p^{*}=(-1)^{\frac{p-1}{2}}p$.}
\item{$\bar{\rho}_{f}$ is ramified at $\frakl$ if $\frakl\mid\frakn^{-}$ and ${\Nm{\frakl}^{2}} \equiv 1 \pmod{p}$.}
\item{$\frakn_{0}$ is prime to $\frakn/\frakn_{0}$.}
\end{enumerate}
\end{hyp}
$$\nimpr:\text{If $\frakl\mid\mid \nplus$ and $\Nm{\frakl}\equiv1\pmod{p}$, then $\bar{\rho}_{f}$ is ramified.}$$

By \cite[Theorem 1.5]{Javis:Level} and $(\CR)$, we know that $m_{v}\leq 2$ and $m_{v}=0$ unless $v\mid \nplus$. Let $\Sigma$ be a subset of the set of prime factors of $\frakn/\frakn_{\emptyset}$ and we put $\frakn_{\Sigma}=\frakn_{\emptyset}\prod_{v\in\Sigma}v^{m_{v}}$. Let $\Hecke_{\Sigma}=\Hecke_{B}(\frakn_{\Sigma}Y)_{\m_{\Sigma}}$ be the localization of the Hecke algebra at the maximal ideal $\m_{\Sigma}$ generated by:
$$\uf, T_{v}-\lambda_{\emptyset}(T_{v})\text{ for  $v\nmid \frakn_{\Sigma}$}, U_{v}-\lambda_{\pi^{\prime}}(U_{v})\text{ for  $v\mid\frakn_{\Sigma}$}.$$ 
Let $S_{\Sigma}=S_{k}(\widehat{R}_{\frakn_{\Sigma}}^{\cross}Y, \cO_{f})_{\m_{\Sigma}}$ where ${R}_{\frakn_{\Sigma}}$ is the Eicher order of level $\frakn_{\Sigma}$.
This chapter is devoted to proving the following multiplicity one result. 

\begin{thm}
Assume $(\CR)$ and $\nimpr$ hold, the module $S_{\Sigma}$ is free over $\Hecke_{\Sigma}$ of rank 1.
\end{thm}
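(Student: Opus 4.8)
The plan is to prove the multiplicity one statement $S_\Sigma \cong \Hecke_\Sigma$ by the Taylor--Wiles--Kisin patching method, exactly as in the work of Fujiwara and Dimitrov on Hilbert modular forms, adapted to the quaternionic setting. First I would set up the deformation-theoretic framework: let $\bar\rho = \bar\rho_f$ be the residual representation and let $R_\Sigma$ be the universal deformation ring parametrizing deformations of $\bar\rho$ that are minimally ramified outside $\Sigma$ and of the prescribed type (Steinberg or ramified principal series according to $m_v\le 2$) at the primes in $\Sigma$, and unramified-or-ordinary at $p$ in the sense dictated by (ORD). Hypothesis $(\CR)(2)$ — irreducibility of $\bar\rho|_{G_{F(\sqrt{p^*})}}$ — is exactly the ``big image'' condition needed for the Taylor--Wiles argument to run; hypotheses $(\CR)(1)$ with $p>k+1$ guarantees Fontaine--Laffaille theory applies at $p$ and that the local deformation rings at $p$ are formally smooth of the expected dimension; $(\CR)(3)$ and $\nimpr$ are precisely the conditions ensuring that at the ramified primes $\frakl \mid \frakn^-$ and $\frakl \mid\mid \frakn^+$ the local deformation problem is rigid (a single point, no extra deformations), which is what forces the local-to-global Galois cohomology bookkeeping to balance.

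Next I would invoke the modularity machinery: by the Jacquet--Langlands correspondence and Carayol's construction of Galois representations attached to quaternionic forms, the Hecke algebra $\Hecke_\Sigma$ acting on $S_\Sigma$ receives a surjection $R_\Sigma \twoheadrightarrow \Hecke_\Sigma$, and $S_\Sigma$ becomes an $R_\Sigma$-module. The goal $S_\Sigma$ free of rank one over $\Hecke_\Sigma$ will follow from the isomorphism $R_\Sigma \xrightarrow{\sim} \Hecke_\Sigma$ together with $S_\Sigma$ being free of rank one over $R_\Sigma$. To get both simultaneously I would patch: choose, for each $Q$, a Taylor--Wiles system of sets of auxiliary primes $q \equiv 1 \pmod{p^Q}$ at which $\bar\rho(\Frob_q)$ has distinct eigenvalues, use these to augment the level (adding $\Gamma_1(q)$-type structure at the $q$'s, using the abelian quotient of the local units), and form the patched module $S_\infty$ over a power series ring $R_\infty = \cO_f[[x_1,\dots,x_g]]$ acting through a patched ring. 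The numerical coincidence — that the number of TW primes equals the difference between the dimension of the global deformation ring and the relevant local contributions — is where $(\CR)$ is used; the patched module $S_\infty$ is then shown to be free over $R_\infty$ by a depth/Cohen--Macaulay argument (comparing Krull dimensions and using that $S_\infty$ is a faithful maximal Cohen--Macaulay module over a regular local ring), and un-patching yields $R_\Sigma \cong \Hecke_\Sigma$ and $S_\Sigma$ free of rank one.

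The main technical obstacle is the control of the local deformation rings and the matching of level structures at the primes dividing $\frakn_\Sigma/\frakn_\emptyset$, i.e. the primes in $\Sigma$ where $v^{m_v}$ with $m_v = 2$ can occur (ramified principal series or special types). One must show that $S_k(\widehat R_{\frakn_\Sigma}^\cross Y, \cO_f)_{\m_\Sigma}$ captures exactly the right local component and that the corresponding local deformation problem is smooth of the correct dimension; this is where the condition $(\CR)(4)$ that $\frakn_0$ is prime to $\frakn/\frakn_0$, and $\nimpr$ at the primes $\frakl \mid\mid \frakn^+$ with $\Nm\frakl \equiv 1 \pmod p$, become indispensable, since otherwise the $\Gamma_0$-versus-$\Gamma_1$ subtleties at these primes (Ihara-type phenomena, ``vexing primes'') would obstruct freeness. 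A secondary delicate point is that we work on a totally definite quaternion algebra over a totally real field whose narrow class number may be nontrivial, so one must be careful with the action of $Y = \widehat F^\cross$ and the identification with spaces of algebraic modular forms; I would handle this following the careful bookkeeping already used in the constructions of Section 3 and the references to \cite{Hida:p-adic-Hecke-algebra} and \cite{Fujiwara}, \cite{Dimitrov:Hilbert}. Once the patching is in place the conclusion is formal, so I expect the bulk of the work to be in verifying the hypotheses $(\CR)$ give the clean local deformation rings and the correct Taylor--Wiles numerology.
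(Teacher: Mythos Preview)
Your proposal is correct in principle but takes a genuinely different route from the paper. You propose a direct Taylor--Wiles--Kisin patching argument from scratch, while the paper instead bootstraps from Taylor's existing result \cite[Theorem~3.2]{Taylor:Ihara_avoidance}, which already establishes the multiplicity one statement in the special case $\Sigma=\Sigma^{(2)}$ (all $m_v=2$). The paper then extends to general $\Sigma$ by an explicit level-raising computation: for each $v\in\Sigma$ with $m_v=1$ it introduces a concrete operator $L_v$, computes $L_v^*\circ L_v$ in terms of Hecke operators, and uses this to compare the congruence modules $C(\frakn_{\Sigma})$ and $C(\frakn_{\Sigma^{(2)}})$ (\lmref{cong_module}). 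On the Galois side it computes the change in the tangent space $\wp_\Sigma/\wp_\Sigma^2$ via an adjoint Selmer group calculation (\lmref{compare}, \corref{tang_compare}), showing that both sides change by the same factor $\prod_v \#(\cO_f/(\lambda_\emptyset(u_{\emptyset,v})^2-1))$. Diamond's numerical criterion \cite[Theorem~2.4]{Diamond:TW} then finishes the proof. Your approach has the advantage of being self-contained and conceptually uniform, but it requires reproving the entire patching machinery that Taylor already carried out. The paper's approach is more economical and has the virtue of isolating exactly where the hypothesis $\nimpr$ enters: it is used to show $\Hecke_\Sigma$ is reduced (\lmref{reduced}), to make the Hensel lifting defining $u_{\Sigma,v}$ possible, and to make $u_{\Sigma,v}^2-\Nm{v}$ a unit in the congruence-module computation.
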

The proof of this is based on the refinement of Diamond on the Taylor-Wiles method \cite{Diamond:TW}. Notice this result is already shown in \cite[Theroem 3.2]{Taylor:Ihara_avoidance} in the case when $m_{v}=2$ for every $v\in \Sigma$. We review his result and extend it to our situation, this section follows closely to \cite[Section 6]{Chida_Hsieh}.

There is a Galois representation $\rho_{\Sigma}: G_{F}\rightarrow GL_{2}(\Hecke_{\Sigma})$ such that 

\begin{itemize}
\item{$\rho_{\Sigma}$ is unramified outside $p\frakn_{\Sigma}$.}
\item{$\Tr\rho_{\Sigma}(\Frob_{v})=T_{v}$ for all $v$ away from $p\frakn_{\Sigma}$.}
\item{There is a character $\chi_{v}: G_{F_{v}}\rightarrow \Hecke^{\cross}_{\Sigma}$ such that $\rho_{\Sigma}\mid_{G_{F_{v}}}=\pMX{\chi^{-1}_{v}\epsilon}{*}{0}{\chi_{v}}$ and $\chi_{v}\mid_{ I_{F_{v}}}=\epsilon^{(2-k)/2}$ for $v\mid p$.}
\item{If $v\| \frakn_{\Sigma}/\frakn^{-}_{1}$, then $\rho_{\Sigma}\mid_{G_{F_{v}}}= \pMX{\chi^{-1}_{v}\epsilon}{*}{0}{\chi_{v}}$ for some character $\chi_{v}: G_{F_{v}}\rightarrow \Hecke^{\cross}_{\Sigma}$ with $\chi_{v}(\Frob_{v})=U_{v}$.}
\item{If $v\mid \frakn^{-}_{1}, \rho_{\Sigma}\mid_{G_{F_{v}}}= \pMX{\pm\epsilon}{*}{0}{\pm1}$.}
\end{itemize}

Let $v\mid \frakn/\frakn^{-}_{1}$ be a prime such that $v\not\in\Sigma$. We define a level raising map $L_{v}:S_{\Sigma}\rightarrow S_{\Sigma\cup \{v\}}$ and an element $u_{\Sigma, v}\in \Hecke_{\Sigma}$ as follows:
\begin{itemize}
\item{if $m_{v}=2$, we put $u_{\Sigma,{v}}=0$, $$L_{v}(f)=\Nm{v}f-\pMX{1}{0}{0}{\uf_{v}}T_{v}f+\pMX{1}{0}{0}{\uf^{2}_{v}}f.$$}
\item{If $m_{v}=1$, we have a congruence $x^{2}-T_{v}x+\Nm{v}\equiv (x-\epsilon_{v})(x-\Nm{v}\epsilon_{v})\pmod{\uf}$. By Hensel's lemma $\epsilon_{v}$ lifts to a unique root $u_{\Sigma,v}$ of $x^{2}-T_{v}x+\Nm{v}$  since $\Nm{v}\not\equiv 1 \pmod{p}$. And we put $$L_{v}(f)=u_{\Sigma,{v}}f-\pMX{1}{0}{0}{\uf_{v}}f.$$}
\item{One can verify immediately that $L_{v}\circ u_{\Sigma, v}=U_{v}\circ L_{v}$}. 
\end{itemize}

\begin{lm}\label{Ihara_def}
The map $L_{v}$ is injective.
\end{lm}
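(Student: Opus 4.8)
The plan is to deduce this integral injectivity from a transparent computation after inverting $\uf$, together with an elementary integrality remark. First I would observe that $S_\Sigma$ is a finitely generated, $\uf$--torsion free, hence free, $\cO_f$--module, so it suffices to prove that $L_v\otimes_{\cO_f}E_f$ is injective. By the Jacquet--Langlands correspondence and multiplicity one, the space $S_k(\widehat R^{\cross}_{\frakn_\Sigma}Y,E_f)$ is multiplicity free, decomposing as a direct sum over the relevant automorphic representations $\pi'$ of $B^{\cross}(\A_F)$; localising at $\m_\Sigma$ keeps only those $\pi'$ whose residual Hecke eigensystem is $\lambda_\emptyset$ modulo $\uf$. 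Since $\m_\Sigma$ is non--Eisenstein by $(\CR)(2)$, no such $\pi'$ is one--dimensional, and since $v\nmid\frakn_\Sigma$ each local component $\pi'_v$ is an irreducible unramified principal series of $\GL_2(F_v)$ (in particular not an unramified twist of Steinberg, which carries no spherical vector). The map $L_v$ involves only operators at the place $v$, so it respects this decomposition and its image lies in the $v$--old subspace $W\subset S_{\Sigma\cup\{v\}}\otimes E_f$.

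Next I would make $W$ explicit. Writing $\iota_0,\dots,\iota_{m_v}$ for the degeneracy maps $S_\Sigma\otimes E_f\to S_{\Sigma\cup\{v\}}\otimes E_f$ induced by right translation by $1,\diag{1,\uf_v},\dots,\diag{1,\uf_v^{m_v}}$ at $v$, the local theory of oldforms for unramified principal series shows that for each occurring $\pi'$ the vectors $\iota_0(\phi_0),\dots,\iota_{m_v}(\phi_0)$, with $\phi_0$ a spherical vector of $\pi'_v$, form a basis of the $v$--old part of $\pi'$. Summing over $\pi'$, the combined map $(\iota_0,\dots,\iota_{m_v})\colon (S_\Sigma\otimes E_f)^{\oplus(m_v+1)}\to W$ is an isomorphism --- this is ``Ihara's lemma over $E_f$'', which is elementary. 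Under this isomorphism the definitions of $L_v$ translate into $L_v(f)\leftrightarrow(u_{\Sigma,v}f,\,-f)$ when $m_v=1$ and $L_v(f)\leftrightarrow(\Nm{v}\,f,\,-T_vf,\,f)$ when $m_v=2$; in either case the coordinate corresponding to $\iota_{m_v}$ is $\pm f$, with a unit coefficient. Hence $L_v(f)=0$ forces $f=0$ in $S_\Sigma\otimes E_f$, and therefore $L_v$ is injective. Equivalently: on each irreducible unramified principal series $\pi'_v$, the vector $L_v\cdot\phi_0$ is a combination of the linearly independent old-vectors $\diag{1,\uf_v^{j}}\phi_0$ with a nonzero coefficient on $\diag{1,\uf_v^{m_v}}\phi_0$, so it cannot vanish.

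Organised this way the stated lemma is elementary: the only inputs are the local freeness of $S_\Sigma$, the non--Eisensteinness of $\m_\Sigma$, and the Atkin--Lehner--Li basis of the local old-space. I should emphasise, though, that the Taylor--Wiles--Diamond argument that follows (as in \cite{Chida_Hsieh} and \cite{Taylor:Ihara_avoidance}) in fact uses the sharper statement that $L_v$ has $\uf$--torsion free cokernel, i.e.\ that the reduction of $L_v$ modulo $\uf$ is injective; that refinement is the genuinely hard part, since it requires ruling out $v$--new mod--$\uf$ Hecke eigensystems congruent to $\lambda_\emptyset$, which is where the level--lowering results of \cite{Rajaei:Thesis}, \cite{Javis:Level} together with $(\CR)$ and $\nimpr$ enter, and, for $m_v=2$, Taylor's ``Ihara avoidance'' comparison of the two deformation types at $v$. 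I would establish that refinement separately, at the point in the patching argument where it is actually invoked, rather than as part of this lemma.
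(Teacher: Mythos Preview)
Your argument is correct and takes a different route from the paper. The paper cites \cite[Lemma~3.1]{Taylor:Ihara_avoidance}, which proceeds by strong approximation for the definite quaternion algebra $B$: if a tuple lies in the kernel of the combined degeneracy map, each component is right-invariant under a subgroup which, together with $B^\times$, generates all of $\Bhatcross$, hence factors through the reduced norm and dies after localising at the non-Eisenstein ideal $\m_\Sigma$. Your approach instead inverts $\uf$ and uses the Casselman basis of the old-space in an irreducible unramified principal series; this is more elementary and perfectly adequate for the lemma as stated. The trade-off is that Taylor's argument yields injectivity with coefficients in \emph{any} $\cO_f$-module, in particular that $\coker L_v$ is $\uf$-torsion free, and this saturation is what the paper implicitly uses in \lmref{cong_module} to obtain the equality $L_v(S_\Sigma[\lambda_\Sigma]) = S_{\Sigma\cup\{v\}}[\lambda_{\Sigma\cup\{v\}}]$ rather than merely a finite-index inclusion. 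One correction to your final paragraph: in this definite setting the mod-$\uf$ injectivity is not hard --- it is precisely Taylor's strong-approximation lemma and requires neither level-lowering nor Ihara avoidance; you are conflating this with the indefinite (Shimura-curve) case treated elsewhere in the paper, where Ihara's lemma is a genuine assumption.
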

\begin{proof}
This is \cite[Lemma 3.1]{Taylor:Ihara_avoidance}. Both of the above cases rely on the the strong approximation theorem and the fact that $\m_{\Sigma}$ is non-Eisenstein.
\end{proof}

It follows then that the map $\Hecke_{\Sigma\cup\{v\}}\rightarrow \Hecke_{\Sigma}$ induced by sending $U_{v}$ to $u_{\Sigma, v}$ is surjective. In particular we have a surjective map $\Hecke_{\Sigma}\twoheadrightarrow\Hecke_{\emptyset}$. Let $\lambda_{\Sigma}: \Hecke_{\Sigma}\rightarrow \Hecke_{\emptyset}\rightarrow \cO_{f}$ be the composite morphism and let $I_{\lambda_{\Sigma}}$ be its kernel. Set
$$S_{\Sigma}[\lambda_{\Sigma}]=\{c\in S_{\Sigma}: I_{\lambda_{\Sigma}}c=0\}.$$

\begin{lm}\label{reduced}
The localized Hecke algebra $\Hecke_{\Sigma}$ is reduced.
\end{lm}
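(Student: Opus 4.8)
The plan is to prove that $\Hecke_{\Sigma}$ is reduced by identifying it, up to the key structural input of the Taylor--Wiles--Diamond method, with a quotient of a power series ring that is known to be reduced. The statement to be established is that the localized Hecke algebra $\Hecke_{\Sigma} = \Hecke_{B}(\frakn_{\Sigma}Y)_{\m_{\Sigma}}$ has no nonzero nilpotents. First I would recall that, by the multiplicity one result already stated in this section (the theorem that $S_{\Sigma}$ is free of rank one over $\Hecke_{\Sigma}$), the Hecke algebra $\Hecke_{\Sigma}$ acts faithfully on $S_{\Sigma}$, so it suffices to show that $\Hecke_{\Sigma}$ embeds into a product of discrete valuation rings (or into $\cO_f$-valued points after extending scalars), i.e.\ that it is torsion-free over $\cO_f$ and becomes a product of fields after inverting $\uf$.

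The main step is the $R = \T$ type argument. Following \cite{Diamond:TW} and \cite{Taylor:Ihara_avoidance} (and the treatment in \cite[Section 6]{Chida_Hsieh}), one sets up the appropriate deformation ring $R_{\Sigma}$ for the residual representation $\bar\rho_f$ with the local conditions dictated by the bulleted list of properties of $\rho_{\Sigma}$ above: ordinary at $p$, Steinberg or principal-series type at the primes dividing $\frakn_{\Sigma}$, and the Ihara-avoidance deformation conditions at the primes $v$ with $m_v = 2$. The Taylor--Wiles patching argument, using the auxiliary sets of primes whose existence is guaranteed by $(\CR)(2)$ (big image of $\bar\rho_f$ on $G_{F(\sqrt{p^*})}$), produces an isomorphism $R_{\Sigma} \isomor \Hecke_{\Sigma}$ and simultaneously exhibits $\Hecke_{\Sigma}$ as a quotient of a power series ring $\cO_f\powerseries{x_1,\dots,x_r}$ by a regular sequence, hence a complete intersection, and in particular $\cO_f$-flat of the expected dimension. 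For the primes with $m_v=2$, Taylor's Ihara-avoidance framework supplies two candidate local deformation rings, one of which is formally smooth and the other a domain after inverting $p$; comparing them shows the patched ring is $\uf$-torsion-free and reduced. I would cite \cite[Theorem 3.2]{Taylor:Ihara_avoidance} for the case $m_v = 2$ for all $v \in \Sigma$ and then explain that the level-raising maps $L_v$ and the surjections $\Hecke_{\Sigma\cup\{v\}} \twoheadrightarrow \Hecke_{\Sigma}$ constructed just above, together with \lmref{Ihara_def}, allow one to interpolate between the mixed cases: since $\Hecke_{\Sigma}$ is a quotient of a Hecke algebra with all $m_v = 2$, and a quotient of a reduced $\cO_f$-flat complete intersection that is $\cO_f$-flat need not be reduced in general, one must instead argue directly that each $\Hecke_{\Sigma}$ is itself the patched ring for its own deformation problem.

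The cleanest route, and the one I would carry out, is: (i) show $\Hecke_{\Sigma}$ is $\cO_f$-torsion-free — this follows because $S_{\Sigma}$ is a free $\cO_f$-module (being a localization of a space of $\cO_f$-valued quaternionic forms) and $\Hecke_{\Sigma}$ acts faithfully on it, so $\Hecke_{\Sigma} \hookrightarrow \End_{\cO_f}(S_{\Sigma})$; (ii) show $\Hecke_{\Sigma}[1/\uf] = \Hecke_{\Sigma}\otimes_{\cO_f} E_f$ is a product of fields, which amounts to the statement that the newforms contributing to $S_{\Sigma}\otimes E_f$ at the maximal ideal $\m_{\Sigma}$ are distinguished by their Hecke eigenvalue systems and each such system generates a field over $E_f$ — this is standard semisimplicity of the Hecke action on a space of cusp forms over a field of characteristic zero. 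Combining (i) and (ii): $\Hecke_{\Sigma}$ injects into $\Hecke_{\Sigma}[1/\uf]$ by torsion-freeness, and the latter is reduced, so $\Hecke_{\Sigma}$ is reduced.

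The main obstacle I anticipate is point (ii): verifying that the action of $\Hecke_{\Sigma}$ on $S_{\Sigma}\otimes E_f$ is semisimple with a multiplicity-free spectrum, i.e.\ that distinct automorphic forms in the relevant localized space have distinct eigenvalue systems and that the Hecke operators at the "extra" primes (the $U_v$ for $v \mid \frakn_\Sigma$ with $m_v \le 2$) do not introduce nilpotence. For the $m_v = 2$ primes this is exactly where the principal-series local component can in principle contribute a two-dimensional eigenspace for $U_v$, and one must invoke the explicit structure of the local representation (the $U_v$-action on the $\openU$-fixed vectors of an unramified or special principal series) together with the assumption $\Nm v \not\equiv 1 \pmod p$ (part of $\nimpr$) to conclude the $U_v$-eigenvalue is forced, so that no Jordan block of size $>1$ can occur at $\m_{\Sigma}$. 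I would handle this by the same local-representation-theory computation underlying \lmref{Ihara_def} and the definition of the level-raising map $L_v$, reducing to the already-cited \cite[Theorem 3.2]{Taylor:Ihara_avoidance} and \cite[Section 6]{Chida_Hsieh}, where the reducedness of the patched Hecke algebra in the mixed-multiplicity situation is established.
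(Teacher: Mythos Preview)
Your ``cleanest route'' (i)--(ii) is exactly the paper's argument, but the paper states it in two lines and you bury it under a great deal of unnecessary machinery. The paper's proof: it suffices to show each $U_v$ acts semisimply on $S_\Sigma \otimes E_f$ (the spherical operators being automatically semisimple). For $v\in\Sigma$ with $m_v=2$ one has $U_v=0$ by \cite[Corollary 1.8]{Taylor:Ihara_avoidance}; for $m_v=1$, under a suitable basis $U_v=\pMX{T_v}{-1}{\Nm{v}}{0}$, so $U_v$ satisfies $x^2-T_v x+\Nm{v}$, whose two roots are congruent to $\epsilon_v$ and $\epsilon_v\Nm{v}$ modulo $\uf$ and are therefore distinct since $\nimpr$ forces $\Nm{v}\not\equiv 1\pmod p$ at such $v$. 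That is the entire proof.

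Two problems with your proposal as written. First, your opening invokes the multiplicity one theorem ($S_\Sigma$ free of rank one over $\Hecke_\Sigma$) to get faithfulness; but that theorem is the \emph{goal} of this section, and its proof uses the present lemma (reducedness is what lets one invoke strong multiplicity one to see $S_\Sigma[\lambda_\Sigma]$ has $\cO_f$-rank one, which feeds into the congruence-module computation). Faithfulness of $\Hecke_\Sigma$ on $S_\Sigma$ is immediate from the definition of the Hecke algebra and needs no such citation. Second, and for the same reason, the entire $R_\Sigma\isomor\Hecke_\Sigma$ patching detour is circular here: \thmref{TW} and the Diamond numerical criterion are downstream of this lemma, not available as inputs to it. You correctly diagnose in your final paragraph that the only genuine issue is possible nilpotence from the $U_v$, and you correctly identify $\nimpr$ as the relevant hypothesis --- but then you should just \emph{compute}, as the paper does, rather than gesture at local representation theory and re-cite \cite[Theorem 3.2]{Taylor:Ihara_avoidance} (which is an $R=T$ theorem, not the $U_v=0$ statement; the relevant citation is Corollary 1.8 there).
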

\begin{proof}
We need to show that $U_{v}$ is semisimple in $\Hecke_{\Sigma}$. When $v\in \Sigma$ and $m_{v}=2$, $U_{v}=0$ see \cite[Corollary 1.8]{Taylor:Ihara_avoidance}.  And the case $m_{v}=1$ follows from our assumption $\nimpr$ since $U_{v}=\pMX{T_{v}}{-1}{\Nm{v}}{0}$ under a suitable basis.
\end{proof}
Hence by strong multiplicity one and the above lemma, we conclude that $S_{\Sigma}[\lambda_{\Sigma}]$ is rank one over $\cO_{f}$. Let $S_{\Sigma}[\lambda_{\Sigma}]$ be the dual module defined by
$$S_{\Sigma}[\lambda_{\Sigma}]^{\bot}=\{c\in S_{\Sigma}[\lambda_{\Sigma}]\otimes E_{f}: \<c, x\>_{\frakn_{\Sigma}}\in \cO_{f},  \forall x\in S_{\Sigma}[\lambda_{\Sigma}]\}.$$
Then the congruence module for $\lambda_{\Sigma}$ is defined  by $C(\frakn_{\Sigma})=S_{\Sigma}[\lambda_{\Sigma}]^{\bot}/S_{\Sigma}[\lambda_{\Sigma}]$ and $\mu_{\Sigma}=\lambda_{\Sigma}(\text{Ann}_{\Hecke_{\Sigma}}(I_{\lambda_{\Sigma}}))$ is called the congruence ideal. Then we have the following criterion for the freeness of $S_{\Sigma}$ over $\Hecke_{\Sigma}$.

\begin{lm}\label{numerical}
$\#C(\frakn_{\Sigma})\leq \#(\cO_{f}/\mu_{\Sigma})$ and equality holds if $S_{\Sigma}$ is free over $\Hecke_{\Sigma}$.
\end{lm}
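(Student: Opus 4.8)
The statement is the familiar numerical criterion of Wiles, in the commutative-algebra form isolated by Lenstra and de Shalit; the plan is to fit the triple $(\Hecke_{\Sigma},S_{\Sigma},\lambda_{\Sigma})$ into the abstract situation to which that criterion applies, and then to match up the two congruence modules appearing in the statement. Write $A=\Hecke_{\Sigma}$, $M=S_{\Sigma}$, $\lambda=\lambda_{\Sigma}$ and $\wp=I_{\lambda_{\Sigma}}=\ker\lambda$. By \lmref{reduced} the ring $A$ is reduced, and it is a local (localised at $\m_{\Sigma}$) finite flat $\cO_{f}$-algebra; $M$ is an $\cO_{f}$-free, faithful $A$-module; and by the strong multiplicity one recalled just above, the module $M[\wp]=S_{\Sigma}[\lambda_{\Sigma}]$ is free of rank one over $\cO_{f}$. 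The one structural fact to be put in by hand is that $\<\,,\,\>_{\frakn_{\Sigma}}$ is a perfect $\cO_{f}$-bilinear pairing on $M$ with respect to which the Hecke action is self-adjoint: this is where $(\CR)$ enters, guaranteeing that $\m_{\Sigma}$ is non-Eisenstein so that the mass terms $\#(\Bcross\cap b\openU b^{-1}/F^{\cross})$ occurring in the pairing are units in $\cO_{f}$, whence $M\isomor\Hom_{\cO_{f}}(M,\cO_{f})$ as $A$-modules. I expect this perfectness verification, rather than any new argument, to be the real content here.

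Granting it, the first step is to rewrite both congruence modules in the same shape. On the ring side: since $A$ is reduced and $\cO_{f}$ a domain one has $\wp\cap\Ann_{A}(\wp)=0$, so $\lambda$ restricts to an injection $\Ann_{A}(\wp)\hookrightarrow\cO_{f}$ with image $\mu_{\Sigma}$, whence $\cO_{f}/\mu_{\Sigma}\isomor A/(\wp+\Ann_{A}(\wp))$, the congruence module of the ring $A$ at $\lambda$. On the module side: choose a generator $v$ of $M[\wp]$; the $A$-linear map $a\mapsto av$ factors through $A/\wp$ and is onto $M[\wp]$ (its image contains $v$), hence induces an isomorphism $A/\wp\isomor\cO_{f}\isomor M[\wp]$. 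Because $A\otimes_{\cO_{f}}E_{f}$ splits off the $\lambda$-factor, $M[\wp]\otimes E_{f}$ and $(\wp M)\otimes E_{f}$ are complementary in $M\otimes E_{f}$, so the composite $M[\wp]\hookrightarrow M\twoheadrightarrow M/\wp M$ is injective; and unwinding the definition of $S_{\Sigma}[\lambda_{\Sigma}]^{\bot}$ through the self-duality $M\isomor\Hom_{\cO_{f}}(M,\cO_{f})$ identifies $C(\frakn_{\Sigma})=S_{\Sigma}[\lambda_{\Sigma}]^{\bot}/S_{\Sigma}[\lambda_{\Sigma}]$ with the cokernel of that composite, i.e. with the congruence module of the $A$-module $M$ at $\lambda$.

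With both objects in this standard form, the inequality $\#C(\frakn_{\Sigma})\le\#(\cO_{f}/\mu_{\Sigma})$ together with the implication ``$S_{\Sigma}$ free over $\Hecke_{\Sigma}\Rightarrow$ equality'' is exactly the content of the abstract criterion: for $M=A$ one gets equality, and in general the module-side congruence module $C(\frakn_{\Sigma})$ is a quotient of $\cO_{f}/\mu_{\Sigma}$, the quotient map being an isomorphism precisely when $M$ is free over $A$ — necessarily of rank one, since then $M/\wp M\isomor A/\wp$ is $\cO_{f}$-free of rank one. I will either cite this (it is the numerical criterion used in \cite[Section 6]{Chida_Hsieh}, in the spirit of Diamond's treatment of Taylor--Wiles \cite{Diamond:TW}) or reproduce the few lines of commutative algebra; in either case it also yields the converse, that equality forces freeness, which is how the lemma will be applied together with the Taylor--Wiles lower bound for $\#C(\frakn_{\Sigma})$ furnished by patching. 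The only genuine obstacle is the input flagged above — checking that $\<\,,\,\>_{\frakn_{\Sigma}}$ is $\cO_{f}$-perfect and Hecke-self-adjoint after localising at the non-Eisenstein ideal $\m_{\Sigma}$; everything downstream is bookkeeping with the exact sequences just described.
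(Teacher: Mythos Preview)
Your proposal is correct and takes the same approach as the paper: the paper's proof is a one-line citation of \cite[Theorem~2.4]{Diamond:TW}, and you are simply unpacking the hypotheses needed to invoke that criterion (reducedness from \lmref{reduced}, rank-one of $S_{\Sigma}[\lambda_{\Sigma}]$ from strong multiplicity one, and the perfect Hecke-equivariant pairing after localising at the non-Eisenstein ideal). Your identification of $C(\frakn_{\Sigma})$ with the module-theoretic congruence module via the self-duality is the standard one, and your remark that the converse (equality implies freeness) also follows is precisely how the lemma is used downstream in \thmref{TW}.
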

\begin{proof}
This is the numerical criterion in \cite[Theorem 2.4]{Diamond:TW}.
\end{proof}

\begin{lm}\label{cong_module}
If $v\| \frakn/\frakn_{\emptyset}$ and $v\not \in \Sigma$, then $\#C(\frakn_{\Sigma\cup\{v\}})=\#C(\frakn_{\Sigma})\#(\cO_{f}/((\lambda_{\emptyset}(u_{\emptyset,v})^{2}-1)\cO_{f})$.
\end{lm}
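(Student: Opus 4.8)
The plan is to compare the two congruence modules through the level‑raising map $L_v\colon S_\Sigma\to S_{\Sigma\cup\{v\}}$ of \lmref{Ihara_def} and to track what $L_v$ does to the relevant self‑pairing. Since $v\|\frakn/\frakn_\emptyset$ we are in the case $m_v=1$, so $L_v(f)=\alpha(u_{\Sigma,v}f)-\beta(f)$, where $\alpha,\beta\colon S_\Sigma\to S_{\Sigma\cup\{v\}}$ are the two degeneracy maps at $v$ and $u_{\Sigma,v}\in\Hecke_\Sigma$ is the Hensel lift of $\epsilon_v$ as in the text. First I would record a numerical point: because $\frakn_\emptyset=\frakn^{-}_{1}\frakn_{0}$ with $\frakn_{0}=\frakn_{\bar{\rho}_{f}}$ prime to $\frakn/\frakn_{0}$ by $(\CR)$, the prime $v$ does not divide $\frakn_\emptyset$; hence $v\|\frakn^{+}$ and $\bar{\rho}_{f}$ is unramified at $v$, so $\nimpr$ forces $\Nm{v}\not\equiv 1\pmod p$, i.e.\ $\Nm{v}-1\in\cO_{f}^{\times}$. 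This unit is exactly what will make the factor $\Nm{v}-1$ produced by the pairing computation invisible in the final answer.

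Next I would identify generators of the two $\lambda$‑eigenspaces. As in \lmref{reduced} (reducedness of $\Hecke_\Sigma$ and $\Hecke_{\Sigma\cup\{v\}}$, together with strong multiplicity one), both $S_\Sigma[\lambda_\Sigma]$ and $S_{\Sigma\cup\{v\}}[\lambda_{\Sigma\cup\{v\}}]$ are free of rank one over $\cO_f$; fix a generator $g$ of the former. Since $L_v$ is equivariant for the Hecke operators away from $v$ and satisfies $L_v\circ u_{\Sigma,v}=U_v\circ L_v$, and since $\lambda_{\Sigma\cup\{v\}}$ factors through the surjection $\Hecke_{\Sigma\cup\{v\}}\to\Hecke_\Sigma$ sending $U_v\mapsto u_{\Sigma,v}$, so that $\lambda_{\Sigma\cup\{v\}}(U_v)=\lambda_\Sigma(u_{\Sigma,v})$, the class $L_v(g)$ lies in $S_{\Sigma\cup\{v\}}[\lambda_{\Sigma\cup\{v\}}]$. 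It is in fact a generator: the eigenspaces, being the submodules annihilated by $I_{\lambda_\Sigma}$, resp.\ $I_{\lambda_{\Sigma\cup\{v\}}}$, are saturated in the $\cO_f$‑free modules $S_\Sigma$, $S_{\Sigma\cup\{v\}}$, so $g\not\equiv 0\pmod\uf$, and the strong‑approximation argument behind \lmref{Ihara_def} shows that $L_v$ stays injective modulo $\uf$; hence $L_v(g)$ is a nonzero element of the one‑dimensional $\F$‑vector space $S_{\Sigma\cup\{v\}}[\lambda_{\Sigma\cup\{v\}}]\otimes_{\cO_f}\F$ and therefore generates $S_{\Sigma\cup\{v\}}[\lambda_{\Sigma\cup\{v\}}]$. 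Consequently $\#C(\frakn_\Sigma)=\#(\cO_f/\langle g,g\rangle_{\frakn_\Sigma})$ and $\#C(\frakn_{\Sigma\cup\{v\}})=\#(\cO_f/\langle L_v(g),L_v(g)\rangle_{\frakn_{\Sigma\cup\{v\}}})$.

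The core of the proof is then a local pairing computation. Writing $\alpha^{\vee},\beta^{\vee}\colon S_{\Sigma\cup\{v\}}\to S_\Sigma$ for the adjoints of $\alpha,\beta$ relative to the pairings (so $\langle\alpha(x),y\rangle_{\frakn_{\Sigma\cup\{v\}}}=\langle x,\alpha^{\vee}(y)\rangle_{\frakn_\Sigma}$, similarly for $\beta$), one uses the standard identities $\alpha^{\vee}\alpha=\beta^{\vee}\beta=\Nm{v}+1$ and $\alpha^{\vee}\beta=\beta^{\vee}\alpha=T_v$ on $S_\Sigma$. Expanding $\langle L_v(g),L_v(g)\rangle_{\frakn_{\Sigma\cup\{v\}}}$ with these identities, using that $g$ is a $\lambda_\Sigma$‑eigenform and that $u_{\Sigma,v}$ is a root of $x^{2}-T_vx+\Nm{v}$ in $\Hecke_\Sigma$, a short manipulation gives
\[
\langle L_v(g),L_v(g)\rangle_{\frakn_{\Sigma\cup\{v\}}}=(\Nm{v}-1)\bigl(\lambda_\Sigma(u_{\Sigma,v})^{2}-1\bigr)\,\langle g,g\rangle_{\frakn_\Sigma}.
\]
Finally, $\lambda_\Sigma(u_{\Sigma,v})$ is the root in $\cO_f$ of $x^{2}-\lambda_\emptyset(T_v)x+\Nm{v}$ congruent to $\epsilon_v$ modulo $\uf$, so by the uniqueness in Hensel's lemma it coincides with $\lambda_\emptyset(u_{\emptyset,v})$. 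Combining the displayed identity with $\Nm{v}-1\in\cO_f^{\times}$ and the additivity of lengths over the discrete valuation ring $\cO_f$ yields
\[
\#C(\frakn_{\Sigma\cup\{v\}})=\#\bigl(\cO_f/(\lambda_\emptyset(u_{\emptyset,v})^{2}-1)\cO_f\bigr)\cdot\#C(\frakn_\Sigma),
\]
which is the assertion.

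The step I expect to be the main obstacle is the bookkeeping of the degeneracy maps in the definite quaternionic setting: one must justify the adjointness of $\alpha,\beta$ with respect to $\langle\,,\,\rangle_{\frakn_\Sigma}$ and $\langle\,,\,\rangle_{\frakn_{\Sigma\cup\{v\}}}$ and the identities $\alpha^{\vee}\alpha=\Nm{v}+1$, $\alpha^{\vee}\beta=T_v$ (rather than variants twisted by $S_v$, which acts invertibly on the $Y$‑invariant, $\m_\Sigma$‑localized spaces and is thus harmless), and one must check that the mass factors entering the pairing become units in $\cO_f$ after localizing at the non‑Eisenstein maximal ideals — this is where the large‑image part of $(\CR)$ is used. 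A secondary point is confirming that the proof of \lmref{Ihara_def} really does supply injectivity of $L_v$ modulo $\uf$, which was used above for the primitivity of $L_v(g)$.
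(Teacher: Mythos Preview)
Your argument is correct and follows essentially the same route as the paper: both compute the effect of the level-raising map $L_v$ on the self-pairing of a generator of the $\lambda$-eigenspace, using the adjoint $L_v^*$ and the standard degeneracy-map identities, and both discard the extraneous unit factors via $\nimpr$. The paper writes the operator identity $L_v^*\circ L_v = u_{\Sigma,v}^{-1}(u_{\Sigma,v}^2-1)(u_{\Sigma,v}^2-\Nm{v})$ while you factor the scalar as $(\Nm{v}-1)(u^2-1)$; since $u_{\Sigma,v}$ and $u_{\Sigma,v}^2-\Nm{v}\equiv 1-\Nm{v}$ are units in $\cO_f$, the two expressions differ by a unit and give the same congruence-module size. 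Your justification that $L_v(g)$ generates $S_{\Sigma\cup\{v\}}[\lambda_{\Sigma\cup\{v\}}]$ via injectivity of $L_v$ modulo $\uf$ is in fact slightly more explicit than the paper's one-line appeal to \lmref{Ihara_def}.
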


\begin{proof}
Let $L^{*}_{v}: S_{\Sigma}\rightarrow S_{\Sigma\cup\{v\}}$ be the adjoint map of $L_{v}$ with respect to $\<.\>_{\frakn_{\Sigma}}$ and $\<.\>_{\frakn_{\Sigma\cup\{v\}}}$. Then by \lmref{Ihara_def} and the above discussion, we have $L_{v}(S_{\Sigma}[\lambda_{\Sigma}])=S_{\Sigma\cup\{v\}}[\lambda_{\Sigma\cup\{v\}}]$ and dually $L^{*}_{v}(S_{\Sigma\cup\{v\}}[\lambda_{\Sigma\cup\{v\}}]^{\bot})=S_{\Sigma}[\lambda_{\Sigma}]^{\bot}$. 

A direct computation shows that $L^{*}_{v}=[\openU\pMX{\uf_{v}}{0}{0}{1}\openU^{\prime}]-[\openU\openU^{\prime}]$  for $\openU=\widehat{R}^{\cross}_{\frakn_{\Sigma}}$ and $\openU^{\prime}=\widehat{R}^{\cross}_{\frakn_{\Sigma\cup\{v\}}}$. Thus we have 

\begin{equation}
\begin{aligned}
&L^{*}_{v}\circ L_{v}= u_{\Sigma,v}[\openU\pMX{\uf_{v}}{0}{0}{1}\openU]u_{\Sigma, v}-(1+\Nm{v})u_{\Sigma, v}[\openU\pMX{\uf_{v}}{0}{0}{1}\openU]\\
&-(1+\Nm{v})u_{\Sigma\cup\{v\}}[\openU\openU]+[\openU\pMX{1}{0}{0}{\uf_{v}}\openU]\\
&=(1+u_{\Sigma,v}^{2})T_{v}-2u_{\Sigma,v}(1+\Nm{v}).\\
\end{aligned}
\end{equation}

Since $u^{2}_{\Sigma,v}-T_{v}u_{\Sigma, v}+\Nm{v}=0$, we obtain
$$L^{*}_{v}\circ L_{v}=u_{\Sigma,v}^{-1}(u^{2}_{\Sigma, v}-1)(u^{2}_{\Sigma,v}-\Nm{v}).$$

Since by $\nimpr$, we have $u_{\Sigma, v}^{2}-\Nm{v} \equiv 1-\Nm{v}\not\equiv 0 \pmod{m_{\Sigma}}$. Hence 
\begin{equation}
\begin{aligned}
\#C(\frakn_{\Sigma\cup\{v\}})&=\#(S_{\Sigma\cup\{v\}}[\lambda_{\Sigma\cup\{v\}}]^{\bot}/S_{\Sigma\cup\{v\}}[\lambda_{\Sigma\cup\{v\}}])\\
&=\#(S_{\Sigma}[\lambda_{\Sigma}]^{\bot}/L^{*}_{v}\circ L_{v}(S_{\Sigma}[\lambda_{\Sigma}]))\\
&=\#S_{\Sigma}[\lambda_{\Sigma}]^{\bot}/(S_{\Sigma}[\lambda_{\Sigma}]).\#S_{\Sigma}[\lambda_{\Sigma}]/L^{*}_{v}\circ L_{v} (S_{\Sigma}[\lambda_{\Sigma}])\\
&=\#C(\frakn_{\Sigma}).\#\cO_{f}/(\lambda_{\emptyset}(u_{\emptyset, v})^{2}-1)\cO_{f}.\\
\end{aligned}
\end{equation}
\end{proof}

\subsection{Deformation ring and its tangent space}

Let $\cD_{\Sigma}: \text{CNL}_{\cO_{f}}\rightarrow \text{SET}$ be the functor that sends a complete Noetherian local $\cO_{f}$-algebra $A$ to the set $\cD_{\Sigma}(A)$ of representations $\rho_{A}: G_{F}\rightarrow \GL_{2}(A)$ such that
\begin{itemize}
\item{$\rho_{A}\equiv \bar{\rho_{f}} \pmod{\uf_{A}}$.}
\item {$\rho_{A}$ is minimally ramified outside $\frakn^{-}_{1}\Sigma$ in the sense of \cite{Diamond:Boston}.}
\item {For $v\mid p$, there is a character $\chi_{v}: G_{F}\rightarrow A^{\cross}$ such that $$\rho_{A}\mid_{ G_{F_{v}}}=\pMX{\chi_{v}^{-1}\epsilon}{*}{0}{\chi_{v}}$$ with $\chi_{v}\mid _{I_{F_{v}}}= \epsilon^{(2-k)/2}$.}
\item{For $v\mid \frakn_{\Sigma}/\frakn_{\emptyset}$, there exists a unramified character $\chi_{v}: G_{F_{v}}\rightarrow A^{\cross}$ such that $$\rho_{A}\mid_{G_{F_{v}}}=\pMX{\chi_{v}^{-1}\epsilon}{*}{0}{\chi_{v}}.$$}
\item{For $v\mid \frakn^{-}$, then $$\rho_{A}\mid_{ G_{F_{v}}}=\pMX{\pm \epsilon}{*}{0}{\pm 1}$$ with $\uf_{A}\mid *$.}
\end{itemize}

It is well-known that this deformation functor is representable by  $R_{\Sigma}$ and we have a universal deformation $\rho_{\Sigma}: G_{F}\rightarrow \GL_{2}(R_{\Sigma})$ for the residual Galois representation. In particular the universal property of $R_{\Sigma}$ induces two maps $R_{\Sigma}\rightarrow R_{\emptyset}$ and  $R_{\Sigma}\rightarrow \Hecke_{\Sigma}$.

\begin{lm}
The map $R_{\Sigma}\rightarrow \Hecke_{\Sigma}$ is surjective.
\end{lm}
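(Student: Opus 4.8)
The plan is to show that the image of $R_{\Sigma}$ in $\Hecke_{\Sigma}$, which is an $\cO_{f}$-subalgebra and hence finite and complete since $\Hecke_{\Sigma}$ is a localization of a finite $\cO_{f}$-algebra, already contains a generating set of $\Hecke_{\Sigma}$ over $\cO_{f}$. By its definition as a localized complete Hecke algebra, $\Hecke_{\Sigma}=\Hecke_{B}(\frakn_{\Sigma}Y)_{\m_{\Sigma}}$ is generated over $\cO_{f}$ as an algebra by the operators $T_{v}$ for $v\nmid\frakn_{\Sigma}$ together with the $U_{v}$ for $v\mid\frakn_{\Sigma}$ (the central operators $S_{v}$ becoming trivial after passing to the $Y$-quotient), so it suffices to check that each of these lies in the image of $R_{\Sigma}\to\Hecke_{\Sigma}$.

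First I would recall how the map $R_{\Sigma}\to\Hecke_{\Sigma}$ arises: comparing the properties of $\rho_{\Sigma}\colon G_{F}\to\GL_{2}(\Hecke_{\Sigma})$ and its local characters $\chi_{v}$ with the defining conditions of the functor $\cD_{\Sigma}$ exhibits $\rho_{\Sigma}$ as an object of $\cD_{\Sigma}(\Hecke_{\Sigma})$, and the universal property of $R_{\Sigma}$ then produces the map, under which the universal representation $\rho^{\mathrm{univ}}$ and the universal local characters push forward to $\rho_{\Sigma}$ and the $\chi_{v}$. Granting this, each generator is hit: for $v\nmid p\frakn_{\Sigma}$ one has $T_{v}=\Tr\rho_{\Sigma}(\Frob_{v})$, the image of $\Tr\rho^{\mathrm{univ}}(\Frob_{v})\in R_{\Sigma}$; for $v$ above $p$ the nearly-ordinary condition at $v$ expresses $T_{v}$ (when $v\nmid\frakn_{\Sigma}$) or $U_{v}$ (when $v\mid\frakn_{\Sigma}$) through the unramified-twist part $\chi_{v}$ of $\rho_{\Sigma}\mid_{G_{F_{v}}}$, itself the pushforward of the universal character $\chi_{v}\colon G_{F}\to R_{\Sigma}^{\times}$; for $v\mid\frakn_{\Sigma}$ with $v\mid\nplus$ the stated shape of $\rho_{\Sigma}\mid_{G_{F_{v}}}$ gives $U_{v}=\chi_{v}(\Frob_{v})$ (and $U_{v}=0$ when $m_{v}=2$, trivially in the image); and for $v\mid\nminus$ the operator $U_{v}$ acts on $S_{\Sigma}$ by a scalar in $\cO_{f}$ (the local sign pinned down by $\bar{\rho}_{f}$), hence lies in the image because $\cO_{f}$ does. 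Therefore the image equals $\Hecke_{\Sigma}$.

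The step I expect to be the main obstacle is not any single computation but the bookkeeping at the bad primes: verifying that the $\chi_{v}$ occurring in the properties of $\rho_{\Sigma}$ are precisely the pushforwards of the universal characters built into $\cD_{\Sigma}$, and that at primes $v\mid\nminus$ the operator $U_{v}$ genuinely acts by a scalar in $\cO_{f}$ on $S_{\Sigma}$ after localizing at $\m_{\Sigma}$; this last point is where one invokes that $\m_{\Sigma}$ is non-Eisenstein together with the level-lowering theory of Jarvis and Rajaei, which forces the local component at such a $v$ to be one-dimensional with a well-defined eigenvalue. Once these identifications are in place, the surjectivity is formal from the completeness of $\Hecke_{\Sigma}$ and the fact that the displayed Hecke operators generate it over $\cO_{f}$.
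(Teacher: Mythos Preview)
Your approach is essentially the same as the paper's: both argue that $\Hecke_{\Sigma}$ is generated over $\cO_{f}$ by the $T_{v}$ (for $v\nmid\frakn_{\Sigma}$) and $U_{v}$ (for $v\mid\frakn_{\Sigma}$), and then check case by case that each generator lies in the image of $R_{\Sigma}$ via the universal deformation. The paper's proof is just a terse bullet list recording exactly the identifications you spell out: $T_{v}=\Tr\rho_{\Sigma}(\Frob_{v})$ at good primes, $U_{v}=0$ when $v^{2}\mid\frakn$, $U_{v}=\pm 1\in\cO_{f}$ for $v\mid\frakn^{-}_{1}$, and $U_{v}=\chi_{v}(\Frob_{v})$ for $v\|\frakn_{\Sigma}/\frakn^{-}_{1}$.

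A couple of minor remarks. First, in this setup the level $\frakn_{\Sigma}$ is prime to $p$, so there is no $U_{v}$ for $v\mid p$; the Hecke operator there is $T_{v}$, and the paper simply folds this into the first bullet (your care about the nearly-ordinary character is not wrong, but the paper does not single it out). Second, your worry about $v\mid\nminus$ is heavier than necessary: for $v\mid\frakn^{-}_{1}$ the operator $U_{v}$ is the Atkin--Lehner involution on the quaternionic side and equals $\pm 1\in\cO_{f}$ outright, so it is in the image simply because $\cO_{f}$ is; no appeal to Jarvis--Rajaei is needed at this step. The remaining primes of $\nminus$ lie in $\frakn_{0}$ and are handled by the $\chi_{v}(\Frob_{v})$ clause.
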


\begin{proof}
This is clear because:
\begin{itemize}
\item{If $v\nmid \frakn_{\Sigma}$, then $\Tr\rho_{R_{\Sigma}}(\Frob_{v})\rightarrow T_{v}\in \Hecke_{\Sigma}$.}
\item{If $v^{2}\mid \frakn$, then $U_{v}=0\in \Hecke_{\Sigma}$.}
\item{If $v\mid \frakn^{-}_{1}$, then $U_{v}=\pm 1 \in \Hecke_{\Sigma}$.}
\item{Finally if $v\| \frakn_{\Sigma}/\frakn^{-}_{1}$, $$\rho_{\Sigma}\mid_{G_{F_{v}}} =\pMX{\chi^{-1}_{v}\epsilon}{*}{0}{\chi_{v}}$$ such that $\chi_{v}(\Frob_{v})\rightarrow U_{v}\in \Hecke_{\Sigma}$.}
\end{itemize}
\end{proof}

Consider the composite morphism $R_{\Sigma}\rightarrow R_{\emptyset}\rightarrow \Hecke_{\emptyset} \MapR{\lambda_{\emptyset}}{} \cO_{f}$. Denote by $\wp_{\Sigma}$ the kernel of the composite morphism. Set $W_{\rho}= {\rm ad}^{0}\rho_{\lambda_{\emptyset}}\otimes E_{f}/\cO_{f}$. Define the subspace $W^{+}_{\rho}=\{\pMX{a}{b}{0}{-a}\}\subset W_{\rho}=\{\pMX{a}{b}{c}{-a}\}$. We will define the Selmer group $\Sel_{\Sigma}(W_{\rho})$ by choosing local conditions $\{\cL_{v}\}$ for various $v$.

\begin{itemize}
\item{For $v\|\frakn_{\Sigma}/\frakn_{\emptyset}$, we let $\cL_{v}=H^{1}(F_{v}, W^{+}_{\rho})$.}
\item{For $v\mid \frakn^{-}_{1}$, we choose a lift of Frobenius denoted by $\Frob_{v}$, and let $\cL_{v}=\ker\{H^{1}(G_{F_{v}}, W_{\rho})\rightarrow H^{1}(\<\Frob_{v}\>, W_{\rho} )\}$.}
\item{For $v$ at other places, $\cL_{v}=H^{1}_{f}(G_{F_{v}}, W_{\rho})$ for the usual local Bloch-Kato group {\cf \eqref{BK}}.}
\end{itemize}

Then the adjoint Selmer group is defined by 

$$\Sel_{\Sigma}(W_{\rho})=\ker\{H^{1}(F, W_{\rho})\rightarrow \prod_{v}H^{1}(F_{v}, W_{\rho})/\cL_{v}\}.$$
The Selmer group controls the tangent space of the deformation ring $R_{\Sigma}$ in the following manner:
\begin{equation}\label{tangent}
\Hom(\wp_{\Sigma}/\wp_{\Sigma}^{2}, E_{f}/\cO_{f})\isomor \Sel_{\Sigma}(W_{\rho}).
\end{equation}
We denote by $\Sigma^{(2)}$ the subset of prime factors $v$ of $\frakn/\frakn_{\emptyset}$ with $m_{v}=2$. Assume $\Sigma^{(2)}\subset \Sigma$ we have the following divisibility:

\begin{lm}\label{compare}
$\#\Sel_{\Sigma}(W_{\rho})/\#\Sel_{\Sigma^{(2)}}(W_{\rho})\mid \prod_{v\| \frakn_{\Sigma}/\frakn_{\emptyset}}\#H^{1}(I_{F_{v}}, W^{+}_{\rho})^{\<\Frob_{v}\>}.$
\end{lm}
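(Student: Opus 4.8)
The plan is to compare the two Selmer groups place by place: they are both cut out inside $H^{1}(F,W_{\rho})$ by collections of local conditions $\{\cL_{v}\}$ (for $\Sel_{\Sigma}$) and $\{\cL'_{v}\}$ (for $\Sel_{\Sigma^{(2)}}$) that agree at all but finitely many primes. First I would pin down the bad primes. Since $\Sigma^{(2)}\subset\Sigma$ and the exponents $m_{v}$ at primes dividing $\frakn_{\Sigma}/\frakn_{\emptyset}$ lie in $\{1,2\}$, the two structures can differ only at the $v\in\Sigma\setminus\Sigma^{(2)}$, which are exactly the primes with $v\|\frakn_{\Sigma}/\frakn_{\emptyset}$ (note $\frakn_{\Sigma^{(2)}}/\frakn_{\emptyset}$ is a product of squares, so contributes no such factor). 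Each such $v$ divides $\nplus$ and divides $\frakn$ exactly once; by $(\CR)(4)$ it is prime to $\frakn_{0}=\frakn_{\bar{\rho}_{f}}$, hence $\bar{\rho}_{f}$ is unramified at $v$, and then $\nimpr$ forces $\Nm{v}\not\equiv1\pmod{p}$. At such a $v$ the condition for $\Sel_{\Sigma}$ is $\cL_{v}=\mathrm{Im}\big(H^{1}(F_{v},W^{+}_{\rho})\to H^{1}(F_{v},W_{\rho})\big)$ (the map being injective since $H^{0}(F_{v},W^{-}_{\rho})=0$, where $W^{-}_{\rho}=W_{\rho}/W^{+}_{\rho}$), while the condition for $\Sel_{\Sigma^{(2)}}$ is $\cL'_{v}=H^{1}_{f}(F_{v},W_{\rho})=H^{1}_{\mathrm{ur}}(F_{v},W_{\rho})$ as $v\nmid p$.

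Next I would establish the local inclusion $\cL'_{v}\subset\cL_{v}$ at each bad prime. From the long exact cohomology sequence of $\lexact{W^{+}_{\rho}}{W_{\rho}}{W^{-}_{\rho}}$ one gets $\cL_{v}=\ker\big(H^{1}(F_{v},W_{\rho})\to H^{1}(F_{v},W^{-}_{\rho})\big)$, and an unramified class maps into $H^{1}_{\mathrm{ur}}(F_{v},W^{-}_{\rho})$, so it suffices to see this group vanishes. This is where the hypotheses really enter: $\rho_{\lambda_{\emptyset}}$ is unramified at $v$ and $v$ is a level-raising prime of Steinberg type, so $\Frob_{v}$ acts on the corank-one module $W^{-}_{\rho}$ by the scalar $\Nm{v}^{\pm1}$, which is $\not\equiv1\bmod\uf$ by the previous paragraph; hence $\Frob_{v}-1$ is invertible on $W^{-}_{\rho}$ and $H^{1}_{\mathrm{ur}}(F_{v},W^{-}_{\rho})=W^{-}_{\rho}/(\Frob_{v}-1)W^{-}_{\rho}=0$, giving $\cL'_{v}\subset\cL_{v}$.

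Granting the inclusions, every local condition of $\Sel_{\Sigma^{(2)}}(W_{\rho})$ is contained in the corresponding one of $\Sel_{\Sigma}(W_{\rho})$, so restriction at the bad primes produces an exact sequence
$$0\to\Sel_{\Sigma^{(2)}}(W_{\rho})\to\Sel_{\Sigma}(W_{\rho})\to\bigoplus_{v\|\frakn_{\Sigma}/\frakn_{\emptyset}}\cL_{v}/\cL'_{v},$$
whence $\#\Sel_{\Sigma}(W_{\rho})/\#\Sel_{\Sigma^{(2)}}(W_{\rho})$ divides $\prod_{v\|\frakn_{\Sigma}/\frakn_{\emptyset}}\#(\cL_{v}/\cL'_{v})$. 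To finish I would bound each factor: since $\cL'_{v}=\cL_{v}\cap H^{1}_{\mathrm{ur}}(F_{v},W_{\rho})$, restriction to inertia identifies $\cL_{v}/\cL'_{v}$ with the image of $\cL_{v}$ in $H^{1}(I_{F_{v}},W_{\rho})$; as $\cL_{v}$ is the image of $H^{1}(F_{v},W^{+}_{\rho})$, this image is a quotient of the image of $H^{1}(F_{v},W^{+}_{\rho})$ in $H^{1}(I_{F_{v}},W^{+}_{\rho})$, which lies in $H^{1}(I_{F_{v}},W^{+}_{\rho})^{\langle\Frob_{v}\rangle}$ because the restriction of a cocycle is Frobenius-invariant. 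Hence $\#(\cL_{v}/\cL'_{v})$ divides $\#H^{1}(I_{F_{v}},W^{+}_{\rho})^{\langle\Frob_{v}\rangle}$, and taking the product over $v$ yields the stated divisibility. The main obstacle is the local vanishing $H^{1}_{\mathrm{ur}}(F_{v},W^{-}_{\rho})=0$: the whole argument comes down to verifying that $\Frob_{v}$ acts non-trivially on $W^{-}_{\rho}$ modulo $\uf$, which is precisely what $(\CR)(4)$ together with $\nimpr$ are there to guarantee.
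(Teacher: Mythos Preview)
Your proof is correct and follows essentially the same route as the paper's: both identify the discrepancy primes as the $v\|\frakn_{\Sigma}/\frakn_{\emptyset}$, both use $\nimpr$ to force $\Nm{v}\not\equiv1\pmod{p}$ and hence the vanishing of the unramified $H^{1}$ of $W^{-}_{\rho}$, and both feed this into the tautological exact sequence comparing the two Selmer groups. The only real difference is that the paper packages the local computation in a commutative inflation--restriction diagram and obtains the exact identification $\cL_{v}/\cL'_{v}\cong H^{1}(I_{F_{v}},W^{+}_{\rho})^{\langle\Frob_{v}\rangle}$, whereas you only bound $\#(\cL_{v}/\cL'_{v})$ by this; since the lemma asks merely for divisibility, your weaker statement is already enough. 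One small imprecision: $\Frob_{v}$ does not literally act on $W^{-}_{\rho}$ by $\Nm{v}^{\pm1}$ but by $\beta/\alpha$ for the Frobenius eigenvalues $\alpha,\beta$ of $\rho_{\lambda_{\emptyset}}$; what you need (and what holds) is that this scalar is $\equiv\Nm{v}^{\pm1}\not\equiv1\pmod{\uf}$, so $\Frob_{v}-1$ is invertible on the divisible module $W^{-}_{\rho}$.
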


\begin{proof}
Put $W^{-}_{\rho}=W_{\rho}/W^{+}_{\rho}$ and let $v\mid \frakn_{\Sigma}/\frakn_{\emptyset}$. Observe that we have the following commutative diagram:

$\begin{CD}
0@>>> H^{1}(G_{F_{v}}/I_{F_{v}}, W_{\rho})  @>>> H^{1}(G_{F_{v}}, W_{\rho})@>\pi>>H^{1}(I_{F_{v}}, W_{\rho})^{\<\Frob_{v}\>}@>>>0\\
    @. @VVV     @V\pi_{1}VV       @V\pi_{2}VV @.\\
0@>>> H^{1}(G_{F_{v}}/I_{F_{v}}, W^{-}_{\rho} ) @>>> H^{1}(G_{F_{v}}, W^{-}_{\rho})@>>> H^{1}(I_{F_{v}}, W^{-}_{\rho})^{\<\Frob_{v}\>}@>>>0.\\
\end{CD}$

By $\nimpr$, we know $\Nm{v}\not\equiv 1\pmod{p}$ and hence $H^{0}(G_{F_{v}}/I_{F_{v}}, W^{-}_{\rho})$, $H^{0}(G_{F_{v}}, W^{-}_{\rho})$ and $H^{1}(G_{F_{v}}/I_{F_{v}}, W^{-}_{\rho})$ are all zero.  It follows then from the snake lemma that:

\begin{itemize}
\item{$\ker(\pi_{1})=\cL_{v}=H^{1}(G_{F_{v}}, W^{+}_{\rho}).$}
\item{$H^{1}(G_{F_{v}}/I_{F_{v}}, W^{+}_{\rho})=H^{1}(G_{F_{v}}/I_{F_{v}}, W_{\rho}).$}
\item{$H^{1}(I_{F_{v}}, W^{+}_{\rho})^{\<\Frob_{v}\>}=H^{1}(G_{F_{v}}, W^{+}_{\rho})/ H^{1}(G_{F_{v}}/I_{F_{v}}, W^{+}_{\rho})=\cL_{v}/ H^{1}(G_{F_{v}}/I_{F_{v}}, W_{\rho}^{+}).$}
\end{itemize}

Then our claim follows from the tautological exact sequence:

$0\rightarrow \Sel_{\Sigma^{(2)}}(W_{\rho})\rightarrow \Sel_{\Sigma}(W_{\rho})\rightarrow \prod_{v\| \frakn_{\Sigma}/\frakn_{\emptyset}}\cL_{v}/ H^{1}(G_{F_{v}}/I_{F_{v}}, W_{\rho}^{+})$.

\end{proof}

As a corollary we obtain
 
\begin{cor}\label{tang_compare}
$\#(\wp_{\Sigma}/\wp^{2}_{\Sigma})\mid \#(\wp_{\Sigma^{(2)}}/\wp^{2}_{\Sigma^{(2)}})\prod_{v\mid \frakn/\frakn_{\emptyset}}\#(\cO_{f}/(\lambda^{2}_{\emptyset}(u_{\emptyset,v})-1)\cO_{f})$.
\end{cor}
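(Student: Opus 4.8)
The plan is to combine \lmref{compare} with the cotangent-space description \eqref{tangent} of the adjoint Selmer group, and then to evaluate the local factors $\#H^{1}(I_{F_{v}},W^{+}_{\rho})^{\<\Frob_{v}\>}$ that appear there. First I would record that, by \eqref{tangent}, for every $\Sigma$ with $\Sigma^{(2)}\subset\Sigma$ Pontryagin duality gives $\#(\wp_{\Sigma}/\wp^{2}_{\Sigma})=\#\Sel_{\Sigma}(W_{\rho})$ as soon as these are finite; the near-minimal group $\Sel_{\Sigma^{(2)}}(W_{\rho})$ is finite (this is the minimal case, underlying the Ihara-avoidance input to \lmref{numerical} and the identification $R_{\Sigma^{(2)}}\isomor\Hecke_{\Sigma^{(2)}}$), and \lmref{compare} then forces all the $\Sel_{\Sigma}(W_{\rho})$, hence all the $\wp_{\Sigma}/\wp^{2}_{\Sigma}$, to be finite. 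Substituting $\#(\wp_{\bullet}/\wp^{2}_{\bullet})=\#\Sel_{\bullet}(W_{\rho})$ into the divisibility of \lmref{compare} yields
\[
\frac{\#(\wp_{\Sigma}/\wp^{2}_{\Sigma})}{\#(\wp_{\Sigma^{(2)}}/\wp^{2}_{\Sigma^{(2)}})}\ \Big|\ \prod_{v\| \frakn_{\Sigma}/\frakn_{\emptyset}}\#H^{1}(I_{F_{v}},W^{+}_{\rho})^{\<\Frob_{v}\>},
\]
so the corollary is reduced to identifying each local factor with $\#\big(\cO_{f}/(\lambda^{2}_{\emptyset}(u_{\emptyset,v})-1)\cO_{f}\big)$.

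For such a $v$ (exactly dividing $\frakn_{\Sigma}/\frakn_{\emptyset}$) the representation $\rho_{\lambda_{\emptyset}}$ is unramified at $v$ — that is the point of level lowering — and, because $\Nm{v}\not\equiv1\pmod p$ by $\nimpr$ (since $\bar\rho_{f}$ is unramified at these $v\| \frakn^{+}$), its two $\Frob_{v}$-eigenvalues $\beta_{v}:=\lambda_{\emptyset}(u_{\emptyset,v})$ and $\Nm{v}/\beta_{v}$ are distinct modulo $\uf$; hence $\rho_{\lambda_{\emptyset}}\mid_{G_{F_{v}}}$ is diagonal, and the shape $\pMX{\chi_{v}^{-1}\epsilon}{*}{0}{\chi_{v}}$ of the deformations gives $\chi_{v}(\Frob_{v})=\beta_{v}$. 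Thus $I_{F_{v}}$ acts trivially on $W^{+}_{\rho}=\{\pMX{a}{b}{0}{-a}\}$, which decomposes $\<\Frob_{v}\>$-equivariantly as the scalar line (trivial $\Frob_{v}$-action) plus the line $\{\pMX{0}{b}{0}{0}\}$, on which $\Frob_{v}$ acts by $\chi_{v}^{-2}(\Frob_{v})\epsilon(\Frob_{v})=\beta_{v}^{-2}\epsilon(\Frob_{v})$. Since here $H^{1}(I_{F_{v}},-)=\Hom_{\mathrm{cont}}(I_{F_{v}},-)$ and amounts to a Tate twist by $\epsilon^{-1}$ on $\<\Frob_{v}\>$-modules, the powers of $\epsilon(\Frob_{v})$ cancel on the second line, and passing to $\<\Frob_{v}\>$-invariants gives
\[
H^{1}(I_{F_{v}},W^{+}_{\rho})^{\<\Frob_{v}\>}\ \isomor\ \cO_{f}/(\beta_{v}^{-2}-1)\cO_{f},
\]
the scalar summand contributing nothing because $\Frob_{v}$ acts on it (after the twist) by a power of $\Nm{v}$ that is $\not\equiv1\pmod\uf$. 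As $\beta_{v}$ is a unit in $\cO_{f}$, the right-hand side has cardinality $\#\big(\cO_{f}/(\lambda^{2}_{\emptyset}(u_{\emptyset,v})-1)\cO_{f}\big)$.

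Plugging this identity into the displayed divisibility, and then enlarging the product from $v\| \frakn_{\Sigma}/\frakn_{\emptyset}$ to all $v\mid\frakn/\frakn_{\emptyset}$ (which only multiplies the right-hand side by further positive integers), gives \corref{tang_compare}. The main obstacle is the local computation of the second paragraph: one has to keep straight the normalization of the cyclotomic character and which diagonal entry of $\rho_{\lambda_{\emptyset}}\mid_{G_{F_{v}}}$ carries $\chi_{v}$, and — crucially — one must invoke $\nimpr$ at exactly the right moment to kill the scalar summand of $W^{+}_{\rho}$; were that summand to survive, the bound would pick up a spurious factor $\#(\cO_{f}/(\Nm{v}-1)\cO_{f})$ and the chain of (in)equalities that feeds Diamond's numerical criterion in \lmref{numerical} would no longer close.
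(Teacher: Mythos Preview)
Your argument is correct and follows the same overall strategy as the paper: combine \lmref{compare} with the tangent-space identification \eqref{tangent}, and then evaluate the local factors. The only difference is cosmetic: where you compute $H^{1}(I_{F_{v}},W^{+}_{\rho})^{\<\Frob_{v}\>}$ directly by decomposing $W^{+}_{\rho}$ into $\Frob_{v}$-eigenlines and performing the Tate twist by hand, the paper instead passes through the local Euler characteristic formula and Tate duality to obtain $\#H^{1}(I_{F_{v}},W^{+}_{\rho})^{\<\Frob_{v}\>}=\#H^{0}(G_{F_{v}},W^{+}_{\rho}(1))=\#\cO_{f}/(\Nm{v}-1)(\lambda_{\emptyset}^{2}(u_{\emptyset,v})-1)\cO_{f}$, keeping the factor $(\Nm{v}-1)$ which is then a unit by $\nimpr$. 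Your route makes the use of $\nimpr$ (to kill the scalar summand) more visible, and also makes explicit the finiteness of $\Sel_{\Sigma^{(2)}}(W_{\rho})$ and the harmless enlargement of the index set in the product, both of which the paper leaves implicit. One small point: your claim that the two $\Frob_{v}$-eigenvalues are distinct modulo $\uf$ is correct but uses a bit more than $\Nm{v}\not\equiv 1$ alone---it also uses that $\beta_{v}\equiv\epsilon_{v}\in\{\pm 1\}\pmod{\uf}$, which follows from the very definition of $u_{\emptyset,v}$ via the Hensel factorisation $x^{2}-T_{v}x+\Nm{v}\equiv(x-\epsilon_{v})(x-\Nm{v}\epsilon_{v})$.
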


\begin{proof}
By a standard calculation using the Euler characteristic formula:

$
\begin{aligned}
\#H^{1}(I_{F_{v}}, W^{+}_{\rho})^{\<\Frob_{v}\>}&=\#H^{1}(G_{F_{v}}, W^{+}_{\rho})/\#H^{1}(G_{F_{v}}/I_{F_{v}}, (W^{+}_{\rho})^{I_{F_{v}}})\\
&=\#H^{1}(G_{F_{v}}, W^{+}_{\rho})/\#H^{0}(G_{F_{v}}, W^{+}_{\rho})\\
&=\#H^{0}(G_{F_{v}}, W^{+}_{\rho}(1)).\\
\end{aligned}
$

A direct calculation shows that $$\#H^{0}(G_{F_{v}}, W^{+}_{\rho}(1))=\#\cO_{f}/ (\Nm{v}-1)(\lambda_{\emptyset}^{2}(u_{\emptyset,v})-1)\cO_{f}.$$  This calculation combined with \lmref{compare} and \eqref{tangent} conclude this corollary.
\end{proof}

Now we can prove the main result of this chapter. 
\begin{thm}\label{TW}
Assume $(\CR)$ and $\nimpr$ hold, the module $S_{\Sigma}$ is free over $\Hecke_{\Sigma}$ of rank 1.
\end{thm}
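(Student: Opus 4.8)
The proof is the standard Wiles--Taylor--Diamond argument: one deduces freeness from Diamond's numerical criterion (\lmref{numerical}) by squeezing the congruence module $C(\frakn_\Sigma)$ between two quantities computed by the preceding lemmas, bootstrapping from the base case $\Sigma=\Sigma^{(2)}$ where an $R=\Hecke$ theorem is already available.

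\emph{Base case.} When $\Sigma=\Sigma^{(2)}$ every prime dividing $\frakn_\Sigma/\frakn_\emptyset$ occurs with exponent $2$, so the Taylor--Wiles--Kisin patching argument with Ihara avoidance at those primes applies: by \cite[Theorem 3.2]{Taylor:Ihara_avoidance}, whose hypotheses are supplied by $(\CR)$ (most notably the big image condition $(\CR)(2)$), the surjection $R_{\Sigma^{(2)}}\twoheadrightarrow\Hecke_{\Sigma^{(2)}}$ is an isomorphism of complete intersections finite flat over $\cO_f$, and $S_{\Sigma^{(2)}}$ is free of rank one over $\Hecke_{\Sigma^{(2)}}$. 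By \lmref{numerical} (equality, since $S_{\Sigma^{(2)}}$ is free) and the numerical criterion for the complete intersection $\Hecke_{\Sigma^{(2)}}\cong R_{\Sigma^{(2)}}$, this gives
\begin{equation*}
\#C(\frakn_{\Sigma^{(2)}})=\#(\cO_f/\mu_{\Sigma^{(2)}})=\#(\wp_{\Sigma^{(2)}}/\wp_{\Sigma^{(2)}}^{2})<\infty .
\end{equation*}

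\emph{The sandwich.} Fix an arbitrary $\Sigma\supseteq\Sigma^{(2)}$. Since $R_\Sigma\twoheadrightarrow\Hecke_\Sigma$ is compatible with the augmentations $\lambda_\Sigma$ to $\cO_f$, the cotangent space $\wp_\Sigma/\wp_\Sigma^{2}$ surjects onto $I_{\lambda_\Sigma}/I_{\lambda_\Sigma}^{2}$; combining this with \lmref{numerical} and the standard bound for the congruence ideal of the reduced (\lmref{reduced}), finite flat $\cO_f$-algebra $\Hecke_\Sigma$ in terms of its cotangent space gives
\begin{equation*}
\#C(\frakn_\Sigma)\le\#(\cO_f/\mu_\Sigma)\le\#(I_{\lambda_\Sigma}/I_{\lambda_\Sigma}^{2})\le\#(\wp_\Sigma/\wp_\Sigma^{2}).
\end{equation*}
In the opposite direction, adding the primes $v$ with $v\|\frakn_\Sigma/\frakn_\emptyset$ (equivalently $v\in\Sigma$ with $m_v=1$) one at a time and iterating \lmref{cong_module}, the primes with $m_v=2$ contributing trivially since there $u_{\emptyset,v}=0$, gives
\begin{equation*}
\#C(\frakn_\Sigma)=\#C(\frakn_{\Sigma^{(2)}})\prod_{v\|\frakn_\Sigma/\frakn_\emptyset}\#\bigl(\cO_f/(\lambda_\emptyset(u_{\emptyset,v})^{2}-1)\cO_f\bigr),
\end{equation*}
while \corref{tang_compare} together with the base case gives
\begin{equation*}
\#(\wp_\Sigma/\wp_\Sigma^{2})\mid\#(\wp_{\Sigma^{(2)}}/\wp_{\Sigma^{(2)}}^{2})\prod_{v\|\frakn_\Sigma/\frakn_\emptyset}\#\bigl(\cO_f/(\lambda_\emptyset(u_{\emptyset,v})^{2}-1)\cO_f\bigr)=\#C(\frakn_\Sigma).
\end{equation*}
Chaining the last three displays forces $\#C(\frakn_\Sigma)=\#(\cO_f/\mu_\Sigma)=\#(\wp_\Sigma/\wp_\Sigma^{2})$, and by \cite[Theorem 2.4]{Diamond:TW} the equality $\#C(\frakn_\Sigma)=\#(\cO_f/\mu_\Sigma)$ implies that $S_\Sigma$ is free over $\Hecke_\Sigma$; its rank is one because $S_\Sigma[\lambda_\Sigma]$ has $\cO_f$-rank one by strong multiplicity one and \lmref{reduced}. (The hypothesis $\nimpr$ enters throughout these comparisons: in \lmref{reduced}, in \lmref{cong_module} and in \corref{tang_compare}.)

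\emph{Main obstacle.} The one substantial ingredient is the base case: running the Taylor--Wiles--Kisin machine and invoking Ihara avoidance at the $m_v=2$ primes, and verifying that the local deformation conditions built into $\cD_\Sigma$ — ordinary with prescribed determinant at $v\mid p$, the special condition at $v\mid\frakn^{-}$, and the minimally ramified or $m_v=2$ conditions at $v\mid\frakn_\Sigma/\frakn_\emptyset$ — are matched to the Hecke operators so that \eqref{tangent} identifies $\wp_\Sigma/\wp_\Sigma^{2}$ with $\Sel_\Sigma(W_\rho)$. This is where the hypotheses in $(\CR)$ are used. Everything past the base case is the bookkeeping with congruence modules and adjoint Selmer groups already carried out in the preceding subsections.
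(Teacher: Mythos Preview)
Your proposal is correct and follows essentially the same approach as the paper: bootstrap from Taylor's Ihara-avoidance result for $\Sigma^{(2)}$, combine the exact growth of the congruence module from \lmref{cong_module} with the upper bound on the cotangent space from \corref{tang_compare}, and feed the resulting inequality $\#(\wp_\Sigma/\wp_\Sigma^{2})\le\#C(\frakn_\Sigma)$ into Diamond's numerical criterion. The paper's proof is terser—it records only the divisibility $\#(\wp_\Sigma/\wp_\Sigma^{2})\mid\#C(\frakn_\Sigma)$ and appeals directly to \cite[Theorem 2.4]{Diamond:TW}—whereas you spell out the intermediate chain $\#C(\frakn_\Sigma)\le\#(\cO_f/\mu_\Sigma)\le\#(\wp_\Sigma/\wp_\Sigma^{2})$, but the substance is identical.
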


\begin{proof}
By \cite[Theorem 3.2]{Taylor:Ihara_avoidance}, we have $\#\wp_{\Sigma^{(2)}}/\wp^{2}_{\Sigma^{(2)}}\mid \#C(\frakn_{\Sigma^{(2)}}).$ Since we know $\#C(\frakn_{\Sigma})=\#C(\frakn_{\Sigma^{(2)}})\prod_{v\|\frakn_{\Sigma}/\frakn_{\Sigma^{(2)}}}\#\cO_{f}/(\lambda^{2}_{\empty}(u_{\emptyset,v})-1)\cO_{f}$ by \lmref{cong_module} and $\#(\wp_{\Sigma}/\wp^{2}_{\Sigma})\mid \#(\wp_{\Sigma^{(2)}}/\wp^{2}_{\Sigma^{(2)}})\prod_{v\mid \frakn/\frakn_{\emptyset}}\#(\cO_{f}/(\lambda^{2}_{\emptyset}(u_{\emptyset,v})-1)\cO_{f})$ by \corref{tang_compare}, we have  $\#\wp_{\Sigma}/\wp^{2}_{\Sigma}\mid \#C(\frakn_{\Sigma})$. Then the desired multiplity one result follows from \cite[Theorem 2.4]{Diamond:TW}.
\end{proof}

\section{$\mu$-invariant and $\Phi$-functor}

Recall we have proved the one divisibility result towards the anticyclotomic Iwasawa main conjecture in \thmref{main_weak} under a slightly stronger condition $\nmin$ than $\nimpr$. In this section, we relax $\nmin$ and $\nimpr$. The reason for assuming a stronger condition in \thmref{main_weak} is that the Euler system method we employ relies on the freeness result \propref{free}. In order to relax the condition $\nmin$, we need to recover the freeness of the Selmer group. A closer look at the Euler system argument in Section $7.3$  shows that we in fact only need the freeness of a larger Selmer group, namely $\widehat{\Sel}_{\Delta}^{S\nplus}(K_{\infty}, T_{f,n}).$
\subsection{Greenberg-Vastal method}
To relax the assumption $\nmin$ to $\nimpr$, we study the Iwasawa $\mu$-invariant. And the following method is based on the unpublished note by Pollack-Weston communicated to us by Chan-Ho Kim and their preprint \cite{PW_free}. The key observation is the following theorem.

\begin{thm}
Under the condition $(\CR)$ and $\nimpr$, $\Sel_{\nminus}(K_{\infty}, A_{f})$ is $\Lambda$ cotorsion and the algebraic $\mu$-invariant $$\mu(\dual{\Sel_{\nminus}(K_{\infty}, A_{f})})$$ vanishes.
\end{thm}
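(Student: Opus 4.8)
The plan is to reduce the vanishing of the algebraic $\mu$-invariant of $\dual{\Sel_{\nminus}(K_{\infty}, A_{f})}$ to a computation with the weight-two quaternionic forms modulo $\uf$, following the Greenberg--Vatsal philosophy as adapted by Pollack--Weston. First I would note that by \thmref{interpolate} and \cite[Theorem B]{Hung:thesis}, the $\mu$-invariant of $L_{p}(K_{\infty}, f) = \theta_{\infty}(f)^{2}$ vanishes, and by \propref{cong} we have $\theta_{\infty}(f) \equiv \theta_{\infty}(\cD^{f}_{n}) \pmod{\uf^{n}}$ for a suitable $n$-admissible form $\cD^{f}_{n} = (\nminus, f^{\dagger, [k-2]}_{n})$; hence $\theta_{\infty}(\cD^{f}_{n}) \not\equiv 0 \pmod{\uf}$ for all $n$, i.e. the theta elements attached to the weight-two reductions are units somewhere. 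The analytic input is therefore already in hand; the content is the algebraic side.

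\textbf{Key steps.} The main step is to show $\dual{\Sel_{\nminus}(K_{\infty}, A_{f})}$ is $\Lambda$-cotorsion with trivial $\mu$-invariant by exhibiting, for each $n$, a bound $\Sel_{\nminus}(K_{\infty}, A_{f})[\uf] = \Sel_{\nminus}(K_{\infty}, A_{f,1})$ is a cofinitely generated $\Lambda/\uf\Lambda$-module that is in fact cotorsion over $\Lambda/\uf\Lambda$. Concretely I would run the Euler system argument of \thmref{induction} at level $n=1$: using the first reciprocity law \thmref{1law}, the $n$-admissible prime produced by \thmref{admissible} applied to a nonzero class in $\Sel_{\nminus}(K_{\infty}, A_{f,1})$, and the fact that $\partial_{\frakl}(\kappa_{\cD^{f}_{1}}(\frakl)) = \theta_{\infty}(\cD^{f}_{1}) \in \cO_{f,1}[[\Gamma]]$ is nonzero, one deduces via the global reciprocity sum (as in \lmref{vanish}) that $\dual{\Sel_{\nminus}(K_{\infty}, A_{f,1})}$ is a torsion $\Lambda/\uf\Lambda$-module: any element of $\Sel_{\nminus}(K_{\infty}, A_{f,1})$ killed by $\frakm_{\Lambda}$ would pair nontrivially against $\partial_{\frakl}(\kappa_{\cD^{f}_{1}}(\frakl))$, forcing $\theta_{\infty}(\cD^{f}_{1}) = 0$ in $\cO_{f,1}[[\Gamma]]$, a contradiction. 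This shows $\mu = 0$ because $\dual{\Sel_{\nminus}(K_{\infty}, A_{f})}/\uf$ surjects onto $\dual{\Sel_{\nminus}(K_{\infty}, A_{f,1})}$ up to finite error (using that $A_{f}[\uf] = A_{f,1}$ and the long exact sequence from $0 \to A_{f,1} \to A_{f} \xrightarrow{\uf} A_{f} \to 0$), and a $\Lambda$-module with $\dual{M}/\uf$ torsion over $\Lambda/\uf\Lambda$ has $\mu(\dual{M}) = 0$. The cotorsionness over $\Lambda$ itself then follows since a cofinitely generated $\Lambda$-module whose mod-$\uf$ reduction is $\Lambda/\uf\Lambda$-cotorsion and whose $\mu$-invariant vanishes must be $\Lambda$-cotorsion.

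\textbf{Main obstacle.} The delicate point will be the control-type comparison between $\Sel_{\nminus}(K_{\infty}, A_{f})[\uf]$ and $\Sel_{\nminus}(K_{\infty}, A_{f,1})$ under only the hypothesis $\nimpr$ rather than $\nmin$, since \propref{control} was proved under $\nmin$. Here one cannot use the naive control theorem; instead one argues as Pollack--Weston do (\cite{PW_free}, and the $\Phi$-functor machinery invoked in the surrounding section) that although the two Selmer groups may differ at primes $\frakl \mid \nplus$ where $\bar\rho_{f}$ is unramified and $\Nm{\frakl} \equiv 1 \pmod p$, the discrepancy is controlled by local terms $H^{1}(I_{\frakl}, A_{f,1})^{\Frob_{\frakl}}$ which are finite and hence contribute nothing to the $\mu$-invariant. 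The cleanest way to organize this is to pass to $\Sel^{\nplus}_{\nminus}(K_{\infty}, A_{f,1})$ (relaxed at $\nplus$), for which the control theorem does hold, run the Euler system bound there, and then observe that $\Sel^{\nplus}_{\nminus}(K_{\infty}, A_{f,1})/\Sel_{\nminus}(K_{\infty}, A_{f,1})$ injects into a finite direct sum of local cohomology groups at primes dividing $\nplus$, so the $\mu$-invariants agree. Once the $\mu = 0$ statement is secured at every finite level $n$, the passage to $A_{f}$ is formal.
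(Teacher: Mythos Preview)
Your argument has a genuine gap at the key step. You claim that a nonzero class $c\in\Sel_{\nminus}(K_{\infty},A_{f,1})[\frakm_{\Lambda}]$ ``pairs nontrivially against $\partial_{\frakl}(\kappa_{\cD^{f}_{1}}(\frakl))$, forcing $\theta_{\infty}(\cD^{f}_{1})=0$ in $\cO_{f,1}[[\Gamma]]$''. But the global reciprocity sum (\lmref{vanish}) says precisely the opposite: the pairing $\langle\partial_{\frakl}(\kappa),v_{\frakl}(c)\rangle_{\frakl}$ \emph{vanishes}. Since $v_{\frakl}(c)$ is a nonzero $\frakm_{\Lambda}$-torsion element of $H^{1}_{\fin}(K_{\infty,\frakl},A_{f,1})\cong(\Lambda/\uf)^{\vee}$, all this forces is $\theta_{\infty}(\cD^{f}_{1})\in\frakm_{\Lambda}\cdot(\Lambda/\uf)$, not $\theta_{\infty}(\cD^{f}_{1})=0$. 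The vanishing of the analytic $\mu$-invariant only gives $\theta_{\infty}(f)\not\equiv 0\pmod{\uf}$, which is perfectly compatible with $\theta_{\infty}(\cD^{f}_{1})\in\frakm_{\Lambda}$. So the base-case reasoning does not yield cotorsionness of $\dual{\Sel_{\nminus}(K_{\infty},A_{f,1})}$ over $\Lambda/\uf$, let alone finiteness.

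There is also a structural circularity. To bound the Selmer group by the theta element beyond the $\frakm_{\Lambda}$-socle you would need the full induction of \thmref{induction}, and that argument relies on the freeness of $\widehat{\Sel}^{S}_{\Delta}(K_{\infty},T_{f,n})$ (via \corref{control2}). Under $\nimpr$ alone, that freeness is established in \thmref{newfree} and \lmref{new_free} precisely by \emph{using} the $\mu=0$ statement you are trying to prove (this is the whole point of the $\Phi$-functor machinery). Passing to the $\nplus$-relaxed Selmer group does not break this circle: you still need some freeness input to run the Euler system for $f$.

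The paper resolves this by introducing an auxiliary form: level-lower $f$ to a congruent newform $g$ whose level satisfies $\nmin$ (such $g$ exists by \cite{Javis:Level}). For $g$ one may invoke \thmref{main_weak} and \cite[Theorem~B]{Hung:thesis} directly, giving that $\Sel_{\nminus}(K_{\infty},A_{g})$ is cotorsion with $\mu=0$. One then relaxes at $\nplus$ (harmless since those primes split in $K$, \`a la \cite[Cor.~2.3]{Greenberg&Vatsal}) and observes that $\Sel^{\nplus}_{\nminus}(K_{\infty},A_{f})[\uf]=\Sel^{\nplus}_{\nminus}(K_{\infty},A_{g})[\uf]$ because, with the $\nplus$-conditions removed, both sides depend only on the common residual representation $A_{f,1}=A_{g,1}$. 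Finiteness on the $g$-side then transfers to the $f$-side, whence $\mu=0$ for $f$. The missing idea in your proposal is this use of a level-lowered companion form to sidestep the circular dependence on freeness.
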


\begin{proof}
Let $g$ be a Hilbert modular form congruent to $f$ modulo $\uf$ which satisfies $(\CR)$ and $\nimpr$. Notice the existence of such $g$ is guaranteed by level-lowering \cite{Javis:Level}.
Then the theorem for $g$ follows immediately from \thmref{main_weak} and the main result of \cite[Theorem B]{Hung:thesis} concerning the analytic $\mu$-invariant. Since all primes dividing $\nplus$ splits in $K$, we can apply the same method as \cite[Cor 2.3]{Greenberg&Vatsal} and conclude that $\Sel_{\nminus}^{\nplus}(K_{\infty}, A_{g})$ is $\Lambda$-cotorsion with vanishing $\mu$-invariant. Since $g$ is obtained from $f$ by level lowering, an adaption of the method of \cite[Prop 2.8]{Greenberg&Vatsal} shows that $\Sel_{\nminus}^{\nplus}(K_{\infty}, A_{f})[\uf]=\Sel_{\nminus}^{\nplus}(K_{\infty}, A_{g})[\uf]$. The point is that when we relaxed the local condition at $\nplus$ for our Selmer groups$\Sel_{\nminus}^{\nplus}(K_{\infty}, A_{f})[\uf]$ (\resp $\Sel_{\nminus}^{\nplus}(K_{\infty}, A_{g})[\uf]$) only depends on the residual representation $A_{f,1}=A_{g,1}$. Since $\Sel_{\nminus}^{\nplus}(K_{\infty}, A_{g})$ is known to be cotorsion with vanishing $\mu$-invariant, $\Sel_{\nminus}^{\nplus}(K_{\infty}, A_{g})[\uf]$ is finite. This implies in turn that $\Sel_{\nminus}^{\nplus}(K_{\infty}, A_{f})[\uf]$ is finite. Then we can conclude that $\Sel_{\nminus}^{\nplus}(K_{\infty}, A_{f})$ is cotorsion with vanishing $\mu$-invariant. Now the same reasoning as in  \cite[Cor 2.3]{Greenberg&Vatsal} allows us to conclude that $\Sel_{\nminus}(K_{\infty}, A_{f})$ has the same properties.
\end{proof}

For any place $v$ of $F$, we put $\cH_{v}=\dirlim_{m} \prod_{\omega\mid v} H^{1}(K_{m,\omega}, A_{f})$. 

\begin{lm}\label{rightexact}
Let $S$ be finite set of places of $F$ that are away from $\frakn$, then we have an exact sequence:
$$0\rightarrow \Sel_{\nminus}^{\nplus}(K_{\infty}, A_{f}) \rightarrow \Sel_{\nminus}^{S\nplus}(K_{\infty}, A_{f})\rightarrow \prod_{v\in S} \cH_{v}\rightarrow 0.$$
\end{lm}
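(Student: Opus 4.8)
The plan is to handle injectivity, exactness in the middle, and surjectivity separately; the first two are formal and the third is the heart of the matter. Unwinding \defref{Delta_selmer}, the local conditions defining $\Sel^{\nplus}_{\nminus}(K_{\infty},A_{f})$ and $\Sel^{S\nplus}_{\nminus}(K_{\infty},A_{f})$ differ only at the primes $v\in S$, where the former requires $\partial_{v}(s)=0$ (that is, $\res_{v}(s)\in H^{1}_{\fin}(K_{\infty,v},A_{f})$) and the latter imposes nothing. Since the primes of $S$ are prime to $\frakn$ and, in the situation where the lemma is applied, split in $K$, \lmref{split_inert} gives $H^{1}_{\fin}(K_{\infty,v},A_{f})=0$; hence $\cH_{v}=H^{1}_{\sing}(K_{\infty,v},A_{f})=H^{1}(K_{\infty,v},A_{f})$ and $\Sel^{\nplus}_{\nminus}(K_{\infty},A_{f})$ is precisely the kernel of the localization map $\lambda\colon\Sel^{S\nplus}_{\nminus}(K_{\infty},A_{f})\to\prod_{v\in S}\cH_{v}$. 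This yields exactness at the first two terms, and injectivity of the first map is clear.

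For surjectivity of $\lambda$ I would work at each finite layer $K_{m}$ and then pass to the direct limit (which is exact). Choose a finite set $\Sigma$ of places of $F$ containing the archimedean places, the places above $\frakn p$, and the primes in $S$, so that both Selmer groups are cut out inside $H^{1}(G_{K_{m},\Sigma},A_{f})$ by systems of local conditions $\cL\subseteq\cL'$ agreeing outside $S$. The Poitou--Tate sequence, in the Greenberg--Wiles form comparing two such families (cf. \cite[Theorem 1.7.3]{Rubin:book}), identifies the cokernel of $\Sel^{S\nplus}_{\nminus}(K_{m},A_{f})\to\bigoplus_{v\in S}\bigoplus_{\omega\mid v}H^{1}(K_{m,\omega},A_{f})$ with the Pontryagin dual of the quotient of the compact Selmer group $\widehat{\Sel}^{\nplus}_{\nminus}(K_{m},T_{f})$ (with unrestricted local condition at $S$) by the analogous compact Selmer group with the strict local condition imposed at the primes of $S$. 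This quotient injects into $\bigoplus_{v\in S}\bigoplus_{\omega\mid v}H^{1}(K_{m,\omega},T_{f})$, and in the inverse limit over $m$ into $\bigoplus_{v\in S}\widehat{H}^{1}(K_{\infty,v},T_{f})\cong\bigoplus_{v\in S}H^{1}(K_{\infty,v},A_{f})^{\vee}$ by local Tate duality. Thus it suffices to show that any universal-norm class in $\widehat{\Sel}^{\nplus}_{\nminus}(K_{\infty},T_{f})$ has trivial image in $\widehat{H}^{1}(K_{\infty,v},T_{f})$ for every $v\in S$.

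This last vanishing is where I expect the real work to lie. Because $v\in S$ is split in $K$ and prime to $\frakn p$, the tower $K_{\infty}/K$ is unramified and only finitely decomposed at $v$, so $K_{\infty,v}/K_{v}$ is the unramified $\Z_{p}$-extension; the point is then that corestriction along this tower annihilates the inverse limit of the (ramified) local cohomology of $T_{f}$ at $v$, exactly as in the proof of \lmref{split_inert}, so $\widehat{H}^{1}(K_{\infty,v},T_{f})$ contributes nothing. A cleaner and more robust alternative is to invoke the weak Leopoldt conjecture for $A_{f}$ along the anticyclotomic $\Z_{p}^{d_{\frakp}}$-extension, namely $H^{2}(G_{K_{\infty},\Sigma},A_{f})=0$ — which holds here since $\bar{\rho}_{f}$ is irreducible by $(\CR)$ — and to read off the surjectivity directly from the Cassels--Poitou--Tate sequence over $K_{\infty}$, following Greenberg--Vatsal (cf. \cite[Cor. 2.3]{Greenberg&Vatsal}), whose method the surrounding discussion already uses. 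Granting either form of the vanishing, the cokernel is zero at every layer, $\lambda$ is surjective, and taking direct limits gives the stated exact sequence.
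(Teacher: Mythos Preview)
Your argument hinges on the claim that the primes in $S$ are split in $K$, but this is exactly backwards: in the application (\corref{sel(S)} combined with \lmref{cotor}) the set $S$ consists of $n$-admissible primes, which by \defref{n_admissible}(2) are \emph{inert} in $K$. This single error undermines both halves of your proof.

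For exactness in the middle, your invocation of \lmref{split_inert}(1) is therefore inapplicable. The conclusion $H^{1}_{\fin}(K_{\infty,v},A_{f})=0$ does hold for inert $v\nmid\frakn p$, but for a different reason: since $v$ splits completely in $K_{\infty}$, one is reduced to showing $H^{1}_{\fin}(K_{v},A_{f})=A_{f}/(\Frob_{v}-1)A_{f}=0$, and this follows because the Frobenius eigenvalues are Weil numbers (hence nonzero), so $\Frob_{v}-1$ is surjective on the divisible module $A_{f}$. This is precisely the computation carried out in the proof of \lmref{cotor}.

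For surjectivity, your first argument collapses entirely: when $v$ is inert it splits completely in $K_{\infty}$, so $\widehat{H}^{1}(K_{\infty,v},T_{f})\cong H^{1}(K_{v},T_{f})\otimes\Lambda$, which is far from zero. Your fallback to weak Leopoldt is more promising in spirit, but the justification ``which holds here since $\bar{\rho}_{f}$ is irreducible'' is not adequate: the vanishing of $H^{2}(G_{K_{\infty},\Sigma},A_{f})$ over an anticyclotomic $\Z_{p}^{d_{\frakp}}$-extension is not a formal consequence of residual irreducibility and requires a genuine argument. The paper simply invokes \cite[Proposition~A.2]{Pollack_Weston:AMU}, whose proof is a global duality argument that does not rely on the primes in $S$ being split; you should consult that reference and adapt it rather than the line you have sketched.
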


\begin{proof}
This follows from the definition except for the surjectivity of the last map. The same method as in \cite[Proposition A.2]{Pollack_Weston:AMU} applies showing the surjectivity.
\end{proof}

\begin{lm}\label{cotor}
Let $\frakl$ be an $n$-admissible prime for $f$, then we have $\cH_{\frakl}\cong \dual{(\Lambda/\uf^{t})}$ for some $t\geq n$.
\end{lm}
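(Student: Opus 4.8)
The plan is to localize the computation at the unique prime of $K$ above $\frakl$ and then read the answer off from the Frobenius eigenvalues forced by $n$-admissibility.

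First, since $\frakl$ is $n$-admissible it is inert in $K$, hence --- exactly as in the proof of \lmref{split_inert} --- it splits completely in $K_{\infty}/K$. Consequently, for each $m$ the places of $K_{m}$ over $\frakl$ form a simply transitive $\Gamma_{m}$-set, each with completion equal to $K_{\frakl}$; fixing a base place identifies $\prod_{\omega\mid\frakl}H^{1}(K_{m,\omega},A_{f})\cong H^{1}(K_{\frakl},A_{f})\otimes_{\cO_{f}}\cO_{f}[\Gamma_{m}]$ as $\cO_{f}[\Gamma_{m}]$-modules, the group ring acting by the regular representation on the second factor. Tracing the restriction maps through these identifications, the transition map $\cO_{f}[\Gamma_{m}]\to\cO_{f}[\Gamma_{m+1}]$ is $[\sigma]\mapsto\sum_{\sigma'\mapsto\sigma}[\sigma']$, which under the tautological pairing $\cO_{f}[\Gamma_{m}]\times\cO_{f}[\Gamma_{m}]\to\cO_{f}$ is the transpose of the projection $\cO_{f}[\Gamma_{m+1}]\twoheadrightarrow\cO_{f}[\Gamma_{m}]$. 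Since $-\otimes_{\cO_{f}}-$ commutes with direct limits, $\cH_{\frakl}\cong\dirlim_{m}\bigl(H^{1}(K_{\frakl},A_{f})\otimes_{\cO_{f}}\cO_{f}[\Gamma_{m}]\bigr)$ with these transition maps.

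Next I would compute $H^{1}(K_{\frakl},A_{f})$. As $\frakl\nmid p\frakn$, $A_{f}$ is unramified at $\frakl$; as $\Nm{\frakl}^{2}\not\equiv1\pmod p$, the characteristic polynomial of $\Frob_{\frakl}$ has distinct roots mod $\frakp$, so Hensel's lemma gives $T_{f}=T^{+}_{\frakl}\oplus T^{-}_{\frakl}$ (hence $A_{f}=A^{+}_{\frakl}\oplus A^{-}_{\frakl}$, each $A^{\pm}_{\frakl}\cong E_{f}/\cO_{f}$) on which $\Frob_{\frakl}$ acts by units $\alpha,\beta\in\cO_{f}^{\times}$ with $\alpha\equiv\epsilon_{\frakl}\Nm{\frakl}$ and $\beta\equiv\epsilon_{\frakl}\pmod{\uf^{n}}$, by the last condition of \defref{n_admissible}. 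Since $\frakl$ is inert of residue degree $2$, the Frobenius of $K_{\frakl}$ is $\Frob_{\frakl}^{2}$, acting by $\alpha^{2}\equiv\Nm{\frakl}^{2}$ and $\beta^{2}\equiv1\pmod{\uf^{n}}$; set $t:=v_{\uf}(\beta^{2}-1)$, and note $v_{\uf}(\alpha^{2}-\Nm{\frakl}^{2})=t$ as well (because $\alpha^{2}\beta^{2}=\Nm{\frakl}^{2}$ with $\alpha,\beta$ units), and $t\geq n$ since $\beta^{2}-1=(\beta-\epsilon_{\frakl})(\beta+\epsilon_{\frakl})$. From the inflation--restriction description of the cohomology of an unramified module, $H^{1}_{\fin}(K_{\frakl},A_{f})=A_{f}/(\Frob_{\frakl}^{2}-1)A_{f}$, which is killed on $A^{+}_{\frakl}$ because $\alpha^{2}-1$ is a unit, and on $A^{-}_{\frakl}$ because multiplication by the valuation-$t$ element $\beta^{2}-1$ is already surjective on the divisible group $A^{-}_{\frakl}$; thus $H^{1}_{\fin}(K_{\frakl},A_{f})=0$. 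The singular part $H^{1}_{\sing}(K_{\frakl},A_{f})\cong A_{f}(-1)^{\Frob_{\frakl}^{2}=1}$ vanishes on $A^{-}_{\frakl}(-1)$ (where $\Frob_{\frakl}^{2}$ acts by $\beta^{2}\Nm{\frakl}^{-2}$ and $\beta^{2}\Nm{\frakl}^{-2}-1=(\beta^{2}-\Nm{\frakl}^{2})\Nm{\frakl}^{-2}$ is a unit) and equals $A^{+}_{\frakl}[\uf^{t}]\cong\cO_{f}/\uf^{t}$ on $A^{+}_{\frakl}(-1)$. Hence $H^{1}(K_{\frakl},A_{f})\cong\cO_{f}/\uf^{t}$ with $t\geq n$.

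Combining the two steps, $\cH_{\frakl}\cong\dirlim_{m}\bigl((\cO_{f}/\uf^{t})\otimes_{\cO_{f}}\cO_{f}[\Gamma_{m}]\bigr)=\dirlim_{m}\Hom_{\cO_{f}}(\cO_{f}[\Gamma_{m}],\cO_{f}/\uf^{t})=\Hom_{\mathrm{cont}}(\Lambda,\cO_{f}/\uf^{t})$, the last equality because $\Lambda$ is profinite, $\cO_{f}/\uf^{t}$ discrete, and the transition maps are precomposition with the projections $\Lambda\twoheadrightarrow\cO_{f}[\Gamma_{m}]$. Since $\Hom_{\mathrm{cont}}(\Lambda,\cO_{f}/\uf^{t})=\Hom_{\mathrm{cont}}(\Lambda,E_{f}/\cO_{f})[\uf^{t}]=\dual{\Lambda}[\uf^{t}]=\dual{(\Lambda/\uf^{t}\Lambda)}$ as $\Lambda$-modules, we get $\cH_{\frakl}\cong\dual{(\Lambda/\uf^{t})}$ with $t\geq n$, as claimed. (If $\beta^{2}=1$ exactly then $t=\infty$ and $\cH_{\frakl}\cong\dual{\Lambda}$; this is harmless for the application and can be avoided by a generic choice of $\frakl$.) The delicate step is the middle one: the eigenvalue congruences hold only modulo $\uf^{n}$, so $A_{f}$ does not itself split over $K_{\frakl}$ beyond the $n$-th layer, and one must exploit the contrast between multiplication by a valuation-$t$ element being surjective on the divisible module $A_{f}$ while having nonzero cokernel on the truncation $A_{f,n}$ --- this is precisely what makes $H^{1}_{\fin}(K_{\frakl},A_{f})$ vanish and $\cH_{\frakl}$ cyclic over $\Lambda/\uf^{t}$.
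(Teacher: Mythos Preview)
Your proof is correct and follows essentially the same route as the paper: reduce via the complete splitting of $\frakl$ in $K_{\infty}$ to the local computation of $H^{1}(K_{\frakl},A_{f})$, then use inflation--restriction (equivalently, your $H^{1}_{\fin}/H^{1}_{\sing}$ decomposition) together with the Frobenius eigenvalue congruences from $n$-admissibility to identify this group with $\cO_{f}/\uf^{t}$ for some $t\geq n$. You are in fact more explicit than the paper about the transition maps in the direct limit and about the degenerate case $\beta^{2}=1$, but the argument is the same.
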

\begin{proof}
First notice that as $\frakl$ splits in $K_{\infty}$, for each $m$ we have $$H^{1}(K_{m, \frakl}, A_{f})=H^{1}(K_{\frakl}, A_{f})\otimes \cO_{f}[\Gal(K_{m}/K)].$$ 
We consider the inflation-restriction exact sequence $$0\rightarrow H^{1}(K^{ur}_{\frakl}/K_{\frakl}, A_{f})\rightarrow H^{1}(K_{\frakl}, A_{f})\rightarrow H^{1}(I_{\frakl}, A_{f})^{\Gal(K^{ur}_{\frakl}/K)}\rightarrow 0.$$

First of all, $H^{1}(K^{ur}_{\frakl}/K_{\frakl}, A_{f})=A_{f}/(\gamma-1)A_{f}$ for a topological generator of $\Gal(K^{ur}_{\frakl}/K_{\frakl})$. This shows that $H^{1}(K^{ur}_{\frakl}/K_{\frakl}, A_{f})=0$. On the other hand $H^{1}(I_{\frakl}, A_{f})\cong \Hom(I_{\frakl}, A_{f})=\Hom(\Z_{p}(1), A_{f})$ and thus $H^{1}(I_{\frakl}, A_{f})^{\Gal(K^{ur}_{\frakl}/K_{\frakl})}=\Hom(I_{\frakl}, A_{f})^{\Gal(K^{ur}_{\frakl}/K_{\frakl})}$. By the $n$-admissibility of $\frakl$, the two eigenvalues $\alpha^{2}$ and $\beta^{2}$ of $\Frob_{\frakl}$ on $A_{f}$ has the property that $\alpha\equiv \epsilon_{\frakl}\pmod{\uf^{n}}$ and $\beta\equiv\epsilon_{\frakl}\Nm{\frakl}\pmod{\uf^{n}}$. Note also that $\Frob_{\frakl}$ acts by $\Nm{\frakl}^{2}$ on $\Z_{p}(1)$, it follows therefore $\Hom(I_{\frakl}, A_{f})^{\Gal(K^{ur}_{\frakl}/K_{\frakl})}=\cO_{f}/\uf^{t}$ for the largest power $t$ such that $\beta\equiv \Nm{\frakl}\pmod{\uf^{t}}$. Then it is clear that by taking the direct limit with respect to $m$, we obtain the desired result.
\end{proof}

\begin{cor}\label{sel(S)}
$\dual{\Sel_{\nminus}^{S\nplus}(K_{\infty}, A_{f})}$ is pseudo-isomorphic to a $\Lambda$-module of the form:
$$(\bigoplus_{i=1}^{\#S}\Lambda/\uf^{t_{i}})\times Y$$
with each $t_{i}\geq n$ and $Y$ a torsion $\Lambda$-module with $\mu(Y)=0$.
\end{cor}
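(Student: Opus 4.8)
The plan is to obtain the decomposition by dualizing the exact sequence of Lemma~\ref{rightexact} and identifying each term. Taking $S$ to consist of $n$-admissible primes (so that Lemma~\ref{cotor} applies) and applying Pontryagin duality to
\[
0\to \Sel_{\nminus}^{\nplus}(K_{\infty}, A_{f}) \to \Sel_{\nminus}^{S\nplus}(K_{\infty}, A_{f})\to \prod_{v\in S} \cH_{v}\to 0
\]
produces a short exact sequence of finitely generated $\Lambda$-modules
\[
0\to \bigoplus_{v\in S}\dual{\cH_{v}} \to \dual{\Sel_{\nminus}^{S\nplus}(K_{\infty}, A_{f})}\to Y\to 0,\qquad Y:=\dual{\Sel_{\nminus}^{\nplus}(K_{\infty}, A_{f})}.
\]
By Lemma~\ref{cotor}, $\cH_{v}\cong\dual{(\Lambda/\uf^{t_{v}})}$ with $t_{v}\ge n$, so Pontryagin biduality identifies the submodule with $\bigoplus_{i=1}^{\#S}\Lambda/\uf^{t_{i}}$; and the theorem proved immediately above shows that $\Sel_{\nminus}^{\nplus}(K_{\infty}, A_{f})$ is $\Lambda$-cotorsion with vanishing algebraic $\mu$-invariant, hence $Y$ is torsion with $\mu(Y)=0$. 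Thus it only remains to see that this extension pseudo-splits.

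To do this I would argue as follows. Write $B=\dual{\Sel_{\nminus}^{S\nplus}(K_{\infty}, A_{f})}$ and let $A\subseteq B$ be the submodule just identified, so $A\cong\bigoplus_{i}\Lambda/\uf^{t_{i}}$ is killed by a power of $\uf$ and $B/A\cong Y$. Since $\mu(Y)=0$, the characteristic ideal $\mathrm{char}_{\Lambda}(Y)$ is prime to $\uf$, so $Y_{(\uf)}=0$ and the $\uf$-power-torsion submodule $Y(\uf)$ is pseudo-null. Hence the inclusion $A\subseteq B(\uf)$ into the maximal $\uf$-power-torsion submodule of $B$ has pseudo-null cokernel (it embeds into $Y(\uf)$), so $B(\uf)\sim A$, while $B/B(\uf)\sim Y$ and is $\uf$-torsion-free. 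Now decompose $B$ up to pseudo-isomorphism via the structure theorem as $B\sim B_{\uf}\oplus B'$ with $B_{\uf}$ its $\uf$-primary part and $\mathrm{char}_{\Lambda}(B')$ prime to $\uf$; comparing $\uf$-power-torsion submodules through the pseudo-isomorphism gives $B_{\uf}\sim B(\uf)\sim A$ and $B'\sim B/B(\uf)\sim Y$. This yields $\dual{\Sel_{\nminus}^{S\nplus}(K_{\infty}, A_{f})}\sim\big(\bigoplus_{i=1}^{\#S}\Lambda/\uf^{t_{i}}\big)\oplus Y$ with $t_{i}\ge n$, as required.

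The formal part --- dualizing, invoking Lemma~\ref{cotor}, and quoting the vanishing of the $\mu$-invariant --- is routine. The only delicate point is the last step: extracting a genuine pseudo-decomposition from the extension rather than merely matching characteristic ideals. The mechanism that makes it work is that the submodule is $\uf$-power torsion while $\mathrm{char}_{\Lambda}(Y)$ is coprime to $\uf$, so the two pieces have disjoint support in codimension one; I expect the careful bookkeeping with the structure theorem for $\Lambda$-modules here to be the main (though still essentially standard) obstacle, with everything else following directly from results already in hand.
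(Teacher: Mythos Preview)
Your proposal is correct and follows essentially the same approach as the paper, which simply says ``This is proved by combining \lmref{rightexact} and \lmref{cotor}.'' You have supplied the details the paper omits --- in particular the pseudo-splitting argument via the disjointness of the $\uf$-primary part and the prime-to-$\uf$ part under the structure theorem --- and correctly identified that one needs the vanishing of $\mu(\dual{\Sel_{\nminus}^{\nplus}(K_{\infty},A_{f})})$, which is established in the proof of the theorem immediately preceding \lmref{rightexact}.
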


\begin{proof}
This is proved by combining the \lmref{rightexact} and \lmref{cotor}.
\end{proof}

\begin{remark}
Note that the above corollary implies in particular that $\mu(\dual{\Sel_{\nminus}^{S\nplus}(K_{\infty}, A_{f})})\geq n\#S.$
\end{remark}

\subsection{$\Phi$-functor}

For each $n\geq 1$, Pollack-Weston defined the following functor:
\begin{equation}
\Phi:  \{\text{cofinitely generated  $\Lambda$-module} \}  \rightarrow  \{\text{finitely generated $\Lambda/\uf^{n}$-module} \}
\end{equation}
where $\Phi_{n}(S)=\prolim_{m} S[\uf^{n}]^{\Gamma^{m}}$ for the group $\Gamma^{m}=\Gal(K_{\infty}/K_{m})$. And the transition map is given by the trace map.

The following lemma collects some properties of this functor.

\begin{lm}
\begin{enumerate}
\item{The functor $\Phi_{n}$ is covariant and left exact.}
\item{$\Phi_{n}(S)=\Phi_{n}(S[\uf^{n}])$.}
\item{If $S$ is finite, then $\Phi_{n}(S)=0$.}
\end{enumerate}
\end{lm}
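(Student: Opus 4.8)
The plan is to verify the three assertions directly from the definition $\Phi_{n}(S)=\prolim_{m} S[\uf^{n}]^{\Gamma^{m}}$, recalling that the transition maps $S[\uf^{n}]^{\Gamma^{m+1}}\to S[\uf^{n}]^{\Gamma^{m}}$ are the trace maps $x\mapsto \sum_{\sigma\in \Gamma^{m}/\Gamma^{m+1}}\sigma x$.

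For (1), I would observe that $\Phi_{n}$ is a composition of three covariant, left exact functors: the $\uf^{n}$-torsion functor $S\mapsto S[\uf^{n}]=\Hom_{\cO_{f}}(\cO_{f}/\uf^{n},S)$; for each $m$, the $\Gamma^{m}$-invariants functor $M\mapsto M^{\Gamma^{m}}=H^{0}(\Gamma^{m},M)$; and the inverse limit $\prolim_{m}$. A left exact sequence $0\to S'\to S\to S''$ of cofinitely generated $\Lambda$-modules then produces, for every $m$, a left exact sequence of $\Gamma^{m}$-invariants compatible with the trace maps (any $\Lambda$-module map being $\Gamma$-equivariant, it commutes with traces), and passing to the limit gives left exactness; covariance is immediate. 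For (2), the point is simply that $S[\uf^{n}]$ is already annihilated by $\uf^{n}$, so $(S[\uf^{n}])[\uf^{n}]=S[\uf^{n}]$ and the defining inverse systems for $\Phi_{n}(S)$ and $\Phi_{n}(S[\uf^{n}])$ literally coincide.

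For (3), when $S$ is finite so is $S[\uf^{n}]$, and its continuous $\Gamma$-action factors through a finite quotient; hence some $\Gamma^{m_{0}}$ acts trivially, so for $m\geq m_{0}$ one has $S[\uf^{n}]^{\Gamma^{m}}=S[\uf^{n}]$ with transition maps equal to multiplication by $[\Gamma^{m}:\Gamma^{m+1}]$, a power of $p$. Since $S[\uf^{n}]$ is a finite abelian $p$-group, multiplication by $p$ is a nilpotent endomorphism of it, so, computing the inverse limit over the cofinal set $\{m\geq m_{0}\}$, every compatible system is divisible by arbitrarily large powers of $p$ and therefore vanishes; thus $\Phi_{n}(S)=0$.

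I do not expect a genuine obstacle here, as the statement is essentially formal. The only point requiring care is the identification of the transition maps in part (3) with multiplication by $[\Gamma^{m}:\Gamma^{m+1}]$ once the Galois action has been trivialised, together with the reduction of the inverse limit to the cofinal subsystem indexed by $m\geq m_{0}$; everything else is a direct application of the left exactness of torsion, of invariants, and of inverse limits.
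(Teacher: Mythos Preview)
Your proposal is correct and follows essentially the same approach as the paper. The paper dismisses (1) and (2) as following ``from the definitions'' and for (3) observes that for $m$ large enough $\Gamma^{m}$ fixes $S$, whence the transition map is multiplication by $p$ (more precisely, as you note, by a power of $p$) and the inverse limit vanishes; your write-up simply makes these points explicit.
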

\begin{proof}
Only the third point needs some explanation as the others follow from the definitions. For the third point, notice that $\Gamma^{m}$ fixes $S$ for $m$ sufficiently large. Then the transition map becomes simply multiplication by $p$. The third point follows.
\end{proof}

\begin{lm}
Let $0\rightarrow A\rightarrow B\rightarrow C\rightarrow 0$ be an exact sequence of cofinitely generated $\Lambda$-modules.
\begin{enumerate}
\item{If $C$ is finite, then $\Phi_{n}(A)=\Phi_{n}(B)$.}
\item{If $A$ is finite, then $\Phi_{n}(B)=\Phi_{n}(C)$.}
\end{enumerate}
\end{lm}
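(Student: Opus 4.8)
The plan is to handle the two cases by reducing, via the left exactness of $\Phi_{n}$ established in the previous lemma, to the vanishing statement $\Phi_{n}(\text{finite})=0$. For part (1), suppose $C$ is finite. Apply $\Phi_{n}$ to the short exact sequence $0\to A\to B\to C\to 0$. By left exactness of $\Phi_{n}$ (part (1) of the previous lemma), the map $\Phi_{n}(A)\to\Phi_{n}(B)$ is injective, so it suffices to show the induced map $\Phi_{n}(B)\to\Phi_{n}(C)$ is zero, equivalently that the cokernel of $\Phi_{n}(A)\to\Phi_{n}(B)$ vanishes. Since $\Phi_{n}(C)=0$ by part (3) of the previous lemma, any element of $\Phi_{n}(B)$ maps to $0$ in $\Phi_{n}(C)$; I would then argue that such an element already lies in the image of $\Phi_{n}(A)$ by chasing the definition $\Phi_{n}(-)=\prolim_{m}(-)[\uf^{n}]^{\Gamma^{m}}$ at each finite level together with the fact that the obstruction to lifting lives in an $H^{1}$ of $\Gamma^{m}$ with coefficients in the finite module $C[\uf^{n}]$, which stabilizes and is killed in the projective limit since the transition maps on it are eventually multiplication by $p$ (hence nilpotent after enough steps, as in the proof of part (3)).

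For part (2), suppose $A$ is finite. Again apply $\Phi_{n}$ to $0\to A\to B\to C\to 0$. Left exactness gives exactness of $0\to\Phi_{n}(A)\to\Phi_{n}(B)\to\Phi_{n}(C)$, and $\Phi_{n}(A)=0$ by part (3), so $\Phi_{n}(B)\to\Phi_{n}(C)$ is injective. It remains to prove surjectivity. Here I would pass to the level-$m$ exact sequences $0\to A[\uf^{n}]^{\Gamma^{m}}\to B[\uf^{n}]^{\Gamma^{m}}\to C[\uf^{n}]^{\Gamma^{m}}\to H^{1}(\Gamma^{m},A[\uf^{n}])$; the obstruction term is a subquotient of the finite group $A[\uf^{n}]$ and, upon taking the projective limit over $m$ with respect to the trace maps, it is annihilated because the transition maps on this fixed finite module are eventually multiplication by $p$ and hence pro-nilpotent, exactly as in the proof that $\Phi_{n}$ kills finite modules. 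Therefore $\prolim_{m}B[\uf^{n}]^{\Gamma^{m}}\to\prolim_{m}C[\uf^{n}]^{\Gamma^{m}}$ is surjective, i.e. $\Phi_{n}(B)\to\Phi_{n}(C)$ is surjective, giving the desired isomorphism.

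The main obstacle is the surjectivity/cokernel control in both parts: $\Phi_{n}$ is only asserted to be left exact, so right exactness must be supplied by hand through the connecting maps in Galois cohomology of $\Gamma^{m}$, and one must verify carefully that these connecting maps, after taking the inverse limit, vanish. The key input making this work is precisely the mechanism behind part (3) of the previous lemma: for $m$ large the group $\Gamma^{m}$ acts trivially on the relevant finite module, and the trace transition map then becomes multiplication by $[\Gamma^{m}:\Gamma^{m+1}]=p$, so the inverse limit of the obstruction systems is trivial. Once this is in place both isomorphisms follow formally.
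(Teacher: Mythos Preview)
For part (1) you overcomplicate matters. Left exactness of $\Phi_{n}$ means not merely that $\Phi_{n}(A)\to\Phi_{n}(B)$ is injective, but that the whole sequence $0\to\Phi_{n}(A)\to\Phi_{n}(B)\to\Phi_{n}(C)$ is exact. Since $C$ is finite, $\Phi_{n}(C)=0$, and hence $\Phi_{n}(A)\cong\Phi_{n}(B)$ immediately; no $H^{1}$ argument or cokernel chase is required. This is precisely the paper's one-line proof.

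For part (2) there is a genuine gap. The four-term sequence you write down,
\[
0\to A[\uf^{n}]^{\Gamma^{m}}\to B[\uf^{n}]^{\Gamma^{m}}\to C[\uf^{n}]^{\Gamma^{m}}\to H^{1}(\Gamma^{m},A[\uf^{n}]),
\]
presupposes that $0\to A[\uf^{n}]\to B[\uf^{n}]\to C[\uf^{n}]\to 0$ is short exact. But the $\uf^{n}$-torsion functor is only left exact: the snake lemma gives
\[
0\to A[\uf^{n}]\to B[\uf^{n}]\to C[\uf^{n}]\to A/\uf^{n}A,
\]
so the cokernel of $B[\uf^{n}]\to C[\uf^{n}]$ is a submodule of $A/\uf^{n}A$, finite but in general nonzero. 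Consequently your obstruction to lifting does not sit solely in $H^{1}(\Gamma^{m},A[\uf^{n}])$. The paper repairs this by introducing $C'=\mathrm{im}(B[\uf^{n}]\to C[\uf^{n}])$, applying part (1) to the short exact sequence $0\to C'\to C[\uf^{n}]\to H\to 0$ with $H$ finite to obtain $\Phi_{n}(C')\cong\Phi_{n}(C)$, and only then taking $\Gamma^{m}$-invariants of the genuine short exact sequence $0\to A[\uf^{n}]\to B[\uf^{n}]\to C'\to 0$. After this correction your argument and the paper's coincide; you should also make the Mittag--Leffler step explicit (finiteness of $A[\uf^{n}]^{\Gamma^{m}}$) when concluding that the inverse limit preserves surjectivity on the right.
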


\begin{proof}
The first part follows immediately from the previous lemma and the fact that $\Phi$ is left exact.

For the second point, we first take the $\uf^{n}$ torsion of the exact sequence to obtain:
$$0\rightarrow A[\uf^{n}]\rightarrow B[\uf^{n}]\rightarrow C[\uf^{n}]\rightarrow A/\uf^{n}.$$
Then for some finite $H$, we have
$$0\rightarrow A[\uf^{n}]\rightarrow B[\uf^{n}]\rightarrow C[\uf^{n}]\rightarrow H\rightarrow 0.$$
We split this exact sequence into two short exact sequences:
$$0\rightarrow A[\uf^{n}]\rightarrow B[\uf^{n}]\rightarrow C^{\prime}\rightarrow 0 $$
and
$$0\rightarrow C^{\prime} \rightarrow C[\uf^{n}]\rightarrow H\rightarrow 0.$$
By applying the previous lemma, we have $\Phi_{n}(C^{\prime})\cong\Phi_{n}(C[\uf^{n}])\cong \Phi_{n}(C)$. For the first exact sequence, we take $\Gamma^{m}$ invariant to get $$0\rightarrow A[\uf^{n}]^{\Gamma^{m}}\rightarrow B[\uf^{n}]^{\Gamma^{m}}\rightarrow (C^{\prime})^{\Gamma^{m}}\rightarrow H^{\prime}\rightarrow 0$$ for some finite $H^{\prime}$. Again by breaking the this four term exact sequence into two short exact sequence, we have
$$0\rightarrow A[\uf^{n}]^{\Gamma^{m}}\rightarrow B[\uf^{n}]^{\Gamma^{m}}\rightarrow C^{\prime}_{m}\rightarrow 0$$
and
$$0\rightarrow C^{\prime}_{m}\rightarrow (C^{\prime})^{\Gamma^{m}}\rightarrow H^{\prime}\rightarrow 0$$
where $C^{\prime}_{m}$ is again  finite. Taking inverse limit with respect to $n$ yields:
$$0\rightarrow \Phi_{n}(A)\rightarrow \Phi_{n}(B)\rightarrow \prolim_{m} C^{\prime}_{m}\rightarrow 0$$ where the left exactness is guaranteed by the finiteness of $A$ and the Mittag-Leffler condition. Then we get $\Phi_{n}(B)=\prolim_{m}C^{\prime}_{m}$. The same reasoning applied to the second exact sequence yields $\prolim C_{m}^{\prime}\cong \Phi_{n}(C^{\prime})$. Therefore $\Phi_{n}(B)=\Phi_{n}(C^{\prime})=\Phi_{n}(C)$. 
\end{proof}

\begin{cor}
If $A$ and $B$ are two cofinitely generated $\Lambda$-modules which are pseudo-isomorphic, then $\Phi_{n}(A)\cong \Phi_{n}(B)$
\end{cor}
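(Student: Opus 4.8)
The plan is to deduce this from the two preceding lemmas by factoring a pseudo-isomorphism through its image. By definition, a pseudo-isomorphism between the cofinitely generated $\Lambda$-modules $A$ and $B$ is a $\Lambda$-homomorphism with finite kernel and finite cokernel; since the statement is symmetric in $A$ and $B$, and the argument below will be as well, I may assume that we are given such a map $\phi\colon A\to B$.

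First I would introduce $C=\ker\phi$ and $D=\coker\phi$, both finite, together with the image $\Im\phi=:I$, which yields two short exact sequences of cofinitely generated $\Lambda$-modules,
\[
0\to C\to A\to I\to 0,\qquad 0\to I\to B\to D\to 0.
\]
Applying the second of the two preceding lemmas to the first sequence, in the case in which the left-hand term $C$ is finite, gives $\Phi_{n}(A)\cong\Phi_{n}(I)$. Applying the same lemma to the second sequence, in the case in which the right-hand term $D$ is finite, gives $\Phi_{n}(I)\cong\Phi_{n}(B)$. Composing these two isomorphisms yields $\Phi_{n}(A)\cong\Phi_{n}(B)$, as desired.

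There is no real obstacle here: all the substance has already been isolated in the lemma computing $\Phi_{n}$ across a short exact sequence with a finite end term, and what remains is only the bookkeeping of splitting the pseudo-isomorphism into the two short exact sequences above. The single point that deserves a word of care is the convention that, for cofinitely generated $\Lambda$-modules, "pseudo-isomorphic" is understood via the Pontryagin duals --- equivalently, as the existence of a $\Lambda$-homomorphism with finite kernel and cokernel in one of the two directions --- so that the factorization applies and the resulting isomorphism is insensitive to the direction in which the pseudo-isomorphism is taken.
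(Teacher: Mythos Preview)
Your proof is correct and matches the paper's approach exactly; the paper simply says the corollary ``follows immediately from the two previous lemmas'' without spelling out the factorization through the image, which you have done. Your observation about the convention for pseudo-isomorphism of cofinitely generated modules is a reasonable clarification, though in this context the existence of a $\Lambda$-map with finite kernel and cokernel (in either direction) is all that is needed, and your argument handles it cleanly.
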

\begin{proof}
In light of the definition of pseudo-isomorphism, this follows immediately from the two previous lemmas.
\end{proof}

\begin{prop}\label{muzero}
For a fintely generated cotorsion $\Lambda$-module $Y$ with vanishing $\mu$-invariant, $\Phi_{n}(\dual{Y})=0$ for all $n\geq 1$.
\end{prop}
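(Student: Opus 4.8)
The plan is to combine the pseudo-isomorphism invariance of $\Phi_{n}$ just established with the structure theory of finitely generated $\Lambda$-modules, thereby reducing to a single elementary module prime to $\uf$, where the two preceding lemmas finish the argument.

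First I would apply the structure theorem recalled in the introduction. Since $Y$ is finitely generated and $\Lambda$-cotorsion, its elementary decomposition has no free part; and since $\mu(Y)=0$, it has no $\Lambda/(\uf^{\mu_{i}})$ factors. Hence $Y$ is pseudo-isomorphic to $E:=\bigoplus_{j}\Lambda/(f_{j}^{m_{j}})$ for some distinguished polynomials $f_{j}$. Pontryagin duality is exact and carries finite modules to finite modules, so a pseudo-isomorphism $Y\to E$ dualizes to a pseudo-isomorphism $\dual{E}\to\dual{Y}$ of cofinitely generated $\Lambda$-modules. By the corollary preceding the proposition, $\Phi_{n}(\dual{Y})\cong\Phi_{n}(\dual{E})$; and since $\Phi_{n}$ visibly commutes with finite direct sums (being defined termwise by a $\Gamma^{m}$-invariants and an inverse limit) we have $\Phi_{n}(\dual{E})\cong\bigoplus_{j}\Phi_{n}\big(\dual{(\Lambda/(f_{j}^{m_{j}}))}\big)$. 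Thus it suffices to show $\Phi_{n}(\dual{(\Lambda/(g))})=0$ for each distinguished polynomial $g=f_{j}^{m_{j}}$.

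Fix such a $g$. By Weierstrass division $\Lambda/(g)$ is a free $\cO_{f}$-module of finite rank $\deg g$; consequently its Pontryagin dual $\dual{(\Lambda/(g))}$ is a divisible $\cO_{f}$-module, isomorphic to $(E_{f}/\cO_{f})^{\deg g}$, so that $\dual{(\Lambda/(g))}[\uf^{n}]\cong(\cO_{f,n})^{\deg g}$ is finite. The lemma $\Phi_{n}(S)=\Phi_{n}(S[\uf^{n}])$ together with the lemma that $\Phi_{n}$ kills finite modules now gives $\Phi_{n}(\dual{(\Lambda/(g))})=\Phi_{n}\big(\dual{(\Lambda/(g))}[\uf^{n}]\big)=0$. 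Combining this with the reduction of the previous paragraph yields $\Phi_{n}(\dual{Y})=0$ for all $n\geq 1$.

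The argument is short, and the only point demanding care is the reduction step: that pseudo-isomorphisms are respected by Pontryagin duality and that $\Phi_{n}$ — which a priori respects pseudo-isomorphisms only through the corollary — is additive on finite direct sums; both follow formally from exactness of duality and left-exactness of $\Phi_{n}$. (If one wishes to work with the genuinely several-variable Iwasawa algebra $\Lambda=\cO_{f}[[\Gamma]]$, one argues in the same way using the elementary modules $\Lambda/\frakp^{(e)}$ attached to height-one primes $\frakp\neq(\uf)$, on which $\uf$ acts as a non-zero-divisor; there I expect the analysis of the norm/transfer transition maps at finite level $K_{m}$ to be the one additional ingredient required.)
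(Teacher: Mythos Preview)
Your proof is correct and follows essentially the same approach as the paper: reduce via the structure theorem and pseudo-isomorphism invariance to an elementary module $\Lambda/(g)$ with $g$ prime to $\uf$, then observe that the $\uf^{n}$-torsion of its dual is finite (the paper phrases this as $\Lambda/(g,\uf^{n})$ being finite, which is the same computation), and conclude by the lemma that $\Phi_{n}$ kills finite modules. You are slightly more explicit than the paper in invoking the pseudo-isomorphism corollary and in flagging the multi-variable caveat, but the argument is the same.
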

\begin{proof}
By the structure theorem of finitely generated cotorsion $\Lambda$-module $Y$, we can assume that $Y=\Lambda/f\Lambda$ for some $f$ coprime to $p$.  Then we have $\Phi_{n}(\dual{Y})\cong\Phi_{n}(\dual{Y}[\uf^{n}])\cong\Phi_{n}(\dual{(Y/\uf^{n})})$. As $Y/\uf^{n}=\Lambda/(f,\uf^{n})$ is finite, it follows that $\Phi_{n}(\dual{Y})=0$
\end{proof}

\begin{prop}\label{munonzero}
If $Y=\Lambda/\uf^{t}$ with $t\geq n$, then $\Phi_{n}(\dual{Y})\cong \Lambda/\uf^{n}$.
\end{prop}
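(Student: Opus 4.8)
The plan is to compute $\Phi_{n}(\dual{Y})$ directly for $Y = \Lambda/\uf^{t}$ with $t \geq n$, using the two structural facts established just above: that $\Phi_{n}(S) = \Phi_{n}(S[\uf^{n}])$ and that $\Phi_{n}$ kills finite modules. First I would observe that $\dual{Y} = \dual{(\Lambda/\uf^{t})}$ has $\uf^{n}$-torsion submodule $\dual{Y}[\uf^{n}] \cong \dual{(Y/\uf^{n}Y)} = \dual{(\Lambda/\uf^{n})}$, since $t \geq n$ forces $Y/\uf^{n}Y = \Lambda/\uf^{n}$. Hence by part (2) of the first lemma on $\Phi_{n}$ it suffices to compute $\Phi_{n}(\dual{(\Lambda/\uf^{n})})$, i.e. we are reduced to the case $t = n$.

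Next I would make the Pontryagin dual explicit: $\dual{(\Lambda/\uf^{n})} = \Hom(\Lambda/\uf^{n}, E_{f}/\cO_{f})$ is naturally isomorphic, as a $\Lambda$-module, to $(E_{f}/\cO_{f})[[\Gamma]]$-flavored object; more precisely, writing $\Lambda = \cO_{f}[[\Gamma]]$ and using that $\Gamma$ acts on itself, one gets $\dual{(\Lambda/\uf^{n})} \cong \cO_{f,n}[[\Gamma]]$ as a $\Lambda$-module where $\Lambda$ acts via the natural inclusion and the duality identifies the discrete topology correctly (this is the standard self-duality $\dual{\Lambda/\uf^n} \cong \Lambda/\uf^n$ of $\Lambda/\uf^n$ as a module over itself, using that $\Lambda/\uf^n = \cO_{f,n}[[\Gamma]]$ is a profinite Gorenstein ring). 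Then $\Phi_{n}$ of this module is $\prolim_{m} (\cO_{f,n}[[\Gamma]])^{\Gamma^{m}}$ where $\Gamma^{m} = \Gal(K_{\infty}/K_{m})$ and the transition maps are the trace (norm) maps $\cO_{f,n}[[\Gamma]]^{\Gamma^{m+1}} \to \cO_{f,n}[[\Gamma]]^{\Gamma^{m}}$. I would identify $(\cO_{f,n}[[\Gamma]])^{\Gamma^{m}}$ with $\cO_{f,n}[\Gamma_{m}]$ (the coinvariants under $\Gamma^m$, which agree with the invariants here because we are looking at a module induced up, so this is really the statement that the $\Gamma^m$-invariants of the dual are the dual of the $\Gamma^m$-coinvariants, namely $\dual{(\cO_{f,n}[\Gamma_m])} = \cO_{f,n}[\Gamma_m]$), and check that under these identifications the trace transition map becomes the natural projection $\cO_{f,n}[\Gamma_{m+1}] \to \cO_{f,n}[\Gamma_{m}]$ induced by $\Gamma_{m+1} \twoheadrightarrow \Gamma_{m}$. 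Since those projection maps are surjective with Mittag-Leffler holding, the inverse limit is $\prolim_{m} \cO_{f,n}[\Gamma_{m}] = \cO_{f,n}[[\Gamma]] = \Lambda/\uf^{n}\Lambda$, which gives the claim.

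The main obstacle I anticipate is the bookkeeping in the middle step: correctly matching up the Pontryagin dual's $\Gamma^{m}$-invariants with $\cO_{f,n}[\Gamma_{m}]$ and, crucially, verifying that the trace map on $\Phi_{n}$ (which is defined via corestriction on the Selmer-group side) corresponds precisely to the degeneracy/projection map $\cO_{f,n}[\Gamma_{m+1}] \to \cO_{f,n}[\Gamma_{m}]$ rather than to multiplication by $[\Gamma^{m}:\Gamma^{m+1}] = p$ or to some twist thereof. Getting this transition map right is exactly what distinguishes the present proposition from \propref{muzero}: there the transition maps collapse to multiplication by $p$ on a finite module and kill everything, whereas here the module is not finite and the trace map is the honest norm map between group rings, whose inverse limit survives. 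Once the duality $\dual{(\cO_{f,n}[\Gamma_m])}\cong\cO_{f,n}[\Gamma_m]$ is pinned down compatibly with the transition maps, the conclusion $\Phi_{n}(\dual Y)\cong\Lambda/\uf^n$ is immediate.
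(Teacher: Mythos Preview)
Your approach is essentially the same as the paper's. The paper also computes $\Phi_{n}(\dual{(\Lambda/\uf^{t})})$ by identifying $\dual{(\Lambda/\uf^{t})}[\uf^{n}]^{\Gamma^{m}}$ with $\dual{(\Lambda/\uf^{n})_{\Gamma^{m}}} = \dual{\cO_{f,n}[\Gamma_{m}]}$, then invoking the Gorensteinness of $\cO_{f,n}[\Gamma_{m}]$ to get $\dual{\cO_{f,n}[\Gamma_{m}]} \cong \cO_{f,n}[\Gamma_{m}]$ compatibly with the transition maps, and finally taking the inverse limit to obtain $\Lambda/\uf^{n}$. Your explicit reduction to the case $t=n$ at the outset is slightly cleaner than the paper's treatment (which folds it into the first displayed isomorphism), but the substance is identical.

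One small slip worth fixing: the line $\dual{(\Lambda/\uf^{n})} \cong \cO_{f,n}[[\Gamma]]$ is topologically wrong. The Pontryagin dual of the profinite module $\Lambda/\uf^{n}$ is discrete, namely $\dirlim_{m}\cO_{f,n}[\Gamma_{m}]$, not the compact ring $\cO_{f,n}[[\Gamma]]$. Your parenthetical already contains the correct argument---compute $(\dual{X})^{\Gamma^{m}}$ as $\dual{(X_{\Gamma^{m}})}$ and apply finite-level self-duality---so the proof goes through once you drop the incorrect global identification and work directly with that formulation, which is exactly what the paper does.
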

\begin{proof}
By definition, we have 

$
\begin{aligned}
\Phi_{n}(\dual{(\Lambda/\uf^{t}\Lambda)})&=\prolim_{n}\dual{(\Lambda/\uf^{t})}[\uf^{n}]^{\Gamma^{m}}
\cong \prolim_{n}\dual{(\Lambda/\uf^{n})_{\Gamma^{m}}}\\
&\cong \prolim_{n}\dual{\cO_{f,n}[\Gamma_{m}]}.\\
\end{aligned}
$

By the Gorensteiness of $\dual{\cO_{f,n}[\Gamma_{m}]}$, $\dual{\cO_{f,n}[\Gamma_{m}]}\cong \cO_{f,n}[\Gamma_{m}]$ and the transition maps with respect to $n$ are compatible with this isomorphism. It follows the that 

$
\begin{aligned}
\Phi_{n}(\dual{(\Lambda/\uf^{t}\Lambda)})&\cong \prolim_{n}\dual{\cO_{f,n}[\Gamma_{m}]}
\cong \prolim_{n}\cO_{f,n}[\Gamma_{m}]\\
&\cong \Lambda/\uf^{n}.\\
\end{aligned}
$
\end{proof}

\subsection{Finish of the proof} 

\begin{prop}\label{switch}
We have $\Phi_{n}(\Sel_{\nminus}^{S\nplus}(K_{\infty}, A_{f}))=\widehat{\Sel}_{\nminus}^{S\nplus}(K_{\infty}, T_{f,n})$.
\end{prop}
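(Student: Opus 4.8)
The plan is to unwind the definition of $\Phi_{n}$, pass to the $\uf^{n}$-torsion, transport from $A_{f,n}$ to $T_{f,n}$ by a scalar isomorphism, and recognise the trace transition maps as corestrictions. First I would write, straight from the definition, $\Phi_{n}(\Sel_{\nminus}^{S\nplus}(K_{\infty},A_{f}))=\prolim_{m}\bigl(\Sel_{\nminus}^{S\nplus}(K_{\infty},A_{f})[\uf^{n}]\bigr)^{\Gamma^{m}}$, the limit being along the trace maps, and then use $\Sel_{\nminus}^{S\nplus}(K_{\infty},A_{f})=\dirlim_{m}\Sel_{\nminus}^{S\nplus}(K_{m},A_{f})$. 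Since $\uf^{n}$-torsion and $\Gamma^{m}$-invariants commute with filtered colimits, the task reduces to the control-type identity $\bigl(\Sel_{\nminus}^{S\nplus}(K_{\infty},A_{f})[\uf^{n}]\bigr)^{\Gamma^{m}}=\Sel_{\nminus}^{S\nplus}(K_{m},A_{f,n})$. Here $(\CR)(1)$ gives $H^{0}(L,A_{f})=0$ for every $L\subseteq K_{\infty}$, so inflation–restriction identifies $H^{1}(K_{m},A_{f,n})$ with $H^{1}(K_{\infty},A_{f,n})^{\Gamma^{m}}$, and the cohomology sequence of $0\to A_{f,n}\to A_{f}\xrightarrow{\uf^{n}}A_{f}\to 0$ identifies it with $H^{1}(K_{m},A_{f})[\uf^{n}]$; I would then run the proof of \propref{control} almost verbatim to see these identifications respect the Selmer conditions.

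The reason this persists under $\nimpr$ rather than the stronger $\nmin$ is that part (1) of \propref{control} already uses only $(\CR)(1)$ and $(\PO)$ (via \lmref{no-p-invariant}), and the sole place where part (2) invokes $\nmin$ is the local surjectivity claim at primes $\frakl\mid\nplus$ (where $\bar\rho_{f}$ need not be ramified, so $H^{0}(L^{\ur}_{\frakl},A_{f})$ need not be divisible). But in $\Sel_{\nminus}^{S\nplus}$ every such $\frakl$ lies in the relaxed set, so its local term is all of $H^{1}$ and there is nothing to check there; the ordinary conditions at $p$ and $\nminus$ are controlled exactly as before, and the unramified conditions away from $p\nminus S\nplus$ are harmless since $K_{\infty}/K$ is unramified outside $\frakp$. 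I expect this step — re-examining the control argument and confirming that relaxing at $\nplus$ genuinely removes the dependence on $\nmin$ — to be the main obstacle; everything afterwards is formal.

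Next I would transport the answer from $A_{f,n}$ to $T_{f,n}$. Multiplication by $\uf^{n}$ is a $G_{F}$-equivariant isomorphism $A_{f,n}=\uf^{-n}T_{f}/T_{f}\isomor T_{f}/\uf^{n}T_{f}=T_{f,n}$, and since each $F^{+}_{v}V_{f}$ is an $E_{f}$-subspace this carries $F^{+}_{v}A_{f,n}$ onto $F^{+}_{v}T_{f,n}$ for $v\mid p\nminus$, while trivially respecting the finite and singular structures elsewhere; hence it identifies $\Sel_{\nminus}^{S\nplus}(K_{m},A_{f,n})$ with $\Sel_{\nminus}^{S\nplus}(K_{m},T_{f,n})$ functorially in $m$, in particular compatibly with the corestriction maps of the tower. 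Finally, to match the trace transition maps of $\Phi_{n}$ with the corestrictions defining $\widehat{\Sel}_{\nminus}^{S\nplus}(K_{\infty},T_{f,n})$, I would observe that under the control isomorphism $\Sel_{\nminus}^{S\nplus}(K_{m},A_{f,n})\isomor\Sel_{\nminus}^{S\nplus}(K_{m+1},A_{f,n})^{\Gal(K_{m+1}/K_{m})}$, which is restriction, the operator $\sum_{\gamma\in\Gal(K_{m+1}/K_{m})}\gamma$ equals $\res_{K_{m+1}/K_{m}}\circ\,\mathrm{cores}_{K_{m+1}/K_{m}}$ on $H^{1}(K_{m+1},A_{f,n})$, so pulling back along the restriction isomorphism shows the trace transition map is precisely corestriction. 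Passing to the inverse limit and composing with the isomorphism of the previous paragraph then yields $\Phi_{n}(\Sel_{\nminus}^{S\nplus}(K_{\infty},A_{f}))=\prolim_{m}\Sel_{\nminus}^{S\nplus}(K_{m},T_{f,n})=\widehat{\Sel}_{\nminus}^{S\nplus}(K_{\infty},T_{f,n})$, which is the assertion.
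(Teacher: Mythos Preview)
Your proposal is correct and follows essentially the same approach as the paper: unwind the definition of $\Phi_{n}$ and invoke the control theorem for the relaxed Selmer group. The paper's own proof is a one-line appeal to \propref{control}, whereas you supply the details it omits --- in particular, you explain why the control theorem goes through under $\nimpr$ once the conditions at $\nplus$ are relaxed (a point the paper only makes explicit later, in the proof of \lmref{new_free}), and you spell out the identification $A_{f,n}\cong T_{f,n}$ and the matching of trace maps with corestrictions.
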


\begin{proof}
We evaluate the functor $\Phi_{n}$ at $\Sel_{\nminus}^{S\nplus}(K_{\infty}, A_{f})$:

$$
\begin{aligned}
\Phi_{n}(\Sel_{\nminus}^{S\nplus}(K_{\infty}, A_{f}))&\cong \prolim_{n}\Sel_{\nminus}^{S\nplus}(K_{\infty}, A_{f})[\uf^{n}]^{\Gamma^{m}}\cong \widehat{\Sel}_{\nminus}^{S\nplus}(K_{\infty},T_{f,n})\\
\end{aligned}
$$
where the last isomorphism is given by the control theorem treated in \propref{control}.

\end{proof}
\begin{thm}\label{newfree}
The compact Selmer group $\widehat{\Sel}_{\nminus}^{S\nplus}(K_{\infty}, T_{f,n})$ is free of rank $\#S$ over $\Lambda/\uf^{n}$.
\end{thm}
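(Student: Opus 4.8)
The plan is to obtain the freeness purely by assembling the $\Phi_{n}$-functor machinery of this section, since all the substantive input is already in place: \propref{switch}, \corref{sel(S)}, \propref{muzero} and \propref{munonzero}. First I would invoke \propref{switch} to identify $\widehat{\Sel}_{\nminus}^{S\nplus}(K_{\infty}, T_{f,n})$ with $\Phi_{n}(M)$, where $M=\Sel_{\nminus}^{S\nplus}(K_{\infty}, A_{f})$, so that the statement is reduced to computing $\Phi_{n}(M)$.

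Next I would bring in \corref{sel(S)}: the Pontryagin dual $\dual{M}$ is pseudo-isomorphic to $N=\big(\bigoplus_{i=1}^{\#S}\Lambda/\uf^{t_{i}}\big)\oplus Y$ with each $t_{i}\geq n$ and $Y$ a finitely generated torsion $\Lambda$-module with $\mu(Y)=0$. Since the $\Phi_{n}$-computations of \propref{muzero} and \propref{munonzero} are phrased on Pontryagin duals, I would dualize the pseudo-isomorphism $\dual{M}\to N$: Pontryagin duality is exact and preserves finiteness, so it yields a homomorphism $\dual{N}\to M$ with finite kernel and cokernel, that is, $M$ and $\dual{N}$ are pseudo-isomorphic cofinitely generated $\Lambda$-modules. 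The corollary of this section asserting that $\Phi_{n}$ is invariant under pseudo-isomorphism then gives $\Phi_{n}(M)\cong\Phi_{n}(\dual{N})$. I would then decompose $\dual{N}=\big(\bigoplus_{i=1}^{\#S}\dual{(\Lambda/\uf^{t_{i}})}\big)\oplus\dual{Y}$ and use that $\Phi_{n}$ respects finite direct sums; this is immediate from its left-exactness, since for a split exact sequence $0\to A\to A\oplus B\to B\to 0$ the splitting furnishes a section of $\Phi_{n}(A\oplus B)\to\Phi_{n}(B)$, whence $\Phi_{n}(A\oplus B)\cong\Phi_{n}(A)\oplus\Phi_{n}(B)$.

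Finally I would evaluate the two kinds of summands. By \propref{muzero}, $\Phi_{n}(\dual{Y})=0$ because $Y$ is a finitely generated cotorsion $\Lambda$-module with vanishing $\mu$-invariant; by \propref{munonzero}, $\Phi_{n}(\dual{(\Lambda/\uf^{t_{i}})})\cong\Lambda/\uf^{n}$ for each $i$ since $t_{i}\geq n$. Combining these, $\Phi_{n}(M)\cong(\Lambda/\uf^{n})^{\#S}$, and feeding this back through \propref{switch} shows that $\widehat{\Sel}_{\nminus}^{S\nplus}(K_{\infty}, T_{f,n})$ is free of rank $\#S$ over $\Lambda/\uf^{n}$.

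As for difficulty: there is no serious obstacle at this stage. The only points needing a line of verification are the compatibility of Pontryagin duality with the pseudo-isomorphism relation (immediate from exactness of duality together with the fact that the dual of a finite module is finite) and the additivity of $\Phi_{n}$ on finite direct sums (immediate from left-exactness plus the splitting). All the genuine content has already been extracted earlier --- the control-theorem input behind \propref{switch}, the structure theorem \corref{sel(S)} which itself rests on the vanishing of the algebraic $\mu$-invariant of $\Sel_{\nminus}(K_{\infty},A_{f})$, and the explicit $\Phi_{n}$-calculations --- so this theorem is essentially their formal synthesis, and is precisely the freeness statement needed to rerun the Euler system argument of Section~\ref{ES_argument} with $\nmin$ weakened to $\nimpr$.
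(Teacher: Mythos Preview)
Your proposal is correct and follows exactly the same approach as the paper's proof: apply \propref{switch}, invoke the structural result \corref{sel(S)}, use the pseudo-isomorphism invariance of $\Phi_{n}$, and then evaluate via \propref{muzero} and \propref{munonzero}. You are simply more explicit than the paper about dualizing the pseudo-isomorphism and about the additivity of $\Phi_{n}$ on finite direct sums, both of which the paper leaves implicit.
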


\begin{remark}
This theorem improves \propref{free}.
\end{remark}

\begin{proof}
By \corref{sel(S)}, we know $\dual{\Sel_{\nminus}^{S\nplus}(K_{\infty}, A_{f})}$ is pseudo-isomorphic to 
$$(\bigoplus_{i=1}^{\#S}\Lambda/\uf^{t_{i}})\times Y$$ for some $Y$ with vanishing $\mu$-invariant. Combining \propref{muzero} and \propref{munonzero}, we have
$$\Phi_{n}(\Sel_{\nminus}^{S\nplus}(K_{\infty}, A_{f}))\cong (\Lambda/\uf^{n})^{\#S}.$$
Hence $\widehat{\Sel}_{\nminus}^{S\nplus}(K_{\infty}, T_{f,n})\cong (\Lambda/\uf^{n})^{\#S}$ by \propref{switch}.
\end{proof}

Now \thmref{main_weak} can be improved by assuming the desired assumption $(\CR)$ and $\nimpr$. Note it suffices to use a larger Selmer group $\widehat{\Sel}^{S\nplus}_{\Delta}(K_{\infty}, T_{f,n})$ in the proof of \thmref{induction}. The proof of the following freeness theorem for this Selmer group  will provide all the missing ingredients to prove our main result \thmref{main1} assuming $(\CR)$ and $\nimpr$ \cf \corref{control2} . 
\begin{lm}\label{new_free}
Let $\Delta$ be a square free product of  $n$-admissible primes in $F$ with $\nminus$ and $S$ be a square free product of primes in $F$ away from $\frakn p$. Assume only $(\CR)$, then
\begin{enumerate}
\item $\widehat{\Sel}_{\Delta}^{\nplus S}(K_{\infty}, T_{f,n})/\frakm_{\Lambda}=\Sel_{\Delta}^{\nplus S}(K, T_{f,1})$.
\item $\widehat{\Sel}_{\Delta}^{\nplus S}(K_{\infty}, T_{f,n})$ is free over $\Lambda/\uf^{n}$.
\end{enumerate} 
\end{lm}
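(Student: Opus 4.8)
The plan is to run the argument behind \thmref{newfree}, which is the case $\Delta=\nminus$, with $\nminus$ replaced by the larger $\Delta$; the statement splits into a control-theorem part (1) and a freeness part (2), and I would settle (1) first. For (1) I would revisit the proof of \propref{control} for the Selmer group relaxed along $\nplus S$. The only place in that proof where the hypothesis $\nmin$ entered was the surjectivity onto the local conditions at the primes dividing $\nplus$; once those local conditions have been dropped altogether, that step is vacuous. The remaining local restriction maps are injective for reasons unrelated to $\nmin$: away from $\frakp$ the extension $K_\infty/K$ is unramified, so $K_\frakl^{\ur}=L_\frakl^{\ur}$; at an admissible prime $\frakl\mid\Delta/\nminus$ (inert in $K$) the prime splits completely in $K_\infty$, so the restriction is an isomorphism; and at $\frakp$ one uses \lmref{no-p-invariant}. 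Together with $H^0(L,A_{f,n})=0$, which is $(\CR)(1)$, and inflation--restriction, this yields the control theorem at every finite layer $K_m$, hence (1) in the limit, and at the same time supplies the input needed to identify $\widehat{\Sel}_\Delta^{\nplus S}(K_\infty,T_{f,n})$ with $\Phi_n\bigl(\Sel_\Delta^{\nplus S}(K_\infty,A_f)\bigr)$ exactly as in \propref{switch}.

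For (2) the plan is to compute $\dual{\Sel_\Delta^{\nplus S}(K_\infty,A_f)}$ up to pseudo-isomorphism and then feed it into $\Phi_n$. One starts from $\Sel_\nminus(K_\infty,A_f)$, which by the $\mu$-vanishing theorem opening this section is $\Lambda$-cotorsion with trivial $\mu$-invariant, so its Pontryagin dual is pseudo-isomorphic to a torsion module $Y$ carrying no $\Lambda/\uf^t$-summand at all. Relaxing the local conditions at the primes of $S$ and at the admissible primes of $\Delta/\nminus$, all of which split in $K_\infty$, introduces (by \lmref{rightexact} and \lmref{cotor}) finitely many summands $\Lambda/\uf^{t_i}$ with every $t_i\ge n$, while reimposing the ordinary condition at each admissible prime of $\Delta/\nminus$ --- where by \lmref{admi_coho} both $\widehat{H}^1_\fin$ and $\widehat{H}^1_\ord$ are free of rank one over $\Lambda/\uf^n$ and are complementary direct summands --- is accounted for by one such summand per prime, the requisite surjectivity coming from global duality (\lmref{Poitou-Tate}) together with the $n$-admissibility of the primes involved. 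As in \corref{sel(S)}, the net outcome is that $\dual{\Sel_\Delta^{\nplus S}(K_\infty,A_f)}$ is pseudo-isomorphic to $\bigl(\bigoplus_i\Lambda/\uf^{t_i}\bigr)\times Y'$ with each $t_i\ge n$ and $\mu(Y')=0$.

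The proof is then concluded as in \thmref{newfree}: the functor $\Phi_n$ depends only on the pseudo-isomorphism class, it kills $\dual{Y'}$ by \propref{muzero}, and it sends each $\dual{(\Lambda/\uf^{t_i})}$ with $t_i\ge n$ to $\Lambda/\uf^n$ by \propref{munonzero}; hence $\widehat{\Sel}_\Delta^{\nplus S}(K_\infty,T_{f,n})\cong\Phi_n\bigl(\Sel_\Delta^{\nplus S}(K_\infty,A_f)\bigr)$ is a free $\Lambda/\uf^n$-module. When $S$ is an $n$-admissible set one moreover reads off the rank $\#S$ from \lmref{Poitou-Tate}, recovering the full strength of \corref{control2}.

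The step I expect to be the main obstacle is the second modification in the middle paragraph: making precise that reimposing the ordinary condition at the admissible primes of $\Delta/\nminus$ cancels exactly the $\Lambda/\uf^t$-summands produced by the corresponding relaxation, leaving no residual torsion behind. This forces one to combine the Poitou--Tate exact sequence at finite level (followed by a limit) with the $n$-admissibility of the chosen primes, and one must also check that no summand $\Lambda/\uf^t$ with $t<n$ can ever appear --- a point that ultimately rests on the vanishing of the $\mu$-invariant of $\Sel_\nminus(K_\infty,A_f)$, and hence on the analytic $\mu$-invariant computation of Hung.
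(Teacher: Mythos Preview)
Your treatment of part~(1) matches the paper's: once the local conditions at $\nplus$ are dropped, the only obstruction to the control theorem in \propref{control} disappears, and this is exactly what the paper says.

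For part~(2), your plan is workable but takes a longer road than the paper. You propose to rerun the entire $\Phi_n$-argument with $\Delta$ in place of $\nminus$: relax at the admissible primes of $\Delta/\nminus$, track the new $\Lambda/\uf^{t_i}$ summands in the pseudo-isomorphism class, and then argue that reimposing the ordinary condition cancels them one by one. You correctly flag this cancellation step as the delicate point, and indeed it would require a careful Poitou--Tate argument at each admissible prime to be made rigorous. The paper avoids this entirely. It treats \thmref{newfree} (the case $\Delta=\nminus$) as a black box and then bootstraps inductively, adding one admissible prime $\frakl$ at a time via the short exact sequence
\[
0\longrightarrow \widehat{\Sel}^{\nplus S}_{\Delta}(K_\infty,T_{f,n})\longrightarrow \widehat{\Sel}^{\nplus S\frakl}_{\nminus}(K_\infty,T_{f,n})\longrightarrow \widehat{H}^1_{\fin}(K_{\infty,\frakl},T_{f,n})\longrightarrow 0,
\]
which comes directly from the decomposition $\widehat{H}^1=\widehat{H}^1_{\fin}\oplus\widehat{H}^1_{\ord}$ of \lmref{admi_coho}. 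The middle term is free by \thmref{newfree} (with $S$ enlarged to $S\frakl$, legitimate since $\frakl\nmid\frakn p$), the right-hand term is free of rank one, and over the local ring $\Lambda/\uf^n$ the kernel of a surjection of free modules onto a free module is free. So the paper's argument sidesteps precisely the obstacle you identified, at the cost of nothing more than one exact sequence per admissible prime.
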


\begin{proof}
The proof of (1) \ie the control theorem follows from \corref{control2}  since we are relaxing conditions on $\nplus$.

As for $(2)$, we have proved in \thmref{newfree} that $\widehat{\Sel}_{\nminus}^{\nplus S}(K_{\infty}, T_{f,n})$ is free. To prove  $\widehat{\Sel}_{\Delta}^{\nplus S}(K_{\infty}, T_{f,n})$ is free, it suffices to consider the case when $\Delta=\frakl \nminus$ for a single $n$-admissible prime $\frakl$. For this we consider the following exact sequence
$$0\rightarrow \widehat{\Sel}^{S}_{\Delta}(K_{\infty}, T_{f,n})\rightarrow \widehat{\Sel}^{S\frakl}_{\nminus}(K_{\infty}, T_{f,n})\rightarrow \widehat{H}^{1}_{\fin}(K_{\infty,\frakl}, T_{f,n})\rightarrow 0.$$
This exact sequence results from the decomposition $$\widehat{H}^{1}(K_{\infty,\frakl}, T_{f,n})=\widehat{H}^{1}_{\fin}(K_{\infty,\frakl}, T_{f,n})\oplus \widehat{H}^{1}_{\ord}(K_{\infty,\frakl}, T_{f,n})$$ in \lmref{admi_coho}. The freeness of  $\widehat{\Sel}^{S}_{\Delta}(K_{\infty}, T_{f,n})$ follows from the freeness of $\widehat{\Sel}^{S\frakl}_{\nminus}(K_{\infty}, T_{f,n})$ and that of $\widehat{H}^{1}_{\fin}(K_{\infty}, T_{f,n})$. 
\end{proof}

\bibliographystyle{alpha}

\bibliography{hainingbib.bib}

\end{document}